\newtheorem{theorem}{Theorem}
\newtheorem{definition}{Definition}
\newtheorem{lemma}[theorem]{Lemma}
 \newtheorem{prop}[theorem]{Proposition}
 \newtheorem{corollary}[theorem]{Corollary}
 \newtheorem{conjecture}[theorem]{Conjecture}
\newcommand\blfootnote[1]{%
  \begingroup
  \renewcommand\thefootnote{}\footnote{#1}%
  \addtocounter{footnote}{-1}%
  \endgroup
}
\newcommand{\Addresses}{
  \bigskip
  \footnotesize

  R.~Kesler, \textsc{School of Mathematics, Georgia Tech,
 Atlanta, Georgia 30313}\par\nopagebreak
   \textit{E-mail address}: rkesler6@math.gatech.edu

  \medskip

}
\begin{document}
\title[Estimates for Semi-Degenerate Multipliers]{$L^p$ Estimates for Semi-Degenerate Simplex Multipliers}
\author{Robert Kesler}

 \maketitle
 \blfootnote{MSC2010: 42B15,42B20}

\begin{abstract} 
Muscalu, Tao, and Thiele prove $L^p$ estimates for the ``Biest" operator defined on Schwartz functions by the map

\begin{align*}
\hspace{5mm} C^{1,1,1}:& (f_1, f_2, f_3) \mapsto \int_{\xi_1 < \xi_2< \xi_3} \left[ \prod_{j=1}^3 \hat{f}_j (\xi_j)   e^{2 \pi i x \xi_j } \right] d \vec{\xi}
\end{align*}
via a time-frequency argument that produces bounds for all multipliers with non-degenerate trilinear simplex symbols.  In this article we prove $L^p$ estimates for a pair of simplex multipliers for which the non-degeneracy condition fails and which are defined on Schwartz functions by the maps 
\begin{align*}
C^{1,1,-2}:& (f_1, f_2, f_3) \mapsto \int_{\xi_1 <\xi_2 < -\frac{\xi_3}{2}}\left[ \prod_{j=1}^3 \hat{f}_j (\xi_j)   e^{2 \pi i x \xi_j } \right] d \vec{\xi} \\ 
 C^{1,1,1,-2}:& (f_1, f_2, f_3, f_4) \mapsto \int_{\xi_1 <\xi_2 < \xi_3< -\frac{\xi_4}{2}} \left[\prod_{j=1}^4 \hat{f}_j (\xi_j) e^{2 \pi i x \xi_j} \right] d \vec{\xi}.
\end{align*}
 Our argument combines the standard $\ell^2$-based energy with an $\ell^1$-based energy in order to enable summability over various size parameters. As a consequence, we obtain that $C^{1,1,-2}$ maps into $L^p$ for all $1/2< p < \infty$ and $C^{1,1,1,-2}$ maps into $L^p$ for all $1/3 < p < \infty$. Both target $L^p$ ranges are shown to be sharp.  
\end{abstract}

 \section{Introduction}
Several recent articles have examined singular integral operators associated to simplexes from a time-frequency perspective. See, for example,  \cite{MR3596720,MR1491450,MR2221256,MR1887641,MR2127984, MR2127985}.  Such objects arise naturally in the asymptotic expansions of solutions to AKNS systems, where estimates of the form $\prod_{i=1}^n L^{p^\prime_i} (\mathbb{R}) \rightarrow L^{\frac{1}{\sum_{i=1}^n \frac{1}{p_i}}}(\mathbb{R})$ are sought for 
\begin{eqnarray*}
C_n: (f_1, ..., f_n) \mapsto  \sup_t \left| \int_{\xi_1 < ... <\xi_n<t}\left[ \prod_{j=1}^n f_j(\xi_j)e^{2 \pi i  x(-1)^j\xi_j}  \right] d \vec{\xi} \right|.
\end{eqnarray*} 
For details on the connection between the family of multisublinear operators $\left\{ C_n \right\}_{n \geq 1}$ and AKNS, see \cite{MR1809116}. 
It has also been of interest in the dynamics of particle systems to study the closely related family of fourier multipliers given for any $\vec{\alpha} \in \mathbb{R}^n$ and $\vec{f} \in \mathcal{S}^n(\mathbb{R})$ by the formula

\begin{eqnarray*}
C^{\vec{\alpha}}: (f_1, ..., f_n)  \mapsto \int_{\frac{\xi_1}{\alpha_1} < ... < \frac{\xi_n}{\alpha_n}} \left[ \prod_{j=1} ^n \hat{f}_j (\xi_j) e^{2 \pi i x \xi_j} \right] d\vec{\xi}. 
\end{eqnarray*}  A non-trivial example from the above simplex multiplier family is the ``Biest" operator $C^{1,1,1}$, which has been shown to satisfy a wide range of $L^p$ estimates via a robust time-frequency argument. More precisely, Muscalu, Tao, and Thiele prove the following result in \cite{MR2127985}:

\begin{theorem}\label{OT}
$C^{1,1,1} : L^{p_1}(\mathbb{R}) \times L^{p_2} (\mathbb{R}) \times L^{p_3}(\mathbb{R}) \rightarrow L^{p_4^\prime}(\mathbb{R})$ for all $(1/p_1, 1/p_2, 1/p_3, 1/p_4) \in \mathbb{D} \cap \mathbb{D}^\prime$, $1 < p_1, p_2, p_3 \leq \infty$ and $0 < p_4^\prime < \infty$, where $\mathbb{D}$ is the interior convex hull of $\{D_j\}_{j=1}^{12}$ given by 

\begin{align*}
D_1 =& \left(1, \frac{1}{2}, 1, - \frac{3}{2}\right) ,  D_2=\left(\frac{1}{2}, 1, 1, -\frac{3}{2}\right), D_3 = \left(\frac{1}{2}, 1, -\frac{3}{2}, 1\right) \\ 
D_4=& \left(1, \frac{1}{2} , -\frac{3}{2}, 1\right) 
, D_5 =\left(1 ,-\frac{1}{2}, 0, \frac{1}{2}\right), D_6 = \left(1, - \frac{1}{2}, \frac{1}{2}, 0\right) \\  D_7=&\left(\frac{1}{2}, - \frac{1}{2}, 0 , 1\right), D_8=\left(\frac{1}{2}, - \frac{1}{2}, 1, 0\right), 
D_9 = \left(-\frac{1}{2}, 1, 0, \frac{1}{2}\right)\\ D_{10}=&\left(-\frac{1}{2}, 1, \frac{1}{2}, 0\right), D_{11}=\left(-\frac{1}{2}, \frac{1}{2}, 1, 0\right), D_{12}=\left(-\frac{1}{2}, \frac{1}{2}, 0 , 1\right)
\end{align*}
and $\mathbb{D}^\prime$ is the interior convex hull of the collection $(D^\prime_1,..., D^\prime_{12})$ where each $D^\prime_j$ is gotten from the corresponding $D_j$ by swapping the $1$st and $3$rd positions. For instance, $D_2=(1, 1, \frac{1}{2}, - \frac{3}{2})$. For the dual index in positions $3$ or $4$, $C^{1,1,1}$ maps near $L^{2/5}(\mathbb{R})$, while in positions $1$ and $2$ it only map near $L^{2/3}(\mathbb{R})$.
\end{theorem}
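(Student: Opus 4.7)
The plan is to run the multilinear time-frequency analysis for Fourier multipliers with singularities along subspaces, specialized to the two lacunary hyperplanes $\xi_1=\xi_2$ and $\xi_2=\xi_3$ bounding the simplex. After dualizing against a fourth Schwartz function $f_4$, one performs a Whitney-type decomposition of the symbol into a rapidly convergent sum of product bumps indexed by pairs of scales (one per lacunary direction), producing a discrete model form
\begin{equation*}
\Lambda(f_1,f_2,f_3,f_4)=\sum_{\vec P\in \mathbf P} |I_{\vec P}|^{-1/2}\prod_{j=1}^{4}\langle f_j,\Phi^j_{\vec P}\rangle
\end{equation*}
indexed by ``quad-tiles'' $\vec P=(P_1,\dots,P_4)$ with common spatial interval $I_{\vec P}$ and frequency intervals $\omega_{P_j}$ satisfying two independent lacunary separation conditions. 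Matters reduce to bounding $|\Lambda|$ by $\prod_j \|f_j\|_{p_j}$ for the exponents $(1/p_j)$ in $\mathbb D\cap\mathbb D'$.

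For each coordinate $j$ I would introduce an $\ell^2$-based \emph{size}
$\mathrm{size}_j(\mathbf P')=\sup_T |I_T|^{-1/2}\big(\sum_{\vec P\in T}|\langle f_j,\Phi^j_{\vec P}\rangle|^2\big)^{1/2}$
where the supremum runs over trees $T\subset\mathbf P'$ (tiles with a common top and lacunary frequency structure), together with an \emph{energy} measuring the largest total length of disjoint trees of a given size. Standard wave-packet and John--Nirenberg arguments yield $\mathrm{energy}_j\lesssim \|f_j\|_2$ and $\mathrm{size}_j(f_j)\lesssim \sup_{\vec P}|I_{\vec P}|^{-1}\int |f_j|\tilde\chi_{I_{\vec P}}$, while a tree estimate provides $|\Lambda_T|\lesssim |I_T|\prod_{j=1}^4\mathrm{size}_j(T)$. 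A greedy stopping-time algorithm then decomposes $\mathbf P$ into trees of decreasing size, and summation gives a generic bound $|\Lambda|\lesssim \prod_{j=1}^4 \mathrm{size}_j^{1-\theta_j}\mathrm{energy}_j^{\theta_j}$ for admissible $(\theta_j)$. The principal obstacle is the presence of \emph{two} lacunary directions: the stopping-time selection must be executed for each direction and then reconciled without losing the geometric structure, which is precisely what forces the quad-tile combinatorics and the ``overlapping tree'' analysis that go beyond the bilinear Hilbert transform case.

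To extract the full polytope $\mathbb D\cap\mathbb D'$, I would interpolate by multilinear restricted weak-type / Marcinkiewicz between the extremal vertices $D_j$ and $D_j'$, each of which corresponds to allocating either a size bound or an energy bound to each of the four functions. Exponents $p_4'<1$ are handled through the standard major-subset trick $E'\subset E$ with $|E'|\geq |E|/2$, which permits exceptional sets of controlled measure and is essential for reaching the quasi-Banach vertices. The companion polytope $\mathbb D'$ arises from the permutation symmetry exchanging the roles of $\xi_1$ and $\xi_3$ (two equivalent ways to orient the lacunary stopping-time hierarchy), and the near-$L^{2/5}$ and near-$L^{2/3}$ behavior at the two types of vertices reflects which index is pushed to its extreme size or energy exponent.
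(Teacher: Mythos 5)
There is a genuine gap in the proposed model. You posit a \emph{single} collection of quad-tiles $\vec P=(P_1,\dots,P_4)$ with a \emph{common spatial interval} $I_{\vec P}$ and frequency intervals satisfying two independent lacunary conditions. But if all four tiles share the same spatial interval, they all have the same frequency scale $|I_{\vec P}|^{-1}$, which means the model can only resolve symbols whose Whitney decomposition involves a single scale per spatial location. The Biest symbol $1_{\{\xi_1<\xi_2<\xi_3\}}$ is singular on both hyperplanes $\{\xi_1=\xi_2\}$ and $\{\xi_2=\xi_3\}$, and in the regions where the two distances $|\xi_1-\xi_2|$ and $|\xi_2-\xi_3|$ are genuinely incomparable the Whitney decomposition must track \emph{two independent scales simultaneously}. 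A single quad-tile collection with one common $I_{\vec P}$ cannot do this, and if you restrict to the single-scale regime you only recover the central diagonal region where the two distances are comparable --- which reduces to the earlier MTT theory of multilinear multipliers with singularity along a line and does not reach the $L^{2/5}$ endpoint.

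The proof that the paper is citing proceeds differently: one first splits the symbol as $\Psi_{\mathcal{R}_1}+\Psi_{\mathcal{R}_2}+\Psi_{\mathcal{R}_3}$ according to whether $|\xi_1-\xi_2|\ll,\simeq,\gg|\xi_2-\xi_3|$. The diagonal piece $\Psi_{\mathcal{R}_2}$ is handled by the earlier multilinear singular multipliers result. The off-diagonal pieces $\Psi_{\mathcal{R}_1},\Psi_{\mathcal{R}_3}$ (equivalent by symmetry) require the genuinely new \emph{nested} Biest model
\[
\Lambda_1^{\mathbb{P},\mathbb{Q}}(f_1,f_2,f_3,f_4)=\sum_{\vec P\in\mathbb{P}}\frac{1}{|I_{\vec P}|^{1/2}}\langle f_1,\Phi_{P_1}\rangle\,\langle f_4,\Phi_{P_4}\rangle\,\bigl\langle BHT^{\mathbb{Q}}_{\omega_{P_2}}(f_2,f_3),\Phi_{P_2}\bigr\rangle,
\]
in which an \emph{outer} collection $\mathbb{P}$ records the coarse scale (distance to $\{\xi_1=\xi_2\}$) and, for each $\vec P$, an \emph{inner} BHT over a finer rank-$1$ collection $\mathbb{Q}$ records the fine scale (distance to $\{\xi_2=\xi_3\}$), with the constraint $\omega_{Q_3}\subset\subset\omega_{P_2}$ linking the two. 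The size/energy machinery then has to be augmented: in addition to sizes and energies for the individual wave-packet coefficients, one needs a size and energy for the composite quantity $\langle BHT_{\omega_{P_2}}(f_2,f_3),\Phi_{P_2}\rangle$, estimated via localized BHT size/energy bounds, together with a ``biest energy'' estimate that explains the improvement from $L^{2/3}$ to $L^{2/5}$. The stopping-time and interpolation skeleton you describe is the right philosophy, but it must operate on this two-layer structure; your proposal, taken as stated, never actually produces a model capable of representing $\Psi_{\mathcal{R}_1}$ or $\Psi_{\mathcal{R}_3}$ and would stall at the BHT-level quasi-Banach range.
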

For future use, we make the following definitions:

\begin{definition}
Let $m : \mathbb{R}^n \rightarrow \mathbb{C}$. Then define the multilinear multiplier $T_m$ on $\mathcal{S}^n(\mathbb{R})$ by

\begin{eqnarray*}
T_m : (f_1, ..., f_n) \mapsto \int_{\mathbb{R}^n} m(\vec{\xi}) \prod_{j=1}^n \left[ \hat{f}_j (\xi_j) e^{2 \pi i x \xi_j } \right]d \vec{\xi}.
\end{eqnarray*}

\end{definition} 
\begin{definition}
For every $\vec{\alpha} \in \mathbb{R}^n$, let $\tilde{C}^{\vec{\alpha}}$ denote the $n$-linear operator defined on $ \mathcal{S}^n(\mathbb{R})$ by

\begin{eqnarray*}
\tilde{C}^{\vec{\alpha}}(f_1,...,f_n)(x)= \int_{\xi_1 <  ... < \xi_n} \left[ \prod_{j=1}^n \hat{f}_j(\xi_j) e^{2 \pi i x \alpha_j \xi_j}\right] d\vec{\xi}.
\end{eqnarray*}

\end{definition}

\begin{definition}
For every $\vec{\alpha} \in \mathbb{R}^n$ with only non-zero entries, let $C^{\vec{\alpha}}$ denote the $n$-linear operator defined on $ \mathcal{S}^n(\mathbb{R})$ by 

\begin{eqnarray*}
C^{\vec{\alpha}}(f_1,...,f_n)(x)= \int_{\frac{\xi_1}{\alpha_1} < .... < \frac{\xi_n}{\alpha_n}} \left[ \prod_{j=1}^n \hat{f}_j(\xi_j) e^{2 \pi i x\xi_j}\right] d\vec{\xi}.
\end{eqnarray*}

\end{definition}
For every $\vec{\alpha} \in \mathbb{R}^n$ with only non-zero entries, we have by a simple change of variables
\begin{eqnarray*}
\tilde{C}^{\vec{\alpha}}(f_1, .., f_n)(x) = C^{\vec{\alpha}} (f_1(\alpha_1 \cdot), ..., f_n(\alpha_n \cdot))(x) \qquad \forall (f_1, .., f_n)\in \mathcal{S}^n(\mathbb{R})
\end{eqnarray*}
so that $C^{\vec{\alpha}}$ and $\tilde{C}^{\vec{\alpha}}$ satisfy the same $L^p$ estimates. 
\begin{definition}\label{Def:Degen}
Let $\vec{\alpha} \in \mathbb{R}^n$ satisfy the property that there exists a pair $(i,j)$ such that $1 \leq i \leq j \leq n, j-i \in \{0,1\}$, and $\sum_{k=i}^j \alpha_k =0 $. Then $\vec{\alpha}$ is a degenerate tuple and $C^{\vec{\alpha}}$ is a degenerate simplex multiplier. 
\end{definition}
\begin{definition}
Let $\vec{\alpha} \in \mathbb{R}^n$ satisfy the property that there exists no pair $1 \leq i \leq j \leq n$ such that $\sum_{k=i}^j \alpha_k =0$. Then $\vec{\alpha}$ is a non-degenerate tuple and $C^{\vec{\alpha}}$ is a non-degenerate simplex multiplier. 
\end{definition}
\begin{definition}
Let $\vec{\alpha} \in \mathbb{R}^n$ not be a degenerate tuple in the sense of definition \ref{Def:Degen}. Moreover, assume there exists a pair $(i,j)$ such that $1 \leq i ,  j \leq n$, $i < j-1$, and $\sum_{k=i}^j \alpha_k =0$. Then $\vec{\alpha}$ is a semi-degenerate tuple and $C^{\vec{\alpha}}$ is a semi-degenerate simplex multiplier. 
\end{definition}
The proof of Theorem \ref{OT} from \cite{MR2127985} involves splitting the symbol

\begin{align*}
1_{\{\xi_1 < \xi_2 < \xi_3\}} = \Psi_{\mathcal{R}_1} + \Psi_{\mathcal{R}_2} + \Psi_{\mathcal{R}_3} 
\end{align*}
where the three functions on the right side are localized to the respective regions 

\begin{align*}
\mathcal{R}_1 =& \left\{ |\xi_1- \xi_2| \gg |\xi_2 - \xi_3| \right\} \\  \mathcal{R}_2 =& \left\{ |\xi_1 - \xi_2| \simeq |\xi_2 - \xi_3| \right\}\\ \mathcal{R}_3 =& \left\{ |\xi_1 - \xi_2| \ll |\xi_2 - \xi_3| \right\}.
\end{align*}
 More precisely,  $\Psi_{\mathcal{R}_1}$ is supported on $\{\xi_1< \xi_2< \xi_3 : |\xi_1 - \xi_2| \geq C_1 |\xi_2 - \xi_3| \}$, identically equal to $1$ on $\{\xi_1 < \xi_2 < \xi_3: |\xi_1 - \xi_2| \geq C_2 |\xi_2 - \xi_3| \}$ for some constants $C_1 \ll C_2$, and has a special nested structure.  Similar statements hold for $\Psi_{\mathcal{R}_2}$ and $\Psi_{\mathcal{R}_3}$ on $\mathcal{R}_2$ and $\mathcal{R}_3$.

As the $L^p$ estimates in $\mathbb{D} \cap \mathbb{D}^\prime$ hold for $T_{\Psi_{\mathcal{R}_2}}$ by earlier work in \cite{MR1887641}, it suffices for Muscalu, Tao, and Thiele to estimate $T_{\Psi_{\mathcal{R}_1}}$ and $T_{\Psi_{\mathcal{R}_3}}$. Furthermore, by symmetry, it suffices for them to prove the estimates only for $T_{\Psi_{\mathcal{R}_1}}$. However, these estimates follow from the fact that $T_{\Psi_{\mathcal{R}_1}}$ can be written as an average of special non-degenerate model sums, which are described in greater detail in \S{4}, each non-degenerate model sum satisfies generalized restricted type estimates near the extremal points in $\mathbb{D}$ uniformly in the averaging parameters, and the generalized restricted type interpolation as shown in Chapter $3$ of \cite{MR2199086} finishes the argument. 

It is important to realize that for any non-degenerate $\vec{\alpha} \in \mathbb{R}^3 $ one can easily adapt the argument from \cite{MR2127985} to show that $C^{\vec{\alpha}}$ satisfies same $L^p$ estimates as $C^{1,1,1}$ does in Theorem \ref{OT}. In fact, we have from \cite{MR3329857} the following generalization. 

\begin{theorem}\label{CamT}
Fix $n \geq 1$ and let $\vec{\alpha} \in \mathbb{R}^n$ be non-degenerate. Then $C^{\vec{\alpha}}$ satisfies a wide range of $L^p$ estimates. 
\end{theorem}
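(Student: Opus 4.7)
The plan is to mimic the strategy underlying Theorem \ref{OT}. First, I would perform a Whitney-type decomposition of the simplex symbol $1_{\{\xi_1/\alpha_1 < \cdots < \xi_n/\alpha_n\}}$ according to the relative sizes of the consecutive gaps $|\xi_i/\alpha_i - \xi_{i+1}/\alpha_{i+1}|$. This produces boundedly many (at most $(n-1)!$) pieces $\Psi_{\mathcal{R}_\sigma}$, each indexed by a permutation $\sigma$ specifying one linear ordering of the gaps. Each $\Psi_{\mathcal{R}_\sigma}$ is adapted to a nested hierarchy of frequency boxes analogous to the three regions $\mathcal{R}_1,\mathcal{R}_2,\mathcal{R}_3$ from the Biest argument.

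Next, on each such region one can replace the smooth symbol by a superposition of tensor products of bumps adapted to the nested frequency boxes, which after discretization yields tree-structured wave packet model sums. The key point is that non-degeneracy of $\vec{\alpha}$ guarantees that when we change variables from $(\xi_1,\ldots,\xi_n)$ to modulation parameters adapted to the tree structure, each partial sum $\sum_{k=i}^j \alpha_k$ that enters as a denominator is bounded away from zero; this forces the frequency cubes in each coordinate to be genuinely separated at every scale. Consequently, the full operator $C^{\vec{\alpha}}$ can be written as an average of non-degenerate model sums of Biest-type whose averaging parameters admit uniform control.

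Once this reduction is in place, the argument proceeds by establishing generalized restricted-type estimates for the model sums near the extremal vertices of a polytope analogous to $\mathbb{D} \cap \mathbb{D}'$. The standard $\ell^2$-based size and energy bounds, combined with John--Nirenberg-type stopping-time arguments for trees, give the required estimates; non-degeneracy is precisely what enables the BHT/paraproduct building blocks in each coordinate to be estimated uniformly. Generalized restricted-type interpolation, as developed in Chapter 3 of \cite{MR2199086}, then yields a wide range of strong-type bounds.

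The main obstacle is combinatorial rather than analytic: one must verify that \emph{every} region produced by the Whitney decomposition in the general $n$ case yields only model sums whose building blocks are non-degenerate, and that the number of such regions and the constants in the uniform estimates remain controlled as $n$ grows. Tracking which partial sums $\sum_{k=i}^j \alpha_k$ appear as denominators in the change of variables on each region $\mathcal{R}_\sigma$ — and confirming that none of them vanishes precisely when $\vec{\alpha}$ is non-degenerate — is the technical heart of the generalization.
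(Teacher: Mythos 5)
The paper does not actually prove Theorem~\ref{CamT}; it is quoted verbatim from the reference \cite{MR3329857}, with the preceding sentence merely noting that for $n=3$ one can ``easily adapt'' the Biest argument of \cite{MR2127985}. So there is no proof in the paper against which to compare your attempt, and the evaluation has to be on the plausibility of your sketch alone.

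Your blueprint is broadly in the right spirit --- decompose the simplex symbol Whitney-style by relative gap sizes, discretize each piece into nested wave-packet models, use non-degeneracy to guarantee each partial sum $\sum_{k=i}^j \alpha_k$ stays away from zero so the tile collections at every nesting level are rank-1, then run size/energy stopping times and restricted-type interpolation. Two concrete points you have elided. First, your count of $(n-1)!$ regions indexed by orderings of the gaps is an undercount: you also need the regions where several consecutive gaps are \emph{comparable}, exactly as $\mathcal{R}_2$ appears alongside $\mathcal{R}_1, \mathcal{R}_3$ in the trilinear case. Those ``diagonal'' pieces are not treated by the tree/model-sum machinery at all; they are Mikhlin--H\"ormander symbols adapted to lower-dimensional singularities and are handled by the paraproduct results of \cite{MR1887641}, so your decomposition has to keep track of them separately. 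Second, for general $n$ the model sums are genuinely recursive: an $n$-linear piece contains an $(n-1)$-linear piece as an inner factor, and so on, so the stopping-time and energy bounds have to be iterated through all nesting levels (one must show, for instance, that strong disjointness and the John--Nirenberg localization survive each composition). Your sketch treats this as a single application of ``standard $\ell^2$ size and energy bounds,'' but propagating these estimates through the recursion is where most of the work in \cite{MR3329857} actually lives, and is also precisely the step that breaks in the semi-degenerate setting this paper addresses --- which is why the authors introduce the supplementary $\ell^1$-energy.
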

While $C^{1,1,1}$ satisfies many $L^p$ estimates, Muscalu, Tao, and Thiele construct counterexamples in \cite{MR1981900} using Gaussian chirps that show no $L^p$ estimates hold for the degenerate simplex multiplier $C^{1,-1,1}$.  As the $4$-form associated to $C^{1,1,-2}$ cannot be written as an average of models of type $\Lambda_1$ and Gaussian chirps do not provide $C^{1,1,-2}$ counterexamples beyond those appearing in \S{3}, it is a natural question to ask which, if any, $L^p$ estimates hold in the semi-degenerate setting. One attractive feature of such simplex symbols is that they can be broken into simpler pieces, as illustrated by 

\begin{align*}
 &\left\{ \xi_1 < \xi_2 < -  \xi_3/2\right\} \\=& \left\{ \xi_1 +  \xi_2 < 2 \xi_2 < - \xi_3 \right\} \\=& \left\{ \xi_1 < \xi_2 \right\} \bigcap \left[ ( \left\{ -\xi_3 < \xi_1 + \xi_2 \right\} \cap \left\{ \xi_1 < \xi_2 \right\})\bigcup \{\xi_1 + \xi_2 \leq - \xi_3 \leq2 \xi_2\} \right]^c \\=& \left\{ \xi_1 < \xi_2 \right\} \bigcap  (\left\{  \xi_1 + \xi_2+\xi_3 >0 \right\}  \cap \{ \xi_1 < \xi_2\} )^c \\ &\bigcap \left(\{\xi_1 + \xi_2 +\xi_3 \leq 0 \} \cap \{ - \xi_3 \leq2 \xi_2\}  \right)^c.
\end{align*}
This elegant observation is due to Camil Muscalu. 
Using $H^+ = T_{\{ \xi >0\}}$ and $H^- = T_{\{\xi \leq 0\}}$, the above decomposition yields the identity

\begin{align}\label{Est:Iden}
 C^{1,1,-2}(f_1, f_2, f_3)(x)  =& C^{1,1}(f_1, f_2)(x) \cdot  f_3(x) \\-& H^+(C^{1,1}(f_1 ,f_2)\cdot  f_3)(x) \nonumber \\ -& H^-(f_1 \cdot C^{-2,1}(f_3, f_2))(x) \nonumber.
\end{align}
Because each term on the right side of the above display satisfies all interior Banach estimates, the same must be true for $\tilde{C}^{1,1,-1/2}$ and therefore $C^{1,1,-2}$. Given that $C^{1,1,-2}$ maps into $L^r(\mathbb{R})$ for all $1 < r < \infty$, it is tempting to ask whether such an object can map below $L^1(\mathbb{R})$, and if so, how low can the target exponent $r \geq \frac{1}{3}$ go. Our first result shows $r > 1/2$ is necessary for $C^{1,1,-2}$ to map into $L^r(\mathbb{R})$. Similarly, we have the identity

\begin{align}\label{Est:Iden2}
C^{1,1,1,-2}(f_1, f_2, f_3, f_4)(x) =&C^{1,1,1}(f_1, f_2, f_3)(x) f_4(x) \\-& T_{ \{\xi_1 < \xi_2 < \xi_3\} \cap \{ \xi_2 + \xi_3 + \xi_4 >0\}}(f_1, f_2, f_3, f_4)(x) \nonumber \\-&C^{1,1,-1}(f_1, f_2, C^{-2, 1}(f_4, f_3))(x) \nonumber.
\end{align}
Because both $T_{ \{\xi_1 < \xi_2 < \xi_3\} \cap \{ \xi_2 + \xi_3 + \xi_4 >0\}} $ and $C^{1,1,-1}(f_1, f_2, C^{-2, 1}(f_4, f_3))$ satisfy no $L^p$ estimates, identity \eqref{Est:Iden2} sheds little light on the boundedness properties of $C^{1,1,1,-2}$. A natural question in light of these facts is whether the degeneracy condition is necessary for $L^p$ estimates to fail. While not answering this question fully, we content ourselves in this section with establishing two main results. The first is
 
\begin{restatable}{theorem}{MT}\label{MT}
$C^{1,1,-2} : L^{p_1} (\mathbb{R}) \times L^{p_2}(\mathbb{R}) \times L^{p_3}(\mathbb{R}) \rightarrow L^{p_4^\prime}(\mathbb{R})$ for all $(\frac{1}{p_1}, \frac{1}{p_2}, \frac{1}{p_3},\frac{1}{p_4}) \in \mathbb{A} \cap \mathbb{A}^\prime$, $1< p_1, p_2, p_3 \leq \infty$ and $0 < p_4^\prime<\infty$, where $\mathbb{A}$ is the interior convex hull of  $\left\{A_j \right\}_{j=1}^9$ given by
\begin{align*}
A_1=& \left( 1, \frac{1}{2}, \frac{1}{2}, -1\right) , A_2 =\left( \frac{1}{2}, \frac{1}{2}, 1, -1\right), A_3= \left(\frac{1}{2}, 1, \frac{1}{2}, -1\right)  \\  A_4= &\left(-\frac{3}{2}, \frac{1}{2}, 1, 1 \right) , A_5 = \left(-\frac{3}{2}, 1, \frac{1}{2}, 1\right) , A_6=\left( \frac{1}{2}, -\frac{1}{2}, 1, 0\right)\\  A_ 7=& \left(0,  -\frac{1}{2}, 1, \frac{1}{2} \right), A_8= \left(0, 1, -\frac{1}{2}, \frac{1}{2} \right) , A_9=\left(\frac{1}{2}, 1, -\frac{1}{2}, 0\right)
\end{align*}
and $\mathbb{A}^\prime$ is the interior convex hull of the collection $\left\{ A_1^\prime,...A_9^\prime \right\}$ where each $A^\prime_j$ is gotten by the corresponding $A_j$ by swapping the $1$st and $3$rd indices. For example, $A_2^\prime = (1, \frac{1}{2}, \frac{1}{2}, -1)$. 

\end{restatable}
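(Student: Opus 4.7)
The plan is to extend the Muscalu--Tao--Thiele time-frequency framework for $C^{1,1,1}$ to the semi-degenerate symbol $1_{\{\xi_1 < \xi_2 < -\xi_3/2\}}$. The degeneracy direction is the hyperplane $\xi_1 + \xi_2 + \xi_3 = 0$, which intersects the simplex along a ray emanating from the origin. First I would partition the symbol according to which of the gaps $|\xi_1 - \xi_2|$, $|\xi_2 + \xi_3/2|$, and $|\xi_1 + \xi_2 + \xi_3|$ dominates, in direct analogy with the $\Psi_{\mathcal{R}_1} + \Psi_{\mathcal{R}_2} + \Psi_{\mathcal{R}_3}$ decomposition recalled in the introduction. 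On regions where one of the first two gaps strictly dominates, the local symbol is non-degenerate and can be represented as an average of standard non-degenerate model sums, so the desired restricted type estimates at the vertices $A_j$ follow from Theorem \ref{CamT} uniformly in the averaging parameters.

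The genuine content lies in the piece where $|\xi_1 + \xi_2 + \xi_3|$ is at least comparable to the other two gaps. For this piece I would build a wave-packet discretization on tri-tiles adapted to the degenerate direction, writing the associated $4$-form as an average over shifts of a discrete model sum of the type $\sum_{P} a_P \prod_{i=1}^3 \langle f_i, \phi_{P,i}\rangle \langle f_4, \phi_{P,4}\rangle$. The goal at this stage is to establish generalized restricted weak type estimates at each extremal vertex $A_j$ with constants uniform in the averaging parameters; by symmetry under swapping the first and third inputs these also give the estimates at each $A_j^\prime$, and the generalized restricted type interpolation from Chapter $3$ of \cite{MR2199086} then yields strong type bounds throughout $\mathbb{A} \cap \mathbb{A}^\prime$.

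The key innovation, as announced in the abstract, is to supplement the classical $\ell^2$-based energies with an $\ell^1$-based energy adapted to the degenerate direction $\xi_1 + \xi_2 + \xi_3 \approx 0$. The standard tree estimate exploits $\ell^2$ orthogonality across scales, but along the degenerate direction the three frequency components are forced into a one-parameter family of collinear configurations, so one of the three $\ell^2$ factors fails to give summable decay in the associated size parameter. Replacing that factor with an $\ell^1$ estimate of the corresponding wave packet sum furnishes the geometric gain needed to execute the usual stopping-time and sparsification arguments.

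The main obstacle is proving uniform restricted weak type bounds at the vertices $A_1, A_2, A_3$, where the target exponent saturates at $p_4^\prime = 1/2$. These are precisely the points where scaling is tightest and where the $\ell^2$-only argument breaks down; integrating the $\ell^1$ energy into the tree selection and size decomposition, and verifying the required John--Nirenberg-type single-tree estimates with the mixed $\ell^2$-$\ell^1$ sizes, is the delicate technical heart of the proof. Once this is in place at the extremal vertices, the rest of the interior polytope follows by routine interpolation.
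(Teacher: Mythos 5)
Your proposal misidentifies where the degenerate content of the symbol actually resides, and this would derail the proof in a serious way. The paper's decomposition of $1_{\{\xi_1 < \xi_2 < -\xi_3/2\}}$ is a two-gap Whitney decomposition by the relative sizes of $|\xi_1 - \xi_2|$ and $|\xi_2 + \xi_3/2|$ alone (giving $\mathcal{R}_1, \mathcal{R}_2, \mathcal{R}_3$); there is no third gap in the decomposition. Your claim that ``on regions where one of the first two gaps strictly dominates, the local symbol is non-degenerate and can be represented as an average of standard non-degenerate model sums'' is the opposite of what happens. Take $\mathcal{R}_3 = \{|\xi_1 - \xi_2| \gg |\xi_2 + \xi_3/2|\}$: there the constraint factors as $\xi_1 < -(\xi_2 + \xi_3)$ together with $\xi_2 < -\xi_3/2$. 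The inner $BHT$ on $(f_2, f_3)$ is non-degenerate, but the outer trilinear structure in the variables $(\xi_1, \, \xi_2 + \xi_3, \, \xi_4)$ has $\alpha_1 + \alpha_\eta = 0$ and is therefore a degenerate $BHT$-type symbol; the $\mathbb{P}$-tri-tiles in the resulting $\Lambda_2$-model are adapted to the degenerate line $\{\xi_1 = -\xi_2; \xi_3 = 0\}$, not to a non-degenerate one. This is precisely why the paper notes that the $4$-form associated to $C^{1,1,-2}$ cannot be written as an average of non-degenerate $\Lambda_1$-models, and why Theorem \ref{CamT} cannot be invoked. So the ``easy'' region for you to hand off to Theorem \ref{CamT} is in fact the entire technical heart of the problem, and the region where $|\xi_1 + \xi_2 + \xi_3|$ dominates is not a distinguished piece in the actual argument at all.

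A second consequence: the discrete model you propose, $\sum_{P} a_P \prod_{i=1}^3 \langle f_i, \phi_{P,i}\rangle \langle f_4, \phi_{P,4}\rangle$, is a single-layer paraproduct and does not capture the nested two-layer structure that the $\mathcal{R}_1, \mathcal{R}_3$ pieces demand. The correct model is the $\Lambda_2$-form: an outer sum over a degenerate, non-rank-$1$ collection $\mathbb{P}$ with, inside it, a localized $BHT^{\alpha, \mathbb{Q}}_{\omega_{P_2}}$ over a separate rank-$1$ collection $\mathbb{Q}$ of much finer scale. It is precisely because $\mathbb{P}$ is not rank-$1$ that the standard Bessel/$\ell^2$-energy apparatus of the Biest argument fails to sum over the inner size parameter, and this is where the $\ell^1$-energy of Theorem \ref{l1T} enters. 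Your intuition that an $\ell^1$-energy is needed ``along the degenerate direction'' is roughly right in spirit, but without the nested $\Lambda_2$-model there is no inner $BHT$-coefficient $\left\langle \int_0^1 BHT^{\alpha,\mathbb{Q}}_{\omega_{P_2}}(f_2, f_3)\,d\alpha,\Phi_{P_2,0}\right\rangle$ to which the $\ell^1$-energy applies, and no parameter $\mathfrak{d}$ over which the interpolated $\ell^2$/$\ell^1$ bound $2^{\sim 3\mathfrak{d}/2}|E_2||E_3|$ produces summability. In short, the high-level slogans about $\ell^1$-energy are consistent with the abstract, but the actual decomposition and model you set up would not even reach the point where those energies become applicable.

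Finally, note that the middle region $\mathcal{R}_2 = \{|\xi_1 - \xi_2| \simeq |\xi_2 + \xi_3/2|\}$, where the symbol is Mikhlin--H\"ormander adapted to the one-dimensional singularity $\{\xi_1 = \xi_2 = -\xi_3/2\}$, is the piece handled by the earlier results of \cite{MR1887641}. The vertices $A_1, A_2, A_3$ ask for $p_4'$ arbitrarily close to $1/2$, which is strictly below the generic threshold $2/3$ that the non-degenerate Biest machinery of Theorem \ref{CamT} delivers, so even at the level of exponents, Theorem \ref{CamT} cannot furnish the restricted type estimates your plan assigns to it.
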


 To prove Theorem \ref{MT}, we follow the standard procedure introduced in \cite{MR2127985} of carving $1_{\xi_1 < \xi_2 < - \xi_3/2}$ into three localized pieces, discretizing each piece into a wave packet model, and then obtaining satisfactory estimates for the models. 
Our second main result is 
\begin{restatable}{theorem}{MTV}\label{MT*}
$C^{1,1,1,-2} : L^{p_1} (\mathbb{R}) \times L^{p_2}(\mathbb{R}) \times L^{p_3}(\mathbb{R}) \times L^{p_4}(\mathbb{R}) \rightarrow L^{p_5^\prime}(\mathbb{R})$ for all $(\frac{1}{p_1}, \frac{1}{p_2},\frac{1}{p_3}, \frac{1}{p_4}, \frac{1}{p_5}) \in \mathbb{B} \cap \mathbb{B}^\prime$, $1 < p_1, p_2, p_3, p_4 \leq \infty$ and $ 0 < p_5^\prime<\infty $, where $\mathbb{B}$ is the interior convex hull of $\left\{ B_j\right\}_{j=1}^{16}$ given by
\begin{align*}
B_1 =& \left( 1, 1, \frac{1}{2}, \frac{1}{2}, -2 \right) , B_2 = \left( 1, \frac{1}{2}, \frac{1}{2}, 1 , -2 \right) , B_3 = \left(1, \frac{1}{2}, 1, \frac{1}{2}, -2 \right) \\ B_4 =& \left( -2, 1, \frac{1}{2}, \frac{1}{2}, 1 \right) , B_5 = \left( -2, \frac{1}{2}, 1, \frac{1}{2}, 1 \right), B_6 = \left( -2, \frac{1}{2}, \frac{1}{2}, 1, 1 \right) \\ 
B_7 =& \left(0  , -\frac{3}{2}, \frac{1}{2}, 1, 1  \right), B_8= \left( 1, -\frac{3}{2}, \frac{1}{2}, 1, 0 \right), B_9 = \left( 0, - \frac{3}{2}, 1, \frac{1}{2}, 1 \right)  \\  B_{10} =& \left( 1 , -\frac{3}{2}, 1, \frac{1}{2}, 0 \right) , 
B_{11} = \left( 0 , \frac{1}{2} ,  -\frac{1}{2}  , 1  ,  0\right)  , B_{12} = \left( \frac{1}{2}, 0, -\frac{1}{2}, 1, 0 \right) \\ B_{13} =& \left( 0, 0, -\frac{1}{2}, 1, \frac{1}{2} \right) , 
B_{14} = \left( 0, \frac{1}{2}, 1, -\frac{1}{2}, 0 \right) , B_{15} = \left( \frac{1}{2}, 0, 1, -\frac{1}{2}, 0 \right) \\ B_{16} =& \left( 0,0, 1, - \frac{1}{2}, \frac{1}{2} \right)
\end{align*}
and $\mathbb{B}^\prime$ is the interior convex hull of the collection $\left\{ B^\prime_j \right\}_{j=1}^{16}$, where each $B^\prime_j$ is obtained from the corresponding $B_j$ by the permutation $1 \mapsto 1, 2 \mapsto 4, 3 \mapsto 2, 4 \mapsto 3$.  In particular, $B_3^\prime  = \left( 1, 1 , \frac{1}{2} , \frac{1}{2}, -2 \right)$. Moreover, $(1, \frac{2}{3}, \frac{2}{3}, \frac{2}{3}, -2)  \in \overline{ \mathbb{B} \cap \mathbb{B}^\prime}$ and $C^{1,1,1,-2}$ maps into $L^{r}(\mathbb{R})$ for all $\frac{1}{3} < r \leq 1$. 
\end{restatable}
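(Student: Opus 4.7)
The plan is to adapt the Muscalu--Tao--Thiele time-frequency paradigm from Theorem \ref{OT}, as further developed in the proof of Theorem \ref{MT} above, to handle the $4$-linear semi-degenerate simplex symbol $\mathbf{1}_{\{\xi_1 < \xi_2 < \xi_3 < -\xi_4/2\}}$. First I would smoothly decompose this symbol into pieces analogous to $\Psi_{\mathcal{R}_1}, \Psi_{\mathcal{R}_2}, \Psi_{\mathcal{R}_3}$, organizing the three frequency gaps $|\xi_1 - \xi_2|$, $|\xi_2 - \xi_3|$, $|\xi_3 + \xi_4/2|$ according to which are strictly dominant and which are mutually comparable. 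By the symmetry built into the definition of $\mathbb{B}'$, it suffices to bound representatives of each equivalence class.

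Each localized piece then discretizes as a superposition of multilinear wave packet model $5$-forms $\Lambda$ over trees of Heisenberg tiles, where $f_5$ plays the role of the dual input. The task reduces to proving uniform generalized restricted-type bounds for $\Lambda$ at each extremal vertex $B_j$ (and $B'_j$). Here is where the new ingredient enters: the semi-degenerate resonance $\alpha_2 + \alpha_3 + \alpha_4 = 0$ allows the combined frequency variable $\xi_2 + \xi_3 + \xi_4$ to be tangent to the symbol support, and the standard $\ell^2$-based tree size/energy does not sum across the relevant size parameter. Following the strategy announced in the abstract, I would run the John--Nirenberg-style stopping-time tree decomposition simultaneously with respect to the usual $\ell^2$ size/energy (associated to the non-degenerate variables) and a companion $\ell^1$-based size/energy adapted to the resonant variable $\xi_2 + \xi_3 + \xi_4$. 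Each tree layer is then controlled by a H\"older-type product of sizes and energies, with the $\ell^1$ size providing the summability across all size parameters that the $\ell^2$ size alone cannot.

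With uniform restricted-type bounds at every extremal vertex in hand, the generalized restricted-type multilinear interpolation of Chapter 3 of \cite{MR2199086} fills out the strong-type region $\mathbb{B} \cap \mathbb{B}'$; the claim $(1, 2/3, 2/3, 2/3, -2) \in \overline{\mathbb{B} \cap \mathbb{B}'}$ is a direct convex-geometry verification from the given $B_j$ and $B'_j$. To deduce mapping into $L^r$ for the full range $\tfrac{1}{3} < r \leq 1$, one interpolates between the extremal estimates clustered near $B_1$ (which land close to $L^{1/3}$) and Banach-range estimates, the latter obtainable by combining identity \eqref{Est:Iden2} with Theorem \ref{OT} and Theorem \ref{MT}.

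I expect the main obstacle to be the coupled $\ell^2/\ell^1$ stopping-time argument and its attendant John--Nirenberg-type inequality: one must arrange the $\ell^1$ tree selection so that it neither destroys the $\ell^2$ tree structure used to absorb the non-degenerate interactions nor inflates the hybrid size parameters beyond the rate dictated by each extremal point $B_j$. This is the genuinely new step beyond the non-degenerate theory of Theorem \ref{OT}, and the need to carry it out with four functions rather than three is why the theorem does not simply reduce to Theorem \ref{MT}.
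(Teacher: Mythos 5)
Your overall plan coincides with the paper's: localize the four-linear symbol according to which of the three frequency gaps $|\xi_1-\xi_2|$, $|\xi_2-\xi_3|$, $|\xi_3+\xi_4/2|$ are dominant or comparable (the paper does this into eleven regions $\mathcal{S}_1,\dots,\mathcal{S}_{11}$, of which only $\mathcal{S}_1$ and $\mathcal{S}_2$ require new work, the rest being absorbed by \cite{MR3329857}); discretize the genuinely semi-degenerate pieces into wave-packet $5$-forms $\Lambda_3^{\mathbb{P},\mathbb{Q},\mathbb{R}}$; run a stopping-time tree decomposition with a companion $\ell^1$-based energy alongside the $\ell^2$ one to recover summability across the degenerate size parameter; then close via generalized restricted-type interpolation. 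That part of your proposal matches the paper faithfully.

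There is, however, a concrete error in your final step. You claim the Banach-range estimates for $C^{1,1,1,-2}$ are ``obtainable by combining identity \eqref{Est:Iden2} with Theorem \ref{OT} and Theorem \ref{MT}.'' The paper explicitly shows the opposite: both $T_{\{\xi_1<\xi_2<\xi_3\}\cap\{\xi_2+\xi_3+\xi_4>0\}}$ and $C^{1,1,-1}(f_1,f_2,C^{-2,1}(f_4,f_3))$, which appear on the right-hand side of \eqref{Est:Iden2}, satisfy no $L^p$ estimates whatsoever (a Gaussian-chirp counterexample in the spirit of \cite{MR1981900} applies), so \eqref{Est:Iden2} decomposes $C^{1,1,1,-2}$ into one bounded and two unbounded operators and yields no boundedness information. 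Indeed the paper highlights the resulting destructive interference as a striking feature invisible to that identity. The Banach estimates, like the quasi-Banach ones, must come from the same restricted-type bounds at the sixteen vertices $B_1,\dots,B_{16}$ (and the permuted $B'_j$) followed by interpolation; the convex hull $\mathbb{B}\cap\mathbb{B}'$ already covers the full target range, including the Banach slab and the endpoint sequence approaching $(1,\tfrac{2}{3},\tfrac{2}{3},\tfrac{2}{3},-2)$, so there is no need --- and, given \eqref{Est:Iden2}, no possibility --- of a shortcut through that identity.
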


A nice feature of our results is that the $L^p$ target ranges for both $C^{1,1,-2}$ and $C^{1,1,1,-2}$ are the best possible. Indeed, that $C^{1,1,-2}$ cannot map into $L^{\frac{1}{2}}(\mathbb{R})$ or below and $C^{1,1,1,-2}$ cannot map into $L^{\frac{1}{3}}(\mathbb{R})$ or below follows from explicit counterexamples included in \S{3}.  This sharpness is quite different from what is known in the non-degenerate setting, where the generic $BHT$ model produces estimates only down to $L^{\frac{2}{3}+\epsilon}(\mathbb{R})$, and there are no known counterexamples at this time to rule out the $BHT$ mapping all the way down to $L^{\frac{1}{2}+\epsilon}(\mathbb{R})$. 

The proofs of Theorems \ref{MT} and \ref{MT*} proceed by showing generalized restricted type estimates for various model sums near the extremal points in $\mathbb{A}, \mathbb{A}^\prime, \mathbb{B}$ and $\mathbb{B}^\prime$. The study of $L^p$ estimates for semi-degenerate simplex multipliers is motivated by the fact that the sizes and energies appearing in \cite{MR2127985} to deal with the non-degenerate ``Biest'' operator are no longer sufficient to produce summability over all the necessary time-frequency parameters. The main technical innovation in this paper is the introduction of an $\ell^1$-based energy to supplement the standard $\ell^2$-based energy. Checking that this energy ``boost" yields the desired restricted type estimates for various model sums requires some effort. 

Another indication of the delicacy present in the semi-degenerate setting is our observation in \eqref{Est:Iden2} that $C^{1,1,1,-2}$ can be naturally decomposed as the sum of one bounded operator and two unbounded operators; it is perhaps surprising that $C^{1,1,1-2}$ satisfies any $L^p$ estimates.   Theorem \ref{MT*} guarantees that there is substantial destructive interference between $T_{ \{\xi_1 < \xi_2 < \xi_3\} \cap \{ \xi_2 + \xi_3 + \xi_4 >0\}}(f_1, f_2, f_3, f_4)$ and $-C^{1,1,-1}(f_1, f_2, C^{-2, 1}(f_4, f_3))$, which would seem quite difficult to detect without time-frequency analysis. 

Theorems \ref{MT} and \ref{MT*} prompt other questions. For example, do we have the same $L^p$ estimates if the symbols $1_{\{\xi_1 < \xi_2 < - \xi_3/2\}}$ and $1_{\{ \xi_1 < \xi_2 < \xi_3 < - \xi_4/2\}}$ are respectively replaced with 

\begin{eqnarray*}
 b_1(\xi_1, \xi_2) \cdot b_2(\xi_2, \xi_3)~\text{and}~c_1(\xi_1, \xi_2)\cdot  c_2(\xi_3, \xi_3)\cdot  c_3(\xi_3, \xi_4), 
\end{eqnarray*}
where $b_1, c_1, c_2$ are each adapted to $\{\xi_1 = \xi_2\}$ 
 and $b_2, c_3$ are each adapted to $\{ \xi_1 = - \xi_2/2\}$ in the sense that for $\Gamma = \{\xi_1 = \xi_2\}$ and $\bar{\Gamma} = \{ \xi_1 = - \xi_2/2\}$

\begin{align*}
| \partial^{\vec{\alpha}} b_1 (\vec{\xi})| + | \partial^{\vec{\alpha}} c_1 (\vec{\xi})| +| \partial^{\vec{\alpha}} c_2 (\vec{\xi})|    \lesssim \frac{1}{dist(\vec{\xi}, \Gamma)^{|\vec{\alpha}|}} \qquad \forall \vec{\xi} \in \mathbb{R}^2 \\
| \partial^{\vec{\alpha}} b_2 (\vec{\xi})| +| \partial^{\vec{\alpha}} c_3 (\vec{\xi})| \lesssim \frac{1}{dist(\vec{\xi}, \bar{\Gamma})^{|\vec{\alpha}|}} \qquad \forall \vec{\xi} \in \mathbb{R}^2
\end{align*}
for sufficiently many multi-indices $\vec{\alpha} \in \mathbb{Z}_{\geq 0}^2$?
The answer to this question is probably yes; however, the proofs in the generic case become longer, less reader-friendly, and tend to obscure the important points of the analysis, and so the details of these arguments are omitted.  Nonetheless, we have all the tools necessary to carry out the proof and now provide the briefest possible sketch. Generic trilinear multipliers $m(\xi_1, \xi_2, \xi_3)$ with symbols of the form $b_1(\xi_1, \xi_2) \cdot b_2(\xi_2, \xi_3)$ may be reduced to ``Biest" models combined with error terms with even better mapping properties by following the arguments in \cite{2013arXiv1311.1574J}. Showing the same estimates for generic $4$-linear multipliers with symbols of the form $c_1(\xi_1, \xi_2) \cdot c_2(\xi_2, \xi_3)\cdot c_3(\xi_3, \xi_4)$ probably requires a local discretization similar to that used in \cite{2016arXiv160905964K} to handle
 
 \begin{eqnarray*}
 B[a_1, a_2] :(f_1, f_2, f_3) \mapsto \int_\mathbb{R} a_1(\xi_1, \xi_2) a_2(\xi_2, \xi_3) \hat{f}_1(\xi_1) \hat{f}_2(\xi_2) \hat{f}_3(\xi_3) e^{2 \pi i x (\xi_1 + \xi_2+ \xi_3)} d \xi_1 d \xi_2 d \xi_3
 \end{eqnarray*}
where $a_1, a_2: \mathbb{R}^2 \rightarrow \mathbb{C}$ are both adapted to the degenerate line $\left\{ \xi_1 + \xi_2= 0 \right\}$ in addition to the $\ell^1$-energy methods of this paper. Another question is which estimates, if any, hold for $C^{1,1,1,1,-2}$ or, for that matter, any $C^{\vec{\alpha}}$ with $\vec{\alpha} \in \mathbb{R}^n$ semi-degenerate. Suspecting that $C^{1,1,1,-2}$ already features a good deal of the pathological behavior exhibited by semi-degenerate simplex multipliers, we are led to state

\begin{conjecture}\label{C1}
For every semi-degenerate $\vec{\alpha} \in \mathbb{R}^n$, $C^{\vec{\alpha}}$ satisfies some $L^p$ estimates.
\end{conjecture}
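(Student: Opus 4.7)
\textbf{Proof sketch for Conjecture \ref{C1}.} The natural plan is induction on $n$, with base cases $n = 3$ and $n = 4$ supplied by Theorems \ref{MT} and \ref{MT*}, and with non-degenerate tuples handled by Theorem \ref{CamT}. Given a semi-degenerate $\vec{\alpha} \in \mathbb{R}^n$, fix a minimal degeneracy block, i.e.\ a pair $(i_0, j_0)$ with $j_0 - i_0 \geq 2$, $\sum_{k=i_0}^{j_0} \alpha_k = 0$, and no proper sub-interval summing to zero. My first step would be to adapt Muscalu's algebraic identity of the type producing \eqref{Est:Iden} and \eqref{Est:Iden2}: write $1_{\{\xi_1/\alpha_1 < \cdots < \xi_n/\alpha_n\}}$ as a signed combination of pieces, where most summands factor through $\{\sum_{k=i_0}^{j_0} \xi_k > 0\}$ or its complement and therefore decompose as products or iterated compositions of simplex multipliers indexed by strictly shorter tuples. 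By the inductive hypothesis and Theorem \ref{CamT}, these factored pieces satisfy some $L^p$ estimates on a nontrivial open set, leaving one residual term that must be handled directly.

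For the residual piece I would follow the standard template from \cite{MR2127985} used in Theorems \ref{MT} and \ref{MT*}: carve the symbol into the three subregions where, around the critical index $i_0$, $|\xi_a - \xi_{a+1}|$ is much larger than, comparable to, or much smaller than $|\xi_{a+1} - \xi_{a+2}|$; discretize each subregion into wave packet models adapted to nested trees; and attempt to prove generalized restricted type estimates near one extremal vertex. Here the plan is to deploy the two-energy framework introduced in this paper, using the $\ell^2$-based energy to exploit orthogonality along the non-degenerate directions and the new $\ell^1$-based energy to recover summability along the direction where the minimal block forces the usual Bessel-type control to fail. Interpolation via the generalized restricted type machinery of Chapter 3 of \cite{MR2199086} should then yield a nontrivial open $L^p$ region.

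The main obstacle I expect is that long semi-degenerate tuples can produce residual models with several simultaneous degeneracy blocks, or with a single block of length strictly greater than three, so that a one-directional $\ell^1$ boost no longer compensates for all the lost summability. In such cases one would likely need a hybrid energy that mixes $\ell^{p}$ norms along several nested tree directions at once, together with a corresponding John--Nirenberg type inequality tying this hybrid energy to an appropriate $BMO$ quantity, before sizes and energies can be combined into a single summable quantity over all scales. A secondary obstacle is purely combinatorial: the algebraic decomposition in the first paragraph splits into $O(1)$ pieces for $n = 3,4$, but its complexity grows with the number of degeneracy blocks, and tracking cancellations between factored pieces and the residual term (which is precisely the destructive interference invoked after \eqref{Est:Iden2}) may itself require nontrivial bookkeeping. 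Since the conjecture only demands \emph{some} $L^p$ estimates, a careful case analysis of the possible block patterns, rather than a fully uniform argument, is probably sufficient.
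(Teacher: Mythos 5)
This statement is explicitly a \emph{conjecture} in the paper; the authors do not prove it, and indeed frame it as an open question motivated by the belief that $C^{1,1,1,-2}$ already exhibits the essential pathologies of the semi-degenerate regime. There is therefore no ``paper's own proof'' against which to compare your attempt, and your submission should not be read as a proof of anything.

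What you have written is a research program, and a reasonable one, but it has genuine gaps that you yourself name. The inductive scheme built on identities of the type \eqref{Est:Iden} and \eqref{Est:Iden2} does not in general reduce the problem: the paper explicitly observes that in \eqref{Est:Iden2} the decomposition of $C^{1,1,1,-2}$ produces two \emph{unbounded} pieces, so that boundedness is not inherited from the factored terms but rather emerges from destructive interference that only the time-frequency analysis detects. Your ``residual piece handled directly'' step is thus not a residual at all --- for $n\ge 4$ it is the whole problem. Moreover, your third paragraph correctly identifies that when there are multiple degeneracy blocks, or a single block of length greater than three, the one-directional $\ell^1$-energy boost of Theorem \ref{l1T} does not obviously restore summability, and you propose a ``hybrid energy'' and an associated John--Nirenberg inequality that do not currently exist. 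Until such tools are constructed and the corresponding size/energy interpolation is carried out, the conjecture remains open, exactly as the paper states. Your sketch is a sensible outline of what a proof might look like if one followed this paper's methods, but it does not close the conjecture and should not be presented as if it did.
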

The structure of this paper is as follows: \S{2} supplies from Lyons' work in \cite{MR1654527} a pointwise bound for simplex multipliers in terms of various powers of the variational Carleson and Bi-Carleson operators, \S{3} provides counterexamples showing that $C^{1,1,-2}$ cannot map below $L^{1/2}$ and $C^{1,1,1,-2}$ cannot map below $L^{1/3}$, \S{4} contains the proof of Theorem \ref{MT}, and \S{5} contains the proof of Theorem \ref{MT*}. 

 \section{Pointwise Domination by Variational Operators}
Closely related to the $a.e.$ convergence of the Fourier series of $L^p(\mathbb{R})$ functions is an important result of Carleson and Hunt, which says that the map initially defined on $ \mathcal{S}(\mathbb{R})$ given by

\begin{eqnarray*}
C: f \mapsto \sup_{N \in \mathbb{R}} \left| \int_{(\infty, N]} \hat{f}(\xi) e^{ 2\pi i x \xi}d \xi \right|
\end{eqnarray*}
can be extended to all of $L^p(\mathbb{R})$ and satisfies $||C(f)||_p \lesssim_p || f||_p$ for every $1 < p <\infty$ and $f \in L^p(\mathbb{R})$.  The variational Carleson estimates from \cite{MR2881301} are a generalization of this result: 
for any $2 < \rho \leq \infty$,

\begin{eqnarray*}
\mathcal{C}^\rho : f \mapsto \sup_{k \in \mathbb{N}} \sup_{\xi_1 < \xi_2 < ... < \xi_k} \left( \sum_{n=1}^{k-1} \left| \int_{\xi_n < \eta< \xi_{n+1}} \hat{f}(\eta)e^{2 \pi i x \eta } d\eta \right|^\rho \right)^{1/\rho} 
\end{eqnarray*}
extends to a map of $L^p(\mathbb{R}) \rightarrow L^p(\mathbb{R})$ for all $ \rho^\prime< p <\infty$. When $\rho = \infty$, we recover the Carleson operator estimates.  It is well known that $\rho >2$ is necessary for any $L^p$ estimates to hold. In light of the variational Carleson estimates, it is natural to ask whether estimates hold for the variational Bi-Carleson, which is defined for variation exponent $0<\rho\leq \infty$ and with domain $\mathcal{S}^2(\mathbb{R})$ to be
 
 \begin{eqnarray*}
 \mathcal{BC}^\rho: (f_1, f_2) \mapsto \sup_{k \in \mathbb{N}} \sup_{\xi_1 < \xi_2 < ... < \xi_k} \left( \sum_{n=1}^{k-1} \left| \int_{\xi_n < \eta_1 < \eta_2 < \xi_{n+1}} \hat{f}_1(\eta_1) \hat{f}_2 (\eta_2) e^{2 \pi i x (\eta_1 + \eta_2) } d\eta_1 d \eta_2 \right|^\rho \right)^{1/\rho}.
 \end{eqnarray*}
 If $\rho= \infty$, then $\mathcal{BC}^\infty$ is the Bi-Carleson operator, for which estimates were obtained in \cite{MR2221256} and shown to coincide with the known $BHT$ estimates $1 < p_1, p_2 \leq \infty$ and $ 0< \frac{1}{p_1} + \frac{1}{p_2} < \frac{3}{2}$.  More recently, Do, Muscalu, and Thiele prove in \cite{MR3596720}
 
 \begin{eqnarray*}
 \mathcal{BC}^{\rho} : L^{p_1}(\mathbb{R}) \times L^{p_2}(\mathbb{R}) \rightarrow L^{\frac{p_1 p_2}{p_1+ p_2}}(\mathbb{R})
 \end{eqnarray*}
 provided $\max\{ 1, \frac{2\rho}{3\rho-4} \} < p_1, p_2 \leq \infty, \max\{ \frac{2}{3}, \frac{\rho^\prime}{2} \} < p_3 < \infty$. 
 In particular, $\mathcal{BC}^{1+\epsilon} : L^2(\mathbb{R}) \times L^2 (\mathbb{R}) \rightarrow L^1(\mathbb{R})$ for every $\epsilon >0$.  We next present a striking inequality due to Lyons in \cite{MR1654527}, which provides a pointwise bound for trilinear simplex multipliers in terms of various powers of the variational Carleson and Bi-Carleson operators.  For all $2 < r <3$, we in fact have
 \begin{eqnarray}\label{Est:Var}
C^{1,1,1}(f_1, f_2, f_3)(x)& \leq& \left[Var^r (f_1, f_2, f_3 )(x)\right]^3
\end{eqnarray}
where 
\begin{align*}
Var^r(f_1, f_2, f_3)(x) :=& \mathcal{C}^r(f_1) (x) + \mathcal{C}^r(f_2)(x) + \mathcal{C}^r(f_3)(x) \\+& \left[ \mathcal{BC}^{r/2}(f_1, f_2)(x) \right]^{1/2} + \left[ \mathcal{BC}^{r/2} (f_2, f_3)(x) \right]^{1/2}+ \left[\mathcal{BC}^{r/2} (f_1, f_3)(x) \right]^{1/2}.
  \end{align*}
 Taking $r \simeq 3$ and $p_1= p_2 = p_3 \simeq 3/2$ and using the variational Carleson and variational Bi-Carleson estimates gives the extremal mapping $L^{3/(2-\epsilon)} \times L^{3/(2-\epsilon)} \times L^{3 /(2-\epsilon)} \rightarrow L^{1/(2-\epsilon)}$. By interpolation, one recovers all estimates in the convex hull of $ \mathcal{B} \cup \Delta$, where $\mathcal{B}$ denotes the set of all interior Banach estimates and $\Delta$ is a diagonal of quasi-Banach estimates, i.e. 
 
 \begin{eqnarray*}
 \mathcal{B} &=& \left\{ \left(\frac{1}{p_1}, \frac{1}{p_2}, \frac{1}{p_3}, 1-\frac{1}{p_1} - \frac{1}{p_2} - \frac{1}{p_3}\right): 1 < p_1, p_2, p_3 <\infty, \frac{1}{p_1} + \frac{1}{p_2} + \frac{1}{p_3} < 1 \right\} \\ 
 \Delta&=& \bigcup_{0< \epsilon< 1} \left(\frac{2-\epsilon}{3}, \frac{2-\epsilon}{3} , \frac{2-\epsilon}{3} , -1 +\epsilon \right). 
 \end{eqnarray*}
 Moreover, one can write down a similar pointwise bound for $C^{1,1,-2}$ that gives the same collection of estimates. Our proof of Theorem \ref{MT} has the two-fold advantage of avoiding the variational Carleson estimates and variational Bi-Carleson estimates and producing $L^p$ estimates beyond the convex hull of $\mathcal{B} \cup \Delta$.

\section{Counterexamples}
We begin with  
\begin{prop}\label{CntEx}
$C^{1,1,-2}$ does not map into $L^r(\mathbb{R})$ for $ r \leq 1/2$.
\end{prop}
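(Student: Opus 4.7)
The plan is to exhibit an explicit Schwartz triple $(f_1, f_2, f_3)$ whose image $C^{1,1,-2}(f_1,f_2,f_3)$ fails to lie in $L^r(\mathbb{R})$ for every $r \leq 1/2$. Since Schwartz functions belong to every $L^{p_j}(\mathbb{R})$ with $1 \leq p_j \leq \infty$, such a triple immediately rules out all estimates $L^{p_1} \times L^{p_2} \times L^{p_3} \to L^r$ at these indices. The underlying reason for the failure is that the symbol $1_{\{\xi_1 < \xi_2 < -\xi_3/2\}}$ has a degenerate line $\xi_1 = \xi_2 = -\xi_3/2$ along which $\xi_1 + \xi_2 + \xi_3 \equiv 0$, so the modulation factor $e^{2\pi i x (\xi_1 + \xi_2 + \xi_3)}$ does not oscillate there. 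This is exactly why one expects $C^{1,1,-2}(f_1,f_2,f_3)(x)$ to decay only like $|x|^{-2}$ at infinity, placing the critical integrability index right at $r = 1/2$.

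Concretely, I would change variables to $u = \xi_2 - \xi_1$, $v = -\xi_3 - 2\xi_2$, $t = \xi_2$, which sends the simplex $\{\xi_1 < \xi_2 < -\xi_3/2\}$ to $\{u > 0,\, v > 0,\, t \in \mathbb{R}\}$ with unit Jacobian and $\xi_1 + \xi_2 + \xi_3 = -(u + v)$. Defining
$$\tilde{G}(u,v) = \int_{\mathbb{R}} \hat{f}_1(t-u)\,\hat{f}_2(t)\,\hat{f}_3(-2t - v)\, dt,$$
which is Schwartz on $\mathbb{R}^2$, the operator becomes
$$C^{1,1,-2}(f_1,f_2,f_3)(x) = \int_{u > 0,\, v > 0} \tilde{G}(u,v)\, e^{-2\pi i x (u+v)}\, du\, dv.$$
Performing integration by parts successively in $u$ and $v$, with boundary contributions at $u = 0$ and $v = 0$, I would derive the expansion
$$C^{1,1,-2}(f_1,f_2,f_3)(x) = -\frac{\tilde{G}(0,0)}{4\pi^2 x^2} + O\!\left(|x|^{-3}\right) \qquad \text{as } |x| \to \infty,$$
where $\tilde{G}(0,0) = \int_{\mathbb{R}} \hat{f}_1(t)\,\hat{f}_2(t)\,\hat{f}_3(-2t)\, dt$.

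It then suffices to pick the Schwartz inputs so that $\tilde{G}(0,0) \neq 0$. A clean choice is $\hat{f}_1 = \hat{f}_2 = \psi$ and $\hat{f}_3(\xi) = \psi(-\xi/2)$ for any non-negative Schwartz bump $\psi \not\equiv 0$, which yields $\tilde{G}(0,0) = \int \psi(t)^3\, dt > 0$. For this triple one has $|C^{1,1,-2}(f_1,f_2,f_3)(x)| \gtrsim |x|^{-2}$ for $|x|$ large, hence $|C^{1,1,-2}(f_1,f_2,f_3)|^r$ is non-integrable at infinity whenever $2r \leq 1$, proving the claim. The main (minor) obstacle is the bookkeeping in the double integration by parts: one must verify that the two boundary contributions genuinely combine into a non-vanishing $x^{-2}$ term rather than cancelling, while the interior remainders are controlled by the Schwartz decay of $\tilde{G}$; this is a standard computation once the correct change of variables has exposed the degenerate direction.
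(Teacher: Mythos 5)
Your proof is correct and establishes the same conclusion by the same strategy as the paper---exhibit a fixed triple of test functions for which the output decays exactly like $|x|^{-2}$, so that $|C^{1,1,-2}(\vec f)|^r$ fails to be integrable at infinity for $r \le 1/2$---but your implementation differs in two useful ways. The paper takes $\hat f_j = 1_{[-1,1]}$ and computes $\tilde C^{1,1,-2}(\vec f)(x)$ by explicit iterated integration, whereas you change variables to isolate the degenerate direction (so that $\xi_1+\xi_2+\xi_3 = -(u+v)$ with $u,v>0$) and perform a double integration by parts, which makes the $x^{-2}$ boundary term appear structurally rather than through a numeric computation that happens not to vanish. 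Also, because $\check 1_{[-1,1]} \notin L^1(\mathbb{R})$, the paper must handle the endpoints $p_j = 1$ by a separate smooth-bump argument; your use of Schwartz inputs covers all $1 \le p_j \le \infty$ in one stroke. One small clarification: after the double integration by parts there is only one boundary term at order $x^{-2}$, namely $\tilde G(0,0)/(2\pi i x)^2$, and the remaining three contributions (two mixed boundary--interior pieces and the fully interior double integral) each carry an additional oscillatory integral and are $o(|x|^{-2})$ by Riemann--Lebesgue, so there is no potential cancellation between ``two boundary contributions'' to worry about.
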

\begin{proof}
It suffices to prove the claim for $\tilde{C}^{1,1,-2}$. Fix $1 <  p_1, p_2, p_3 \leq \infty$ with $\frac{1}{p_1} + \frac{1}{p_2} + \frac{1}{p_3} \geq 2$. Let $f_1=f_2=f_3 = \check{1}_{[-1,1]}$. It follows that $\prod^3_{j=1} \left| \left| f_j \right|\right|_{p_j} <\infty$ and 

\begin{align*}
& \tilde{C}^{1,1,-2} (f_1, f_2, f_3)(x) \\=& \int_{-1 < \xi_1 < \xi_2 < \xi_3 <1} e^{2 \pi i x ( \xi_1 + \xi_2 - 2\xi_3)} d\xi_1 d\xi_2 d\xi_3 \\ =& \frac{1}{2 \pi i x}\int_{-1 < \xi_2 < \xi_3 <1} \left[ e^{2 \pi i x(2\xi_2 - 2\xi_3)}- e^{2 \pi i x (-1 + \xi_2 - 2\xi_3)} \right]d\xi_2 d\xi_3  \\ =&\left[ \frac{1}{2 \pi i x}\right]^2 \int_{-1 < \xi_3 <1} \left[ \frac{1}{2}-e^{2 \pi i x(-1-\xi_3)}+ \frac{e^{2 \pi i x ( -2 - 2\xi_3)}}{2} \right] d\xi_3 \\ =&\left[ \frac{1}{2 \pi i x}\right]^2 -\left[ \frac{1}{2 \pi i x}\right]^3 \left[ \frac{3}{4}- e^{-4 \pi i x}+\frac{e^{-8 \pi i x}}{4} \right].
\end{align*}
Therefore, $\tilde{C}^{1,1,-2} (\vec{f})(x)$ decays like $\frac{-1}{4 \pi^2 x^2}$ far enough away from the origin and so cannot belong to $L^r(\mathbb{R})$ for $r \leq 1/2$. If $p_i=1$ for some $j \in \{1, 2, 3\}$, then one can instead take $f_1 =f_2 = f_3 =\mathcal{F}^{-1} \left[ \phi \right]$ for some non-trivial, non-negative $\phi  \in C^\infty([-1,1])$ and use integration by parts to deduce the same quadratic decay as before. 

\end{proof}
The analogous statement for $C^{1, 1,1,-2}$ is 
\begin{prop}
$C^{1,1,1,-2}$ does not map into $L^r(\mathbb{R})$ for $r \leq \frac{1}{3}$. 
\end{prop}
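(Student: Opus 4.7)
My plan mirrors the proof of Proposition \ref{CntEx}: produce an explicit quadruple of test functions on which $\tilde{C}^{1,1,1,-2}$ can be evaluated in closed form and whose output decays too slowly at infinity to belong to $L^r$ for $r \leq 1/3$. It suffices to work with $\tilde{C}^{1,1,1,-2}$ in place of $C^{1,1,1,-2}$. Fix exponents $1 < p_1, p_2, p_3, p_4 \leq \infty$ with $\sum_{j=1}^{4} 1/p_j \geq 3$ (for example $p_j = 4/3$ for each $j$), and take $f_1 = f_2 = f_3 = f_4 = \check{1}_{[-1,1]}$, which lies in $L^{p_j}$ for every such $p_j$.

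The key observation is that peeling off the innermost $\xi_1$-integration reduces the quadrilinear integral to the trilinear object already analyzed in Proposition \ref{CntEx}. Writing
\[
\tilde{C}^{1,1,1,-2}(\vec f)(x) = \int_{-1 < \xi_2 < \xi_3 < \xi_4 < 1} \left[\int_{-1}^{\xi_2} e^{2\pi i x \xi_1} d\xi_1\right] e^{2\pi i x (\xi_2 + \xi_3 - 2\xi_4)} d\xi_2 d\xi_3 d\xi_4
\]
and evaluating the bracket as $(2\pi i x)^{-1}[e^{2\pi i x \xi_2} - e^{-2\pi i x}]$ splits the expression as $(a)(x) + (b)(x)$, where
\begin{align*}
(a)(x) &= \frac{1}{2\pi i x}\int_{-1 < \xi_2 < \xi_3 < \xi_4 < 1} e^{2 \pi i x(2\xi_2 + \xi_3 - 2 \xi_4)} d\xi_2 d\xi_3 d\xi_4, \\
(b)(x) &= -\frac{e^{-2 \pi i x}}{2\pi i x}\,\tilde{C}^{1,1,-2}(f_2, f_3, f_4)(x).
\end{align*}

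Proposition \ref{CntEx} already establishes $\tilde{C}^{1,1,-2}(\vec f)(x) = -\frac{1}{4\pi^2 x^2} + O(|x|^{-3})$, so that $(b)(x) = \frac{e^{-2 \pi i x}}{8 \pi^3 i x^3} + O(|x|^{-4})$ and in particular $|(b)(x)| = \frac{1}{8\pi^3 |x|^3} + O(|x|^{-4})$. For $(a)$, the phase $2\xi_2 + \xi_3 - 2 \xi_4$ has all contiguous partial sums $\{2, 1, -2, 3, -1, 1\}$ nonzero, so three successive oscillatory integrations over the simplex contribute an additional factor of $|x|^{-3}$, giving $(a)(x) = O(|x|^{-4})$. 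Combining these estimates yields $|\tilde{C}^{1,1,1,-2}(\vec f)(x)| \gtrsim |x|^{-3}$ for all $|x|$ sufficiently large, whence $|\tilde{C}^{1,1,1,-2}(\vec f)|^r$ fails to be integrable near infinity whenever $3r \leq 1$, contradicting any putative $L^r$-bound.

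If an edge case at $p_j = 1$ requires separate treatment, I would replace $\check{1}_{[-1,1]}$ by $\mathcal{F}^{-1}[\phi]$ for a smooth nonnegative bump $\phi \in C_c^{\infty}([-1,1])$ and repeat the same computation, exactly as in Proposition \ref{CntEx}: integration by parts in each variable preserves the non-oscillatory $|x|^{-3}$-contribution since $\phi$ and all its derivatives vanish at $\pm 1$. There is no genuine analytic obstacle; the only real labor is the routine iterated integration verifying $(a)(x) = O(|x|^{-4})$, which closely mirrors the calculation already carried out for the trilinear case.
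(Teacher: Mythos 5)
Your fixed-input counterexample is a genuinely different proof strategy from the paper's. The paper builds a one-parameter family: $f_1^N$ is modulated to frequency near $-2$ and dilated to spatial scale $N$, so that for large $N$ the quadrilinear operator factors as $f_1^N\cdot C^{1,1,-2}(f_2,f_3,f_4)$, and the ratio of output to input norms is then shown to blow up as $N\to\infty$. You instead evaluate $\tilde{C}^{1,1,1,-2}$ explicitly on a single quadruple. Your split $(a)+(b)$, the reduction of $(b)$ to the trilinear computation of Proposition \ref{CntEx}, and the $O(|x|^{-4})$ bound for $(a)$ via the nonvanishing contiguous partial sums of the phase are all correct, and they cleanly rule out $L^{p_1}\times\cdots\times L^{p_4}\to L^r$ for every tuple with $p_j>1$ for all $j$ and $\sum 1/p_j\geq 3$.

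The gap is in your final paragraph on the $p_j=1$ endpoint, and it is not a deferrable verification: with $\phi\in C_c^\infty([-1,1])$ the output $\tilde{C}^{1,1,1,-2}(\mathcal{F}^{-1}[\phi],\ldots,\mathcal{F}^{-1}[\phi])$ is Schwartz, so the $|x|^{-3}$ tail disappears entirely. Notice first that the dominant term you isolate is $(b)(x)=\frac{e^{-2\pi ix}}{8\pi^3 ix^3}+O(|x|^{-4})$, which is \emph{oscillatory}, not non-oscillatory as your last paragraph asserts; the factor $e^{-2\pi ix}$ is a boundary contribution at $\xi_1=-1$, exactly the kind a bump vanishing to infinite order at $\pm 1$ eliminates. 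The structural reason the trilinear fallback works but the quadrilinear one does not is the coefficient sum. For $\vec\alpha=(1,1,-2)$ one has $\sum_j\alpha_j=0$, so the phase $\sum\alpha_j\xi_j$ is invariant under $\vec\xi\mapsto\vec\xi+c$; in the gap variables $u_j=\xi_{j+1}-\xi_j$ the Fourier transform $\hat g(\eta)$ of the output is an integral over the shrinking segment $\{u_1+2u_2=-\eta,\ u_j>0\}$, which is $C^0$ but not $C^1$ at $\eta=0$ (one-sided slope $-\tfrac12\int\phi^3\neq 0$), giving genuine $|x|^{-2}$ decay even for compactly supported smooth $\phi$. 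For $\vec\alpha=(1,1,1,-2)$ one has $\sum_j\alpha_j=1\neq 0$; the map $\vec\xi\mapsto(u_1,u_2,u_3,\sum\alpha_j\xi_j)$ is a unimodular change of variables, and $\hat g(\eta)$ becomes an integral over the fixed, full-dimensional orthant $\{u_j>0\}$ of a jointly smooth compactly supported integrand, hence $C_c^\infty$ in $\eta$. So a single Schwartz quadruple cannot exhibit the failure at $p_j=1$, and some version of the paper's scaling argument is unavoidable there.
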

\begin{proof}
It suffices to prove the claim for $\tilde{C}^{1,1,1,-2}$. 
If $C^{1,1,1,-2}$ did map into $L^r(\mathbb{R})$ for some $r \leq \frac{1}{3}$, there would exist a $4$-tuple  $(p_1, p_2, p_3, p_4)$ satisfying $1 \leq  p_1 , p_2, p_3, p_4 \leq \infty$ and $\frac{1}{p_1} + \frac{1}{p_2} + \frac{1}{p_3} + \frac{1}{p_4} \geq  3$ for which 

\begin{eqnarray*}
\left| \left| C^{1,1,1,-2} (f_1, f_2, f_3, f_4) \right| \right|_{L^{\frac{1}{\frac{1}{p_1} + \frac{1}{p_2} + \frac{1}{p_3} + \frac{1}{p_4}}}(\mathbb{R})} \lesssim_{\vec{p}} || f_1||_{L^{p_1}(\mathbb{R})} || f_2||_{L^{p_2}(\mathbb{R})} || f_3||_{L^{p_3}(\mathbb{R})} ||f_4||_{L^{p_4}(\mathbb{R})} 
\end{eqnarray*}
for all $f_j \in L^{p_j}(\mathbb{R})$ and $j \in \{1, 2, 3, 4\}$. 

\vspace{5mm}
CASE 1: $\frac{1}{p_2} + \frac{1}{p_3} + \frac{1}{p_4} >2$. Then take $f_2 = f_3= f_4 = \mathcal{F}^{-1} [\phi ]$ along with $f^N_1 = \mathcal{F}^{-1} \left[  N \phi(N ( \cdot +2)) \right] = \mathcal{F}^{-1} \left[ \phi \right] (N^{-1}x) e^{-2 \pi i 2 x} $ where $\phi \in C^\infty([-1,1])$ is again some non-trivial, non-negative function. Then for large enough $N$, 

\begin{eqnarray*}
C^{1,1,1,-2} (f^N_1, f_2, f_3, f_4)  = f^N_1 (x) C^{1,1,-2}(f_2, f_3, f_4)(x),
\end{eqnarray*}
so that

\begin{eqnarray*}
\left| \left| C^{1,1,1,-2} (f^N_1, f_2, f_3, f_4) \right| \right|_{L^{\frac{1}{\frac{1}{p_1} + \frac{1}{p_2} + \frac{1}{p_3} + \frac{1}{p_4}}}(\mathbb{R})}  \simeq N^{\frac{1}{p_1} + \frac{1}{p_2} + \frac{1}{p_3} + \frac{1}{p_4}-2}. 
\end{eqnarray*}
However, $ || f^N _1||_{L^{p_1}(\mathbb{R})} \prod_{j=2}^4 || f_j || _{L^{p_j}(\mathbb{R})} \simeq N^{1/p_1}.$ Taking $N$ arbitrarily large proves the claim. 

\vspace{5mm}
CASE 2: $\frac{1}{p_2} + \frac{1}{p_3} + \frac{1}{p_4} =2$. Then $p_1 =1$. Setting 

\begin{eqnarray*}
f_1^N(x) =\mathcal{F}^{-1} \left[N \phi( N ( \cdot +2)) \right] (x) \mathcal{F}^{-1} \left[ 1_{[-1,1]} \right](x)
\end{eqnarray*}
for the same $\phi$ as before ensures that $C^{1,1,1,-2} (f^N_1, f_2, f_3, f_4)(x) = f^N_1 (x) C^{1,1,-2}(f_2, f_3, f_4)(x)$ for large enough $N$. Hence,

\begin{eqnarray*}
\left| \left| C^{1,1,1,-2} (f^N_1, f_2, f_3, f_4) \right| \right|_{L^{\frac{1}{\frac{1}{p_1} + \frac{1}{p_2} + \frac{1}{p_3} + \frac{1}{p_4}}}(\mathbb{R})} \simeq \left( \ln N \right)^3,
\end{eqnarray*}
whereas $ || f^N _1||_{L^{p_1}(\mathbb{R})} \prod_{j=2}^4 || f_j || _{L^{p_j}(\mathbb{R})} \simeq \ln N.$ Taking $N$ arbitrarily large again proves the claim and therefore shows the proposition.  

\end{proof}
\section{$C^{1,1,-2}$ Estimates}
Our goal in this section is to prove
\MT*

As $\mathbb{A} \cap \mathbb{A}^\prime$ strictly contains the estimates obtained by interpolating between the diagonal 

\begin{eqnarray*}
\Delta := \left\{ ( p, p, p, 1-3p) : 1/3 \leq  p <2/3 \right\}
\end{eqnarray*}
and the interior Banach estimates $\mathcal{B} = \left\{ \vec{p} : 1 < p_j \leq \infty~\forall~j \in \{1, 2, 3, 4\} \right\}$, Theorem \ref{MT} provides estimates not obtainable from estimate \eqref{Est:Var} and the variational Bi-Carleson estimates. For instance, $C^{1,1,-2} : L^{p_1} (\mathbb{R}) \times L^{p_2}(\mathbb{R}) \times L^{p_3}(\mathbb{R}) \rightarrow L^{p_4^\prime}(\mathbb{R})$ for tuples $(p_1, p_2, p_3, p_4)$ in a small neighborhood of $(1,\frac{1}{2}, \frac{1}{2}, -1)$.

Before proceeding to the proof of Theorem \ref{MT}, we need to collect several definitions. 
\subsection{Time-Frequency Definitions}

\begin{definition}
Let $n \geq 1$ and $\sigma \in \{ 0, \frac{1}{3}, \frac{2}{3} \}^n$. We define the shifted $n$-dyadic mesh $D= D^n_\sigma$ to be the collection of cubes of the form 

\begin{eqnarray*}
D^n_\sigma := \left\{ 2^j(k+(0,1)^n + (-1)^j \sigma) : j \in \mathbb{Z}, k \in \mathbb{Z}^n \right\}
\end{eqnarray*}

\end{definition}
Observe that for every cube $Q$, there exists a shifted dyadic cube $Q^\prime$ such that $Q \subseteq  \frac{7}{10} Q^\prime$ and $|Q^\prime| \sim |Q|$; this property clearly follows from verifying the $n=1$ case. The constant $\frac{7}{10}$ is not especially important here. 

\begin{definition}
A subset $D^\prime$ of a shifted $n$-dyadic grid $D$ is called sparse, if for any two cubes $Q, Q^\prime$ in $D$ with $Q \not = Q^\prime$ we have $|Q| < |Q^\prime|$ implies $|10^9 Q| < |Q^\prime|$ and $|Q|=|Q^\prime|$ implies $10^9 Q \cap 10^9 Q^\prime = \emptyset$.
\end{definition}
It is immediate from the above definition that any shifted $n$-dyadic grid can be split into $O(C^n)$ sparse subsets. 
\begin{definition}\label{TT}
Let $\sigma = (\sigma_1, \sigma_2, \sigma_3) \in \{0, \frac{1}{3}, \frac{2}{3} \}^3$, and let $1 \leq i \leq 3$. An $i$-tile with shift $\sigma_i$ is a pair $P = (I_P, \omega_P)$ with $|I_P| \times |\omega_P| =1$ and with $I_P \in D_0^1, \omega_P \in D^1_{\sigma_i}$. A tri-tile with shift $\sigma$ is a $3$-tuple $\vec{P} = (P_1, P_2, P_3)$ such that each $P_i$ is an $i$-tile with shift $\sigma_i$, and the $I_{P_i} = I_{\vec{P}}$ are independent of $i$. The frequency cube $\omega_{\vec{P}}$ of a tri-tile $\vec{P}$ is defined to be $\prod_{i=1}^3 \omega_{P_i}$. Define generalized tiles and tri-tiles to be the same as tiles and tri-tiles except that each frequency cube has edges belonging to $2^k D_0^1 + \eta$ for some $k \in [0,1]$ and $\eta \in \mathbb{R}$. 
\end{definition}
\begin{definition}

For each interval $I = \left[ c_I - \frac{|I|}{2}, c_I + \frac{|I|}{2} \right]$, let $\chi_I(x) = \frac{1}{ 1+  \frac{|x-c_I|}{|I|} }$. 
\end{definition}

\begin{definition}\label{WP}
Let $P= (I_P, \omega_P)$ be a pair of intervals for which $|I_P| \times |\omega_P| =1$. A wave packet on $P$ is any function $\Phi_P$ that has fourier support in $\frac{9}{10} \omega_P$ and obeys the estimate 

\begin{eqnarray*}
\left|\frac{d^k}{dx^k} \left[ e^{-2 \pi i c_{\omega_{P}} \cdot }\Phi_P \right] (x)\right| \leq C_k |I_{P}|^{-(1/2+k)} \chi^{L}_{I_{P}}(x) \qquad \forall x \in \mathbb{R}  ~\forall 0 \leq k \leq K
\end{eqnarray*}
where $c_{\omega_{P}}$ is the center of $\omega_P$ and $ K,L \gg 1$ are absolute constants we do not specify further. 
Therefore, $\Phi_P$ is $L^2$-normalized and localized to the Heisenberg box $(I_{P}, \omega_P )$.  
\end{definition}

\begin{definition}
A set $\mathbb{P}$ of tri-tiles is called sparse, if all the tri-tiles in $\mathbb{P}$ have the same shift $\sigma$ and the set of frequency cubes $\{Q_{\vec{P}}= (\omega_{P_1}, \omega_{P_2}, \omega_{P_3}): \vec{P} \in \mathbb{P} \}$ 
 is sparse. 
\end{definition}
We next introduce the tile ordering $<$ from \cite{MR2127985}, which is in the spirit of Lacey and Thiele.
\begin{definition}
Let $P$ and $P^\prime$ be tiles. We write $P^\prime < P$ if $I_{P^\prime} \subsetneq I_{P}$ and $3 \omega_P \subseteq 3 \omega_{P^\prime}$, and $P^\prime \leq P$ if $P^\prime <P $ or $P^\prime = P$. We write $P^\prime \lesssim P$ if $I_{P^\prime} \subseteq I_P$ and $10^7 \omega_P \subseteq 10^7 \omega_{P^\prime}$. We write $ P^\prime \lesssim^\prime P$ if $P^\prime \lesssim P$ and $P^\prime \not \leq P$. 
\end{definition}
\begin{definition}
A set $T$ of tri-tiles is a $j$-tree for some $j \in \{1,2,3\}$ provided there is a tile $P_T$ so that $P_j \leq P_{T}$ for all $\vec{P} \in T$. 
\end{definition}

\begin{definition}\label{R-1D}
A collection $\mathbb{P}$ of tri-tiles is said to have the rank-$1$ property if for all $\vec{P}, \vec{P}^\prime \in \mathbb{P}$: 

If $\vec{P} \not = \vec{P}^\prime$, then $P_j \not = P_j^\prime$ for all $j =1,2,3$. 

If $P_j^\prime \leq P_j$ for some $j= 1,2,3$, then $P_i^\prime \lesssim P_i$ for all $1 \leq i \leq 3$. 

If we further assume that $|I_{\vec{P}^\prime} |> 10^9 |I_{\vec{P}}|$, then $P_i^\prime \lesssim^\prime P_i$ for all $i \not = j$.

\end{definition}
\begin{definition}\label{Def:StronglyDisjoint}
Let $j \in \{1,2,3\}$. A finite sequence of trees $T_1, ..., T_M$ is said to be a chain of strongly $j$-disjoint trees if and only if 

\begin{align*}
& (i) ~P_j \not = P_j^\prime~\text{for every}~ P\in T_{\ell_1}~\text{and}~ P^\prime \in T_{\ell_2}~ \text{with}~ \ell_1 \not = \ell_2. \\ 
& (ii) ~\text{Whenever} ~P \in T_{\ell^1} ~\text{and}~ P^\prime \in T_{\ell^2}~ \text{with}~ \ell_1 \not = \ell_2 ~ \text{are such that} ~ 2 \omega_{P_i} \cap 2 \omega_{P_i^\prime}  \not = \emptyset \\ & \hspace{8mm} \text{then if} ~|\omega_{P_i}| < | \omega_{P_i^\prime}| ~\text{one has} ~ I_{P^\prime} \cap I_{T_{\ell^1}} = \emptyset \\ &\hspace{8mm} \text{and if} ~|\omega_{P^\prime_i}| < |\omega_{P_i}|~\text{one has}~ I_P \cap I_{T_{\ell_2}} = \emptyset .\\ 
& (iii) ~\text{Whenever}~ P\in T_{\ell_1} ~\text{and}~ P^\prime \in T_{\ell_2} ~ \text{with} ~\ell_1< \ell_2 ~\text{are such that}~2 \omega_{P_i} \cap 2 \omega_{P^\prime_i}  \not = \emptyset \\ & \hspace{8mm} \text{then if} ~ |\omega_{P_i}| = |\omega_{P^\prime_i}|~ \text{one has} ~I_{P^\prime} \cap I_{T_{\ell_1}} = \emptyset. 
\end{align*}
\end{definition}

\begin{definition}
For any two intervals $\omega_1 $and $\omega_2$, $\omega_1 \subset \subset \omega_2$ means $|\omega_1| \ll |\omega_2|$ for some absolute (and sufficiently small) implicit constant and $\omega_1 \subset \frac{9}{10} \omega_2.$

\end{definition}
\begin{definition}\label{Def:Adapted}
To say a tri-tile $\vec{P} = (P_1, P_2, P_3)$ is adapted to a subset of $\Gamma \subset \mathbb{R}^3$ means that the frequency cube $\vec{\omega}= (\omega_{P_1}, \omega_{P_2}, \omega_{P_3})$ satisfies the Whitney property with respect to $\Gamma$, i.e. 

\begin{eqnarray*}
dist(\vec{\omega}, \Gamma) \simeq |I_{\vec{P}}|^{-1}
\end{eqnarray*}
for some implicit absolute constants that we will not state explicitly.

\end{definition}
\begin{definition}
 For any collection of tri-tiles $\mathbb{Q}$, shifted dyadic interval $\omega$, and $\alpha \in [0,1]$ let

\begin{eqnarray*}
BHT_{\omega}^{\alpha, \mathbb{Q}} (f_1, f_2) (x) = \sum_{\vec{Q} \in \mathbb{Q} : \omega_{Q_3}  \subset \subset \omega_{P_2}}\frac{1}{|I_{\vec{Q}}|^{1/2}} \langle f_1, \Phi^\alpha_{Q_1,3} \rangle \langle f_2, \Phi^\alpha_{Q_2,4}  \rangle \Phi^\alpha_{Q_3,5}(x),
\end{eqnarray*}
where each $\Phi^\alpha_{Q_k, j}$ is a wave packet on $Q_k$ in accordance with Definition \ref{WP}. 
\end{definition}
\begin{definition}
A $\Lambda_1$-model is any 4-form writable as
\begin{align*}
\Lambda_1^{\mathbb{P}, \mathbb{Q}}(f_1, f_2, f_3, f_4) = \sum_{ \vec{P} \in \mathbb{P}}\frac{1}{|I_{\vec{P}}|^{1/2}} \langle f_1, \Phi_{P_1,1} \rangle \langle f_4 ,\Phi_{P_4,4} \rangle \left \langle BHT^{0, \mathbb{Q}}_{\omega_{P_2}}(f_2, f_3), \Phi_{P_2,0} \right\rangle, 
\end{align*}
where $\mathbb{P}$ and $\mathbb{Q}$ are rank-1 (non-degenerate) collections of tri-tiles. Tri-tiles are defined in Definition \ref{TT}, and the rank-1 condition is detailed in Definition \ref{R-1D}. Specifically, $\mathbb{P}$  is a collection of tri-tiles for which $(\omega_{P_1}, \omega_{P_2}, \omega_{P_4})$ is adapted to the non-degenerate line $\{\xi_1 =\xi_2/2 = - \xi_3/3 \}$ in the sense of Definition \ref{Def:Adapted}, and $\mathbb{Q}$ is a rank-1 collection of tri-tiles for which $(\omega_{Q_2}, \omega_{Q_2}, \omega_{Q_3})$ is adapted to the non-degenerate line $\{\xi_1 = -\xi_2/2 =- \xi_3\}$. Each $\Phi_{P_k, j}$ is a wave packet on the tile $P_k$ in accordance with Definition \ref{WP}. 
\end{definition}

\begin{definition}\label{Def:Lambda-2}
A $\Lambda_2$-model is any $4$-form writable as

\begin{align*}
\Lambda_2^{\mathbb{P}, \mathbb{Q}}(f_1, f_2, f_3, f_4)= \sum_{ \vec{P} \in \mathbb{P}}\frac{1}{|I_{\vec{P}}|^{1/2}} \langle f_1, \Phi_{P_1,1} \rangle \langle f_4, \Phi_{P_4,4} \rangle \left \langle  \int_0^1 BHT^{\alpha, \mathbb{Q}}_{\omega_{P_2}} (f_2, f_3) d\alpha , \Phi_{P_2,0} \right\rangle,
\end{align*}
where $\mathbb{P}$  is a collection of tri-tiles for which $\omega_{\vec{P}}=(\omega_{P_1}, \omega_{P_2}, \omega_{P_4})$ is adapted to $\{\xi_1 =-\xi_2; \xi_3=0\}$ in the sense of Definition \ref{Def:Adapted}, and $\mathbb{Q}$ is a rank-1 collection of tri-tiles for which $\omega_{\vec{Q}}=(\omega_{Q_1}, \omega_{Q_2}, \omega_{Q_3})$ is adapted to $\{ \xi_1=-\xi_2/2= -\xi_3 \}$. Each $\Phi_{P_k,j}$ is a wave packet on the tile $P_k$ in accordance with Definition \ref{WP}. 
\end{definition}

\begin{definition}\label{Def:Lambda-3}
A $\Lambda_3$-model is any $5$-form writable as

\begin{align*}
& \Lambda_3^{\mathbb{P}, \mathbb{Q}, \mathbb{R}}(f_1, f_2, f_3, f_4, f_5) \\ =& \sum_{\vec{P} \in \mathbb{P}}\frac{1}{|I_{\vec{P}}|^{1/2}}  \left \langle \sum_{\vec{R} \in \mathbb{R}: \widetilde{\omega_{R_1}}\ni (c_{\omega_{P_2}}-c_{\omega_{P_3}})/2 }  \frac{ \langle f_1, \Phi_{R_1, 1} \rangle \langle f_5, \Phi_{R_2, 5}\rangle}{|I_{\vec{R}}|^{1/2}} \Phi^{n-l}_{R_3, 0} , \Phi^{lac}_{P_1,6}\right  \rangle \\ & \hspace{20mm} \times \langle f_2, \Phi_{P_2, 2} \rangle \left\langle  \int_0^1  BHT_{\omega_{P_3}}^{\alpha, \mathbb{Q}} (f_3, f_4) d \alpha,  \Phi_{P_3, 7}\right \rangle 
\end{align*}
where $\mathbb{P}$ is a collection of tri-tiles for which $(\omega_{P_1}, \omega_{P_2}, \omega_{P_3})$ is adapted to $\{\xi_1=0, \xi_2 +\xi_3 =0\}$ in the sense of Definition \ref{Def:Adapted}, $\mathbb{R}$ is generalized tri-tile collection for which $\omega_{R_2}=-\omega_{R_1}$, $\widetilde{\omega_{R_1}}=\omega_{R_1}+|\omega_{R_1}|$, $\omega_{R_3} = [-|\omega_{R_3}|/2, |\omega_{R_3}|/2]$, and $\mathbb{Q}$ is a rank-$1$ collection of tri-tiles for which $(\omega_{Q_1}, \omega_{Q_2}, \omega_{Q_3})$ is adapted to $\{\xi_1=-\xi_2/2 =-\xi_3\}$. Each $\Phi_{P_k,j}$ is a wave packet on the tile $P_k$, and each $\Phi_{R_k,j}$ is a wave packet on the tile $R_k$ in accordance with Definition \ref{WP}. 
\end{definition}
\begin{definition}\label{Def:RestrType}
Let $\Lambda$ be an $n$-linear form and $\vec{\alpha}$ an admissible tuple. By an admissible tuple, we mean any $\vec{\alpha} \in \mathbb{R}^n$ for which $\sum_{j=1}^n \alpha_j=1, \alpha_i \leq 1$ for all $j \in \{1,...,n\}$ and there is at most one bad index $i \in \{1, ..., n\}$ for which $\alpha_i  \leq 0$. Then $\Lambda$ is generalized restricted type $\vec{\alpha}$ at an admissible tuple $\vec{\alpha}$ provided for any $n$-tuple $(E_1, ...,  E_n)$ of measurable subsets of $\mathbb{R}$ and $(f_1,...,f_n)$
satisfying $|f_j| \leq 1_{E_j}$ for $j=1,...,n$, then for the bad index $i$, if one exists, there is a major subset $E_i^\prime \subset E_i$  in the sense that $|E_i^\prime| \geq |E_i|/2$ such that the following inequality holds for $\left\{ f_j^\prime\right\}_{j=1}^n$ where $f_j^\prime: = f_j$ if $j \not = i$ and $f_i^\prime:= f_i1_{E_i^\prime}$:

\begin{eqnarray*}
\Lambda(f_1, ..., f_n) \lesssim_{\vec{\alpha}} |E_1|^{\alpha_1}...  |E_n|^{\alpha_n}. 
\end{eqnarray*}
\end{definition}
\subsection{Reduction to the $\Lambda_2$-Model}
The discretized and localized version of Theorem \ref{MT} is 

\begin{theorem}\label{DT}
Every $4$-form of type $\Lambda_2$ as described in Definition \ref{Def:Lambda-2} is generalized restricted type $\vec{\alpha}$ for all admissible tuples $\vec{\alpha}$ sufficiently close to the extremal points in $\mathbb{A}$.
If $\vec{\alpha}$ has a bad index $j$, the restricted type estimate is uniform in the sense that the major subset $E_j^\prime$ can be chosen uniformly in the parameters

\begin{eqnarray*}
\mathbb{P}, \mathbb{Q}, \left\{ \Phi_{P_k, j} \right\}, \left\{ \Phi^\alpha_{Q_k, j} \right\}. 
\end{eqnarray*}

\end{theorem}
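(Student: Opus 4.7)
The plan is to follow the Muscalu--Tao--Thiele paradigm adapted to the semi-degenerate setting: pick an extremal $A_j \in \mathbb{A}$, choose the generalized restricted tuple $\vec{\alpha}$ near $A_j$, identify the bad index $i$ (the one with $\alpha_i < 0$), and then by rescaling invariance of the tri-tile model reduce to the case where, say, $|E_i| = 1$ and each $|E_k| \leq 1$ for $k \neq i$. A major subset $E_i' \subseteq E_i$ will be cut out by removing an exceptional set $\Omega$ consisting of the superlevel sets $\{M 1_{E_k} > C |E_k|\}$, the BHT-type exceptional sets from the inner factor $\int_0^1 BHT^{\alpha,\mathbb{Q}}_{\omega}(f_2,f_3)\,d\alpha$ uniformly in $\alpha$, and the shifted discrete analogues adapted to sparse mesh shifts $\sigma \in \{0,1/3,2/3\}^3$. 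The point is to force every tile whose spatial interval $I_{\vec{P}}$ is not contained in $\Omega$ to have controlled interactions with $1_{E_i}$.

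The core of the argument is the discrete John--Nirenberg/sums-of-trees machinery. For the three ``outer'' indices of $\mathbb{P}$, I would define the usual $\ell^2$-based sizes
\begin{align*}
\mathrm{size}_j(\mathbb{P},f) = \sup_{T \subseteq \mathbb{P}}\left(\frac{1}{|I_T|}\sum_{\vec{P} \in T}\frac{|\langle f,\Phi_{P_j,\cdot}\rangle|^2}{|I_{\vec{P}}|}\right)^{1/2}
\end{align*}
together with the corresponding $\ell^2$ energies, following \cite{MR2127985}. The decisive new ingredient, as flagged in the introduction, is an $\ell^1$-based energy supplied for the slot whose frequency is adapted to $\xi_3 = 0$, namely the $P_4$ slot, where the standard $\ell^2$ summation over trees does not close because of the semi-degeneracy. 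This $\ell^1$-energy should take the shape
\begin{align*}
\mathrm{energy}^{\ell^1}_4(\mathbb{P},f_4) = \sup_{D} 2^n \sum_{T \in \mathcal{F}_n}|I_T|
\end{align*}
where $\mathcal{F}_n$ is a chain of strongly $4$-disjoint trees extracted by a greedy stopping time at $\ell^2$-size $\sim 2^{-n}$. The main technical lemma to prove is that $\mathrm{energy}^{\ell^1}_4(\mathbb{P},f_4) \lesssim \|f_4\|_1$ (or, at restricted type, $\lesssim |E_4|$), which uses both the strong disjointness of trees in Definition \ref{Def:StronglyDisjoint} and the fact that $\omega_{P_4}$ is forced by adaptedness to $\{\xi_3 = 0\}$ to cluster near the origin, so overlapping tile contributions really do reduce to an $L^1$ quantity.

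With sizes and energies (now including the $\ell^1$ one) in hand, I would prove the standard tree estimate for $\Lambda_2^T$: for any tree $T$,
\begin{align*}
|\Lambda_2^{T,\mathbb{Q}}(f_1,f_2,f_3,f_4)| \lesssim |I_T|\cdot \mathrm{size}_1\cdot \mathrm{size}_2\cdot \mathrm{size}_4 ,
\end{align*}
where $\mathrm{size}_2$ absorbs the inner $\int_0^1 BHT^{\alpha,\mathbb{Q}}_{\omega_{P_2}}(f_2,f_3)\,d\alpha$ by applying the classical BHT tree estimate uniformly in $\alpha$ and then integrating — the average over $\alpha$ costs nothing. Then the tile decomposition splits $\mathbb{P}$ into level sets $\mathbb{P}_{n_1,n_2,n_4}$ where $\mathrm{size}_j \sim 2^{-n_j}$, and the total contribution is estimated by interpolating between the $\ell^2$-based energy bound on three of the indices and the new $\ell^1$-based bound on the fourth; exactly this interpolation produces the exponent configurations $A_1,\ldots,A_9$ (with one coordinate equal to $-1$ or $-3/2$ precisely where the $\ell^1$-energy is invoked), matching the extremal tuples.

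The main obstacle I foresee is establishing the $\ell^1$-energy lemma in a form strong enough that it actually sums over the degenerate parameter: the naive $\ell^2$ strongly disjoint tree argument gives $\sum |I_T| \leq \|f\|_2^2 / \mathrm{size}^2$, which is useless in the target direction. The fix relies on the semi-degenerate geometry — the tiles $P_4$ live in a fixed-scale neighbourhood of $\xi_3 = 0$ so, after telescoping through the shifted dyadic mesh and using the sparseness of $\mathbb{P}$, the disjointness upgrades to spatial disjointness of the $I_T$ themselves (or near-disjointness), giving the needed $\ell^1$ replacement. Once this is established, summing the tree estimate against the geometric decomposition of sizes closes the generalized restricted type bound, uniformly in the tile parameters and wave packets as required.
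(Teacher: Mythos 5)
Your high-level architecture — exceptional set cut from superlevel sets and BHT-type sets uniformly in $\alpha$, stopping-time decomposition into trees, tree estimate via Cauchy--Schwarz, and interpolation between an $\ell^2$-energy and a new $\ell^1$-energy to recover summability — matches the paper's plan. You have also correctly identified that the failure of the rank-$1$ condition for $\mathbb{P}$ (caused by $\omega_{P_4}$ being adapted to $\xi_3 = 0$, so tiles of different scales have overlapping fourth frequency components) is exactly why the standard Biest machinery does not close, and that this is where the new $\ell^1$-energy must intervene.

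However, you have mislocated where the $\ell^1$-energy acts, and as a result the key lemma you propose would not close the argument. You apply it to the $P_4$ slot and want $\mathrm{energy}^{\ell^1}_4(\mathbb{P}, f_4) \lesssim |E_4|$. The parameter that actually fails to sum in the degenerate setting is not the $f_4$-size parameter (which already has $2^{-Nd}$ decay because $f_4$ is supported off the exceptional set), but the size parameter $\mathfrak{d}$ coming from the $P_2$ slot — the slot carrying the inner $\int_0^1 BHT^{\alpha,\mathbb{Q}}_{\omega_{P_2}}(f_2,f_3)\,d\alpha$. The $\ell^2$-energy there only gives $\sum_T |I_T| \lesssim 2^{2\mathfrak{d}}|E_2|^{\sim 3/2}|E_3|^{\sim 3/2}$, which combined with the size factor $2^{-\mathfrak{d}}$ still grows in $\mathfrak{d}$. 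The paper's Theorem \ref{l1T} supplies the missing complementary bound
\begin{eqnarray*}
\sum_{\vec{P} \in \mathbb{P}^0_{\mathfrak{b}}(\tilde{\mathbb{Q}})}|I_{\vec{P}}| \lesssim_{\tilde{\epsilon}} 2^{\mathfrak{b}/(1-\tilde{\epsilon})}\, |E_2|^{1/2}|E_3|^{1/2},
\end{eqnarray*}
under the hypothesis that $\{P_2\}_{\vec{P}\in\mathbb{P}^0}$ is pairwise disjoint, where $\mathbb{P}^0_{\mathfrak{b}}(\tilde{\mathbb{Q}})$ is defined by the BHT content at the $P_2$ slot being $\simeq 2^{-\mathfrak{b}}$. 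Interpolating the two $\sum_T|I_T|$ bounds in the variables $|E_2|, |E_3|$ is what makes the $\mathfrak{d}$-sum converge. Your proposed lemma, in contrast, only produces a bound in terms of $|E_4|$, which does nothing for $\mathfrak{d}$.

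Your justification for the $\ell^1$-energy (that $\omega_{P_4}$ clustering near the origin forces near-disjointness of the $I_T$'s) is also not what happens. The proof of Theorem \ref{l1T} dualizes the BHT composition, passes the sum over $\vec{P}$ inside as a wave packet superposition $\sum_{\vec{P}}h_{\vec{P}}\Phi^\infty_{P_2,0}$, and then controls a new size $S_3$ and a new energy $\mathcal{E}_3$ for this superposition tested against the $\Phi^\alpha_{Q_3,5}$ dual wave packets on $\mathbb{Q}$-trees. The crucial Lemma \ref{L:Energy-3}, a ``3-energy'' bound of the form $\mathcal{E}_3 \lesssim (\sum_{\vec{P}}|I_{\vec{P}}|)^{1/2}$, relies on the almost-orthogonality of the $\Phi^\infty_{P_2,0}$ coming from disjointness of the $P_2$ tiles together with a careful case analysis separating the $|\omega_{Q_3}| \gtrsim |\omega_{P_2}|$ contribution. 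This is a genuinely different and harder calculation than the spatial-disjointness heuristic you sketch. In short: the structure of your argument is right, but the crucial new lemma is being proved for the wrong slot, in the wrong variables, and by a route that would not furnish the needed $|E_2|^{1/2}|E_3|^{1/2}$ dependence.
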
 
Before showing Theorem \ref{DT}, we prove the following
 \begin{prop}\label{MP}
 Theorem \ref{DT} implies Theorem \ref{MT}. 
 \end{prop}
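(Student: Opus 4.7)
The plan is to follow the now-standard reduction scheme introduced in \cite{MR2127985}, adapted to accommodate the degenerate line $\{2\xi_2 + \xi_3 = 0\}$ that is built into the symbol of $C^{1,1,-2}$. By duality it suffices to obtain generalized restricted type estimates for the $4$-form
\[
\Lambda^{1,1,-2}(f_1, f_2, f_3, f_4) = \int_{\mathbb{R}} C^{1,1,-2}(f_1, f_2, f_3)(x) \, f_4(x) \, dx
\]
at admissible tuples sufficiently close to each extremal point $A_j$ and $A_j^\prime$, since the interior of $\mathbb{A} \cap \mathbb{A}^\prime$ is then recovered by the multilinear interpolation for generalized restricted types from Chapter~3 of \cite{MR2199086}.

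The first step is to split the symbol as
\[
1_{\{\xi_1 < \xi_2 < -\xi_3/2\}} = \Psi_{\mathcal{R}_1} + \Psi_{\mathcal{R}_2} + \Psi_{\mathcal{R}_3},
\]
where the three pieces are smoothly localized, with the nested Whitney structure of \cite{MR2127985}, to the regions $\{|\xi_1 - \xi_2| \gg |2\xi_2 + \xi_3|\}$, $\{|\xi_1 - \xi_2| \simeq |2\xi_2 + \xi_3|\}$, and $\{|\xi_1 - \xi_2| \ll |2\xi_2 + \xi_3|\}$ respectively. The middle piece $T_{\Psi_{\mathcal{R}_2}}$ is a paraproduct-type multiplier paired with a $BHT$-like kernel adapted to the single degenerate direction $\{2\xi_2 + \xi_3 = 0\}$; the needed $L^p$ bounds for this term follow from existing results in the spirit of \cite{MR1887641}. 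For the two outer regions $\mathcal{R}_1$ and $\mathcal{R}_3$, which by symmetry produce the two extremal families $\{A_j\}$ and $\{A_j^\prime\}$, I would perform a standard discrete wave packet expansion on multiple scales (outer tri-tiles $\mathbb{P}$ and inner tri-tiles $\mathbb{Q}$) and write each form as a uniformly convergent continuous average, over shift parameters $\sigma \in \{0, \tfrac{1}{3}, \tfrac{2}{3}\}^n$ and over the inner $BHT$ modulation parameter $\alpha \in [0,1]$, of discrete model sums of precisely the type $\Lambda_2^{\mathbb{P}, \mathbb{Q}}$ from Definition~\ref{Def:Lambda-2}. The $\mathbb{P}$ tri-tiles encode the outer degenerate structure $\{\xi_1 = -\xi_2;\, \xi_3 = 0\}$, while the $\mathbb{Q}$ tri-tiles encode the inner $BHT$ adapted to $\{\xi_1 = -\xi_2/2 = -\xi_3\}$, exactly as required by the definition of a $\Lambda_2$-model.

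Once this decomposition is in hand, Theorem~\ref{DT} applies to each model sum and yields generalized restricted type estimates that are uniform in $\mathbb{P}, \mathbb{Q}, \sigma, \alpha$ and in the choices of wave packets. Uniformity is what allows the continuous averaging step to preserve the restricted type estimate for $\Lambda^{1,1,-2}$ itself near every extremal point of $\mathbb{A}$ and $\mathbb{A}^\prime$; the conclusion of Theorem~\ref{MT} then follows by interpolation. The main obstacle in this reduction is verifying that each extremal point $A_j$ and $A_j^\prime$ is an admissible tuple in the sense of Definition~\ref{Def:RestrType} (at most one negative index) and that, for those extremal points possessing a bad index $i$, the major subset $E_i^\prime$ produced by Theorem~\ref{DT} can be chosen independently of all averaging parameters so that it remains a single major subset after averaging. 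This is the reason uniformity is emphasized in the statement of Theorem~\ref{DT}; provided it holds, no further work is needed to pass from the discretized statement to the continuous multiplier statement.
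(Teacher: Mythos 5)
Your proposal is correct and follows essentially the same route as the paper: localize the symbol to the three Whitney regions determined by comparing $|\xi_1-\xi_2|$ against $|\xi_2+\xi_3/2|$, dispose of the diagonal region by existing multiplier results from \cite{MR1887641}, discretize the two outer regions into averages of $\Lambda_2^{\mathbb{P},\mathbb{Q}}$-models, apply the uniform restricted type estimates of Theorem~\ref{DT}, obtain estimates in $\mathbb{A}^\prime$ from those in $\mathbb{A}$ by symmetry, and close with generalized restricted type interpolation. The one point worth flagging is that the paper's middle-region piece $\phi_{\mathcal{R}_2}$ is a Mikhlin--H\"ormander symbol adapted to the one-dimensional singular line $\{\xi_1=\xi_2=-\xi_3/2,\;\xi_4=0\}$ rather than a paraproduct-times-$BHT$ composite as you describe; this is a cosmetic difference in characterization, since either way the $L^p$ bounds come from \cite{MR1887641}, but the more accurate picture matters if one tries to reproduce the discretization. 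You also leave implicit the specific algebraic trick the paper uses to obtain the nested wave packet structure (rewriting the constraint $\xi_1<\xi_2<-\xi_3/2$ on $\mathcal{R}_3$ as $\xi_1<-(\xi_2+\xi_3)$ together with $\xi_2<-\xi_3/2$), which is what actually produces the two-scale $\mathbb{P}\times\mathbb{Q}$ model rather than a generic rank-one discretization; citing the ``standard'' expansion is fair in a sketch, but the reader should be aware this step is where the degenerate geometry of $\{\xi_1+\xi_2=0,\xi_3=0\}$ enters.
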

 \begin{proof}

Our analysis of $C^{1,1,-2}$ begins as in the case of the ``Biest" in \cite{MR2127985} by localizing the symbol $1_{\xi_1 < \xi_2 < -\frac{\xi_3}{2}}$ inside the three regions: 

\begin{align*}
\mathcal{R}_1=& \left\{ \xi_1 < \xi_2 < - \xi_3/2 \right\} \cap \left\{ |\xi_1 - \xi_2| \ll |\xi_2 + \xi_3/2| \right\} \\ 
\mathcal{R}_2 =& \left\{ \xi_1 < \xi_2 < - \xi_3/2 \right\} \cap \left\{ |\xi_1 - \xi_2| \simeq |\xi_2 + \xi_3/2| \right\} \\
\mathcal{R}_3 =& \left\{ \xi_1 < \xi_2 < - \xi_3/2 \right\} \cap \left\{ |\xi_1 - \xi_2| \gg |\xi_2 + \xi_3/2| \right\}.
\end{align*}
To this end, let us recall from Section 6.1 in \cite{MR3052499} that on $\{\xi_1 + \xi_2 + \xi_3=0\} \subset \mathbb{R}^3$
\begin{eqnarray}\label{Def:Splitting1}
 1_{\{ \xi_2 < -\xi_3/2\}}(\xi_1,\xi_2, \xi_3) = \sum_{\vec{\sigma} \in \{ 0, \frac{1}{3}, \frac{2}{3} \}^3} \sum_{\vec{k} \in \mathbb{Z}^3} \sum_{\vec{\omega}_{\vec{Q}} \in \mathcal{Q}^{\vec{\sigma}}} c_{k_1,1} c_{k_2, 2} c_{k_3, 3} \cdot \hat{\eta}_{-\omega_{Q_3}, 5}^{\sigma_1, k_1}(\xi_1) \hat{\eta}_{\omega_{Q_1}, 2}^{\sigma_2, k_2} (\xi_2) \hat{\eta}_{\omega_{Q_2}, 3}^{\sigma_3 , k_3} (\xi_3)
\end{eqnarray}
where $\sigma_1, \sigma_2, \sigma_3$ are dyadic shifts, $k_1,k_2, k_3$ are oscillation parameters, $\omega_{\vec{Q}}=(\omega_{Q_1},\omega_{Q_2}, \omega_{Q_3})$ is a Whitney cube for the set $\Gamma := \{\xi_1=-\xi_2 /2=  -\xi_3\}$ in the usual sense that the side-length of $\omega_{\vec{Q}}$ is proportional to  $dist(\omega_{\vec{Q}}, \Gamma)$, and $supp~ \hat{\eta}^{\sigma_j,k_j}_{\omega_{Q_j},j} \subset \frac{9}{10} \omega_{Q_j}$ for all $j \in \{1,2,3\}$. Another important property of this decomposition is the decay valid for all $N \geq 1$, $k \in \mathbb{Z}$, and $j \in \{1,2,3\}$

\begin{eqnarray*}
|c_{k,j}| \lesssim_N \frac{1}{1+k^N}.
\end{eqnarray*}
Similarly, we have on $\{ \xi_1 + \xi_2 + \xi_3=0 \}$

\begin{eqnarray}\label{Def:Splitting2}
 1_{\{ \xi_1 < -\xi_2\}}(\xi_1, \xi_2, \xi_3) = \sum_{ \vec{\gamma} \in \{ 0, \frac{1}{3}, \frac{2}{3} \}^3} \sum_{\vec{l} \in \mathbb{Z}^3} \sum_{\omega_{\vec{P}} \in \mathcal{P}^{\vec{\gamma}}} d_{l_1, 1} d_{l_2, 2} d_{l_3,3} \cdot \hat{\eta}_{\omega_{P_1}, 1}^{\gamma_1, k_1} (\xi_1) \hat{\eta}_{\omega_{P_2}, 0}^{\gamma_2 , k_2} (\xi_2)  \hat{\eta}^{\gamma_3, k_3}_{\omega_{P_4},4} (\xi_3)
\end{eqnarray}
where each $\omega_{\vec{P}} = (\omega_{P_1}, \omega_{P_2}, \omega_{P_4})$ is a Whitney cube for the set $\tilde{\Gamma} = \left\{ \xi_1 = -\xi_2; \xi_3=0 \right\}$. 
As before, an important property is the decay valid for all $N \geq 1$, $k \in \mathbb{Z}$, and $j \in \{1,2,3\}$

\begin{eqnarray*}
|d_{k,j}| \lesssim_N \frac{1}{1+k^N}.
\end{eqnarray*}
The main trick we use now is that inside $\mathcal{R}_3$, $\xi_1 < \xi_2 < - \xi_3/2$ holds iff $\xi_1 < -(\xi_2 + \xi_3) ; \xi_2 < - \xi_3/2$ holds. Therefore, setting $c_{\vec{k}}=\prod_{j=1}^3 c_{k_j,j}$, $d_{\vec{l}}=\prod_{j=1}^3 d_{l_j,j}$, and

\begin{align*}
\tilde{\phi}_{\mathcal{R}_3}(\xi_1, \xi_2, \xi_3, \xi_4)  =&  
   \sum_{\vec{\sigma} \in \{ 0, \frac{1}{3}, \frac{2}{3} \}^3} \sum_{\vec{k} \in \mathbb{Z}^3} ~\sum_{\omega_{\vec{Q}} \in \mathcal{Q}^{\vec{\sigma}}} c_{\vec{k}} \hat{\eta}_{-\omega_{Q_3}, 5}^{\sigma_1, k_1}(\xi_1+\xi_4) \hat{\eta}_{\omega_{Q_1}, 2}^{\sigma_2, k_2} (\xi_2) \hat{\eta}_{\omega_{Q_2}, 3}^{\sigma_3 , k_3} (\xi_3) \\ \times& \left[ \sum_{\vec{\gamma} \in \{ 0, \frac{1}{3}, \frac{2}{3} \}^2} \sum_{\vec{l} \in \mathbb{Z}^3} ~\sum_{\omega_{\vec{P}} \in \mathcal{P}^{\vec{\gamma}}: | \omega_{\vec{P}}| \gg |\omega_{\vec{Q}}|} d_{\vec{l}} ~\hat{\eta}_{\omega_{P_1}, 1}^{\gamma_1, l_1} (\xi_1) \hat{\eta}_{\omega_{P_2}, 0}^{\gamma_2 , l_2} (\xi_2+\xi_3)  \hat{\eta}^{\gamma_3, l_3}_{\omega_{P_4},4} (\xi_4) \right],
\end{align*}
it follows that there are two constants $C_1$ and $C_2$ such that $\tilde{\phi}_{\mathcal{R}_3}(\xi_1, \xi_2, \xi_3,\xi_4) \equiv 1$ on the set $\left\{ \xi_1 < \xi_2 < - \xi_3/2 \right\} \cap \left\{ |\xi_1 - \xi_2| \geq C_1 |\xi_2 + \xi_3/2| \right\} \cap \left\{ \xi_1 + \xi_2 + \xi_3 + \xi_4 =0\right\}$ and is supported on $\left\{ \xi_1 < \xi_2 < - \xi_3/2 \right\} \cap \left\{ |\xi_1 - \xi_2| \geq C_2 |\xi_2 + \xi_3/2| \right\} \cap \left\{ \xi_1 + \xi_2 + \xi_3 + \xi_4=0 \right\}.$ We may similarly construct $\tilde{\phi}_{\mathcal{R}_1}$: on $\{\xi_1 + \xi_2 + \xi_3=0\} \subset \mathbb{R}^3$
\begin{eqnarray*}
 1_{\{ \xi_1 < \xi_2\}}(\xi_1,\xi_2, \xi_3) = \sum_{\vec{\sigma}\in \{ 0, \frac{1}{3}, \frac{2}{3} \}^3} \sum_{\vec{k} \in \mathbb{Z}^3} \sum_{\omega_{\vec{Q}} \in \mathcal{Q}^{\vec{\sigma}^\prime}} c^\prime_{k_1,1} c_{k_2, 2} c_{k_3, 3} \cdot \hat{\eta}_{-\omega_{Q_3}, 5}^{\prime,\sigma_1, k_1}(\xi_1) \hat{\eta}_{\omega_{Q_1}, 2}^{\prime,\sigma_2, k_2} (\xi_2) \hat{\eta}_{\omega_{Q_2}, 3}^{\prime,\sigma_3 , k_3} (\xi_3)
\end{eqnarray*}
where $\sigma_1, \sigma_2, \sigma_3$ are again dyadic shifts, $k_1,k_2, k_3$ are oscillation parameters, each $\vec{Q}=(\omega_{Q_1},\omega_{Q_2}, \omega_{Q_3})$ is a Whitney cube for the set $\Gamma := \{\xi_1=\xi_2=  -\xi_3/2\}$ in that the side-length of $\omega_{\vec{Q}}$ is proportional to  $dist(\omega_{\vec{Q}}, \Gamma)$, and $supp~ \hat{\eta}^{\prime, \sigma_j, k_j}_{\omega_{Q_j},j} \subset \frac{9}{10} \omega_{Q_j}$ for all $j \in \{1,2,3\}$. An important property of this decomposition is the decay valid for all $N \geq 1$, $k \in \mathbb{Z}$, and $j \in \{1,2,3\}$

\begin{eqnarray*}
|c^\prime_{k,j}| \lesssim_N \frac{1}{1+k^N}.
\end{eqnarray*}
The main trick now is that inside $\mathcal{R}_1$, $\xi_1 < \xi_2 < - \xi_3/2$ holds iff $(\xi_1 +\xi_2)< - \xi_3 ; \xi_1 <  \xi_2$ holds. Therefore, setting $c^\prime_{\vec{k}}=\prod_{j=1}^3 c^\prime_{k_j,j}$, recalling $d_{\vec{l}}=\prod_{j=1}^3 d_{l_j,j}$, and letting

\begin{align*}
\tilde{\phi}_{\mathcal{R}_1}(\xi_1, \xi_2, \xi_3, \xi_4)  =&  
   \sum_{\vec{\sigma} \in \{ 0, \frac{1}{3}, \frac{2}{3} \}^3} \sum_{\vec{k} \in \mathbb{Z}^3} ~\sum_{\omega_{\vec{Q}} \in \mathcal{Q}^{\vec{\sigma}}} c^\prime_{\vec{k}} \hat{\eta}_{-\omega_{Q_3}, 5}^{\prime,\sigma_1, k_1}(\xi_3+\xi_4) \hat{\eta}_{\omega_{Q_1}, 2}^{\prime,\sigma_2, k_2} (\xi_1) \hat{\eta}_{\omega_{Q_2}, 3}^{\prime,\sigma_3 , k_3} (\xi_2) \\ \times& \left[ \sum_{\vec{\gamma} \in \{ 0, \frac{1}{3}, \frac{2}{3} \}^2} \sum_{\vec{l} \in \mathbb{Z}^3} ~\sum_{\omega_{\vec{P}} \in \mathcal{P}^{\vec{\gamma}}: | \omega_{\vec{P}}| >> |\omega_{\vec{Q}}|} d_{\vec{l}} ~\hat{\eta}_{\omega_{P_1}, 1}^{\gamma_1, l_1} (\xi_1+\xi_2) \hat{\eta}_{\omega_{P_2}, 0}^{\gamma_2 , l_2} (\xi_3)  \hat{\eta}^{\gamma_3, l_3}_{\omega_{P_4},4} (\xi_4) \right],
\end{align*}
it follows that there are two constants $C_1$ and $C_2$ such that $\tilde{\phi}_{\mathcal{R}_3}(\xi_1, \xi_2, \xi_3,\xi_4) \equiv 1$ on the set $\left\{ \xi_1 < \xi_2 < - \xi_3/2 \right\} \cap \left\{ |\xi_1 - \xi_2| \leq C_1 |\xi_2 + \xi_3/2| \right\} \cap \left\{ \xi_1 + \xi_2 + \xi_3 + \xi_4 =0\right\}$ and is supported on $\left\{ \xi_1 < \xi_2 < - \xi_3/2 \right\} \cap \left\{ |\xi_1 - \xi_2| \leq C_2 |\xi_2 + \xi_3/2| \right\} \cap \left\{ \xi_1 + \xi_2 + \xi_3 + \xi_4=0 \right\}.$ Now set 

\begin{eqnarray*}
\phi_{\mathcal{R}_1}(\xi_1, \xi_2, \xi_3) := \tilde{\phi}_{\mathcal{R}_1} (\xi_1, \xi_2, \xi_3, -\xi_1-\xi_2-\xi_3) \\ 
\phi_{\mathcal{R}_3}(\xi_1, \xi_2, \xi_3) := \tilde{\phi}_{\mathcal{R}_3} (\xi_1, \xi_2, \xi_3, -\xi_1-\xi_2-\xi_3)
\end{eqnarray*}
and observe the identity

\begin{eqnarray*}
&&1_{\xi_1 < \xi_2 < - \xi_3/2} \\ &=&  1_{\xi_1 < \xi_2 < - \xi_3/2} (1- \phi_{\mathcal{R}_1})(1-\phi_{\mathcal{R}_3}) + 1_{\xi_1 < \xi_2 < - \xi_3/2} \phi_{\mathcal{R}_1} + 1_{\xi_1 < \xi_2 < - \xi_3/2} \phi_{\mathcal{R}_3} - 1_{\xi_1 < \xi_2 < - \xi_3/2} \phi_{\mathcal{R}_1} \phi_{\mathcal{R}_3}  \\&:=& I + II + III + IV. 
\end{eqnarray*} 
Letting $\mathcal{R}_2 = \{ \xi_1 = \xi_2 = - \xi_3/2\}  \subset \mathbb{R}^3$,  it is straightforward to observe that $\phi_{\mathcal{R}_2}:= I$ is a Mikhlin-H\"{o}rmander symbol adapted to the region $\mathcal{R}_2$, i.e. 

\begin{eqnarray*}
|\partial^{\vec{\alpha}} \phi_{\mathcal{R}_2} (\vec{\xi}) | \leq C_{\vec{\alpha}} \frac{1}{dist(\vec{\xi}, \mathcal{R}_2)^{|\vec{\alpha}|}}
\end{eqnarray*}
for sufficiently many multi-indices $\vec{\alpha} \in \mathbb{Z}_{\geq 0}^3$; moreover, $IV\equiv 0$ for large enough implicit constants governing the separation of scales between the frequency cubes in $\mathcal{Q}^{\vec{\sigma}}$ and $\mathcal{P}^{\vec{\gamma}}$ in the definition of $\tilde{\phi}_{\mathcal{R}_3}$ and $\tilde{\phi}_{\mathcal{R}_1}$. We handle terms $II$ and $III$ by first noting  

\begin{eqnarray*}
1_{\xi_1 < \xi_2 < - \xi_3/2} \phi_{\mathcal{R}_1} = \phi_{\mathcal{R}_1} \\ 
1_{\xi_1 < \xi_2 < - \xi_3/2} \phi_{\mathcal{R}_3} = \phi_{\mathcal{R}_3}
\end{eqnarray*}
and then proving the desired $L^p$ estimate for $T_{\phi_{\mathcal{R}_1}}$ and $T_{\phi_{\mathcal{R}_3}}$. However, by symmetry, it suffices to obtain estimates for $T_{\phi_{\mathcal{R}_3}}$. To this end, we dualize by introducing $f_4$ as follows: 

\begin{eqnarray*}
&& \int_\mathbb{R}  T_{\phi_{\mathcal{R}_3}}(f_1, f_2, f_3) (x)  f_4(x) dx \\&=&\sum_\prime c_{\vec{k}}  d_{\vec{l}}  \int_\mathbb{R}  f_1*\eta ^{\gamma_1, l_1}_{\omega_{P_1}, 1} \cdot  f_4 *\eta^{\gamma_3, l_3}_{\omega_{P_4},4}  \cdot \left[ f_2*\eta^{\sigma_1, k_1}_{\omega_{Q_1}, 2}  f_3*\eta^{\sigma_2, k_2}_{\omega_{Q_2}, 3} \right] * \eta^{\sigma_3, k_3}_{\omega_{Q_3}, 5} *\eta^{\gamma_2, l_2}_{\omega_{P_2}, 0}  dx,
\end{eqnarray*}
where $\sum_\prime =\sum_{\vec{\sigma} \in \{ 0, \frac{1}{3}, \frac{2}{3} \}^3} \sum_{\vec{\gamma} \in \{ 0, \frac{1}{3}, \frac{2}{3} \}^3} \sum_{\vec{k} \in \mathbb{Z}^3} \sum_{\vec{l} \in \mathbb{Z}^3}  ~\sum_{\vec{Q} \in \mathcal{Q}^{\vec{\sigma}}}  ~\sum_{\vec{P} \in \mathcal{P}^{\vec{\gamma}}: |\omega_{\vec{P}}| \gg |\omega_{\vec{Q}}|}$.
We may now discretize in time with respect to the $\mathcal{Q}$ and $\mathcal{P}$ Whitney cubes. The details required for this process are well-established and discussed in complete detail in Section 6.1 of \cite{MR3052499}. This procedure will yield that

\begin{eqnarray*}
&& \sum_\prime c_{\vec{k}}  d_{\vec{l}}  \int_\mathbb{R}  f_1*\eta ^{\gamma_1, l_1}_{\omega_{P_1}, 1} \cdot f_4 *\eta^{\gamma_3, l_3}_{\omega_{P_4},4} \cdot \left[ f_2*\eta^{\sigma_1, k_1}_{\omega_{Q_1}, 2}  f_3*\eta^{\sigma_2, k_2}_{\omega_{Q_2}, 3} \right] * \eta^{\sigma_3, k_3}_{\omega_{Q_3}, 5} *\eta^{\gamma_2, l_2}_{\omega_{P_2}, 0}dx  
\end{eqnarray*} 
can be written as a double average of the form
\begin{eqnarray*}
&& \int_0^1 \int_0^1\sum_{\vec{\sigma} \in \{ 0, \frac{1}{3}, \frac{2}{3} \}^3} \sum_{\vec{\gamma} \in \{ 0, \frac{1}{3}, \frac{2}{3} \}^3} \sum_{\vec{k} \in \mathbb{Z}^3} \sum_{\vec{l} \in \mathbb{Z}^3}  \sum_{ \vec{P} \in \mathbb{P}} \frac{1}{|I_{\vec{P}}|^{1/2}}  \langle f_1, \Phi^{\alpha^\prime, \gamma_1, l_1}_{P_1,1} \rangle \langle f_4, \Phi^{\alpha^\prime, \gamma_3, l_3}_{P_4,4} \rangle \\ && \hspace{20mm} \times \left \langle \sum_{\vec{Q} \in \mathbb{Q} : \omega_{Q_3} \subset \subset \omega_{P_2}} \frac{\langle f_2, \Phi^{\alpha, \sigma_1, k_1}_{Q_1,2} \rangle \langle f_3, \Phi^{\alpha, \sigma_2, l_2}_{Q_2,3}  \rangle \Phi^{\alpha,\sigma_3, k_3}_{Q_3,5}}{|I_{\vec{Q}}|^{1/2}} , \Phi^{\alpha^\prime, \gamma_2, l_2}_{P_2,0} \right\rangle  d\alpha d\alpha^\prime ,
\end{eqnarray*}
where $\mathbb{Q}$ is a rank-1 collection of tri-tiles for which $(\omega_{Q_1}, \omega_{Q_2}, \omega_{Q_3})$ is adapted to $\{xi_1 = - \xi_2/2 = - \xi_3\}$ and $\mathbb{P}$ is a collection of tri-tiles for which $(\omega_{P_1}, \omega_{P_2}, \omega_{P_4})$ is adapted to the degenerate line $\{ \xi_1 + \xi_2 = 0, \xi_3=0\}$. Moreover, each $\Phi_{P_j, i(j)}$ is a wave packet on the tile $P_j= (I_{\vec{P}}, \omega_{P_j})$ for each $j \in \{1,2,4\}$, and each $\Phi_{R_j,i(j)}$ is a wave packet on the tile $R=(I_{\vec{R}}, \omega_{R_j})$ for each $j \in \{1,2,3\}$. The reader should also note that the condition $|\omega_{\vec{P}}| \gg 
|\omega_{\vec{Q}}|$ appearing in $\sum_\prime$ has been replaced by $\omega_{Q_3} \subset \subset \omega_{P_2}$ in the fully discretized version. This is permissible because under the assumption $\left \langle \Phi^{\alpha,\sigma_3, k_3}_{Q_3,5}, \Phi^{\alpha^\prime, \gamma_2, l_2}_{P_2,0} \right \rangle \not = 0$, the conditions $|\omega_{\vec{P}}| \gg |\omega_{\vec{Q}}|$ and $\omega_{Q_3} \subset \subset \omega_{P_3}$ are the same by the fourier support properties of $\Phi^{\alpha,\sigma_3, k_3}_{Q_3,5}$ and $ \Phi^{\alpha^\prime, \gamma_2, l_2}_{P_2,0}$. 

By Theorem \ref{DT}, we know generalized restricted type estimates hold for forms of type $\Lambda_2$ near the extremal points in $\mathbb{A}$. Therefore, due to the rapid decay of the coefficients $c_{\vec{k}}$ and $d_{\vec{l}}$, we know $T_{\phi_{\mathcal{R}_3}}$ satisfies generalized restricted type estimates in the entire interior convex hull of $\mathbb{A}$. Moreover, by symmetry, $T_{\phi_{\mathcal{R}_1}}$ must satisfy generalized restricted type estimates in the entire interior convex hull of $\mathbb{A}^\prime$.  Using generalized restricted type interpolation from Chapter 3 of \cite{MR2199086} gives the desired $L^p$ estimates for $T_{\phi_{\mathcal{R}_1}}$ and $T_{\phi_{\mathcal{R}_3}}$. 

To prove the proposition, we only need to show the desired $L^p$ estimates for $T_{\phi_{\mathcal{R}_2}}$. Again using the discretization argument from Section 6.1 of \cite{MR3052499}, it suffices to obtain restricted type estimates arbitrarily close to the extremal points in $\mathcal{A}$ for the $4$-form 

\begin{eqnarray}\label{Def:4-form-basic}
\sum_{\vec{P} \in \mathbb{P}}  \frac{1}{|I_{\vec{P}}|} \langle f_1, \Phi_{P_1, 1} \rangle \langle f_2, \Phi_{P_2, 2}\rangle \langle f_3, \Phi_{P_3, 3} \rangle \langle f_4, \Phi_{P_4, 4} \rangle ,
\end{eqnarray}
where $\vec{P}= (P_1, P_2, P_3, P_4)$ is a $4$-tile, where each $\Phi_{P_j,j}$ is a wave packet on $P_j= (I_{\vec{P}}, \omega_{P_j})$ for $j=1, 2,3, 4$ and $ (\omega_{P_1}, \omega_{P_2}, \omega_{P_3}, \omega_{P_4})$ is a Whitney cube with respect to $\{\xi_1 = \xi_2 = - \xi_3/2, \xi_4=0\}$. By the results in \cite{MR1887641} for multipliers adapted to singularities of small dimension, it is straightforward to obtain generalized restricted type estimates for \eqref{Def:4-form-basic} and all $\vec{\alpha}$ near the extremal points in $\mathbb{A}$, where the exceptional set can be taken independently of all the necessary time-frequency parameters.

 \end{proof}
The remainder of \S{4} is dedicated to the proof of Theorem \ref{DT}. 
  \subsection{Generalized Restricted Type Estimates near $A_1, A_2, A_3$}
  
 \subsubsection{Tile Decomposition}
Fix tri-tile collections $\mathbb{P}$ and $\mathbb{Q}$ once and for all. For convenience, we shall subsequently use $f_j$ to denote $f_j^\prime$ for $j=1,2,3,4$ as described in Definition \ref{Def:RestrType}. Furthermore, we assume that $|E_4|=1$ by rescaling and that the collections $\mathbb{P}$ and $\mathbb{Q}$ are sparse. For each $\tilde{d} \geq 0$, let $\mathbb{Q}^{\tilde{d}} := \left\{ \vec{Q} \in \mathbb{Q} : 1 + \frac{ dist(I_{\vec{Q}}, \tilde{\Omega}^c) }{|I_{\vec{Q}}|} \simeq 2^{\tilde{d}} \right\}$ and set 

\begin{align}\label{Exc:1}
\tilde{\Omega} =& \left\{ M1_{E_1} \gtrsim |E_1| \right\} \bigcup \left\{ M1_{E_2} \gtrsim |E_2| \right\} \bigcup \left\{ M1_{E_3} \gtrsim |E_3| \right\} \\
\Omega_1^0 =& \left\{ M\left(  \int_0^1   BHT^{\alpha, \mathbb{Q}^0} (f_2, f_3) d \alpha \right) \gtrsim |E_2|^{1/2} |E_3|^{1/2}\right\} \\ 
\Omega_1^{\tilde{d}}=& \left\{ M \left(  \left[ \int_0^1  \sum_{\vec{Q} \in \mathbb{Q}^{\tilde{d}}} \frac{ |\langle f_2, \Phi^\alpha_{Q_1, 2} \rangle \langle f_3, \Phi^\alpha_{Q_2, 3} \rangle|}{|I_{\vec{Q}}|} \chi^M_{I_{\vec{Q}}} d \alpha\right]^2 \right) \gtrsim 2^{2 \tilde{d}} |E_2| |E_3| \right\}.
\end{align}
Lastly, construct 
\begin{eqnarray*}
\Omega=\tilde{\Omega} \bigcup \Omega_1^0 \bigcup_{\tilde{d} \geq 1}  \Omega_1^{\tilde{d}}. 
\end{eqnarray*}
\begin{lemma}
For large enough implicit constants, $|\Omega| \leq 1/2$ and $\tilde{E}_4 := E_4 \cap \Omega^c$ is a major subset of $E_4$. 
\end{lemma}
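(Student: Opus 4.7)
My plan is to prove the lemma by showing $|\Omega| \leq 1/2$, after which $|\tilde E_4| = |E_4 \setminus \Omega| \geq |E_4| - |\Omega| \geq 1/2 = |E_4|/2$ follows automatically from the normalization $|E_4|=1$. The three types of pieces making up $\Omega$---namely $\tilde{\Omega}$, $\Omega_1^0$, and $\{\Omega_1^{\tilde d}\}_{\tilde d \geq 1}$---will be controlled separately, each by at most $1/8$, by choosing the implicit constants in their definitions sufficiently large.

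The first two pieces are direct. For $\tilde{\Omega}$, I would apply weak $(1,1)$ of the Hardy--Littlewood maximal function to obtain $|\{M 1_{E_j} > C|E_j|\}| \lesssim 1/C$ for each $j=1,2,3$, so $|\tilde\Omega| \leq 3/C \leq 1/8$ for $C$ large. For $\Omega_1^0$, I combine the $L^2 \times L^2 \to L^1$ boundedness of $BHT$ (uniformly in the averaging parameter $\alpha$, which is a standard feature of the time-frequency models from Muscalu--Tao--Thiele) with Minkowski's inequality in $\alpha$ and weak $(1,1)$ of $M$: the bound $\|\int_0^1 BHT^{\alpha,\mathbb{Q}^0}(f_2,f_3)\,d\alpha\|_1 \lesssim \|f_2\|_2\|f_3\|_2 \leq |E_2|^{1/2}|E_3|^{1/2}$ yields $|\Omega_1^0| \lesssim 1/C$, which is at most $1/8$ for $C$ large.

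The main obstacle is the summability of $\sum_{\tilde d \geq 1}|\Omega_1^{\tilde d}|$, which requires genuine decay in $\tilde d$. The argument I have in mind is a Cauchy--Schwarz/square-function estimate on the inner function $F(x) := \int_0^1 \sum_{\vec Q \in \mathbb{Q}^{\tilde d}} \frac{|\langle f_2,\Phi^\alpha_{Q_1,2}\rangle \langle f_3,\Phi^\alpha_{Q_2,3}\rangle|}{|I_{\vec Q}|}\chi^M_{I_{\vec Q}}(x)\,d\alpha$: Cauchy--Schwarz both in the tile variable $\vec Q$ and in the $\alpha$-integral gives $F(x) \leq \tilde G_2(x)^{1/2}\tilde G_3(x)^{1/2}$, where $\tilde G_j(x) := \int_0^1 \sum_{\vec Q \in \mathbb{Q}^{\tilde d}} \frac{|\langle f_j,\Phi^\alpha\rangle|^2}{|I_{\vec Q}|}\chi^M_{I_{\vec Q}}(x)\,d\alpha$ is an $\alpha$-averaged discrete square function of $f_j$. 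The standard square-function inequality $\|\tilde G_j^{1/2}\|_p \lesssim \|f_j\|_p$ for $1<p<\infty$, uniform in the wave-packet family and in $\alpha$ thanks to the finite $(K,L)$-adaptivity from Definition~\ref{WP}, then gives $\|\tilde G_j\|_2 \lesssim \|f_j\|_4^2 \leq |E_j|^{1/2}$, and a further Cauchy--Schwarz yields $\|F\|_2^2 \leq \|\tilde G_2\|_2\|\tilde G_3\|_2 \lesssim (|E_2||E_3|)^{1/2}$. Applying weak $(1,1)$ of $M$ to $F^2$ produces $|\Omega_1^{\tilde d}| \lesssim 2^{-2\tilde d}(|E_2||E_3|)^{-1/2}$; under the standard GRT normalization $|E_j|\gtrsim 1$ for $j=1,2,3$ (enforced by a preliminary dyadic pigeonhole on sizes absorbed into the restricted-type formulation), this geometric series sums to at most $1/8$. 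The most delicate point to verify is exactly the uniformity of the square-function inequality over $\alpha$ and over the wave-packet family, which is standard but requires some care to state cleanly.
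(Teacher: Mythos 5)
Your treatment of $\tilde\Omega$ and $\Omega_1^0$ matches the paper. The gap is in the $\Omega_1^{\tilde d}$ piece. Your chain gives $\|F\|_2^2 \lesssim (|E_2||E_3|)^{1/2}$, so
\begin{eqnarray*}
|\Omega_1^{\tilde d}| \lesssim \frac{(|E_2||E_3|)^{1/2}}{2^{2\tilde d}|E_2||E_3|} = \frac{2^{-2\tilde d}}{(|E_2||E_3|)^{1/2}},
\end{eqnarray*}
which is useless when $|E_2||E_3|\ll 1$. The fix you propose, ``enforce $|E_j|\gtrsim 1$ by a preliminary pigeonhole,'' is not available: in the generalized restricted type framework one is allowed to rescale only one of the measures (here $|E_4|=1$), and the inequality must hold uniformly over all other $E_j$. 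No further normalization is permitted because the exceptional set is built out of $E_1,E_2,E_3$ themselves.

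What you are missing is that the definition of $\mathbb{Q}^{\tilde d}$ places the tiles at distance $\simeq 2^{\tilde d}|I_{\vec Q}|$ from $\tilde\Omega^c$, and $\tilde\Omega$ encodes the sizes of $E_2,E_3$ through $M1_{E_j}\lesssim |E_j|$. Since the dilate $2^{\tilde d}I_{\vec Q}$ meets $\tilde\Omega^c$, one gets $\frac{1}{|I_{\vec Q}|}\int 1_{E_j}\chi^M_{I_{\vec Q}}\lesssim 2^{\tilde d}|E_j|$. Combining this with the pairwise disjointness of the intervals $\{I_{\vec Q}: \vec Q\in\mathbb{Q}^{\tilde d}\}$ (so the $L^2$ norm of the overlap sum localizes), the paper obtains
\begin{eqnarray*}
\left\|\sum_{\vec Q\in\mathbb{Q}^{\tilde d}}\frac{|\langle f_2,\Phi^\alpha_{Q_1,2}\rangle\langle f_3,\Phi^\alpha_{Q_2,3}\rangle|}{|I_{\vec Q}|}\chi^M_{I_{\vec Q}}\right\|_2^2 \lesssim 2^{\tilde d}|E_2||E_3|,
\end{eqnarray*}
which is linear (not square root) in $|E_2||E_3|$ and cancels the denominator, yielding $|\Omega_1^{\tilde d}|\lesssim 2^{-\tilde d}$ regardless of the measures. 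Your Cauchy--Schwarz split into separate square functions for $f_2$ and $f_3$ throws away the product structure that, together with the $\tilde d$-distance condition, produces the factor $|E_2||E_3|$. Secondarily, the square function bound $\|\tilde G_j^{1/2}\|_p\lesssim\|f_j\|_p$ is not a free consequence of Definition~\ref{WP} for an arbitrary BHT-type collection (the $Q_1$'s overlap in frequency at all scales); it would need its own justification, though that issue is moot given the first gap.
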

\begin{proof}
It is immediate from the weak-$\ell^1$ bounds for the maximal function that the implicit constants in \eqref{Exc:1} can be taken large enough to ensure 
\begin{eqnarray*}
|\tilde{\Omega} | \leq 1/6.
\end{eqnarray*}
Moreover, $| \Omega_1^0| \leq 1/6$
for a large enough implicit constant, which follows from the standard $BHT$ estimates. To estimate $\bigcup_{\tilde{d} \geq 1} \Omega_2^{\tilde{\delta}}$, we use the fact that $\left\{ I_{\vec{Q}}: \vec{Q} \in \mathbb{Q}^{\tilde{d}} \right\}$ consists of pairwise disjoint (shifted) dyadic intervals. So, it suffices to observe

\begin{eqnarray*}
| \Omega_1^{\tilde{d}}| \lesssim  2^{-2 \tilde{d}} \frac{ \int_0^1 \left| \left|  \sum_{\vec{Q} \in \mathbb{Q}^{\tilde{d}}} \frac{ |\langle f_2, \Phi^\alpha_{Q_1, 2} \rangle \langle f_3, \Phi^\alpha_{Q_2, 3} \rangle|}{|I_{\vec{Q}}|} \chi^M_{I_{\vec{Q}}} \right| \right|_2^2 d \alpha}{|E_2| |E_3|}. 
\end{eqnarray*}
However,  
\begin{align*}
\left| \left|  \sum_{\vec{Q} \in \mathbb{Q}^{\tilde{d}}} \frac{ |\langle f_2, \Phi^\alpha_{Q_1, 2} \rangle \langle f_3, \Phi^\alpha_{Q_2, 3} \rangle|}{|I_{\vec{Q}}|} \chi^M_{I_{\vec{Q}}} \right| \right|_2^2 \lesssim& \left| \left|  \sum_{\vec{Q} \in \mathbb{Q}^{\tilde{d}}} \frac{ |\langle f_2, \Phi^\alpha_{Q_1, 2} \rangle \langle f_3, \Phi^\alpha_{Q_2, 3} \rangle|}{|I_{\vec{Q}}|} 1_{I_{\vec{Q}}} \right| \right|_2^2\\ \lesssim& \sum_{\vec{Q} \in \mathbb{Q}^{\tilde{d}}} \frac{ |\langle f_2, \Phi^\alpha_{Q_1, 2} \rangle|^2 | \langle f_3, \Phi^\alpha_{Q_2, 3} \rangle|^2}{|I_{\vec{Q}}|^2} \\ \lesssim& 2^{ \tilde{d}} |E_2| |E_3|. 
\end{align*}
Therefore, $\sum_{ \tilde{d} \geq 1} | \Omega_1^{\tilde{d}}| \leq 1/6$ for large enough constants. 
\end{proof}
We keep the average over the parameter $\alpha$ in our expression for $\Lambda_2^{\mathbb{P}, \mathbb{Q}}(\vec{f})$ because otherwise we would need to consider an exceptional set depending  on $\alpha$. However, in producing restricted weak type estimates near $\{A_1, A_2, A_3\}$, the exceptional set should be independent of $\alpha$ as we are interested in obtaining quasi-Banach estimates in which the target $L^p$ index is less than $1$. If we only sought Banach estimates, then it would suffice to obtain an $L^p$ estimate uniform in $\alpha$ and then handle the average over $\alpha$ by Minkowski's inequality.

Our goal is now to obtain the estimate $|\Lambda_2^{\mathbb{P}, \mathbb{Q}}(\vec{f})| \lesssim |E_1|^{\alpha_1} |E_2|^{\alpha_2} |E_3|^{\alpha_3} $ for all $(\alpha_1, \alpha_2, \alpha_3, 1-\alpha_1- \alpha_2- \alpha_3)$ in a small neighborhood near an extremal point $\vec{\beta}  \in \{A_1, A_2, A_3\}$ for all $(f_1, f_2, f_3, f_4)$ satisfying  $|f_1|\leq 1_{E_1}, |f_2| \leq 1_{E_2}, |f_3| \leq 1_{E_3}, |f_4|\leq 1_{E_4 \cap \Omega^c}$. To this end, we shall need the following notions:

\begin{definition}
For any collection of tri-tiles $\tilde{\mathbb{P}} \subset \mathbb{P}$, let

\begin{eqnarray*}
Size_1(f_1, \tilde{\mathbb{P}}) = \sup_{T \subset \tilde{\mathbb{P}}} \frac{1}{|I_T|^{1/2}} \left( \sum_{\vec{P} \in T} |\langle f_1, \Phi_{P_1,1} \rangle|^2 \right)^{1/2},
\end{eqnarray*}
where the supremum is over all $2$-trees $T \subset \tilde{\mathbb{P}}$. 
\end{definition}

\begin{definition}
For each $\tilde{d} \geq 0$, and collection of tri-tiles $\tilde{\mathbb{P}}$, let
\begin{align*}
 & Size_0^{\tilde{d}} (f_2,f_3, \tilde{\mathbb{P}}) :=\\ &   \sup_{T \subset \tilde{\mathbb{P}}} \frac{1}{|I_T|^{1/2}} \left( \sum_{\vec{P} \in T}\left | \left \langle \int_0^1 BHT^{\alpha, \mathbb{Q}^{\tilde{d}}} (f_2, f_3) d \alpha, \Phi_{P_2, 0} \right \rangle \right|^2 \right. \\   & \hspace{20mm} + \left.  \left | \left \langle \int_0^1 \sum_{\vec{Q} \in \mathbb{Q}^{\tilde{d}}: \omega_{Q_3} \supset \supset \omega_{P_2}} \frac{1}{|I_{\vec{Q}}|^{1/2}} \langle f_2, \Phi^\alpha_{Q_1, 2} \rangle \langle f_3, \Phi^\alpha_{Q_2,3} \rangle \Phi^\alpha_{Q_3,5} d\alpha, \Phi_{P_2,0} \right \rangle  \right|^2 \right. \\ & \hspace{20mm} + \left.\left | \left \langle \int_0^1 \sum_{\vec{Q} \in \mathbb{Q}^{\tilde{d}}: |\omega_{Q_3}| \simeq  |\omega_{P_2}|} \frac{1}{|I_{\vec{Q}}|^{1/2}} \langle f_2, \Phi^\alpha_{Q_1, 2} \rangle \langle f_3, \Phi^\alpha_{Q_2,3} \rangle \Phi^\alpha_{Q_3,5} d\alpha, \Phi_{P_2,0} \right \rangle  \right|^2  \right)^{1/2},
\end{align*}
where the supremum is over all $1$-trees $T \subset \tilde{\mathbb{P}}$. 
\end{definition}

Now let $\mathbb{P}^d := \left\{ \vec{P} \in \mathbb{P}: 1+ \frac{dist(I_{\vec{P}}, \Omega^c)}{|I_{\vec{P}}|} \simeq 2^d \right\}$. We now recall the following tree selection algorithm for $\mathbb{P}^d$ (essentially) from Section 6.3 of \cite{MR3052499}:
\begin{lemma}\label{TDL}
Fix $d, \tilde{d} \geq 0$. Then there exist two decompositions of $\mathbb{P}^d$, namely $\bigcup_{n_1 \geq N_1(d)}  \mathbb{P}^{d}_{n_1,1}$ and $ \bigcup_{ \mathfrak{d} \geq N_2(d, \tilde{d})} \mathbb{P}^{d, \tilde{d}}_{\mathfrak{d}, 2}$ such that $Size_1(f_1, \mathbb{P}_{n_1, 1}^d) \lesssim 2^{-n_1}$ and $Size_0^{\tilde{d}}(f_2, f_3, \mathbb{P}_{\mathfrak{d},2}^{d, \tilde{d}}) \lesssim 2^{-\mathfrak{d}}$. Moreover, $\mathbb{P}^d_{n_1, 1}$ and $\mathbb{P}_{\mathfrak{d}, 2}^{d, \tilde{d}}$ can each be written as a union of trees, i.e. 

\begin{align}\label{Def:P-sets}
\mathbb{P}^d_{n_1, 1} =& \bigcup_{T \in \mathcal{T}_{n_1, 1}^d} \bigcup_{\vec{P} \in T} \vec{P} \\ \mathbb{P}^{d, \tilde{d}}_{\mathfrak{d}, 2} =& \bigcup_{T \in \mathcal{T}^{d, \tilde{d}}_{\mathfrak{d}, 2}} \bigcup_{\vec{P} \in T} \vec{P}, 
\end{align}
such that $\sum_{T \in \mathcal{T}_{n_1, 1}^d} |I_T| \lesssim  2^{2n_1} \sum_{T \in \mathcal{T}_{n_1, 1,*}^d} \sum_{\vec{P} \in T} \left| \langle f_1, \Phi_{P_1, 1} \rangle \right|^2$ and

\begin{align*}
&\sum_{T \in \mathcal{T}_{\mathfrak{d}, 2}^{d, \tilde{d}}} |I_T| \\ \lesssim& 2^{2 \mathfrak{d}}\sum_{T \in \mathcal{T}_{\mathfrak{d}, 2,*}^{d, \tilde{d}}}  \sum_{\vec{P} \in T}\left| \left \langle \int_0^1 BHT^{\alpha,\mathbb{Q}^{\tilde{d}}}_{\omega_{P_2}}(f_2, f_3) d \alpha, \Phi_{P_2,0} \right\rangle \right| ^2 \\+&  2^{2 \mathfrak{d}}\sum_{T \in \mathcal{T}_{\mathfrak{d}, 2,*}^{d, \tilde{d}}}  \sum_{\vec{P} \in T} \left|\left \langle \int_0^1 \sum_{\vec{Q} \in \mathbb{Q}^{\tilde{d}}: \omega_{Q_3} \supset \supset \omega_{P_2}} \frac{1}{|I_{\vec{Q}}|^{1/2}} \langle f_2, \Phi^\alpha_{Q_1, 2} \rangle \langle f_3, \Phi^\alpha_{Q_2,3} \rangle \Phi^\alpha_{Q_3,4} d\alpha, \Phi_{P_2,0} \right \rangle  \right| ^2  \\+& 2^{2 \mathfrak{d}}\sum_{T \in \mathcal{T}_{\mathfrak{d}, 2,*}^{d, \tilde{d}}}  \sum_{\vec{P} \in T} \left|\left \langle \int_0^1 \sum_{\vec{Q} \in \mathbb{Q}^{\tilde{d}}:| \omega_{Q_3}| \simeq |\omega_{P_2}|} \frac{1}{|I_{\vec{Q}}|^{1/2}} \langle f_2, \Phi^\alpha_{Q_1, 2} \rangle \langle f_3, \Phi^\alpha_{Q_2,3} \rangle \Phi^\alpha_{Q_3,4} d\alpha, \Phi_{P_2,0} \right \rangle  \right| ^2   ,
\end{align*}
where $\mathcal{T}_{n_1, 1,*}^d \subset  \mathcal{T}_{n_1, 1}^d $ is a collection of $2$-trees and $\mathcal{T}_{\mathfrak{d}, 2,*}^{d, \tilde{d}} \subset  \mathcal{T}_{\mathfrak{d}, 2}^{d, \tilde{d}}$ is a collection of $1$-trees.  We further decompose

\begin{align*}\label{Def:Tree-set}
\mathcal{T}_{n_1, 1,*}^d =& \mathcal{T}_{n_1, 1,*, +}^d \bigcup \mathcal{T}_{n_1, 1,*, -}^d \\ 
\mathcal{T}_{\mathfrak{d}, 2,*}^{d, \tilde{d}}=& \mathcal{T}_{\mathfrak{d}, 2,*, +}^{d, \tilde{d}} \bigcup \mathcal{T}_{\mathfrak{d}, 2,*, -}^{d, \tilde{d}}.
\end{align*}
where $\mathcal{T}_{n_1, 1,*, +}^d$ and $\mathcal{T}_{n_1, 1,*, -}^d$ form $2$ strongly $1$-disjoint chains and $\mathcal{T}_{\mathfrak{d}, 2,*, +}^{d, \tilde{d}}$ and $\mathcal{T}_{\mathfrak{d}, 2,*, -}^{d, \tilde{d}}$ form $2$ strongly $2$-disjoint chains.

\end{lemma}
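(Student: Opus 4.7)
The strategy is to implement greedy tree selection algorithms in the spirit of Chapter 6 of \cite{MR3052499}, run once with respect to $Size_1$ and once with respect to $Size_0^{\tilde{d}}$. First, I would derive a priori upper bounds on each size over $\mathbb{P}^d$ in order to pin down the starting exponents $N_1(d)$ and $N_2(d,\tilde{d})$: for $Size_1$, the $L^2$-normalization of $\Phi_{P_1,1}$ and the bound $|f_1| \leq 1_{E_1}$ give $|\langle f_1, \Phi_{P_1,1}\rangle| \lesssim |I_{\vec{P}}|^{1/2}$, which combined with the definition of $\tilde{\Omega}$ and the localization $\vec{P} \in \mathbb{P}^d$ produces a gain like $Size_1(f_1, \mathbb{P}^d) \lesssim 2^{Cd}|E_1|$; the analogous a priori bound for $Size_0^{\tilde{d}}$ comes from the exceptional set $\Omega_1^{\tilde{d}}$, since each of its three summands is pointwise comparable on $I_{\vec{P}}$ to the maximal average entering the definition of the exceptional set.

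For each $n_1 \geq N_1(d)$, I would iteratively extract $2$-trees from $\mathbb{P}^d$ achieving the density lower bound $2^{-n_1}$: at each step select a $2$-tree $T$ with maximal tree-top frequency meeting the size threshold, add $T$ to $\mathcal{T}^d_{n_1,1,*,+}$, symmetrically select one with minimal tree-top frequency to form $\mathcal{T}^d_{n_1,1,*,-}$, and associate to each selected $2$-tree a maximal tree in $\mathcal{T}^d_{n_1,1}$ containing it; then remove all tri-tiles in the resulting maximal trees from the remaining pool, iterating until the residual $Size_1$ falls below $2^{-n_1-1}$. The counting estimate $\sum_{T \in \mathcal{T}^d_{n_1,1,*}} |I_T| \lesssim 2^{2n_1}\sum_T \sum_{\vec{P} \in T}|\langle f_1, \Phi_{P_1,1}\rangle|^2$ is then a direct rearrangement of the density lower bound on each selected $2$-tree. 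The $Size_0^{\tilde{d}}$ decomposition is obtained by running the same iteration in parallel on each of the three summands that define $Size_0^{\tilde{d}}$, taking the overall threshold to be their maximum; the union of the three resulting tree families is still organizable into $O(1)$ strongly $2$-disjoint chains, and the three-term bound on $\sum_T|I_T|$ then emerges summand-by-summand.

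The main obstacle is verifying that the selected trees can be split into the strongly disjoint chains demanded by Definition \ref{Def:StronglyDisjoint}. Item $(ii)$ in particular requires that whenever two selected trees contain tiles with overlapping enlarged frequency intervals but incomparable scales, the smaller-scale tile's spatial interval avoids the larger tree's top interval. This follows from the extremal (maximal or minimal tree-top frequency) nature of the selection: a violating tile $P'$ from a later-selected tree would have been absorbed into a maximal tree at an earlier iteration, using the sparseness hypothesis on $\mathbb{P}$ together with the rank-$1$ property to rule out pathological overlaps. Once strong disjointness is in hand, a $TT^*$ orthogonality argument (compare Lemma 6.8 of \cite{MR3052499}) converts the density lower bound on the selected trees into the displayed counting inequality. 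For $Size_0^{\tilde{d}}$ there is the additional subtlety that the $\int_0^1 d\alpha$ average must be carried through the $TT^*$ step; this causes no real trouble because $\alpha$ appears linearly inside each inner product, so Cauchy-Schwarz in $\alpha$ combined with Fubini across the finite sums upgrades the pointwise-$\alpha$ orthogonality into the desired uniform bound.
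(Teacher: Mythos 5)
Your overall strategy --- greedy extraction of high-density $2$-trees ordered by extremal tree-top frequency, plus a ``cleanup'' $1$-tree absorbing the remaining tiles with dominated $P_1$, iterated until the residual size drops --- is exactly the paper's mechanism, and the disjointness verification via sparseness and extremality is also the right idea. However, there are three points that need correction before this is a proof.

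First, you invoke ``the rank-$1$ property'' of $\mathbb{P}$ in the disjointness argument, but $\mathbb{P}$ is emphatically \emph{not} rank-$1$ here: the tri-tiles in $\mathbb{P}$ are adapted to the degenerate line $\{\xi_1 = -\xi_2;\ \xi_3 = 0\}$, and the failure of rank-$1$ for $\mathbb{P}$ is precisely the obstacle that motivates the paper's $\ell^1$-energy. The paper's disjointness verification relies only on sparseness of $\mathbb{P}$ (so that nested frequency intervals jump by a factor of $10^9$), the extremality of the selected tree's top frequency $\xi_{P_{T,1}}$, and the explicit cleanup tree $T' = \{\vec P \in \mathbb{P}^d \cap T^c : P_1 \leq P_{T,1}\}$ removed along with $T$. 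You should remove any appeal to rank-$1$ for $\mathbb{P}$.

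Second, running the selection algorithm ``in parallel on each of the three summands'' defining $Size_0^{\tilde d}$ does not work as stated. A $1$-tree can meet the threshold for the \emph{sum} of the three quantities without any individual summand meeting its threshold, so the three parallel runs need not exhaust $\mathbb{P}^d$ down to size $2^{-\mathfrak d - 1}$. Moreover, strong $2$-disjointness is a property of a single ordered chain satisfying the conditions of Definition \ref{Def:StronglyDisjoint}; the union of three chains extracted by independent algorithms need not decompose into $O(1)$ strongly disjoint chains, since condition $(ii)$ imposes cross-chain constraints that the separate extractions do not enforce. The right move, and what ``very similar'' in the paper means, is to run a \emph{single} extraction with respect to the combined quantity $Size_0^{\tilde d}$; each selected $1$-tree $T$ then satisfies $2^{-2\mathfrak d - 3}|I_T| \leq \sum_{\vec P \in T}[\text{sum of the three terms}]$, and the displayed three-term bound is simply this rearranged and summed.

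Third, the $TT^*$/orthogonality remark and the worry about carrying $\int_0^1 d\alpha$ through are misplaced at the level of this lemma. The counting inequality here follows \emph{immediately} from the density threshold on the selected trees --- no orthogonality is needed; Bessel-type orthogonality (Lemma \ref{Est:BHTEnergy}, Lemma \ref{Est:BiestEnergy}) only enters downstream in the energy estimates. Likewise, once $f_2, f_3$ are fixed, each quantity $\bigl|\bigl\langle\int_0^1(\cdots)\,d\alpha, \Phi_{P_2,0}\bigr\rangle\bigr|^2$ is a deterministic coefficient $a_{\vec P}$, and the tree selection is purely combinatorial on these coefficients; there is no $\alpha$-average to carry through.
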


\begin{proof}
We describe the procedure for producing the collection $\mathcal{T}^d_{n_1, 1}$, as the decomposition into trees in the collection $\mathcal{T}^{d, \tilde{d}}_{\mathfrak{d}, 2} $ is very similar.  Let $N_1(d)$ be the smallest integer for which $Size_1(f_1, \mathbb{P}^d) \geq 2^{-N_1(d)}$. We may assume without loss of generality that there are only finitely many tri-tiles in the collection $\mathbb{P}^d$, and our bounds will be independent of the cardinality of tiles. 
Assume the collection $\mathbb{P}^d_{m, 1}$ has already been constructed with all the desired properties for $m < n_1$. We now perform the following standard tile selection algorithm on the tri-tile collection $\mathbb{P}^d \cap \left[ \bigcup_{ N_1(d) \leq m < n_1} \mathbb{P}^d_{n_1, 1} \right]^c$ to produce $\mathbb{P}^d_{n_1, 1}$ with the desired properties. To this end, introduce the following notation: if $P$ is a tile, let $\xi_P$ denote the center of $\omega_P$. If $P$ and $P^\prime$ are tiles, we write $P^\prime \lesssim^+ P$ if $P^\prime \lesssim^\prime P$ and $\xi_{P^\prime} > \xi_P$, and $P^\prime \lesssim^- P$ if $P^\prime \lesssim^\prime P$ and $\xi_{P^\prime} < \xi_P$.  Now consider the set of $2$-trees  in $\mathbb{P}^d \cap \left[ \bigcup_{ N_1(d) \leq m < n_1} \mathbb{P}^d_{n_1, 1} \right]^c$ which are upward in the sense that 

\begin{eqnarray*}
P_j \lesssim^+ P_{T, j}~for~all~ \vec{P} \in T
\end{eqnarray*}
and which satisfies $\sum_{\vec{P} \in T} |\langle f_1, \Phi_{P_1,1} \rangle|^2 \geq 2^{-2n_1-3}|I_T|$. If there are no trees with this property, terminate the algorithm. Otherwise, choose $T$ among all such trees so that the center $\xi_{T,1}$ of $\omega_{P_{T}, t}$ is maximal and that $T$ is maximal with respect to the set inclusion. Moreover, let $T^\prime$ denote that $1$-tree 

\begin{eqnarray*}
T^\prime := \left\{ \vec{P} \in \mathbb{P}^d \cap T^c : P_1 \leq P_{T, 1} \right\}.
\end{eqnarray*}
Now remove $T$ and $T^\prime$ from $\mathbb{P}^d$. Then repeat the tile selection process with the remaining tri-tiles $\mathbb{P}^d \cap (T \cup T^\prime)^c$ until there are no more upward trees satisfying the size condition. Again, by our finiteness assumption, the algorithm terminates in a finite number of steps, producing trees $T_1, T_1^\prime, T_2, T_2^\prime, ..., T_M, T^\prime_M$, where each $T_j$ is a $2$-tree and each $T_j^\prime$ is a $1$-tree. Set

\begin{align*}
\mathcal{T}^d_{n_1, 1, +} =&  \bigcup_{j=1}^M \left[ T_j \cup T^\prime_j \right]\\
\mathcal{T}^d_{n_1, 1, *, +} =&  \bigcup_{j=1}^M T_j.
\end{align*}
The claim is now that $T_1,..., T_M$ form a chain of strongly $1$-disjoint trees. Indeed, it is clear that $T_s \cap T_{s^\prime} = \emptyset$ when $s \not = s^\prime$. Therefore, we must have $P_1 \not = P_1^\prime$ for all $\vec{P} \in T_s$, $\vec{P}^\prime \in T_{s^\prime}, s \not = s^\prime$. 
Suppose for a contradiction that there were tri-tiles $\vec{P} \in T_s, \vec{P}^\prime \in T_{s^\prime}$ such that $2\omega_{P_1} \subsetneq 2 \omega_{P_1^\prime}$ and $I_{P^\prime_2} \subset I_{T_s}$. By sparseness, we thus have $|\omega_{P_1^\prime}| \geq 10^9 |\omega_{P_1}|$. Since $P_1 \lesssim^+ P_{T_s, 1}$ and $P_1^\prime \lesssim^+ P_{T_{s^\prime}, 1}$, we thus see that $\xi_{P_{T_{s^\prime}, 1}} < \xi_{P_{T_s}, 1}$. By our select algorithm, this implies $s < s^\prime$. Also, since $|\omega_{P_1^\prime}| \geq 10^9 |\omega_{P_1}|$, $I_{P_1^\prime} \subset I_{T_s}$, and $P_1 \lesssim P_{T_s, 1}$, it must be that $P_1^\prime \leq P_{T_s, 1}$. Since $s < s^\prime$, this means that $\vec{P}^\prime \in T^\prime_{s}$. But $T^\prime_s$ and $T_{s^\prime}$ are disjoint trees by construction, which is a contradiction. 
Now repeat the previous algorithm, but replace $\lesssim^+$ by $\lesssim^-$, so the trees $T$ are downward pointing instead of upward pointing, and select the trees $T$ so that the center $\xi_{T, j}$ is minimized rather than maximized. This yields two  further collection of trees $\mathcal{T}^d_{n_1, 1, -}$ and $\mathcal{T}^d_{n_1, 1,*,-}$ such that for any $2$-tree $T$ consisting of unselected tiles 

\begin{align*}
\sum_{\vec{P} \in T : P_2 \lesssim^- P_{T_2}} |\langle f_1, \Phi_{P_1, 1} \rangle|^2< 2^{-2n-3} |I_T|. 
\end{align*}
Letting $\mathcal{T}^d_{n_1, 1} = \mathcal{T}^d_{n_1, 1, +} \bigcup  \mathcal{T}^d_{n_1, 1, -}$ and $ \mathcal{T}^d_{n_1, 1, *} = \mathcal{T}^d_{n_1, 1, *,+} \bigcup  \mathcal{T}^d_{n_1, 1, *,-}$, it follows that

\begin{align*}
Size_1\left (f_1, \mathbb{P}^d \cap \left[ \bigcup_{N_1(d) \leq m \leq n_1} \mathcal{T}_{m, 1}^d \right]^c\right) < 2^{-2(n_1+1) }.
\end{align*}

\end{proof}
Letting $\mathbb{P}^{d, \tilde{d},*}_{n_1, \mathfrak{d}} = \mathbb{P}^{d}_{n_1, 1} \cap \mathbb{P}^{d, \tilde{d}}_{\mathfrak{d}, 2}$, we obtain the decomposition

\begin{align}
\mathbb{P} \times \mathbb{Q} =& \bigcup_{ d, \tilde{d} \geq 0}   \bigcup_{n_1 \geq N_1(d)} \bigcup_{ \mathfrak{d} \geq N_2(d, \tilde{d})} \mathbb{P}^{d, \tilde{d},*}_{n_1, \mathfrak{d}} \times \mathbb{Q}^{\tilde{d}}.
\end{align}

\subsubsection{Tree Estimates}

 First, let $T \subset \mathbb{P}^{d, \tilde{d}}_{n_1, \mathfrak{d}}$ be a 2-tree.  Then
\begin{align*}
&\left|  \sum_{\vec{P} \in T}  \frac{  \langle f_1, \Phi_{P_1,1} \rangle \langle f_4, \Phi^{lac}_{P_4,4} \rangle \left \langle \int_0^1 BHT^{\mathbb{Q}^{\alpha, \tilde{d}}}_{\omega_{P_2}}(f_2, f_3) d \alpha ,\Phi_{P_2,0} \right \rangle}{|I_{\vec{P}}|^{1/2}} \right| \\ \leq&\frac{ \left( \sum_{\vec{P} \in T} |\langle f_1, \Phi_{P_1,1} \rangle|^2 \right)^{1/2}}{|I_T|^{1/2}} \cdot \frac{  \left( \sum_{\vec{P} \in T} |\langle f_4, \Phi_{\vec{P},4}^{lac} \rangle|^2 \right)^{1/2}}{|I_T|^{1/2}} \\ \times&  \sup_{\vec{P} \in T}\left[  \frac{\left| \left \langle \int_0^1 BHT^{\alpha, \mathbb{Q}^{\tilde{d}}}_{\omega_{P_2}}(f_2, f_3) d \alpha, \tilde{\Phi}^{\infty}_{P_2,0} \right\rangle \right|}{|I_{\vec{Q}}|} \right] |I_T|\\ \lesssim &~2^{-N d} 2^{-n_1}2^{-\frak{d}} |I_T|. 
\end{align*}
Now let $T \subset \mathbb{P}^{d, \tilde{d}}_{n_1, \mathfrak{d}}$ be a $1$-tree. Then 

\begin{align*}
& \left| \sum_{\vec{P} \in T}  \frac{ \langle f_1, \Phi_{P_1} \rangle \langle f_4, \Phi^{lac}_{P_4,4} \rangle \left \langle \int_0^1 BHT^{\alpha, \mathbb{Q}^{ \tilde{d}}}_{\omega_{P_2}}(f_2, f_3) d\alpha,\Phi_{P_2,0} \right \rangle}{|I_{\vec{P}}|^{1/2}} \right| \\ \lesssim&  \left[ \sup_{\vec{P} \in T} \frac{ |\langle f_1, \Phi_{P_1,1} \rangle|}{|I_{\vec{P}}|^{1/2}} \right]  \left( \sum_{\vec{P} \in T} \frac{|\langle f_4, \Phi_{\vec{P},4}^{lac} \rangle|^2}{|I_T|} \right)^{1/2} \\ \times&  \left( \sum_{\vec{P} \in T} \frac{ \left| \left \langle \int_0^1 BHT^{\alpha, \mathbb{Q}^{\tilde{d}}}_{\omega_{P_2}}(f_2, f_3) d \alpha, \Phi_{P_2,0} \right\rangle\right|^2}{|I_T|}\right)^{1/2} |I_T| \\ \lesssim& 2^{-N d} 2^{-n_1}2^{-\frak{d}} |I_T|.
\end{align*}

\subsubsection{Size Restrictions}
Before proceeding to the main lemma of this section, we record

\begin{lemma}\label{L:BiestSize}
Let $\tilde{\mathbb{P}} \subset \mathbb{P}$ be any subcollection of tri-tiles. Then, there is $M \gg 1$ such that for any $0 < \theta <1$ and $1$-tree $T \subset \tilde{\mathbb{P}}$ 
\begin{align*}
& \left[ \frac{1}{|I_T|} \sum_{\vec{P} \in T} \left|\left \langle  \int_0^1 \sum_{\vec{Q} \in \mathbb{Q}^{\tilde{d}} : \omega_{Q_3} \supset \supset  \omega_{P_2}}\ \frac{1}{|I_{\vec{Q}}|^{1/2}} \langle f_2, \Phi^\alpha_{Q_1,2} \rangle \langle f_3, \Phi^\alpha_{Q_2,3} \rangle \Phi^\alpha_{Q_3, 5} d\alpha , \Phi_{P_2,0}\right  \rangle \right|^2 \right]^{1/2}\\ \lesssim_\theta& \left[ \sup_{\vec{P} \in \tilde{\mathbb{P}}} \frac{1}{|I_{\vec{P}}|} \int 1_{E_2} \chi^M_{I_{\vec{P}}} dx \right]^\theta \left[ \sup_{\vec{P} \in \tilde{\mathbb{P}}} \frac{1}{|I_{\vec{P}}|} \int 1_{E_3} \chi^M_{I_{\vec{P}}} dx \right]^{1-\theta}.
\end{align*}
\end{lemma}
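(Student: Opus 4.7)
The plan is to combine a pointwise interpolation bound on the inner tail Bi-Carleson expression with an almost-orthogonality estimate coming from the $1$-tree structure. Set
\[
\mathbf{S}_j := \sup_{\vec{P}\in\tilde{\mathbb{P}}} \frac{1}{|I_{\vec{P}}|}\int 1_{E_j}\chi^M_{I_{\vec{P}}}\,dx \qquad (j\in\{2,3\}),
\]
and for $\vec{P}\in T$ write
\[
G_{P_2}(x) := \int_0^1\sum_{\vec{Q}\in\mathbb{Q}^{\tilde d},\ \omega_{Q_3}\supset\supset\omega_{P_2}}\frac{\langle f_2,\Phi^\alpha_{Q_1,2}\rangle\langle f_3,\Phi^\alpha_{Q_2,3}\rangle}{|I_{\vec{Q}}|^{1/2}}\Phi^\alpha_{Q_3,5}(x)\,d\alpha,
\]
so the left-hand side of the lemma equals $\bigl[|I_T|^{-1}\sum_{\vec{P}\in T}|\langle G_{P_2},\Phi_{P_2,0}\rangle|^2\bigr]^{1/2}$.

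The first step, which is the crux, is to prove the pointwise interpolation bound
\[
|G_{P_2}(x)| \lesssim_\theta \mathbf{S}_2^{\theta}\mathbf{S}_3^{1-\theta}\chi^N_{I_{\vec{P}}}(x) \qquad \forall\, \vec{P}\in\tilde{\mathbb{P}},\ x\in\mathbb{R},
\]
where $N$ may be made as large as needed by choosing $M, L$ large in Definition \ref{WP}. The strategy would be to verify the two endpoint bounds $\theta\in\{0,1\}$ and then interpolate geometrically via Hölder in the $\vec{Q}$-sum. At the endpoint $\theta=1$, bound $|\langle f_2,\Phi^\alpha_{Q_1,2}\rangle|\lesssim |I_{\vec{Q}}|^{1/2}\mathbf{S}_2\,\chi^M_{I_{\vec{P}}}(c_{I_{\vec{Q}}})$ by using $|f_2|\leq 1_{E_2}$ together with a spatial comparison of the small scale $|I_{\vec{Q}}|$ to the larger scale $|I_{\vec{P}}|$ (the mismatch being absorbed into one power of $\chi^M$), and then estimate the remaining sum—essentially a wave-packet Carleson maximal expression in $f_3$—pointwise by $\mathbf{S}_3$. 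The endpoint $\theta=0$ is symmetric, and Hölder interpolation on the $\vec{Q}$-sum then yields the interpolated pointwise bound.

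The second step uses the $1$-tree structure to invoke almost-orthogonality of $\{\Phi_{P_2,0}\}_{\vec{P}\in T}$: any two distinct $\vec{P},\vec{P}'\in T$ at the same spatial scale are spatially disjoint by sparseness of $\mathbb{P}$, and at distinct scales the rank-$1$ condition forces $\omega_{P_2}\cap\omega_{P_2'}=\emptyset$. A standard Bessel argument then yields
\[
\sum_{\vec{P}\in T}|\langle G_{P_2},\Phi_{P_2,0}\rangle|^2 \lesssim \Bigl\|\sup_{\vec{P}\in T}|G_{P_2}|\Bigr\|_{L^2(\chi^{M'}_{I_T})}^2,
\]
since the frequency (resp.\ spatial) separation lets one handle different-scale (resp.\ same-scale) contributions independently. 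Feeding in the uniform pointwise bound from the first step and integrating against $\chi^{M'}_{I_T}$ produces $|I_T|\,\mathbf{S}_2^{2\theta}\mathbf{S}_3^{2(1-\theta)}$; dividing by $|I_T|$ and taking a square root closes the proof.

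The main obstacle is the pointwise interpolation step. At each endpoint one must trade a wave-packet inner product for the sup-type quantity $\mathbf{S}_j$ even though $\vec{Q}$ lives in the collection $\mathbb{Q}^{\tilde d}$—adapted to a Whitney cone different from that of $\tilde{\mathbb{P}}$—and despite $\vec{Q}$ ranging over scales much smaller than $|I_{\vec{P}}|$. Handling this scale mismatch cleanly, while preserving enough $\chi^M$-decay to both justify the local reduction to $\mathbf{S}_j$ at each $\vec{Q}$ and to execute the outer $L^2$-integration in the second step, requires careful bookkeeping of the decay exponents and is the source of the constant $M$ being taken much larger than $1$.
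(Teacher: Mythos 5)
Your Step 1 pointwise bound
\[
|G_{P_2}(x)| \lesssim_\theta \mathbf{S}_2^\theta \mathbf{S}_3^{1-\theta}\chi^N_{I_{\vec{P}}}(x)
\]
does not hold, and this sinks the whole argument. The inner sum $G_{P_2}$ is, after restricting to the $\vec{Q}$-tiles with $\omega_{Q_3}\supset\supset\omega_{P_2}$, a wave-packet model of a Bilinear Hilbert Transform localized to the dual $3$-tree through $\omega_{P_2}$. Such an object is genuinely singular: already for $f_2=f_3=1_{[0,1]}$ (so that $\mathbf{S}_2,\mathbf{S}_3\lesssim 1$), the truncated $BHT$ has a logarithmic blow-up near the endpoints $x=0,1$, so no $L^\infty$ bound of the proposed form can be true. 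The endpoint sub-steps as written make this concrete: the inequality $|\langle f_2,\Phi^\alpha_{Q_1,2}\rangle|\lesssim |I_{\vec{Q}}|^{1/2}\mathbf{S}_2\,\chi^M_{I_{\vec{P}}}(c_{I_{\vec{Q}}})$ compares an average of $1_{E_2}$ over the small interval $I_{\vec{Q}}$ to the sup over $\tilde{\mathbb{P}}$-averages; because $|I_{\vec{Q}}|\ll|I_{\vec{P}}|$, the small-scale average can be $\gg\mathbf{S}_2$ (take $E_2\subset I_{\vec{Q}}$ with $|E_2|\approx|I_{\vec{Q}}|$). Similarly, the ``Carleson maximal expression in $f_3$'' that you want to dominate pointwise by $\mathbf{S}_3$ is a lacunary sum with cancellation, not an average, and cannot be controlled in $L^\infty$ by an average. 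A pointwise-bound-then-Bessel scheme simply cannot reproduce the subtlety of a size estimate for a composed wave-packet sum.

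There are two further, smaller, problems: (i) the almost-orthogonality reduction to $\bigl\|\sup_{\vec P\in T}|G_{P_2}|\bigr\|_{L^2(\chi^{M'}_{I_T})}$ is not justified when $G_{P_2}$ varies with $\vec P$ — Bessel requires the tested function to be fixed, and your substitute only yields $\sum_{\vec P\in T}|\langle G_{P_2},\Phi_{P_2,0}\rangle|^2\lesssim \bigl(\sup_{\vec P\in T}\|G_{P_2}\|_\infty\bigr)^2\sum_{\vec P\in T}|I_{\vec P}|$, and $\sum_{\vec P\in T}|I_{\vec P}|$ is not $O(|I_T|)$ for a tree with many scales; (ii) the frequency-disjointness of the $P_2$-tiles across scales is not a consequence of rank-$1$ — the collection $\mathbb{P}$ in Definition \ref{Def:Lambda-2} is \emph{not} rank-$1$; what makes the $\omega_{P_2}$ lacunary in a $1$-tree is the Whitney condition with respect to the degenerate set $\{\xi_1=-\xi_2,\ \xi_3=0\}$. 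The argument this lemma actually rests on (Lemma 9.1 in \cite{MR2127985}, as invoked in the paper) is of a different nature: one first uses a John--Nirenberg-type argument to replace the $\ell^2(T)$-aggregate by a weak $L^1$ / BMO-type quantity on $I_T$, and then estimates the localized $BHT$ sum via its local size and energy estimates — precisely because a pointwise bound is unavailable.
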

\begin{proof} 
The above result is a slightly modified version of Lemma 9.1 in \cite{MR2127985} in which the roles of $\mathbb{P}$ and $\mathbb{Q}$ are reversed and the restriction $\omega_{Q_3} \supset \supset  \omega_{P_2}$ is replaced with $\omega_{Q_3} \supseteq \omega_{P_2}$.  However, this modification does not create any difficulties. A step by step repeat of the proof of Lemma 9.1 substituting $\supset \supset$ for $\supseteq$ does the job. 
\end{proof}
The next result we shall need is
\begin{lemma}\label{L:John-Nirenberg}
Let $T$ be a $1$-tree. Then there is $M \gg 1$ for which

\begin{align*}
\left( \sum_{\vec{P} \in T} \frac{1}{|I_{\vec{P}}|}\left| \left \langle f, \Phi_{P_2} \right \rangle \right|^2 \right)^{1/2} \lesssim \sup_{\vec{P}\in T} \frac{1}{|I_{\vec{P}}|} \int_\mathbb{R} |f| \chi^M_{I_{\vec{P}}} dx. 
\end{align*}
\begin{proof}
There are many available references. For example, see Lemma 6.13 in \cite{MR3052499}. 
\end{proof}

\end{lemma}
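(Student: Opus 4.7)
The statement is a John--Nirenberg-type bound on the square function of wave-packet coefficients over a $1$-tree, a standard tool in the time-frequency literature (as the author notes, this is essentially Lemma 6.13 of \cite{MR3052499}). My plan is to combine a level-set decomposition of $f$ with an almost-orthogonality bound for the wave packets, and to argue that the $|I_{\vec{P}}|^{-1}$ weights in the square function are exactly what is needed for the multiscale estimate to close.

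Writing $A$ for the right-hand side, the hypothesis $|I_{\vec{P}}|^{-1}\int |f|\chi^M_{I_{\vec{P}}} \leq A$ for all $\vec{P}\in T$ is a BMO-style bound on $f$ relative to the nested family $\{I_{\vec{P}}\}_{\vec{P}\in T}$. A standard stopping-time iteration then yields the exponential distribution estimate $|\{x \in I_T : |f(x)| > \lambda A\}| \lesssim e^{-c\lambda}|I_T|$ (modulo standard tricks to replace the hard cut-off by the weight $\chi^{M/2}_{I_T}$). I would then decompose $f = \sum_{k\geq 0} h_k$ along dyadic levels of its size, so that $|h_k| \lesssim 2^k A$ on a set of measure $\lesssim 2^{-ck}|I_T|$ and zero elsewhere; in particular $\|h_k\|_{L^2}^2 \lesssim 2^{(2-c)k} A^2 |I_T|$.

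The core ingredient is then the weighted Bessel inequality
\[
\sum_{\vec{P}\in T} \frac{|\langle h, \Phi_{P_2}\rangle|^2}{|I_{\vec{P}}|} \;\lesssim\; \frac{\|h\|_{L^2}^2}{|I_T|}
\]
for $h$ essentially supported near $I_T$. The $1$-tree hypothesis $P_1 \leq P_{T,1}$, combined with the rank-$1$ sparseness of $\mathbb{P}$, forces the frequency intervals $\{\omega_{P_2} : \vec{P}\in T,\, |I_{\vec{P}}| = 2^j\}$ to be pairwise disjoint at each common scale $j$ and to live in a lacunary configuration across scales; this yields the required almost-orthogonality via a $TT^*$ computation, and the $|I_{\vec{P}}|^{-1}$ weights are absorbed against the Whitney-type adaptedness of the tri-tile frequency cubes, leaving only the overall factor $|I_T|^{-1}$ on the right.

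Applying this weighted Bessel bound to each $h_k$ and summing the resulting geometric series in $k$ (which converges when $M$ is taken large enough so that $c > 2$) yields the desired $O(A^2)$ estimate. The principal obstacle is the precise verification of the weighted Bessel bound: the multiscale character of the $1$-tree means that the naive single-scale Bessel argument must be upgraded carefully to absorb the $|I_{\vec{P}}|^{-1}$ factors without losing powers of the number of scales, and it is exactly here that the detailed geometry of sparse rank-$1$ tri-tile collections plays a non-trivial role. Once that step is in hand, everything else is a routine John--Nirenberg summation.
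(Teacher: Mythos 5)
Your argument contains two gaps, either of which is fatal. The more concrete is the proposed ``weighted Bessel inequality''
\begin{eqnarray*}
\sum_{\vec{P}\in T}\frac{|\langle h,\Phi_{P_2}\rangle|^2}{|I_{\vec{P}}|} \lesssim \frac{\|h\|_{L^2}^2}{|I_T|},
\end{eqnarray*}
which is false even for a single wave packet: take $h=\Phi_{P_2^0}$ for the tile $\vec{P}^0\in T$ of smallest scale. The single term $\vec{P}=\vec{P}^0$ on the left already contributes $\gtrsim |I_{\vec{P}^0}|^{-1}$, while the right side equals $|I_T|^{-1}\ll |I_{\vec{P}^0}|^{-1}$. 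The $|I_{\vec{P}}|^{-1}$ weights cannot be ``absorbed against adaptedness''; they break the $L^2$ orthogonality outright. The correct Bessel bound over a $1$-tree is the unweighted $\sum_{\vec{P}\in T}|\langle h,\Phi_{P_2}\rangle|^2\lesssim\|h\|_{L^2}^2$, which pairs only with the $|I_T|^{-1}$-normalized form of the size. Indeed, comparing with how Lemma~\ref{L:John-Nirenberg} is actually invoked in the proof of Lemma~\ref{Est:Size-Restrictions}, the left side of the statement should read $\big(\frac{1}{|I_T|}\sum_{\vec{P}\in T}|\langle f,\Phi_{P_2}\rangle|^2\big)^{1/2}$, matching Lemma~6.13 of~\cite{MR3052499}; the version with $|I_{\vec{P}}|^{-1}$ inside the sum fails already for $f=1_{I_T}$ once $T$ spans many scales, since the left side then grows linearly in the number of scales while the right side stays bounded.

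The second gap is that the exponential distribution estimate $|\{x\in I_T:|f(x)|>\lambda A\}|\lesssim e^{-c\lambda}|I_T|$ does not follow from the hypothesis and is not the object one iterates on. The quantity $A=\sup_{\vec{P}\in T}\frac{1}{|I_{\vec{P}}|}\int|f|\chi^M_{I_{\vec{P}}}$ is a bound on $L^1$-averages of $|f|$ over tree intervals with no mean subtraction; it is not a BMO condition, and the tree intervals $\{I_{\vec{P}}\}_{\vec{P}\in T}$ need not reach all scales at every point of $I_T$, so no Calder\'on--Zygmund stopping applied directly to $f$ is available and $f$ need not even be bounded on $I_T$. In the argument of~\cite{MR3052499} the John--Nirenberg iteration runs over the \emph{tree}: one considers the tree-local square function $S(f)(x)=\big(\sum_{\vec{P}\in T}\frac{|\langle f,\Phi_{P_2}\rangle|^2}{|I_{\vec{P}}|}1_{I_{\vec{P}}}(x)\big)^{1/2}$, proves a single-scale weak-$(1,1)$ or Carleson-measure bound for $S(f)$ in terms of $A$, selects maximal subtrees on which the local $L^2$ average of $S(f)$ is large, iterates on the complementary forest, and obtains an exponential distribution estimate for $S(f)$ rather than for $f$. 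Substituting a level-set decomposition of $f$ for the stopping-time decomposition of $T$ is the wrong mechanism, and combined with the failure of the weighted Bessel bound your geometric summation does not close.
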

The next result gives some necessary conditions for $\mathbb{P}_{n_1, \mathfrak{d}}^{d, \tilde{d}}$ to be nonempty:
\begin{lemma}\label{Est:Size-Restrictions}
Fix $d, \tilde{d}, n_1, \mathfrak{d} \geq 0$ such that $\mathbb{P}^{d, \tilde{d}}_{n_1, \mathfrak{d}}$ is nonempty. Then for all $N \geq 1$
\begin{align*}
2^{-n_1}& \lesssim 2^d|E_1|\\
2^{-\mathfrak{d}} &\lesssim_N 2^{-N(\tilde{d}-d) } |E_2|^{1/2} |E_3|^{1/2}.
\end{align*}
As a consequence, we know that in the tile decomposition

\begin{align*}
\mathbb{P} \times \mathbb{Q} = \bigcup_{d, \tilde{d} \geq 0} \bigcup_{n_1 \geq N_1(d)} \bigcup_{\mathfrak{d} \geq N_2(d, \tilde{d})} \mathbb{P}^{d, \tilde{d}}_{n_1, \mathfrak{d}} \times \mathbb{Q}^{\tilde{d}}
\end{align*}
$2^{-N_1(d)} \lesssim 2^d |E_1|$ and $2^{-N_2(d, \tilde{d})} \lesssim_N 2^{-N(\tilde{d}-d)} |E_2|^{1/2} |E_3|^{1/2}$. 

\end{lemma}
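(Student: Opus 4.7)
The two estimates follow from combining the tree-selection algorithm of Lemma \ref{TDL} with the maximal-function control built into the exceptional set $\Omega$. Nonemptyness of $\mathbb{P}^{d,\tilde{d}}_{n_1,\mathfrak{d}}$ produces, by the selection procedure, a $2$-tree $T_1 \subset \mathbb{P}^d$ with $Size_1(f_1, T_1) \gtrsim 2^{-n_1}$ and a $1$-tree $T_0 \subset \mathbb{P}^d$ with $Size_0^{\tilde{d}}(f_2,f_3,T_0) \gtrsim 2^{-\mathfrak{d}}$, so both bounds reduce to upper estimates on the sizes of these two trees.

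For the first bound, I would apply the standard John-Nirenberg inequality for $2$-trees (the analog of Lemma \ref{L:John-Nirenberg} for $\Phi_{P_1}$-wave packets, whose frequency supports are pairwise disjoint on a $2$-tree by sparseness and rank-$1$) to obtain
\[
Size_1(f_1, T_1) \;\lesssim\; \sup_{\vec{P} \in T_1} \frac{1}{|I_{\vec{P}}|} \int 1_{E_1}\,\chi^M_{I_{\vec{P}}}\,dx.
\]
Since $T_1 \subset \mathbb{P}^d$, every $I_{\vec{P}}$ sits at distance $\simeq 2^d |I_{\vec{P}}|$ from $\Omega^c \subset \tilde{\Omega}^c$, so some $x_0 \in C\cdot 2^d I_{\vec{P}}$ satisfies $M 1_{E_1}(x_0) \lesssim |E_1|$; splitting the weighted average into its main part on $C\cdot 2^d I_{\vec{P}}$ and rapidly decaying tails then yields $\frac{1}{|I_{\vec{P}}|}\int 1_{E_1}\,\chi^M_{I_{\vec{P}}}\,dx \lesssim 2^d |E_1|$, and hence $2^{-n_1} \lesssim 2^d|E_1|$.

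For the second bound, I would apply Lemma \ref{L:John-Nirenberg} directly to each of the three sums defining $Size_0^{\tilde{d}}(f_2,f_3,T_0)$, reducing matters to controlling $\sup_{\vec{P} \in T_0}\frac{1}{|I_{\vec{P}}|}\int g\, \chi^M_{I_{\vec{P}}}\,dx$, where $g$ is pointwise dominated by $\int_0^1 \sum_{\vec{Q} \in \mathbb{Q}^{\tilde{d}}} \frac{|\langle f_2, \Phi^\alpha_{Q_1,2}\rangle \langle f_3, \Phi^\alpha_{Q_2,3}\rangle|}{|I_{\vec{Q}}|} \chi^L_{I_{\vec{Q}}} \,d\alpha$ (with analogous expressions localized to the $\omega_{Q_3} \supset\supset \omega_{P_2}$ and $|\omega_{Q_3}| \simeq |\omega_{P_2}|$ subregimes). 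Since $T_0 \subset \mathbb{P}^d$, some $x_0 \in C\cdot 2^d I_{\vec{P}}$ avoids $\Omega$ and hence $\Omega_1^{\tilde{d}}$, so Cauchy-Schwarz combined with the defining threshold of $\Omega_1^{\tilde{d}}$ gives the bare bound $Mg(x_0) \lesssim 2^{\tilde{d}}|E_2|^{1/2}|E_3|^{1/2}$.

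The main obstacle is producing the decay factor $2^{-N(\tilde{d}-d)}$ that the claim requires when $\tilde{d} > d$; the exceptional-set argument just described only yields the \emph{growing} factor $2^{\tilde{d}}$. The remedy rests on a geometric incompatibility: if $\vec{Q} \in \mathbb{Q}^{\tilde{d}}$ has $|I_{\vec{Q}}| \geq |I_{\vec{P}}|$ and pairs nontrivially with $\Phi_{P_2,0}$, then combining $dist(I_{\vec{Q}}, \tilde{\Omega}^c) \simeq 2^{\tilde{d}}|I_{\vec{Q}}|$ with $dist(I_{\vec{P}}, \tilde{\Omega}^c) \leq dist(I_{\vec{P}}, \Omega^c) \simeq 2^d|I_{\vec{P}}|$ via the triangle inequality forces $|I_{\vec{Q}}| \lesssim 2^{d-\tilde{d}}|I_{\vec{P}}|$, incompatible with $|I_{\vec{Q}}| \geq |I_{\vec{P}}|$ once $\tilde{d} \gg d$. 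Surviving $\vec{Q}$'s therefore lie at distance $\gtrsim 2^{\tilde{d}-d}|I_{\vec{P}}|$ from $I_{\vec{P}}$, whence the rapid decay of the wave packets in Definition \ref{WP} contributes $O(2^{-N(\tilde{d}-d)})$ per pair; inserting this gain into the previous step gives the claimed bound. The complementary range $\tilde{d} \leq d + O(1)$ is absorbed into the bare exceptional-set estimate, since there $2^{-N(\tilde{d}-d)}$ is bounded below by a constant.
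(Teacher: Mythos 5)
Your treatment of the first inequality is correct and essentially the paper's: the John--Nirenberg reduction combined with the observation that $I_{\vec{P}}$ lies at distance $\simeq 2^d|I_{\vec{P}}|$ from $\tilde{\Omega}^c \supset \Omega^c$ gives $\frac{1}{|I_{\vec{P}}|}\int 1_{E_1}\chi^M_{I_{\vec{P}}}\,dx \lesssim 2^d|E_1|$. Your key insight for the second inequality --- that $\tilde{d}\gg d$ forces $\vec{Q}$'s with $|I_{\vec{Q}}|\geq|I_{\vec{P}}|$ to lie far from $I_{\vec{P}}$ --- is also the paper's, and it correctly supplies the decay factor for the piece of $Size_0^{\tilde d}$ involving the full $BHT^{\alpha,\mathbb{Q}^{\tilde d}}$ sum.

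The gap is in the other two pieces. Lemma~\ref{L:John-Nirenberg} converts an $\ell^2(T)$-sum of coefficients $\langle f,\Phi_{P_2}\rangle$ into a maximal-average bound only when $f$ is a \emph{single} function independent of $\vec P$. For the full-sum piece the inner object is fixed, so this works. But the remaining two pieces of $Size_0^{\tilde d}$ sum only over $\vec Q$ with $\omega_{Q_3}\supset\supset\omega_{P_2}$ or $|\omega_{Q_3}|\simeq|\omega_{P_2}|$, and this restriction moves with the running tile $\vec P$, so there is no fixed majorant $g$ to which the lemma applies. The paper does not reduce those terms to a pointwise bound: it invokes Lemma~\ref{L:BiestSize} (a localized ``Biest'' size estimate imported from Muscalu--Tao--Thiele, Lemma 9.1 of \cite{MR2127985}) for the $\omega_{Q_3}\supset\supset\omega_{P_2}$ piece, and a direct $\ell^2$ orthogonality computation --- summing $\frac{|\langle f_2,\Phi^\alpha_{Q_1,2}\rangle|^2|\langle f_3,\Phi^\alpha_{Q_2,3}\rangle|^2}{|I_{\vec Q}|}$ over the tree with $\chi^M_{I_T}$ weights --- for the $|\omega_{Q_3}|\simeq|\omega_{P_2}|$ piece. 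Your proposal omits Lemma~\ref{L:BiestSize} entirely.

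Moreover, even granting the pointwise reduction, the geometric incompatibility you rely on only excludes close pairings when $|I_{\vec Q}|\geq|I_{\vec P}|$. The $\omega_{Q_3}\supset\supset\omega_{P_2}$ piece lives in the opposite regime $|I_{\vec Q}|\ll|I_{\vec P}|$, and there the triangle inequality gives no contradiction: $\vec P\in\mathbb P^d$ yields only the \emph{upper} bound $dist(I_{\vec P},\tilde\Omega^c)\lesssim 2^d|I_{\vec P}|$, which is perfectly compatible with a small $I_{\vec Q}\subset I_{\vec P}$ satisfying $dist(I_{\vec Q},\tilde\Omega^c)\simeq 2^{\tilde d}|I_{\vec Q}|$. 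So your derivation of the $2^{-N(\tilde d-d)}$ factor does not cover that term, and some ingredient playing the role of Lemma~\ref{L:BiestSize} (whose bound is $2^d$-controlled, not $2^{\tilde d}$-growing, and which is combined with the geometric decay on the complementary $\subset\subset$ piece) is genuinely required.
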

\begin{proof}
The estimate $2^{-n_1} \lesssim 2^d |E_1|$ is an immediate consequence of Lemma \ref{L:John-Nirenberg} and the definition of $\mathbb{P}^d$, so the details are omitted.
Therefore, it suffices to prove $2^{-\mathfrak{d}} \lesssim_N 2^{-N(\tilde{d}-d)} |E_2|^{1/2} |E_3|^{1/2}$. 

CASE 1: Assume $ \tilde{d} \lesssim d$. It clearly suffices to show that for every $\tilde{\mathbb{P}}_{n_1, \delta}^{d, \tilde{d}}$-tree $T$

\begin{align*}
&  \left( \frac{1}{|I_T|} \sum_{\vec{P} \in T}  \left| \left \langle \int_0^1 BHT^{\alpha , \mathbb{Q}^{\tilde{d}}}(f_2, f_3) d \alpha, \Phi_{P_2,3} \right \rangle\right|^2\right)^{1/2} \\+& \left( \frac{1}{|I_T|} \sum_{\vec{P} \in T}  \left| \left \langle \int_0^1 \sum_{\vec{Q} \in \mathbb{Q}^{\tilde{d}}: \omega_{Q_3} \supset \supset \omega_{P_2}} \frac{1}{|I_{\vec{Q}}|^{1/2}} \langle f_2, \Phi^\alpha_{Q_1, 2} \rangle \langle f_3, \Phi^\alpha_{Q_2,3} \rangle \Phi^\alpha_{Q_3,4} d\alpha, \Phi_{P_2,0} \right \rangle \right|^2 \right)^{1/2} \\+& \left( \frac{1}{|I_T|} \sum_{\vec{P} \in T} \left| \left \langle  \int_0^1 \sum_{\vec{Q} \in \mathbb{Q}^{\tilde{d}} : |\omega_{Q_3}| \simeq |\omega_{P_2}| }\frac{\langle f_2, \Phi^\alpha_{Q_1, 2} \rangle \langle f_3, \Phi^\alpha_{Q_2, 3} \rangle}{|I_{\vec{Q}}|^{1/2}} \Phi^\alpha_{Q_3, 5}d \alpha , \Phi_{P_2,0} \right \rangle\right|^2\right)^{1/2}\\ =:& ~I + II +III \\ \lesssim& ~2^d |E_2|^{1/2} |E_3|^{1/2}. 
\end{align*}
By Lemma \ref{L:John-Nirenberg}, $I \lesssim \sup_{\vec{P} \in T} \frac{1}{|I_{\vec{P}}|} \int_\mathbb{R} \left| \int_0^1 BHT^{\alpha, \mathbb{Q}^{\tilde{d}}}(f_2, f_3) d \alpha\right| \chi^M_{I_{\vec{P}}}dx \lesssim 2^d 2^{ \tilde{d}} |E_2|^{1/2} |E_3|^{1/2}$. This is clearly acceptable by the assumption $d \geq \tilde{d}$. 
Invoking Lemma \ref{L:BiestSize} gives $II \lesssim 2^d|E_2|^{1/2} |E_3|^{1/2}$. To handle term $III$, it suffices to observe for some $M \gg 1$
\begin{align*}
III^2 &\lesssim  \frac{1}{|I_T|} \sum_{l \in \mathbb{Z}} \frac{1}{1+l^M} \sum_{\vec{Q}\in \mathbb{Q}^{\tilde{d}}: I_{\vec{Q}} \subset I_T + l |I_T|} \int_0^1 \frac{ |\langle f_2, \Phi^\alpha_{Q_1,2} \rangle|^2 |\langle f_3, \Phi^\alpha_{Q_2, 3} \rangle|^2}{|I_{\vec{Q}}|} d \alpha\\&\lesssim 2^{2\tilde{d}} |E_2| |E_3|   \sum_{l \in \mathbb{Z}} \frac{1}{1+l^M} \frac{  ||1_{E_2} \chi^M_{I_{T}+l |I_T|} ||_{L^2} \cdot ||1_{E_3} \chi^M_{I_{T}+l |I_T|}||_{L^2}}{|I_T|} \\ &\lesssim  2^{2 d} |E_2| |E_3|. 
\end{align*}

 CASE 2: Assume $\tilde{d} >> d$. It suffices to prove that for every $\mathbb{P}_{n_1, \delta}^{d, \tilde{d}}$-tree $T$ and some $N \gg 1$

\begin{align*}
& \left( \frac{1}{|I_T|} \sum_{\vec{P} \in T}  \left| \left \langle \int_0^1 BHT^{\alpha , \mathbb{Q}^{\tilde{d}}}(f_2, f_3) d \alpha, \Phi_{P_2,0} \right \rangle\right|^2\right)^{1/2} \\+&  \left( \frac{1}{|I_T|} \sum_{\vec{P} \in T}  \left| \left \langle \int_0^1 BHT^{\alpha , \mathbb{Q}^{\tilde{d}}}_{\omega_{P_2}}(f_2, f_3) d \alpha, \Phi_{P_2,0} \right \rangle\right|^2\right)^{1/2}\\+& \left( \frac{1}{|I_T|} \sum_{\vec{P} \in T} \left| \left \langle  \int_0^1 \sum_{\vec{Q} \in \mathbb{Q}^{\tilde{d}} : |\omega_{Q_3}| \simeq |\omega_{P_2}| }\frac{\langle f_2, \Phi^\alpha_{Q_1, 2} \rangle \langle f_3, \Phi^\alpha_{Q_2, 3} \rangle}{|I_{\vec{Q}}|^{1/2}} \Phi^\alpha_{Q_3, 5}d \alpha , \Phi_{P_2,0} \right \rangle\right|^2\right)^{1/2} \\:=& ~I + II +III\\\lesssim&~ 2^{-N \tilde{d}} |E_2|^{1/2} |E_3|^{1/2}.
\end{align*}
We now want to exploit the fact that the $\mathbb{Q}$-tiles appearing inside the $\mathbb{P}$-sum have finer frequency localization than $\omega_{P_2}$ and so are adapted to larger time intervals than $I_{\vec{P}}$. To this end, observe that whenever $(\vec{P}, \vec{Q}) \in \mathbb{P}^d \times \mathbb{Q}^{\tilde{d}}$ satisfy $|I_{\vec{P}}| \leq |I_{\vec{Q}}|$ and $\tilde{d} \gg d$, then $dist(I_{\vec{P}}, I_{\vec{Q}}) \gtrsim 2^{\tilde{d}} |I_{\vec{Q}}|.$
 This is because $\vec{P} \in \mathbb{P}^d$ implies $1+\frac{ dist(I_{\vec{P}}, \Omega^c) }{|I_{\vec{P}}|} \simeq 2^d$. Therefore, using $\Omega \supset \tilde{\Omega}$, 

\begin{eqnarray*}
 dist(I_{\vec{P}}, \tilde{\Omega}^c) \lesssim 2^d |I_{\vec{P}}|.
\end{eqnarray*}
If the proposed inequality did not hold, then 

\begin{eqnarray*}
dist(I_{\vec{Q}}, \tilde{\Omega}^c) \leq dist(I_{\vec{Q}}, I_{\vec{P}}) + |I_{\vec{P}}| + dist(I_{\vec{P}}, \tilde{\Omega}^c) \ll  2^{\tilde{d}} |I_{\vec{Q}}|,
\end{eqnarray*} which would violate the assumption $\vec{Q} \in \mathbb{Q}^{\tilde{d}}$.  With this observation, we now can write down using Lemma \ref{L:John-Nirenberg} and the definition of the exceptional set $\Omega$

\begin{align*}
I \lesssim& \sup_{\vec{P} \in T} \frac{1}{|I_{\vec{P}}|} \int_\mathbb{R} \left| \int_0^1 \sum_{\vec{Q} \in \mathbb{Q}^{\tilde{d}}}  \frac{ \left| \langle f_2, \Phi^\alpha_{Q_1, 2} \rangle \langle f_3, \Phi^\alpha _{Q_2, 3} \rangle \right|}{|I_{\vec{Q}}|} \chi^M_{I_{\vec{Q}}}d \alpha  \right| \chi^M_{I_{\vec{P}}} dx\\  \lesssim& 2^{-N \tilde{d}}  \sup_{\vec{P} \in \mathbb{P}^{d, \tilde{d}}_{n_1, \mathfrak{d}}} \inf_{x \in I_{\vec{P}}} M \left(  \int_0^1 \sum_{\vec{Q} \in \mathbb{Q}^{\tilde{d}}} \frac{ \left| \langle f_2, \Phi^\alpha_{Q_1, 2} \rangle \langle f_3, \Phi^\alpha_{Q_2, 3} \rangle \right|}{|I_{\vec{Q}}|} \chi^M_{I_{\vec{Q}}} d\alpha  \right) (x) \\ \lesssim& 2^{-N \tilde{d}/2} 2^{d/2} 2^{\tilde{d}} |E_2|^{1/2} |E_3|^{1/2} \\\lesssim& 2^{-N \tilde{d}/4} |E_2|^{1/2} |E_3|^{1/2}.  
\end{align*}
Furthermore, for term $II$, it suffices to note
\begin{align*}
&  \sum_{\vec{P} \in T}  \left| \left \langle \int_0^1 BHT^{\alpha, \mathbb{Q}^{\tilde{d}}}_{\omega_{P_2}}(f_2, f_3) d\alpha, \Phi_{P_2,0} \right \rangle\right|^2 \\=&  \sum_{\vec{P} \in T} \left| \left \langle \int_0^1  \sum_{\vec{Q} \in \mathbb{Q}^{\tilde{d}} : \omega_{Q_3} \subset \subset  \omega_{P_2}} \frac{\langle f_2, \Phi^\alpha_{Q_1, 2} \rangle \langle f_3, \Phi^\alpha_{Q_2, 3} \rangle}{|I_{\vec{Q}}|^{1/2}} \Phi^\alpha_{\vec{Q}_3, 5} d \alpha , \Phi_{P_2,0} \right \rangle\right|^2 \\ \lesssim&   2^{-N \tilde{d}} \sum_{\omega \in \Omega_2\{T\}}  \sum_{\vec{P} \in T : \omega_{P_2} = \omega}\left| \left \langle  \int_0^1 \sum_{\vec{Q} \in \mathbb{Q}^{\tilde{d}} : \omega_{Q_3} \subset \subset \omega} \frac{ \left| \langle f_2, \Phi^\alpha_{Q_1, 2} \rangle \langle f_3, \Phi^\alpha_{Q_2, 3} \rangle \right|}{|I_{\vec{Q}}|} \chi^M_{I_{\vec{Q}}}  d \alpha, \chi^M_{I_{\vec{P}}} \right \rangle\right|^2 \\ \lesssim&   2^{-N \tilde{d}} \sum_{\omega \in \Omega_2\{T\}} \left| \left| \int_0^1 \sum_{\vec{Q} \in \mathbb{Q}^{\tilde{d}} : \omega_{Q_3} \subset \subset \omega} \frac{ \left| \langle f_2, \Phi^\alpha_{Q_1, 2} \rangle \langle f_3, \Phi^\alpha_{Q_2, 3} \rangle \right|}{|I_{\vec{Q}}|} d\alpha \chi^M_{I_{\vec{Q}}} \chi_{I_T}^M \right| \right|_2^2.  
 \end{align*}
The last line of the above display can be majorized by 
\begin{eqnarray*}
2^{-N \tilde{d}} \left| \left| \int_0^1\sum_{\vec{Q} \in \mathbb{Q}^{\tilde{d}}}  \frac{ \left| \langle f_2, \Phi^\alpha_{Q_1, 2} \rangle \langle f_3, \Phi^\alpha _{Q_2, 3} \rangle \right|}{|I_{\vec{Q}}|} \chi^M_{I_{\vec{Q}}} \chi^M_{I_T}d \alpha \right| \right|_2^2.
\end{eqnarray*}
Therefore, using the definition of the exceptional set $\Omega$ once more, 

\begin{align*}
&  \left( \frac{1}{|I_T|} \sum_{\vec{P} \in T}  \left| \left \langle BHT^{\mathbb{Q}^{\tilde{d}}}_{\omega_{P_2}}(f_2, f_3), \Phi_{P_2,0} \right \rangle\right|^2 \right)^{1/2}\\ \lesssim&2^{-N \tilde{d}}  \left(  \sup_{\vec{P} \in \mathbb{P}^{d, \tilde{d}}_{n_1, \mathfrak{d}}} \inf_{x \in I_{\vec{P}}} M \left(  \left[ \int_0^1 \sum_{\vec{Q} \in \mathbb{Q}^{\tilde{d}}} \frac{ \left| \langle f_2, \Phi^\alpha_{Q_1, 2} \rangle \langle f_3, \Phi^\alpha_{Q_2, 3} \rangle \right|}{|I_{\vec{Q}}|} \chi^M_{I_{\vec{Q}}} d\alpha \right] ^2 \right) (x)\right)^{1/2} \\ \lesssim& 2^{-N \tilde{d}/2} 2^{d/2} 2^{\tilde{d}} |E_2|^{1/2} |E_3|^{1/2} \\\lesssim& 2^{-N \tilde{d}/4} |E_2|^{1/2} |E_3|^{1/2}. 
\end{align*}
By combining this estimate with the size estimate in Lemma \ref{L:BiestSize}, we deduce the desired claim for $II$.
It remains to bound term $III$. However, this is immediate from 

\begin{align*}
III \lesssim& 
2^{-N \tilde{d}} \sum_{l \in \mathbb{Z}} \frac{1}{1+l^N}  \int_0^1  \left( \frac{1}{|I_T|}\sum_{\vec{Q} \in \mathbb{Q}^{\tilde{d}} : I_{\vec{Q}} \subset I_T + l |I_T|} \frac{1}{|I_{\vec{Q}}|} |\langle f_2, \Phi^\alpha_{Q_1, 2} \rangle|^2 | \langle f_3, \Phi^\alpha_{Q_2, 3} \rangle|^2 \right)^{1/2} d \alpha \\ \lesssim& 2^{-N \tilde{d}} |E_2|^{1/2} |E_3|^{1/2}. 
\end{align*}

\end{proof}

\subsubsection{$\ell^2$-Energy Estimates}
In preparation for the main energy estimates, we first record
\begin{lemma}\label{Est:BHTloc}
Fix $\alpha \in [0, 1]$ and $\theta \in (0,1)$. Then 
\begin{align*}
\left| \left| BHT^{\alpha, \mathbb{Q}^{\tilde{d}}} (f_2, f_3) \right| \right|_2 \lesssim _{\theta}& \left[ |E_2|^{1/2}\sup_{\vec{Q} \in \mathbb{Q}^{\tilde{d}}} \frac{ \int_{E_3} \chi^M_{I_{\vec{Q}}} dx }{|I_{\vec{Q}}|} \right]^{1-\theta} \cdot \left[ |E_3|^{1/2} \sup_{\vec{Q} \in \mathbb{Q}^{\tilde{d}}}\frac{ \int_{E_2}\chi^M_{I_{\vec{Q}}} dx }{|I_{\vec{Q}}|} \right]^\theta \\ \lesssim& 2^{\tilde{d}} |E_2| ^{\frac{1+\theta}{2}} |E_3|^{\frac{2-\theta}{2}},
\end{align*}
where the implicit constant in the above display can be taken independently of $\alpha$. 
\end{lemma}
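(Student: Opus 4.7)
The plan is to dualize $BHT^{\alpha,\mathbb{Q}^{\tilde d}}(f_2,f_3)$ against a test function $h\in L^2(\mathbb{R})$ with $\|h\|_2\le 1$ and reduce the lemma to a trilinear tile estimate for
\[
\Lambda(f_2,f_3,h) := \sum_{\vec{Q}\in\mathbb{Q}^{\tilde d}} \frac{\langle f_2,\Phi^\alpha_{Q_1,2}\rangle\,\langle f_3,\Phi^\alpha_{Q_2,3}\rangle\,\langle h,\Phi^\alpha_{Q_3,5}\rangle}{|I_{\vec{Q}}|^{1/2}}.
\]
To each input function $g\in\{f_2,f_3,h\}$ I will attach an $\ell^\infty$-based size $S(g) := \sup_{\vec{Q}\in\mathbb{Q}^{\tilde d}}\frac{1}{|I_{\vec{Q}}|}\int|g|\chi^M_{I_{\vec{Q}}}\,dx$ and an $\ell^2$-based energy majorized by $\|g\|_2$. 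The standard $BHT$ stopping-time decomposition from \cite{MR2127985} (see also Chapter~6 of \cite{MR3052499}) partitions $\mathbb{Q}^{\tilde d}$ into levels at which the $j$-th size is of order $2^{-n_j}S(g_j)$, and on each resulting tree $T$ one has the tree estimate $|\Lambda_T|\lesssim 2^{-n_2-n_3-n_h}S(f_2)S(f_3)S(h)|I_T|$ together with the counting bound $\sum_T |I_T|\lesssim 4^{n_j}\|g_j\|_2^2/S(g_j)^2$. Summing over $n_2,n_3,n_h$ and geometrically interpolating yields, for any $\theta_2,\theta_3,\theta_h\in[0,1]$ with $\theta_2+\theta_3+\theta_h=1$,
\[
|\Lambda(f_2,f_3,h)| \lesssim_{\theta_2,\theta_3} S(f_2)^{\theta_2}S(f_3)^{\theta_3}S(h)^{\theta_h}\|f_2\|_2^{1-\theta_2}\|f_3\|_2^{1-\theta_3}\|h\|_2^{1-\theta_h}.
\]

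Specializing to $\theta_h=0$, $\theta_2=\theta$, $\theta_3=1-\theta$ and bounding $\|f_j\|_2\le|E_j|^{1/2}$, $\|h\|_2\le1$, $S(f_j)\le S(1_{E_j})$, then taking the supremum in $h$, recovers
\[
\|BHT^{\alpha,\mathbb{Q}^{\tilde d}}(f_2,f_3)\|_2 \lesssim_\theta \bigl[|E_2|^{1/2}S(1_{E_3})\bigr]^{1-\theta}\bigl[|E_3|^{1/2}S(1_{E_2})\bigr]^\theta,
\]
which is precisely the first line of the lemma.

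For the second inequality, I will unpack the definitions of $\mathbb{Q}^{\tilde d}$ and $\tilde\Omega$. Since $\vec{Q}\in\mathbb{Q}^{\tilde d}$ implies $\mathrm{dist}(I_{\vec{Q}},\tilde\Omega^c)\lesssim 2^{\tilde d}|I_{\vec{Q}}|$, there exists $x_0\notin\tilde\Omega$ within distance $\lesssim 2^{\tilde d}|I_{\vec{Q}}|$ of $I_{\vec{Q}}$, so that $M1_{E_j}(x_0)\lesssim|E_j|$. Decomposing $\int 1_{E_j}\chi^M_{I_{\vec{Q}}}\,dx$ into dyadic shells around $I_{\vec{Q}}$ and using that the average of $1_{E_j}$ on any interval of length $r\ge |I_{\vec{Q}}|$ containing $x_0$ is at most $|E_j|$ yields $S(1_{E_j})\lesssim 2^{\tilde d}|E_j|$; substituting into the first inequality produces the bound $\lesssim 2^{\tilde d}|E_2|^{(1+\theta)/2}|E_3|^{(2-\theta)/2}$ claimed on the second line.

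The principal subtlety will be \emph{uniformity in} $\alpha\in[0,1]$. This follows from the observation that the wave packets $\Phi^\alpha_{Q_k,j}$ satisfy the bump estimates of Definition \ref{WP} with constants independent of $\alpha$; consequently every ingredient of the size--energy argument — the Bessel inequalities, the John--Nirenberg-type size bounds, and the tree-selection algorithm — goes through with $\alpha$-uniform constants, so the final estimate inherits the required uniformity.
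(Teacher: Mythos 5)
Your proposal is correct and is essentially the paper's own argument: the paper simply cites the localized $BHT$ size–energy estimate from the proof of Theorem~1.1 in \cite{MR2127985} for the first inequality and then invokes the definition of $\mathbb{Q}^{\tilde d}$ for the second, whereas you unpack that cited estimate via the standard dualization and size–energy stopping-time machinery (with $\theta_h=0$, $\theta_2=\theta$, $\theta_3=1-\theta$) before making the same $\mathbb{Q}^{\tilde d}$ observation. The content, including the $\alpha$-uniformity remark, matches the intended proof.
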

\begin{proof}
Apply the localized $BHT$ size/energy estimate from the proof of Theorem 1.1 in \cite{MR2127985} and then use the definition of $\mathbb{Q}^{\tilde{d}}$. 
\end{proof}

\begin{lemma}\label{Est:BHTEnergy}
Let $\mathbb{T} = \{ T \}$ be a chain of strongly $j$-disjoint $i$-trees for which $i \not = j$ with comparable tree sizes and for which each subtree $T^\prime \subset T$ satisfies

\begin{eqnarray*}
\frac{1}{|I_{T^\prime}|} \sum_{\vec{P} \in T^\prime} \langle f, \Phi_{P_j} \rangle|^2 \lesssim \frac{1}{|I_T|} \sum_{\vec{P} \in T} |\langle f , \Phi_{P_j} \rangle|^2.
\end{eqnarray*}
Then 

\begin{eqnarray*}
\sum_{T \in \mathbb{T}} \sum_{\vec{P} \in T} |\langle f , \Phi_{P_j} \rangle|^2 \lesssim || f||_2^2. 
\end{eqnarray*}

\end{lemma}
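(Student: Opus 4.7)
The plan is a standard $TT^*$ duality argument. Setting $c_{\vec{P}} := \langle f, \Phi_{P_j} \rangle$, observe that
\[
\sum_{T \in \mathbb{T}} \sum_{\vec{P} \in T} |c_{\vec{P}}|^2 = \left\langle f,\ \sum_{T \in \mathbb{T}} \sum_{\vec{P} \in T} c_{\vec{P}} \Phi_{P_j} \right\rangle \leq \|f\|_2 \cdot \left\|\sum_{T \in \mathbb{T}} \sum_{\vec{P} \in T} c_{\vec{P}} \Phi_{P_j}\right\|_2.
\]
Hence it suffices to establish the dual inequality $\bigl\|\sum_{T}\sum_{\vec{P}\in T} c_{\vec{P}} \Phi_{P_j}\bigr\|_2^2 \lesssim \sum_{T}\sum_{\vec{P}\in T}|c_{\vec{P}}|^2$, which after expanding the square reduces to controlling the double sum
\[
\sum_{T, T'} \sum_{\vec{P} \in T,\ \vec{P}^\prime \in T'} c_{\vec{P}} \overline{c_{\vec{P}^\prime}} \langle \Phi_{P_j}, \Phi_{P^\prime_j}\rangle.
\]

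The diagonal terms with $\vec{P} = \vec{P}^\prime$ produce exactly $\sum_{T,\vec{P}}|c_{\vec{P}}|^2$, so the task is to dominate the off-diagonal contribution. I would split into two regimes: within a single tree and across distinct trees. Within a fixed $i$-tree $T$ with $i \neq j$, the frequency intervals $\omega_{P_j}$ for distinct $\vec{P} \in T$ stack lacunarily over $\omega_{P_{T,i}}$, so the wave packets $\Phi_{P_j}, \Phi_{P^\prime_j}$ have essentially disjoint Fourier supports; their inner products decay rapidly in the ratio of scales and, via Cauchy--Schwarz together with the subtree-size hypothesis, fold back into the diagonal. Across distinct trees, the strongly $j$-disjoint hypothesis is the key: whenever $2\omega_{P_j} \cap 2\omega_{P^\prime_j} \neq \emptyset$, either the frequency intervals are disjoint so the inner product vanishes by Fourier support, or the smaller-frequency tile's time interval lies outside the opposite tree top $I_T$, yielding Schwartz tail decay in space.

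The main obstacle will be showing that the cumulative off-diagonal contribution is dominated by $\sum|c_{\vec{P}}|^2$ rather than accumulating a logarithmic loss. This is precisely where the comparable-tree-size condition and the subtree bound $|I_{T'}|^{-1}\sum_{\vec{P}\in T'}|c_{\vec{P}}|^2 \lesssim |I_T|^{-1}\sum_{\vec{P}\in T}|c_{\vec{P}}|^2$ are used: they allow the Schwartz tails to be packaged into integrated $\chi^M_{I_{\vec{P}}}$ weights that telescope cleanly along the tree structure, after which a geometric summation over scales closes the estimate. The bookkeeping that realizes this telescoping is the heart of the standard Muscalu--Tao--Thiele energy lemma and closely parallels the argument in \cite{MR2127985} and Section 6 of \cite{MR3052499}, to which I would defer for the residual combinatorial details.
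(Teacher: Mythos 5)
Your proposal correctly sketches the standard $TT^*$ / Bessel-type argument that the paper simply defers to by citing Section 6.5 of \cite{MR3052499}: the dualization to a square-expansion, the diagonal term, in-tree off-diagonal handled by frequency lacunarity and spatial tail decay, cross-tree off-diagonal handled by strong $j$-disjointness, with the subtree density bound and comparable-tree-size hypothesis preventing logarithmic pile-up. This matches the referenced proof essentially step for step (the only slip is cosmetic --- the lacunary stacking is over $\omega_{P_{T,j}}$, not $\omega_{P_{T,i}}$).
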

\begin{proof}
See section 6.5 on Bessel-type inequalities in \cite{MR3052499}.
\end{proof}

We also need the following energy bound: 

\begin{lemma}\label{Est:BiestEnergy}
Let $\mathbb{T} = \left\{ T \right\}$ be a chain of strongly $2$-disjoint $1$-trees satisfying for every $T_1, T_2 \in \mathbb{T}$ 

\begin{align*}
\frac{1}{|I_{T_1}|} \sum_{\vec{P} \in T_1}  \left|\left \langle \int_0^1 \sum_{\vec{Q} \in \mathbb{Q}^{\tilde{d}}: \omega_{Q_3} \supset \supset \omega_{P_2}} \frac{1}{|I_{\vec{Q}}|^{1/2}} \langle f_2, \Phi^\alpha_{Q_1, 2} \rangle \langle f_3, \Phi^\alpha_{Q_2,3} \rangle \Phi^\alpha_{Q_3,4} d\alpha, \Phi_{P_2,0} \right \rangle \right|^2 \\ \simeq  \frac{1}{|I_{T_2}|} \sum_{\vec{P} \in T_2}  \left|\left \langle \int_0^1 \sum_{\vec{Q} \in \mathbb{Q}^{\tilde{d}}: \omega_{Q_3} \supset \supset \omega_{P_2}} \frac{1}{|I_{\vec{Q}}|^{1/2}} \langle f_2, \Phi^\alpha_{Q_1, 2} \rangle \langle f_3, \Phi^\alpha_{Q_2,3} \rangle \Phi^\alpha_{Q_3,4} d\alpha, \Phi_{P_2,0} \right \rangle  \right|^2
\end{align*}
in addition to the property that for each subtree $T^\prime \subset T \in \mathbb{T}$

\begin{align*}
& \frac{1}{|I_{T^\prime}|} \sum_{\vec{P} \in T^\prime} \left| \left \langle \int_0^1 \sum_{\vec{Q} \in \mathbb{Q}^{\tilde{d}}: \omega_{Q_3} \supset \supset \omega_{P_2}} \frac{1}{|I_{\vec{Q}}|^{1/2}} \langle f_2, \Phi^\alpha_{Q_1, 2} \rangle \langle f_3, \Phi^\alpha_{Q_2,3} \rangle \Phi^\alpha_{Q_3,4} d\alpha, \Phi_{P_2,0} \right \rangle  \right|^2\\ \lesssim& \frac{1}{|I_{T}|} \sum_{\vec{P} \in T} \left| \left \langle \int_0^1 \sum_{\vec{Q} \in \mathbb{Q}^{\tilde{d}}: \omega_{Q_3} \supset \supset \omega_{P_2}} \frac{1}{|I_{\vec{Q}}|^{1/2}} \langle f_2, \Phi^\alpha_{Q_1, 2} \rangle \langle f_3, \Phi^\alpha_{Q_2,3} \rangle \Phi^\alpha_{Q_3,4} d\alpha, \Phi_{P_2,0} \right \rangle  \right|^2 .
\end{align*}
 Then for every $0 < \theta <1$, 

\begin{eqnarray*}
\sum_{T \in \mathbb{T}} \sum_{\vec{P} \in T} \left|\left \langle \int_0^1 \sum_{\vec{Q} \in \mathbb{Q}^{\tilde{d}}: \omega_{Q_3} \supset \supset \omega_{P_2}} \frac{1}{|I_{\vec{Q}}|^{1/2}} \langle f_2, \Phi^\alpha_{Q_1, 2} \rangle \langle f_3, \Phi^\alpha_{Q_2,3} \rangle \Phi^\alpha_{Q_3,4} d\alpha, \Phi_{P_2,0} \right \rangle  \right|^2 \lesssim_\theta 2^{2 \tilde{d}} |E_2|^{1+\theta} |E_3|^{2-\theta}. 
\end{eqnarray*}

\end{lemma}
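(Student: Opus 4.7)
The plan is to combine Cauchy--Schwarz in $\alpha$, the $L^2$ bound on the localized $BHT$ output from Lemma \ref{Est:BHTloc}, and a Bessel-type inequality for the strongly $2$-disjoint $1$-trees, in direct analogy with the Biest energy estimate from \cite{MR2127985}. The first reduction is to push the square inside the $\alpha$-integral: by Cauchy--Schwarz,
$$\left|\left\langle G_{P_2},\Phi_{P_2,0}\right\rangle\right|^2 \leq \int_0^1 \left|\left\langle g^\alpha_{P_2,\supset\supset},\Phi_{P_2,0}\right\rangle\right|^2\, d\alpha,$$
where $g^\alpha := BHT^{\alpha,\mathbb{Q}^{\tilde{d}}}(f_2,f_3)$ and $g^\alpha_{P_2,\supset\supset}$ denotes the piece of $g^\alpha$ obtained by restricting the sum to $\vec{Q}$ with $\omega_{Q_3}\supset\supset\omega_{P_2}$. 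It therefore suffices to produce the estimate for each fixed $\alpha$.

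For fixed $\alpha$, the second ingredient is the Bessel-type inequality for strongly $2$-disjoint $1$-trees,
$$\sum_{T\in\mathbb{T}}\sum_{\vec{P}\in T}\left|\left\langle F,\Phi_{P_2,0}\right\rangle\right|^2 \lesssim \|F\|_2^2 \qquad \forall F\in L^2(\mathbb{R}),$$
which follows from the strong-disjointness hypothesis via essentially the same argument that underlies Lemma \ref{Est:BHTEnergy} with the roles of the two indices interchanged. Were one able to substitute $F = g^\alpha$, Lemma \ref{Est:BHTloc} would immediately close the estimate, since $\|g^\alpha\|_2^2 \lesssim_\theta 2^{2\tilde{d}}|E_2|^{1+\theta}|E_3|^{2-\theta}$. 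The obstruction is that $g^\alpha_{P_2,\supset\supset}$ genuinely depends on $\vec{P}$ through the frequency restriction, so the Bessel inequality cannot be applied verbatim.

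To eliminate the $\vec{P}$-dependence, I would decompose $g^\alpha = g^\alpha_{P_2,\supset\supset} + g^\alpha_{P_2,\simeq} + g^\alpha_{P_2,\subset\subset}$ according to whether $|\omega_{Q_3}|$ is much larger than, comparable to, or much smaller than $|\omega_{P_2}|$, and use
$$\left|\left\langle g^\alpha_{P_2,\supset\supset},\Phi_{P_2,0}\right\rangle\right|^2 \lesssim \left|\left\langle g^\alpha,\Phi_{P_2,0}\right\rangle\right|^2 + \left|\left\langle g^\alpha_{P_2,\simeq},\Phi_{P_2,0}\right\rangle\right|^2 + \left|\left\langle g^\alpha_{P_2,\subset\subset},\Phi_{P_2,0}\right\rangle\right|^2.$$
The main term is controlled by the Bessel inequality applied with $F = g^\alpha$ and Lemma \ref{Est:BHTloc}. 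For the two error pieces, I would use the comparable-tree-size hypothesis to rewrite the double sum as $\Sigma^2 \sum_T|I_T|$, bound $\Sigma$ by Lemma \ref{L:BiestSize} (together with its natural variants for the $\simeq$ and $\subset\subset$ scales, of which the treatment of term $III$ in the proof of Lemma \ref{Est:Size-Restrictions} is a prototype), and bound $\sum_T|I_T|$ by another application of the Bessel inequality against a suitable $L^2$ function constructed from the exceptional set in \eqref{Exc:1}. Each error piece then produces the same scaling as the main term.

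The principal obstacle will be verifying the Bessel inequality for the wave packets $\Phi_{P_2,0}$ with the strongly $2$-disjoint chain $\mathbb{T}$: while this is essentially standard, it requires careful bookkeeping of the Fourier-support and spatial-localization interactions across different trees in the chain, entirely parallel to the proof of Lemma \ref{Est:BHTEnergy}. Once this inequality is established, summing the three contributions yields the claimed bound $2^{2\tilde{d}}|E_2|^{1+\theta}|E_3|^{2-\theta}$.
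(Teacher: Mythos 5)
Your first reduction (Cauchy--Schwarz in $\alpha$) is fine, and you correctly identify the real obstruction: the $\vec{P}$-dependence of the restriction $\omega_{Q_3}\supset\supset\omega_{P_2}$ blocks a direct Bessel application. But the decomposition you propose to remove that dependence does not close. Writing $g^\alpha_{P_2,\supset\supset} = g^\alpha - g^\alpha_{P_2,\simeq} - g^\alpha_{P_2,\subset\subset}$ and taking the triangle inequality replaces one $\vec{P}$-dependent quantity with two others of exactly the same type. In particular the piece $g^\alpha_{P_2,\subset\subset}$ is precisely $BHT^{\alpha,\mathbb{Q}^{\tilde d}}_{\omega_{P_2}}(f_2,f_3)$, i.e.\ the Biest coefficient $\langle BHT^\alpha_{\omega_{P_2}},\Phi_{P_2,0}\rangle$ in the ``wrong'' scale direction. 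There is no reason this term is an error: it is the same order of magnitude as the main term, and bounding $\sum_T\sum_{\vec P}|\langle g^\alpha_{P_2,\subset\subset},\Phi_{P_2,0}\rangle|^2$ by $\|g^\alpha\|_2^2$ is a distinct claim that you have not proved. Your proposed treatment --- rewrite the double sum as $\Sigma^2\sum_T|I_T|$, then bound $\sum_T|I_T|$ ``by another application of the Bessel inequality against a suitable $L^2$ function'' --- is circular, since a good bound on $\sum_T|I_T|$ for a chain at size $\Sigma$ is exactly what an energy estimate is. Also note that the Bessel inequality you invoke for $F=g^\alpha$ requires the comparable-tree-size and subtree-domination hypotheses with respect to the coefficients $\langle g^\alpha,\Phi_{P_2,0}\rangle$; the lemma's hypotheses only supply those for the $\supset\supset$ coefficients, so even the ``main'' term is not immediate.

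The paper sidesteps all of this by observing that the statement is, after replacing $\supset\supset$ by $\supseteq$, exactly Lemma~9.2 in~\cite{MR2127985}, whose proof is untouched by that change. MTT's argument handles the $\vec{P}$-dependence intrinsically, by a $TT^*$-type expansion that exploits the disjointness of the $\mathbb{Q}$-tile intervals against the strongly disjoint $\mathbb{P}$-trees, rather than by peeling off a scale-independent main term --- the same kind of argument the paper carries out explicitly in Lemma~\ref{L:Energy-3} for the $3$-energy. If you want to prove the lemma from scratch rather than cite it, you should model your argument on Lemma~\ref{L:Energy-3}, not on the $g^\alpha$ decomposition.
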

\begin{proof}
The argument is a slight modification of Lemma 9.2 in \cite{MR2127985}. There, the result is phrased as an estimate for the $3$-energy, which is equivalent to what we need to show except that $\supset \supset$ is replaced by $\supseteq$. However, this does not affect the validity of the proof found there. 
\end{proof}

Lemmas \ref{Est:BHTloc}, \ref{Est:BHTEnergy}, and \ref{Est:BiestEnergy} are the main ingredients needed for the following $\ell^2$ energy estimate: 
\begin{lemma}\label{Est:l2Energy}
Fix $d, \tilde{d} \geq 0$ along with $\mathfrak{d} \geq N_2(d, \tilde{d})$.   Then for any $0< \theta <1$, 

\begin{align*}
\sum_{T \in \mathcal{T}_{\mathfrak{d},2}^{d, \tilde{d}}} |I_T| \lesssim_\theta 2^{2 \mathfrak{d}} 2^{2 \tilde{d}} |E_2|^{1+\theta} |E_3|^{2 - \theta}.
 \end{align*}

\end{lemma}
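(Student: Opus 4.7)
The plan is to start from the size-to-counting reduction already furnished by Lemma~\ref{TDL}, which bounds $\sum_{T \in \mathcal{T}_{\mathfrak{d},2}^{d,\tilde{d}}} |I_T|$ by $2^{2\mathfrak{d}}$ times the sum over $\mathcal{T}_{\mathfrak{d},2,*}^{d,\tilde{d}}$ of three square-sum expressions. Hence it suffices to show, for each of the three expressions $(I)$, $(II)$, $(III)$ appearing there, that
\[
\sum_{T \in \mathcal{T}_{\mathfrak{d},2,*}^{d,\tilde{d}}} \sum_{\vec{P}\in T} (\cdot) \;\lesssim_\theta\; 2^{2\tilde{d}} |E_2|^{1+\theta} |E_3|^{2-\theta}.
\]
Since $\mathcal{T}_{\mathfrak{d},2,*}^{d,\tilde{d}}$ is (modulo splitting into $\pm$) a chain of strongly $2$-disjoint $1$-trees whose tree sizes are comparable (all $\simeq 2^{-\mathfrak{d}}$ by the selection algorithm), the hypotheses of the Bessel-type energy inequalities are in place.

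For term $(I)$, involving $\bigl\langle \int_0^1 BHT^{\alpha,\mathbb{Q}^{\tilde{d}}}(f_2,f_3)\,d\alpha,\,\Phi_{P_2,0}\bigr\rangle$, I would apply Lemma~\ref{Est:BHTEnergy} with $f := \int_0^1 BHT^{\alpha,\mathbb{Q}^{\tilde{d}}}(f_2,f_3)\,d\alpha$, converting the tree-energy sum into $\|f\|_2^2$. Minkowski in $\alpha$ followed by Lemma~\ref{Est:BHTloc} (uniform in $\alpha$) produces
\[
\|f\|_2 \lesssim_\theta 2^{\tilde{d}}|E_2|^{(1+\theta)/2}|E_3|^{(2-\theta)/2},
\]
giving exactly the desired bound. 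For term $(II)$, involving the $\omega_{Q_3}\supset\supset\omega_{P_2}$ restriction, Lemma~\ref{Est:BiestEnergy} is tailor-made and yields the same bound directly.

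Term $(III)$, corresponding to $|\omega_{Q_3}|\simeq|\omega_{P_2}|$, is the one I expect to be the main obstacle, because it cannot be treated by either of the two energy lemmas as stated: the $\mathbb{Q}$-scales and $\mathbb{P}$-scales coincide, so this is a ``diagonal'' layer. I plan to handle it by exploiting the fact that $|I_{\vec{Q}}|\simeq |I_{\vec{P}}|$ forces the wave-packet pairing $\langle \Phi^\alpha_{Q_3,5},\Phi_{P_2,0}\rangle$ to decay rapidly in $\mathrm{dist}(I_{\vec{P}},I_{\vec{Q}})/|I_{\vec{P}}|$. Inserting this decay and applying Cauchy--Schwarz on the inner sum allows one to bound the tree-energy by
\[
\sum_{l\in\mathbb{Z}}\frac{1}{1+l^N}\int_0^1 \sum_{\vec{Q}\in\mathbb{Q}^{\tilde{d}}:\,I_{\vec{Q}}\subset I_T+l|I_T|} \frac{|\langle f_2,\Phi^\alpha_{Q_1,2}\rangle|^2|\langle f_3,\Phi^\alpha_{Q_2,3}\rangle|^2}{|I_{\vec{Q}}|}\,d\alpha,
\]
after which strong $2$-disjointness across the chain (together with the standard $BHT$ energy inequality for $\mathbb{Q}$) yields an $\ell^2$-sum of the $\mathbb{Q}$-energies. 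Interpolating these by the two-parameter trick used in Lemma~\ref{Est:BHTloc} (i.e.\ combining the trivial $|E_2|\cdot|E_3|$ bound with the localized $L^\infty$ size bound $\sup_{\vec{Q}} \frac{1}{|I_{\vec{Q}}|}\int_{E_j}\chi_{I_{\vec Q}}^M \lesssim 2^{\tilde d}$), one recovers the asymmetric $|E_2|^{1+\theta}|E_3|^{2-\theta}$ weighting; combined with the definition of $\mathbb{Q}^{\tilde d}$ this produces the desired $2^{2\tilde{d}}$ factor.

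Summing the three contributions and multiplying by $2^{2\mathfrak{d}}$ completes the bound. The only delicate point is ensuring that the diagonal term $(III)$ genuinely admits the same $\theta$-parametrized weighting as terms $(I)$ and $(II)$; everything else is a direct packaging of Lemmas~\ref{Est:BHTloc}, \ref{Est:BHTEnergy}, and \ref{Est:BiestEnergy} through the skeleton provided by Lemma~\ref{TDL}.
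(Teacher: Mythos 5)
Your approach is essentially the same as the paper's: start from Lemma~\ref{TDL}'s size-to-counting reduction, then bound each of the three tree-energy contributions by combining Lemmas~\ref{Est:BHTloc}, \ref{Est:BHTEnergy}, and \ref{Est:BiestEnergy}, treating the matched-scale term $(III)$ by wave-packet decay, Cauchy--Schwarz, and pairwise disjointness. However, there is a small technical gap in the claim that ``the hypotheses of the Bessel-type energy inequalities are in place'' because ``tree sizes are comparable.'' The selection algorithm behind Lemma~\ref{TDL} makes the \emph{total} size $Size_0^{\tilde d}$ comparable ($\simeq 2^{-\mathfrak d}$) across trees in $\mathcal{T}^{d,\tilde d}_{\mathfrak d, 2, *}$, but Lemmas~\ref{Est:BHTEnergy} and \ref{Est:BiestEnergy} each require the \emph{specific} component appearing in the inner product (term $(I)$, $(II)$, or $(III)$ separately) to have comparable tree energy across the chain. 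A tree selected because term $(II)$ dominates may have term $(I)$ or $(III)$ far below $2^{-2\mathfrak d}|I_T|$, so the comparable-size hypothesis is not automatic for each individual component. The paper fixes this by partitioning $\mathcal T_{\mathfrak d, 2,*,\pm}^{d,\tilde d}$ further into $\mathcal T_{\mathfrak d, 2,*,\pm, I}$, $\mathcal T_{\mathfrak d, 2,*,\pm, II}$, $\mathcal T_{\mathfrak d, 2,*,\pm, III}$ according to which term exceeds $2^{-2\mathfrak d-5}|I_T|$: on each sub-collection the lower bound from membership and the upper bound from the total size pin the relevant component to $\simeq 2^{-\mathfrak d}$, so the energy lemma applies cleanly. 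Adding this further partition step closes the gap; otherwise your argument matches the paper's.
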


\begin{proof}
Recall from Lemma \ref{TDL} that

\begin{align*}
\sum_{T \in \mathcal{T}_{\mathfrak{d},2}^{d, \tilde{d}}} |I_T| \lesssim \sum_{T \in \mathcal{T}_{\mathfrak{d},2, *}^{d, \tilde{d}}} |I_T| =  \sum_{T \in \mathcal{T}_{\mathfrak{d},2, *,+}^{d, \tilde{d}}} |I_T|  +  \sum_{T \in \mathcal{T}_{\mathfrak{d},2, *,-}^{d, \tilde{d}}} |I_T| . 
\end{align*}
We further decompose the trees in $\mathcal{T}_{\mathfrak{d},2, *,+}$ into the following union:

\begin{eqnarray*}
\mathcal{T}_{\mathfrak{d},2, *,+} = \mathcal{T}_{\mathfrak{d},2, *,+, I}  \bigcup \mathcal{T}_{\mathfrak{d},2, *,+, II}  \bigcup \mathcal{T}_{\mathfrak{d},2, *,+, III}
\end{eqnarray*}
where $\mathcal{T}_{\mathfrak{d},2, *,+, I}, \mathcal{T}_{\mathfrak{d},2, *,+, II}$, and $\mathcal{T}_{\mathfrak{d},2, *,+, III}$ are respectively given by 

\begin{eqnarray*}
&&  \left\{ T :  \sum_{\vec{P} \in T} \left|\left \langle \int_0^1 BHT^{\alpha, \mathbb{Q}^{\tilde{d}}}(f_2, f_3) d \alpha, \Phi_{P_2, 0}  \right \rangle \right|^2 \geq 2^{-2\mathfrak{d}-5} |I_T| \right\}  \\
&& \left\{ T  :  \sum_{\vec{P} \in T} \left| \left \langle \int_0^1 \sum_{ \substack{ \vec{Q} \in \mathbb{Q}^{\tilde{d}} \\  \omega_{Q_3} \supset \supset \omega_{P_2}}} \frac{1}{|I_{\vec{Q}}|^{1/2}} \langle f_2, \Phi^\alpha_{Q_1, 2} \rangle \langle f_3, \Phi^\alpha_{Q_2,3} \rangle \Phi^\alpha_{Q_3,4} d\alpha, \Phi_{P_2,0} \right \rangle \right|^2 \geq 2^{-2\mathfrak{d}-5} |I_T| \right\} \\ &&
\left\{ T  :  \sum_{\vec{P} \in T} \left| \left \langle \int_0^1 \sum_{ \substack{ \vec{Q} \in \mathbb{Q}^{\tilde{d}} \\  |\omega_{Q_3}| \simeq  |\omega_{P_2}|}} \frac{1}{|I_{\vec{Q}}|^{1/2}} \langle f_2, \Phi^\alpha_{Q_1, 2} \rangle \langle f_3, \Phi^\alpha_{Q_2,3} \rangle \Phi^\alpha_{Q_3,4} d\alpha, \Phi_{P_2,0} \right \rangle \right|^2 \geq 2^{-2\mathfrak{d}-5} |I_T| \right\}. 
\end{eqnarray*}
Similarly, we have the decomposition $\mathcal{T}_{\mathfrak{d},2, *,-} = \mathcal{T}_{\mathfrak{d},2, *,-, I}  \bigcup \mathcal{T}_{\mathfrak{d},2, *,-, II} \bigcup  \mathcal{T}_{\mathfrak{d},2, *,-, III} $.  Therefore, 

\begin{align*}
 \sum_{T \in \mathcal{T}_{\mathfrak{d},2, *}^{d, \tilde{d}}} |I_T| \leq& \sum_{T \in \mathcal{T}_{\mathfrak{d},2, *,+,I}^{d, \tilde{d}}} |I_T| + \sum_{T \in \mathcal{T}_{\mathfrak{d},2, *,+,II}^{d, \tilde{d}}} |I_T| +\sum_{T \in \mathcal{T}_{\mathfrak{d},2, *,+,III}^{d, \tilde{d}}} |I_T| \\+&  \sum_{T \in \mathcal{T}_{\mathfrak{d},2, *,-, I}^{d, \tilde{d}}} |I_T| + \sum_{T \in \mathcal{T}_{\mathfrak{d},2, *,-,II}^{d, \tilde{d}}} |I_T|+\sum_{T \in \mathcal{T}_{\mathfrak{d},2, *,-,III}^{d, \tilde{d}}} |I_T|.
\end{align*} 
By Lemmas \ref{Est:BHTloc} and \ref{Est:BHTEnergy}, 

\begin{align*}
\sum_{T \in \mathcal{T}_{\mathfrak{d},2, *,+,I}^{d, \tilde{d}}} |I_T| \lesssim & 2^{2 \mathfrak{d}} \sum_{T \in \mathcal{T}_{\mathfrak{d},2, *,+, I}}  \sum_{\vec{P} \in T} \left|\left \langle \int_0^1 BHT^{\alpha, \mathbb{Q}^{\tilde{d}}}(f_2, f_3) d \alpha, \Phi_{P_2, 0}  \right \rangle \right|^2\\  \lesssim& 2^{2 \mathfrak{d}}  \left| \left| \int_0^1 BHT^{\alpha, \mathbb{Q}^{\tilde{d}}}(f_2, f_3) d \alpha \right| \right|_2^2 \\ \lesssim& 2^{2 \mathfrak{d}} 2^{2\tilde{d}} |E_2|^{1+\theta} |E_3|^{2-\theta}. 
\end{align*}
Moreover, by Lemma \ref{Est:BiestEnergy},

\begin{align}
 & \sum_{T \in \mathcal{T}_{\mathfrak{d},2, *,+,II}^{d, \tilde{d}}} |I_T| \\ \lesssim&   2^{2 \mathfrak{d}} \sum_{T \in \mathcal{T}_{\mathfrak{d},2, *,+, II}}  \sum_{\vec{P} \in T} \left|\left \langle \sum_{\vec{Q} \in \mathbb{Q}^{\tilde{d}}: \omega_{Q_3} \supset \supset \omega_{P_2}} \frac{1}{|I_{\vec{Q}}|^{1/2}} \langle f_2, \Phi^\alpha_{Q_1, 2} \rangle \langle f_3, \Phi^\alpha_{Q_2,3} \rangle \Phi_{Q_3,4}, \Phi^\alpha_{P_2,0} \right \rangle  \right|^2 \nonumber \\ \lesssim& 2^{2 \mathfrak{d}} 2^{2 \tilde{d}} |E_2|^{1+\theta} |E_3|^{2-\theta} \nonumber .
 \end{align}
 It is also straightforward to observe
 
 \begin{align}\label{Est:BiestEnergy,III}
 & \sum_{T \in \mathcal{T}_{\mathfrak{d},2, *,+,III}^{d, \tilde{d}}} |I_T| \\ \lesssim& 2^{2 \mathfrak{d}} \sum_{T \in \mathcal{T}_{\mathfrak{d},2, *,+,III}^{d, \tilde{d}}}\sum_{\vec{P} \in T} \left| \left \langle \sum_{\vec{Q} \in \mathbb{Q}^{\tilde{d}}: |\omega_{Q_3}| \simeq |\omega_{P_2}|} \frac{1}{|I_{\vec{Q}}|^{1/2}} \langle f_2, \Phi^\alpha_{Q_1, 2} \rangle \langle f_3, \Phi^\alpha_{Q_2,3} \rangle \Phi_{Q_3,4}, \Phi^\alpha_{P_2,0} \right \rangle \right|^2 \nonumber \\ \lesssim&  \sum_{\vec{Q} \in \mathbb{Q}^{\tilde{d}}} \frac{1}{|I_{\vec{Q}}|} |\langle f_2, \Phi^\alpha_{Q_1, 2} \rangle|^2  |\langle f_3, \Phi^\alpha_{Q_2,3} \rangle|^2 \nonumber \\ \lesssim& 2^{2 \mathfrak{d}} 2^{2\tilde{d}} |E_2|^{1+\theta} |E_3|^{2-\theta} \nonumber.  
\end{align}
The sums $\sum_{T \in \mathcal{T}_{\mathfrak{d},2, *,-,I}^{d, \tilde{d}}} |I_T|$, $\sum_{T \in \mathcal{T}_{\mathfrak{d},2, *,-,II}^{d, \tilde{d}}} |I_T|$, and $\sum_{T \in \mathcal{T}_{\mathfrak{d},2, *,-,III}^{d, \tilde{d}}} |I_T|$ are handled similarly, so these details are omitted. 
\end{proof}

\subsubsection{$\ell^1$-Energy Estimate}
The standard $BHT$ energy method involves obtaining $l^2$ estimates of the form

\begin{eqnarray*}
\sum_{T \in \mathcal{T}} |I_T| \lesssim 2^{2n} \sum_{T \in \mathcal{T}} \sum_{\vec{P} \in T} |\langle f , \Phi_P \rangle|^2 \lesssim 2^{2n} ||f||_2^2 ,
\end{eqnarray*}
where the trees $T \in \mathcal{T}$ are strongly disjoint. Because our tri-tile collection $\mathbb{P}$ is not rank-1, we are not able to pass the analysis directly to ``Biest" type arguments. Instead, we shall need an $l^1$-type energy estimate. Before stating this result precisely, we shall need to introduce terminology: 

\begin{definition} For a given collection of tri-tiles $\mathbb{P}^0 \subset \mathbb{P}$ and $\tilde{\mathbb{Q}} \subset \mathbb{Q}$, let
  
  \begin{align*}
\mathbb{P}^0_{\mathfrak{b}}(\tilde{\mathbb{Q}})= \left\{ \vec{P} \in \mathbb{P}^0: \frac{1}{|I_{\vec{P}}|^{1/2}} \left |\left \langle \int_0^1 BHT^{\alpha,\tilde{\mathbb{Q}}}_{\omega_{P_2}}(f_2, f_3)d \alpha, \Phi_{P_2,0} \right \rangle \right| \simeq 2^{-\mathfrak{b}} \right\} 
\end{align*}
\end{definition}
Our main $\ell^1$-energy result is the following:
\begin{theorem}\label{l1T}
Let $\mathbb{P}^0 \subset \mathbb{P}$ be any collection of tri-tiles for which $\{P_2\}_{\vec{P} \in \mathbb{P}^0}$ is pairwise disjoint. Moreover, let $\tilde{\mathbb{Q}} \subset \mathbb{Q}$. Then for any $0< \tilde{\epsilon} < 1$
\begin{align*}
\sum_{\vec{P} \in \mathbb{P}_{\mathfrak{b}}^0(\tilde{\mathbb{Q}})} |I_{\vec{P}}| 
 \lesssim_{\tilde{\epsilon}} 2^{\frac{\mathfrak{b}}{1-\tilde{\epsilon}}} |E_2|^{ 1/2} |E_3|^{1/2}.
\end{align*}
\end{theorem}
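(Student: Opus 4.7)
The anchor of the argument is the size relation $|T^\alpha_{\vec{P}}| \simeq 2^{-\mathfrak{b}} |I_{\vec{P}}|^{1/2}$, which holds by definition of $\mathbb{P}^0_{\mathfrak{b}}(\tilde{\mathbb{Q}})$, where $T^\alpha_{\vec{P}} := \int_0^1 \langle BHT^{\alpha, \tilde{\mathbb{Q}}}_{\omega_{P_2}}(f_2, f_3), \Phi_{P_2, 0}\rangle \, d\alpha$. For any $\tilde{\epsilon} \in (0,1)$, this identity may be rewritten as
\begin{equation*}
|I_{\vec{P}}| \simeq 2^{\mathfrak{b}(1+\tilde{\epsilon})} |T^\alpha_{\vec{P}}|^{1+\tilde{\epsilon}} |I_{\vec{P}}|^{(1-\tilde{\epsilon})/2},
\end{equation*}
so that summing over $\vec{P}$ reduces the theorem to the weighted $\ell^{1+\tilde{\epsilon}}$-estimate $\sum_{\vec{P}} |T^\alpha_{\vec{P}}|^{1+\tilde{\epsilon}} |I_{\vec{P}}|^{(1-\tilde{\epsilon})/2} \lesssim (|E_2||E_3|)^{1/2}$. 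After the harmless relabeling $\tilde{\epsilon}_{\mathrm{new}} := \tilde{\epsilon}/(1+\tilde{\epsilon})$, the factor $2^{\mathfrak{b}(1+\tilde{\epsilon})}$ becomes $2^{\mathfrak{b}/(1-\tilde{\epsilon}_{\mathrm{new}})}$, matching the stated conclusion.

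To prove the $\ell^{1+\tilde{\epsilon}}$-estimate, the plan is to invoke an $L^{1+\tilde{\epsilon}}$-Bessel inequality for the wave packets $\{\Phi_{P_2, 0}\}_{\vec{P} \in \mathbb{P}^0}$, which is unlocked by the pairwise disjointness of $\{P_2\}$: for every $g \in L^{1+\tilde{\epsilon}}(\mathbb{R})$,
\begin{equation*}
\sum_{\vec{P}} |\langle g, \Phi_{P_2, 0}\rangle|^{1+\tilde{\epsilon}} |I_{\vec{P}}|^{(1-\tilde{\epsilon})/2} \lesssim_{\tilde{\epsilon}} \|g\|_{L^{1+\tilde{\epsilon}}}^{1+\tilde{\epsilon}}.
\end{equation*}
This is obtained by dualizing to the wave-packet expansion $\sum_{\vec{P}} c_{\vec{P}} \Phi_{P_2, 0}$ and comparing its $L^{(1+\tilde{\epsilon})'}$-norm with that of the square function $S(x) = (\sum |c_{\vec{P}}|^2 |I_{\vec{P}}|^{-1} \chi^{M}_{I_{\vec{P}}}(x))^{1/2}$, which is well-behaved because the disjointness of $P_2$ forces bounded overlap of $\{I_{\vec{P}}\}$ within each dyadic frequency scale. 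I will apply this bound with $g := \int_0^1 BHT^{\alpha, \tilde{\mathbb{Q}}}(f_2, f_3) \, d\alpha$, observing that a Fourier-support argument yields $T^\alpha_{\vec{P}} = \langle g, \Phi_{P_2, 0}\rangle - \mathcal{E}_{\vec{P}}$, where $\mathcal{E}_{\vec{P}}$ collects the contributions of $\vec{Q} \in \tilde{\mathbb{Q}}$ with $\omega_{Q_3} \not\subset\subset \omega_{P_2}$ but still meeting $\tfrac{9}{10}\omega_{P_2}$. The main-term estimate then follows from Minkowski and the standard bound $\|BHT(f_2, f_3)\|_{L^{1+\tilde{\epsilon}}} \lesssim \|f_2\|_{L^{2(1+\tilde{\epsilon})}} \|f_3\|_{L^{2(1+\tilde{\epsilon})}}$, which under $|f_j| \leq \mathbf{1}_{E_j}$ produces exactly $(|E_2||E_3|)^{1/2}$ after raising to the $(1+\tilde{\epsilon})$-th power.

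The principal obstacle is the treatment of the boundary error $\mathcal{E}_{\vec{P}}$. These terms pair $\Phi_{P_2, 0}$ with wave packets $\Phi^\alpha_{Q_3, 5}$ at scales $|\omega_{Q_3}| \gtrsim |\omega_{P_2}|$, so $|I_{\vec{Q}}| \lesssim |I_{\vec{P}}|$ and the pairings are essentially spatially local. I plan to control them either by re-running the $\ell^{1+\tilde{\epsilon}}$-Bessel argument with the roles of $\mathbb{P}^0$ and $\tilde{\mathbb{Q}}$ swapped (exploiting that the rank-$1$ hypothesis on $\tilde{\mathbb{Q}}$ forces the relevant $Q_3$-tiles at comparable scales to be pairwise comparable), or alternatively by pointwise-dominating $\mathcal{E}_{\vec{P}}$ via the variational Bi-Carleson operator $\mathcal{BC}^{1+\tilde{\epsilon}}$ of Do--Muscalu--Thiele and invoking its boundedness from $L^{2(1+\tilde{\epsilon})} \times L^{2(1+\tilde{\epsilon})}$ into $L^{1+\tilde{\epsilon}}$. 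This boundary piece is the genuinely delicate step, and it is precisely the place where the $\tilde{\epsilon}$-loss in the theorem is born, since the Do--Muscalu--Thiele constants degenerate as the variational exponent approaches $1$.
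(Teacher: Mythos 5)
Your Hölder rewrite in step 2 is correct algebra ($|T_{\vec{P}}|\simeq 2^{-\mathfrak{b}}|I_{\vec{P}}|^{1/2}$ indeed gives $|I_{\vec{P}}|\lesssim 2^{\mathfrak{b}(1+\tilde{\epsilon})}|T_{\vec{P}}|^{1+\tilde{\epsilon}}|I_{\vec{P}}|^{(1-\tilde{\epsilon})/2}$, and the $\tilde{\epsilon}$-relabeling matches the stated exponent), and the weighted $\ell^{1+\tilde{\epsilon}}$ Bessel inequality you want for a pairwise-disjoint family $\{P_2\}$ is plausible via dualization plus a Rubio-de-Francia/square-function argument. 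The main-term accounting using $BHT: L^{2(1+\tilde\epsilon)}\times L^{2(1+\tilde\epsilon)}\to L^{1+\tilde\epsilon}$ then lands cleanly on $(|E_2||E_3|)^{1/2}$. This is a genuinely different route from the paper, which never introduces a $\vec{P}$-independent $g$: the paper dualizes $2^{-\mathfrak b}\sum|I_{\vec{P}}|$ into a trilinear $\mathbb{Q}$-form whose third "input" is $\sum_{\vec{P}:\,\omega_{P_2}\supset\supset\omega_{Q_3}}h_{\vec{P}}\Phi^\infty_{P_2,0}$, and then applies the abstract size--energy estimate, with the hard step being the $3$-energy bound $\mathcal{E}_3\lesssim(\sum|I_{\vec{P}}|)^{1/2}$ (Lemma \ref{L:Energy-3}), where the disjointness of the $P_2$-tiles is used through an $L^2$ Bessel bound and a careful two-case analysis of the comparable-scale interaction $\mathcal{E}_3^{II}$.

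The gap in your argument is exactly where you flag it: you do not actually control $\mathcal{E}_{\vec{P}}$, and neither of your two suggested routes is convincing as stated. Swapping roles of $\mathbb{P}^0$ and $\tilde{\mathbb{Q}}$ does not give you a Bessel-type hypothesis, since $\tilde{\mathbb{Q}}$ is only rank-$1$ (the $Q_3$-tiles can nest across scales), and the error is not a clean pairing $\langle h,\Phi^\alpha_{Q_3,5}\rangle$ for a $\vec Q$-independent $h$; it is again a doubly-constrained sum. As for the variational Bi-Carleson route: $\mathcal{BC}^\rho$ controls a variational norm over a single ordered sequence of frequency cut-points, whereas $\{\mathcal{E}_{\vec{P}}\}_{\vec P\in\mathbb{P}^0_{\mathfrak b}}$ is a two-parameter array indexed by both the time interval $I_{\vec{P}}$ and the scale $|\omega_{P_2}|$, so there is no pointwise domination of the weighted $\ell^{1+\tilde\epsilon}$ sum by $\|\mathcal{BC}^{1+\tilde\epsilon}(f_2,f_3)\|_{L^{1+\tilde\epsilon}}^{1+\tilde\epsilon}$ in sight. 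Note also that your claim that the $\tilde\epsilon$-loss "is born" from degeneration of the Do--Muscalu--Thiele constants is off: in your own step 2 the loss is already baked into the Hölder rewrite, and in the paper the loss comes from taking $\theta_3=1-2\tilde\epsilon<1$ in the size--energy interpolation, not from any variational-exponent degeneration. To turn your sketch into a proof you would at minimum need a precise decomposition of $\mathcal{E}_{\vec{P}}$ into the "large $\omega_{Q_3}$" and "boundary-straddling" pieces, and for each piece either an $\ell^{1+\tilde\epsilon}$ estimate using the disjointness of $P_2$-tiles or a reduction to something like the paper's $\mathcal{E}_3^{II}$ computation; as written, this is where the argument stops.
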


 \begin{corollary}\label{Cor0}
Let $\mathbb{P}^{d, \tilde{d}}_{\mathfrak{d},2,*} = \bigcup_{T \in \mathcal{T}^{d, \tilde{d}}_{ \mathfrak{d}, 2, *}} T$. For every $d, \tilde{d} \geq 0$, $\mathfrak{b} \gtrsim \mathfrak{d} \geq N_2(d, \tilde{d})$ and $0< \tilde{\epsilon} < 1$,
 \begin{eqnarray*}
 \sum_{\vec{P} \in  \mathbb{P}^{d, \tilde{d}}_{\mathfrak{d},2,*,\mathfrak{b}}( \mathbb{Q}^{\tilde{d}} )}| I_{\vec{P}}| \lesssim_{\tilde{\epsilon}} 2^{\frac{\mathfrak{b}}{1-\tilde{\epsilon}}} |E_2|^{ 1/2} |E_3|^{ 1/2}.
 \end{eqnarray*}
 Note that $\mathfrak{b} \ll  \mathfrak{d}$ implies $ \mathbb{P}^{d, \tilde{d}}_{\mathfrak{d},2,*,\mathfrak{b}}( \mathbb{Q}^{\tilde{d}})= \emptyset$. 
 \end{corollary}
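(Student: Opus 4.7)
The plan is to derive the corollary as a direct application of Theorem \ref{l1T} with $\mathbb{P}^0 := \mathbb{P}^{d, \tilde{d}}_{\mathfrak{d}, 2, *}$ and $\tilde{\mathbb{Q}} := \mathbb{Q}^{\tilde{d}}$. With these choices, the level set $\mathbb{P}^0_{\mathfrak{b}}(\tilde{\mathbb{Q}})$ defined just before Theorem \ref{l1T} coincides with the corollary's collection $\mathbb{P}^{d, \tilde{d}}_{\mathfrak{d}, 2, *, \mathfrak{b}}(\mathbb{Q}^{\tilde{d}})$, and the bound $2^{\mathfrak{b}/(1-\tilde{\epsilon})} |E_2|^{1/2} |E_3|^{1/2}$ supplied by the theorem matches the desired conclusion verbatim. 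Hence the proof reduces to two subsidiary checks.

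First, I would dispatch the parenthetical emptiness claim. Since $\mathbb{P}^{d, \tilde d}_{\mathfrak d, 2, *} \subseteq \mathbb{P}^{d, \tilde d}_{\mathfrak d, 2}$, Lemma \ref{TDL} supplies $Size_0^{\tilde d}(f_2, f_3, \mathbb{P}^{d, \tilde d}_{\mathfrak d, 2, *}) \lesssim 2^{-\mathfrak d}$. Testing the supremum in the definition of $Size_0^{\tilde d}$ against the singleton $1$-tree $\{\vec P\}$ yields
\[
\frac{1}{|I_{\vec P}|^{1/2}}\left|\left\langle \int_0^1 BHT^{\alpha, \mathbb{Q}^{\tilde d}}_{\omega_{P_2}}(f_2, f_3)\,d\alpha,\; \Phi_{P_2, 0}\right\rangle\right| \lesssim 2^{-\mathfrak d},
\]
which forces $2^{-\mathfrak b} \lesssim 2^{-\mathfrak d}$, i.e.\ $\mathfrak b \gtrsim \mathfrak d$, for every tile in $\mathbb{P}^{d, \tilde d}_{\mathfrak d, 2, *, \mathfrak b}(\mathbb{Q}^{\tilde d})$; when $\mathfrak b \ll \mathfrak d$ the set is therefore empty.

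The second task, which I expect to be the main obstacle, is verifying the pairwise disjointness hypothesis of Theorem \ref{l1T} for $\{P_2\}_{\vec P \in \mathbb{P}^0}$ viewed as Heisenberg boxes. Recall $\mathbb{P}^0 = \bigcup_{T \in \mathcal{T}^{d, \tilde d}_{\mathfrak d, 2, *}} T$ with the trees forming a chain of strongly $2$-disjoint $1$-trees by Lemma \ref{TDL}. Cross-tree disjointness of $\{P_2\}$ follows at once from clauses (i)--(iii) of Definition \ref{Def:StronglyDisjoint}, after splitting into $O(1)$ sub-chains organized by common $P_2$-scale. Within a single $1$-tree, the rank-$1$ property (Definition \ref{R-1D}) forces distinct $\vec P, \vec P'$ to have distinct $P_2, P_2'$; pigeonholing further by the scale of $\omega_{P_2}$ partitions the tree into $O(1)$ sub-collections on which the $P_2$-Heisenberg boxes are genuinely pairwise disjoint. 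The Whitney adaptation of $\mathbb{P}$ to the degenerate line $\{\xi_1 = -\xi_2,\; \xi_3 = 0\}$, together with the fixed top tile $P_{T, 1}$, is what confines the $\omega_{P_2}$-scales to a narrow enough range to make this pigeonholing costless.

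Having reduced to $O(1)$ sub-collections on which $\{P_2\}$ is pairwise disjoint, applying Theorem \ref{l1T} to each and summing the resulting bounds yields the corollary. The main technical delicacy lies entirely in the within-tree disjointness verification; the remaining invocation of Theorem \ref{l1T} and the emptiness check are routine.
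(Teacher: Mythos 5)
Your reduction of the corollary to Theorem \ref{l1T} with $\mathbb{P}^0 := \mathbb{P}^{d,\tilde d}_{\mathfrak d, 2, *}$ and $\tilde{\mathbb{Q}} := \mathbb{Q}^{\tilde d}$ is the right move, your disposal of the parenthetical emptiness claim is correct, and you have correctly identified that the real content is the pairwise-disjointness hypothesis on $\{P_2\}_{\vec P \in \mathbb{P}^0}$. The paper itself says only that the corollary is ``immediate'' from Theorem \ref{l1T}, so supplying that verification is a reasonable thing for you to attempt. The cross-tree part of your verification is essentially fine: for $\vec P \in T_{\ell_1}$ and $\vec P' \in T_{\ell_2}$ with $\ell_1 \neq \ell_2$, clauses (ii)--(iii) of Definition \ref{Def:StronglyDisjoint} force $I_{P_2} \cap I_{P'_2} = \emptyset$ whenever $\omega_{P_2} \cap \omega_{P'_2} \neq \emptyset$, so the boxes are disjoint; no splitting into sub-chains by $P_2$-scale is needed or even meaningful, since there is no $O(1)$ bound on the number of scales present.

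Your within-tree argument, however, has a genuine gap and would not survive as written. You invoke the rank-$1$ property (Definition \ref{R-1D}) for $\mathbb{P}$, but the paper explicitly states, in the paragraph immediately preceding the statement of Theorem \ref{l1T}, that ``our tri-tile collection $\mathbb{P}$ is not rank-$1$''; this failure of rank-$1$ is the entire reason the $\ell^1$-energy machinery exists. More seriously, you claim that the Whitney adaptation of $\mathbb{P}$ to $\{\xi_1 = -\xi_2,\ \xi_3 = 0\}$, together with the fixed top $P_{T,1}$, confines the $\omega_{P_2}$-scales inside a tree to a narrow range, making the scale-pigeonholing ``costless.'' This is false. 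Inside a $1$-tree the side-lengths $|\omega_{P_1}|$ (and hence $|\omega_{P_2}|$) range unboundedly from the top scale downward, so pigeonholing by $\omega_{P_2}$-scale produces unboundedly many sub-collections, not $O(1)$. Moreover the Whitney adaptation to the degenerate line actually works against you here: if $P_1 \leq P_{T,1}$ for every $\vec P \in T$, then $\omega_{P_1}$ is nested, and the Whitney condition forces $\omega_{P_2}$ to sit within $O(|\omega_{P_2}|)$ of $-\omega_{P_1}$, so the $\omega_{P_2}$'s accumulate near a common frequency across all scales --- the precise geometry that produces overlap rather than disjointness in a generic $1$-tree from a non-rank-$1$ collection.

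What actually rescues the corollary is a structural fact about the output of the stopping-time algorithm that you did not invoke. The collection $\mathcal{T}^{d,\tilde d}_{\mathfrak d, 2, *}$ is not an arbitrary family of $1$-trees: the selection procedure in Lemma \ref{TDL} (the analogue, for $Size_0^{\tilde d}$, of the described $Size_1$ procedure) chooses trees $T$ consisting entirely of tiles with the lacunary condition $P_2 \lesssim^{+} P_{T,2}$, resp. $P_2 \lesssim^{-} P_{T,2}$, and splits the result into the two chains $\mathcal{T}^{d,\tilde d}_{\mathfrak d, 2, *, +}$ and $\mathcal{T}^{d,\tilde d}_{\mathfrak d, 2, *, -}$ accordingly. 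The residual $T'$ with $P_2 \leq P_{T,2}$ --- which is exactly the nested, non-disjoint part --- is deliberately excluded from the starred collection. For tiles within a single selected tree, the condition $P_2 \lesssim' P_{T,2}$ (so $10^7\omega_{T,2} \subseteq 10^7\omega_{P_2}$ but $3\omega_{T,2} \not\subseteq 3\omega_{P_2}$) pins $\omega_{P_2}$ to lie at distance between $\sim |\omega_{P_2}|$ and $\sim 10^7 |\omega_{P_2}|$ from $\xi_{T,2}$ on a fixed side; the sparseness assumption on $\mathbb{P}$ (a reduction already made at the start of \S4.2) then separates the scales by a factor of $10^9$ and forces these annular intervals to be pairwise disjoint across scales, whence the Heisenberg boxes $P_2$ are pairwise disjoint within each tree. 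Combined with your cross-tree observation, this verifies the hypothesis of Theorem \ref{l1T} on each chain, and one applies the theorem to $\mathcal{T}^{d,\tilde d}_{\mathfrak d, 2, *, +}$ and $\mathcal{T}^{d,\tilde d}_{\mathfrak d, 2, *, -}$ separately and sums. In short, the disjointness is not a consequence of rank-$1$ or of Whitney-scale confinement; it is manufactured by the tree selection and by sparseness, and that is the step your proposal should supply.
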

 \begin{proof}
This is immediate from Theorem \ref{l1T}. 
\end{proof}
 We postpone the proof of Theorem \ref{l1T} to state another corollary and discuss how it is used in the proof of Theorem \ref{MT}. 
  \begin{corollary}\label{Cor1}
For every $d, \tilde{d} \geq 0, \mathfrak{d} \geq N_2(d, \tilde{d})$, and $0 < \tilde{\epsilon} \ll 1$, 

\begin{eqnarray*}
\sum_{T \in \mathcal{T}^{d, \tilde{d}}_{\mathfrak{d} ,2}}  |I_T| \lesssim_{\tilde{\epsilon}} 2^{ \frac{ \mathfrak{d}}{1-\tilde{\epsilon}}} |E_2|^{ 1/2} |E_3|^{1/2}.
\end{eqnarray*}
\end{corollary}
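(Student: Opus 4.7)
The plan is to combine the tree-count bound of Lemma \ref{TDL} with the $\ell^1$-type estimate of Corollary \ref{Cor0} by slicing the tile collection according to the size parameter $\mathfrak{b}$. The starting point is Lemma \ref{TDL}, which after collapsing the pairwise tile-disjoint chains $\mathcal{T}^{d,\tilde{d}}_{\mathfrak{d},2,*}$ gives
\[
\sum_{T \in \mathcal{T}^{d,\tilde{d}}_{\mathfrak{d},2}} |I_T| \lesssim 2^{2\mathfrak{d}} \sum_{\vec{P} \in \mathbb{P}^{d,\tilde{d}}_{\mathfrak{d},2,*}} \bigl(A_{\vec{P}}^2 + B_{\vec{P}}^2 + C_{\vec{P}}^2\bigr),
\]
where I abbreviate by $A_{\vec{P}},B_{\vec{P}},C_{\vec{P}}$ the three inner-product magnitudes appearing in the definition of $Size_0^{\tilde{d}}$. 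It therefore suffices to bound each of the three tile sums by $2^{-2\mathfrak{d}}\cdot 2^{\mathfrak{d}/(1-\tilde{\epsilon})}|E_2|^{1/2}|E_3|^{1/2}$.

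I treat the $A$-sum first. Partition $\mathbb{P}^{d,\tilde{d}}_{\mathfrak{d},2,*}$ into the size classes $\mathbb{P}^{d,\tilde{d}}_{\mathfrak{d},2,*,\mathfrak{b}}(\mathbb{Q}^{\tilde{d}})$ introduced before Theorem \ref{l1T}, where $\vec{P}$ lies in the $\mathfrak{b}$-class exactly when $A_{\vec{P}} \simeq 2^{-\mathfrak{b}}|I_{\vec{P}}|^{1/2}$. The control $Size_0^{\tilde{d}} \lesssim 2^{-\mathfrak{d}}$ characterizing $\mathbb{P}^{d,\tilde{d}}_{\mathfrak{d},2}$ forces $\mathfrak{b}\gtrsim\mathfrak{d}$, and Corollary \ref{Cor0} applied to each class yields
\[
\sum_{\vec{P} \in \mathbb{P}^{d,\tilde{d}}_{\mathfrak{d},2,*}} A_{\vec{P}}^2 \simeq \sum_{\mathfrak{b}\gtrsim\mathfrak{d}} 2^{-2\mathfrak{b}} \sum_{\vec{P} \in \mathbb{P}^{d,\tilde{d}}_{\mathfrak{d},2,*,\mathfrak{b}}(\mathbb{Q}^{\tilde{d}})} |I_{\vec{P}}| \lesssim_{\tilde{\epsilon}} \sum_{\mathfrak{b}\gtrsim\mathfrak{d}} 2^{\mathfrak{b}\left(\frac{1}{1-\tilde{\epsilon}} - 2\right)} |E_2|^{1/2} |E_3|^{1/2}.
\]
For $\tilde{\epsilon}<1/2$ the exponent of $\mathfrak{b}$ is negative, so the geometric series is dominated by its $\mathfrak{b}\simeq\mathfrak{d}$ term; the arithmetic identity $2 + \bigl(\tfrac{1}{1-\tilde{\epsilon}} - 2\bigr) = \tfrac{1}{1-\tilde{\epsilon}}$ then delivers the desired bound once the prefactor $2^{2\mathfrak{d}}$ is included.

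The $B$- and $C$-sums are handled by the identical recipe, defining analogous size classes via $B_{\vec{P}}$ and $C_{\vec{P}}$ and invoking the corresponding analogs of Theorem \ref{l1T}; the proof of Theorem \ref{l1T} transports to those configurations with only notational changes, since it uses only the pairwise disjointness of the $\{P_2\}$ family (which survives the tree selection of Lemma \ref{TDL}) together with the wave-packet structure of the inner pairings — both preserved when the frequency restriction $\omega_{Q_3}\subset\subset\omega_{P_2}$ is replaced by $\omega_{Q_3}\supset\supset\omega_{P_2}$ or $|\omega_{Q_3}|\simeq|\omega_{P_2}|$. The principal obstacle is precisely this verification that the $\ell^1$-energy machinery of Theorem \ref{l1T} extends uniformly to all three configurations in the definition of $Size_0^{\tilde{d}}$; once that is granted, the geometric summation in $\mathfrak{b}$ and the exponent arithmetic above are routine.
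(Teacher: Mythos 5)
Your treatment of the $A$-sum (the $BHT^{\alpha,\mathbb{Q}^{\tilde{d}}}_{\omega_{P_2}}$ term, i.e.\ the $\omega_{Q_3}\subset\subset\omega_{P_2}$ configuration) coincides exactly with the paper's proof of Corollary \ref{Cor1}: slice $\mathbb{P}^{d,\tilde{d}}_{\mathfrak{d},2,*}$ by the size parameter $\mathfrak{b}$, apply Corollary \ref{Cor0} to each slice, and sum the geometric series $\sum_{\mathfrak{b}\gtrsim\mathfrak{d}}2^{-(2-\frac{1}{1-\tilde{\epsilon}})\mathfrak{b}}$, where $\mathfrak{b}\gtrsim\mathfrak{d}$ follows from the size control. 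Worth noting is that the paper's own proof writes down only this $A$-term of Lemma \ref{TDL} and says nothing about the $B$- and $C$-sums, so your explicit flagging of them as requiring treatment is actually more careful than the published argument.

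The gap you acknowledge at the end is, however, more than ``only notational changes'' --- at least for the $B$-sum. The $\ell^1$-energy machinery of Theorem \ref{l1T} is built entirely around the inclusion $\omega_{Q_3}\subset\subset\omega_{P_2}$: this forces $|I_{\vec{P}}|\ll|I_{\vec{Q}}|$, so the $P$-tiles overlapping a fixed $\mathbb{Q}$-tree have bounded overlap $\sum 1_{I_{\vec{P}}}\lesssim 1$ (invoked immediately after the dualization), and this in turn drives Lemma \ref{L:Trivial-v2}, Proposition \ref{P90}, and especially the $\mathcal{E}_3^{II}$ term in Lemma \ref{L:Energy-3}. With the inclusion reversed ($\omega_{Q_3}\supset\supset\omega_{P_2}$), the $P$-tiles become the large ones and none of those steps carries over as written; a genuinely new $\ell^1$-type energy estimate would have to be formulated and proved for that configuration. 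The $C$-configuration ($|\omega_{Q_3}|\simeq|\omega_{P_2}|$) is friendlier --- both collections live at comparable scale and the disjointness of the $Q_3$-tiles gives almost-orthogonality directly, as in the paper's treatment of $III$ in Lemma \ref{Est:l2Energy} --- but even there it is a separate (if easier) argument, not a relabeling of the $A$-case.
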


 \begin{proof}
 Using Corollary \ref{Cor0}, we have that
 \begin{align*}
\sum_{T \in \mathcal{T}^{d, \tilde{d}}_{\mathfrak{d} ,2}}  |I_T|& \lesssim 2^{2 \mathfrak{d}} \sum_{T \in \mathcal{T}^{d, \tilde{d}}_{\mathfrak{d} ,2,*}}~ \sum_{\vec{P} \in T} \left |\left \langle \int_0^1 BHT^{\alpha, \mathbb{Q}^{\tilde{d}}}_{\omega_{P_2}} (f_2, f_3) d \alpha, \Phi_{P_2,0}^{2} \right\rangle \right|^2 \\ &= 2^{ 2 \mathfrak{d}} \sum_{\mathfrak{b} \gtrsim \mathfrak{d}} \sum_{\vec{P} \in \mathbb{P}^{d, \tilde{d}}_{\mathfrak{d},2,*,\mathfrak{b}}( \mathbb{Q}^{\tilde{d}})} \left |\left \langle \int_0^1 BHT^{\alpha, \mathbb{Q}^{\tilde{d}}}_{\omega_{P_2}} (f_2, f_3)d \alpha, \Phi_{P_2,0} \right\rangle \right|^2\\& \lesssim  2^{ 2 \mathfrak{d}}\sum_{\mathfrak{b} \gtrsim \mathfrak{d}}2^{-2\mathfrak{b}}  \sum_{\vec{P} \in \mathbb{P}^{d, \tilde{d}}_{\mathfrak{d},2,*,\mathfrak{b}}( \mathbb{Q}^{\tilde{d}})} |I_{\vec{P}}| \\ &\lesssim 2^{2 \mathfrak{d}} \sum_{\mathfrak{b} \geq \mathfrak{d}}2^{-\left[ 2-\frac{1}{1-\tilde{\epsilon}}\right] \mathfrak{b}}   |E_2|^{ 1/2} |E_3|^{ 1/2}  \\ &\lesssim 2^{ \frac{ \mathfrak{d}}{1-\tilde{\epsilon}}} |E_2|^{ 1/2} |E_3|^{1/2}.
\end{align*}
 \end{proof}
 We now discuss how Corollary \ref{Cor1} is used in proof of Theorem \ref{MT}. Interpolating the $\ell^2$-energy bound  $\sum_{T \in \mathcal{T}_{\mathfrak{d},2}^{d, \tilde{d}}} |I_T| \lesssim 2^{2 \mathfrak{d}} |E_2|^{3/2} |E_3|^{3/2}$ with the $\ell^1$-energy bound $\sum_{T \in \mathcal{T}_{\mathfrak{d},2}^{d, \tilde{d}}} |I_T| \lesssim 2^{ \sim\mathfrak{d}} |E_2|^{1/2} |E_3|^{1/2}$ ensures

\begin{eqnarray*}
\sum_{T \in \mathcal{T}_{\mathfrak{d}, 2}^{d, \tilde{d}}} |I_T| \lesssim 2^{\sim 3\mathfrak{d}/2 } |E_2| |E_3|.
\end{eqnarray*}
It follows that we should have for every $0 \leq \theta_1 , \theta_2 \leq 1$ such that $\theta_1 + \theta_2 =1$
\begin{align*}
|\Lambda_2^{\mathbb{P},\mathbb{Q}} (f_1, f_2, f_3, f_4)|& \lesssim \sum_{n_1,n_4, \mathfrak{d} \geq 0} 2^{-n_1}2^{-n_4}  2^{-\mathfrak{d}} \min\left\{ 2^{2n_1} |E_1|, 2^{\sim 3/2 \mathfrak{d}} |E_2| |E_3| \right\} \\ &\lesssim \sum_{n_1,n_4, \mathfrak{d} \geq 0} 2^{-n_1(1-2\theta_1)}2^{-n_4}  2^{-\mathfrak{d}(1- (\sim 3\theta_2/2))}  |E_1|^{\theta_1}|E_2|^{ \theta_2} |E_3|^{ \theta_2} .
\end{align*}
Choosing $\theta_1 \simeq 1/3, \theta_2 \simeq 2/3$ gives $|\Lambda_2^{\mathbb{P}, \mathbb{Q}} (f_1, f_2, f_3, f_4)| \lesssim |E_1|^{\sim 2/3} |E_2|^{\sim 2/3} |E_3|^{\sim 2/3}$, so that $C^{1,1,-2}$ should map into $L^r(\mathbb{R})$ for all $r$ in a small neighborhood near $1/2$. With this sketch in mind, it  remains to fill in the details.

 \begin{proof}{[Theorem \ref{l1T}]}
 For convenience, set $\mathbb{P}_{\mathfrak{d}}^0 (\tilde{\mathbb{Q}})= \mathbb{P}_{\mathfrak{d}}^0$. Now using the definition of $\mathbb{P}^0_{\mathfrak{b}}$ and dualizing, we obtain
\begin{align*}
\sum_{\vec{P} \in \mathbb{P}^0_{\mathfrak{b}}} |I_{\vec{P}}| &\simeq 2^{\mathfrak{b}} \sum_{\vec{P} \in  \mathbb{P}^0_{\mathfrak{b}}} \left| \left \langle \int_0^1  BHT^{\alpha, \mathbb{Q}^{\tilde{d}}}_{\omega_{P_2}} (f_2, f_3), \Phi_{P_2,0}^\infty \right \rangle \right| \\ &:=2^{\mathfrak{b}} \sum_{\vec{P} \in \mathbb{P}^0_{\mathfrak{b}}} h_{\vec{P}} \left \langle \int_0^1 BHT^\alpha_{\omega_{P_2}} (f_2, f_3) d\alpha, \Phi_{P_2, 0}^\infty \right\rangle, 
\end{align*}
where $|h_{\vec{P}}| =1$ for all $\vec{P} \in \mathbb{P}^0_{\mathfrak{b}}$ and $\Phi_{P_2,0}:= |I_{\vec{P}}|^{1/2} |\Phi_{P_2,0}|$ is $L^\infty$-normalized.  Rewriting the above display, we find

\begin{align*}
2^{-\mathfrak{b}} \sum_{\vec{P} \in \mathbb{P}^0_{\mathfrak{b}}} |I_{\vec{P}}| &\simeq \int_0^1  \sum_{\vec{P} \in \mathbb{P}^0_{\mathfrak{b}}}~ \sum_{ \substack{ \vec{Q} \in \mathbb{Q}^{\tilde{d}} \\  \omega_{Q_3} \subset  \subset \omega_{P_2}}} \frac{1}{|I_{\vec{Q}}|^{1/2}} \langle f_2, \Phi^\alpha_{Q_1,2} \rangle \langle f_3, \Phi^\alpha_{Q_2,3} \rangle \langle \Phi^\alpha_{Q_3,5}, h_{\vec{P}} \chi^M_{I_{\vec{P}}}  \rangle d \alpha  \\ &= \int_0^1 \sum_{\vec{Q} \in \mathbb{Q}^{\tilde{d}}}~\frac{1}{|I_{\vec{Q}}|^{1/2}} \langle f_2, \Phi^\alpha_{Q_1,1} \rangle \langle f_3, \Phi^\alpha_{Q_2,2} \rangle \left \langle \Phi^\alpha_{Q_3,5}, \sum_{ \substack{ \vec{P} \in \mathbb{P}^0_{\mathfrak{b}} \\  \omega_{P_2} \supset \supset\omega_{Q_3}}} h_{\vec{P}} \Phi_{P_2, 0}^\infty  \right \rangle d \alpha.
\end{align*}
Observe that when the $\mathbb{Q}$-tiles are restricted to a single tree, the sum over $\vec{P} \in \mathbb{P}^{\mathfrak{b}}_{\mathfrak{d}}$ containing a frequency of the tree satisfies $\sum_{\vec{P} \in \mathbb{P}^\mathfrak{b}_{\mathfrak{d}} (T)} 1_{I_{\vec{P}}} \lesssim 1$.  Moreover, 

\begin{eqnarray*}
\left| \left| \sum_{\vec{P} \in \mathbb{P}^0_{\mathfrak{b}}: \omega_{P_2} \supset \supset \omega_{Q_3}, I_{\vec{P}} \subset I_{\vec{Q}} ~for ~some~\vec{Q} \in \mathbb{Q}^{\tilde{d}}}  h_{\vec{P}} \Phi_{P_2, 0}^\infty\right| \right|_2 ^2 \lesssim \sum_{\vec{P} \in \mathbb{P}^0_{\mathfrak{b}}} |I_{\vec{P}}|. 
\end{eqnarray*}
It suffices to prove the following estimate uniformly in $\alpha \in [0,1]$:
 
 \begin{align}
  & \left| \sum_{\vec{Q} \in \mathbb{Q}^{\tilde{d}}}~\frac{1}{|I_{\vec{Q}}|^{1/2}} \langle f_2, \Phi^\alpha_{Q_1,1} \rangle \langle f_3, \Phi^\alpha_{Q_2,2} \rangle \left \langle \Phi^\alpha_{Q_3,5}, \sum_{\vec{P} \in \mathbb{P}^0_{\mathfrak{b}}: \omega_{P_2} \supset  \supset \omega_{Q_3}}h_{\vec{P}} \Phi_{P_2, 0}^\infty   \right \rangle  \right|\label{Est:Goal}  \\& \lesssim  |E_2|^{\frac{1}{2}(1-\tilde{\epsilon})} |E_3|^{\frac{1}{2}(1-\tilde{\epsilon})} \left[ \sum_{\vec{P} \in \mathbb{P}^0_{\mathfrak{b}}} |I_{\vec{P}}| \right]^{\tilde{\epsilon}} \nonumber
 \end{align}
for some $0 < \tilde{\epsilon} <<1$. Indeed, as consequence, we would have

\begin{eqnarray*}
\sum_{\vec{P} \in \mathbb{P}^0_{\mathfrak{b}}} |I_{\vec{P}}| \lesssim_{\tilde{\epsilon}} 2^{\mathfrak{b}}  |E_2|^{\frac{1}{2} (1-\tilde{\epsilon})} |E_3|^{\frac{1}{2}(1-\tilde{\epsilon})} \left[ \sum_{\vec{P} \in \mathbb{P}^{\mathfrak{b}}_{\mathfrak{d}}} |I_{\vec{P}}| \right]^{\tilde{\epsilon}}, 
\end{eqnarray*}
which easily implies 

\begin{eqnarray*}
\sum_{\vec{P} \in \mathbb{P}^0_{\mathfrak{b}}} |I_{\vec{P}}|  \lesssim_{\tilde{\epsilon}}  2^{\frac{ \mathfrak{b}}{1-\tilde{\epsilon}}}  |E_2|^{1/2} |E_3
|^{ 1/2}.
\end{eqnarray*}
Estimate \eqref{Est:Goal} is eventually shown in Proposition \ref{P:Goal}. We first need to record several elementary results.
\begin{lemma}\label{L:Trivial-v2}
For all $M \gg 1$, 

\begin{eqnarray*}
 \left(\frac{1}{|I_T|} \int_\mathbb{R} \left| \sum_{\vec{P} \in \mathbb{P}^0_\mathfrak{b}(T)} \chi^M_{I_{\vec{P}}}\right|^2 \chi^M_{I_T} dx \right)^{1/2} \lesssim 1. 
 \end{eqnarray*}
 \end{lemma}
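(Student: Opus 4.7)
The plan is to reduce the estimate to the well-known bound $\sum_{\vec{P}} \chi^M_{I_{\vec{P}}} \lesssim \chi^{M-1}_{I_T}$ valid for any pairwise disjoint subfamily $\{I_{\vec{P}}\}$ of $I_T$, so the crux of the argument is establishing such disjointness for the time intervals of the tri-tiles in $\mathbb{P}^0_\mathfrak{b}(T)$.

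First I would show that $\{I_{\vec{P}}\}_{\vec{P} \in \mathbb{P}^0_\mathfrak{b}(T)}$ is a pairwise disjoint collection of subintervals of $I_T$. Because $T$ is a $1$-tree, every $\vec{P} \in T$ has $\omega_{P_1} \supseteq \omega_{P_T,1}$, and by the Whitney adaptation of $(\omega_{P_1}, \omega_{P_2}, \omega_{P_4})$ to the line $\{\xi_1 = -\xi_2,\ \xi_3 = 0\}$ from Definition~\ref{Def:Lambda-2}, the interval $\omega_{P_2}$ is essentially the reflection $-\omega_{P_1}$ in an appropriate shifted dyadic grid. Consequently the $\omega_{P_2}$'s all contain a common base interval near $-\omega_{P_T,1}$, and in particular they are pairwise nested (or equal) as (shifted) dyadic intervals. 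Now if two distinct $\vec{P}, \vec{P}' \in \mathbb{P}^0_\mathfrak{b}(T)$ had overlapping time intervals, the nestedness of $\omega_{P_2}, \omega_{P_2'}$ would force the rectangles $P_2, P_2'$ to overlap in phase space, contradicting the standing hypothesis that $\{P_2\}_{\vec{P} \in \mathbb{P}^0}$ is pairwise disjoint.

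Having identified $\{I_{\vec{P}}\}_{\vec{P} \in \mathbb{P}^0_\mathfrak{b}(T)}$ as a disjoint subfamily of $I_T$, the pointwise domination
\[
\sum_{\vec{P} \in \mathbb{P}^0_\mathfrak{b}(T)} \chi^M_{I_{\vec{P}}}(x) \lesssim \chi^{M-1}_{I_T}(x)
\]
is a classical consequence of grouping contributions by the dyadic distance from $x$ to $I_{\vec{P}}$ and using the packing inequality $\sum_{\vec{P}} |I_{\vec{P}}| \leq |I_T|$. Squaring and integrating against $\chi^M_{I_T}$ then yields
\[
\frac{1}{|I_T|}\int_\mathbb{R} \Big|\sum_{\vec{P} \in \mathbb{P}^0_\mathfrak{b}(T)} \chi^M_{I_{\vec{P}}}\Big|^2 \chi^M_{I_T}\, dx \lesssim \frac{1}{|I_T|}\int_\mathbb{R} \chi^{3M-2}_{I_T}\, dx \lesssim 1
\]
for $M$ large enough, which is precisely the claim.

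The main obstacle is the geometric reduction in the first step: one must confirm that the Whitney structure of a $1$-tree in position $1$, combined with the $P_2$-disjointness hypothesis, forces the $\omega_{P_2}$'s to share a common dyadic anchor and thus transfers the $P_2$-disjointness into a clean separation of the time intervals $I_{\vec{P}}$. Once this geometric fact is pinned down, the rest of the argument is a standard tail estimate that does not use any further structure of the tree.
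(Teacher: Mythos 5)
Your argument has two serious problems.

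First, you have misread the setup. The tree $T$ appearing in Lemma~\ref{L:Trivial-v2} is the $\mathbb{Q}$-tree fixed at the beginning of the proof of Proposition~\ref{P90}, not a $\mathbb{P}$-tree, and
\[
\mathbb{P}^0_\mathfrak{b}(T) = \left\{ \vec{P} \in \mathbb{P}^0_\mathfrak{b} : \exists\,\vec{Q}\in T,\ \langle \Phi^\alpha_{Q_3,5},\Phi_{P_2,0}^\infty\rangle\neq 0,\ |\omega_{P_2}|\gg|\omega_{Q_3}|\right\}.
\]
There is no ``$\mathbb{P}$-$1$-tree in position~$1$'' structure in play and $\omega_{P_1}$ is irrelevant. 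The correct geometric observation (which the paper records just before the lemma) is that the non-vanishing inner product forces the Fourier supports of $\Phi_{P_2,0}^\infty$ and $\Phi^\alpha_{Q_3,5}$ to meet while $|\omega_{P_2}|\gg|\omega_{Q_3}|$, which (for suitable implicit constants) forces $\omega_{P_2}\supset\omega_{T_3}$, the top frequency band of the $\mathbb{Q}$-tree; combined with the standing hypothesis that $\{P_2\}_{\vec{P}\in\mathbb{P}^0}$ is pairwise disjoint, this does give disjointness of the $I_{\vec{P}}$'s. You reach the right conclusion by an incorrect mechanism. You also assert that $I_{\vec{P}}\subset I_T$; nothing forces this, and the paper copes by decomposing over translates $I_T+l|I_T|$ and summing a rapidly decaying tail in $l$.

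Second, and more seriously, the pointwise inequality $\sum_{\vec{P}}\chi^M_{I_{\vec{P}}}\lesssim\chi^{M-1}_{I_T}$ for pairwise disjoint $I_{\vec{P}}\subset I_T$ is false once the interval lengths vary across scales. Take $I_j=[2^j,2^{j+1}]$ for $j=0,\dots,N$ inside $I_T=[0,2^{N+1}]$. Then $\chi^M_{I_j}(0)=(2/5)^M$ for every $j$, so $\sum_j\chi^M_{I_j}(0)\simeq N$, while $\chi^{M-1}_{I_T}(0)\simeq 1$. Disjointness controls $\sum_{\vec{P}}1_{I_{\vec{P}}}$, but the tails of the $\chi^M$'s can stack over roughly $\log|I_T|$ scales, so the packing inequality $\sum_{\vec{P}}|I_{\vec{P}}|\leq|I_T|$ does not upgrade to an $L^\infty$ bound. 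The Lemma is genuinely an $L^2$ statement and the paper proves it as one: it expands the square, uses $\int\chi^M_{I_{P_1}}\chi^M_{I_{P_2}}\,dx\lesssim\int 1_{I_{P_1}}\chi^M_{I_{P_2}}\,dx$ for $|I_{P_1}|\leq|I_{P_2}|$, sums out the indicator by disjointness to reduce to $\sum_{P_2}\int\chi^M_{I_{P_2}}\,dx\simeq\sum_{P_2}|I_{P_2}|\lesssim|I_T|$, and closes with the $\frac{1}{|I_T|}$ prefactor and a summable $\frac{1}{1+l^M}$ tail. Your proposed reduction to a pointwise bound cannot be repaired because that pointwise bound is simply not true.
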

 \begin{proof}
 First observe that for any $M \gg 1$ and $|I_{P_1}| \leq |I_{P_2}|$
\begin{eqnarray}\label{Est:Trivial}
\int_\mathbb{R}\chi^{M}_{I_{P_1}} \chi^M_{I_{P_2}} dx \lesssim \frac{|I_{P_1}|}{ 1+ \left[ \frac{ dist( I_{P_1}, I_{P_2})}{|I_{P_2}|} \right]^M} \lesssim \int_\mathbb{R} 1_{I_{P_1}} \chi^M_{I_{P_2}} dx.
\end{eqnarray}
Estimate \eqref{Est:Trivial} then implies
 \begin{align*}
&    \frac{1}{|I_T|} \int_\mathbb{R} \left| \sum_{\vec{P} \in \mathbb{P}^0_\mathfrak{b}(T): I_{\vec{P}} \subset I_T +l |I_T|} \chi^M_{I_{\vec{P}}}\right|^2 \chi^M_{I_T} dx \\  \lesssim& \frac{1}{1+l^M} \frac{1}{|I_T|} \int_\mathbb{R}  \sum_{P_1, P_2 \in \mathbb{P}^0_\mathfrak{b}(T):  |I_{P_2}| \geq |I_{P_1}|: I_{P_1}, I_{P_2} \subset I_T + l|I_T|} \chi^M_{I_{P_1}} \chi^M_{I_{P_2}}   dx \\ \lesssim&\frac{1}{1+l^M}\frac{1}{|I_T|} \int_\mathbb{R}  \sum_{P_2 \in \mathbb{P}^0_\mathfrak{b}(T): I_{P_2} \subset I_T +l|I_T|} \chi^{M}_{I_{P_2}}  dx \\   \lesssim &\frac{1}{1+l^M}. 
\end{align*}
Consequently, 

\begin{align*}
 \left( \frac{1}{|I_T|} \int_\mathbb{R} \left| \sum_{\vec{P} \in \mathbb{P}^0_\mathfrak{b}(T)} \chi^M_{I_{\vec{P}}}\right|^2 \chi^M_{I_T} dx  \right)^{1/2}  &\lesssim \sum_{l \in \mathbb{Z}} \left( \frac{1}{|I_T|} \int_\mathbb{R} \left| \sum_{\vec{P} \in \mathbb{P}^0_\mathfrak{b}(T):I_{\vec{P}} \subset I_T | l |I_T|} \chi^M_{I_{\vec{P}}}\right|^2 \chi^M_{I_T} dx  \right)^{1/2} \\ & \lesssim \sum_{l \in \mathbb{Z}} \frac{1}{1+l^M} \\ &\lesssim 1. 
 \end{align*}

\end{proof}
Before proving our next result, we shall need

\begin{definition}
For $j \in \{1,2,3\}$ and $\tilde{\mathbb{Q}} \subset \mathbb{Q}$, let  

\begin{align*}
S_j(\{a_{\vec{Q},j}\}_{\vec{Q} \in \mathbb{Q}}) =\left( \sup_{T \subset \tilde{Q}} \frac{1}{|I_T|} \sum_{\vec{Q}\in T} |c_{\vec{Q}}|^2 \right)^{1/2}
\end{align*}
where the supremum is over all $i$-trees $T \subset \tilde{\mathbb{Q}}$ for some $i \not = j$.

\end{definition}

\begin{prop}\label{P89}
For $j \in \{1,2\}$ set $a_{\vec{Q},1}= \langle f_2, \Phi_{Q_1,2} \rangle; a_{\vec{Q},2}= \langle f_3, \Phi_{Q_2, 3} \rangle.$ 
Then 

\begin{align*}
S_1\left(\{a_{\vec{Q},1}\}_{\vec{Q} \in \tilde{\mathbb{Q}}}  \right) \lesssim 1 , S_2\left(\{a_{\vec{Q},2}\}_{\vec{Q} \in \tilde{\mathbb{Q}}} \right) \lesssim 1.
\end{align*}

\end{prop}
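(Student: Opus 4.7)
The plan is to view both assertions as instances of the standard tree-size bound for $L^2$-normalized wave packets, where the only a priori control on the inputs is the trivial pointwise estimate $|f_2|,|f_3|\le 1$ inherited from $|f_j|\le \mathbf{1}_{E_j}$. By the obvious symmetry in the roles of $(f_2,Q_1)$ and $(f_3,Q_2)$, I would focus on $S_1$. Fix an arbitrary $i$-tree $T\subset\tilde{\mathbb{Q}}$ with $i\in\{2,3\}$; the goal is the uniform bound
\[
\frac{1}{|I_T|}\sum_{\vec Q\in T}\bigl|\langle f_2,\Phi_{Q_1,2}\rangle\bigr|^2 \;\lesssim\; 1.
\]

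The first step is the frequency-disjointness claim: within such a tree the intervals $\{\omega_{Q_1}\}_{\vec Q\in T}$ are pairwise essentially disjoint. Indeed, all $\omega_{Q_i}$ for $\vec Q\in T$ contain the frequency of the common top tile $Q_{T,i}$, so if two distinct tri-tiles $\vec Q,\vec Q'\in T$ had overlapping $\omega_{Q_1}$ and $\omega_{Q'_1}$ then by the rank-$1$ property (Definition \ref{R-1D}) and the sparseness of $\mathbb{Q}$ they would be forced to agree on all three frequency components, contradicting $\vec Q\neq\vec Q'$. The second step is then the standard localized Bessel inequality: since the $L^2$-normalized wave packets $\{\Phi_{Q_1,2}\}_{\vec Q\in T}$ have essentially disjoint Fourier supports and are spatially concentrated on $I_{\vec Q}\subset I_T$, splitting $f_2=\sum_{k\in\mathbb{Z}} f_2\mathbf{1}_{I_T+k|I_T|}$ and exploiting the rapid decay of the Schwartz tails yields
\[
\sum_{\vec Q\in T}|\langle f_2,\Phi_{Q_1,2}\rangle|^2 \;\lesssim\; \int_{\mathbb{R}}|f_2|^2\,\chi_{I_T}^{M}\,dx .
\]

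The third step closes the argument: using $|f_2|\le 1$ pointwise, the right-hand side is at most $\int\chi_{I_T}^M\,dx\lesssim |I_T|$, which after dividing by $|I_T|$ and taking the supremum over $T$ gives $S_1\lesssim 1$. The bound $S_2\lesssim 1$ follows verbatim after interchanging the roles of the indices $1$ and $2$, invoking the disjointness of $\{\omega_{Q_2}\}_{\vec Q\in T}$ inside $1$-trees and $3$-trees.

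I do not anticipate any real obstacle in this proposition; the core content is the familiar ``size $\lesssim L^\infty$-norm of input'' estimate from the BHT/Lacey--Thiele framework, and the only step worth checking with a bit of care is the disjointness claim in step one, which is routine but uses specifically the sparse rank-$1$ structure of $\mathbb{Q}$ adapted to the non-degenerate line $\{\xi_1=-\xi_2/2=-\xi_3\}$. In particular, no localization to the exceptional set $\Omega$ nor any of the more refined energy machinery developed earlier in the section is needed here.
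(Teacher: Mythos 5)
Your proposal reaches the right conclusion and follows essentially the same route as the paper: both reduce the claim to the standard tree-size estimate --- the paper simply cites Lemma~\ref{L:John-Nirenberg}, while you re-derive a version of it --- and then close by inserting $|f_j|\le \mathbf{1}_{E_j}\le 1$ into the right-hand side. The one place I would push back is the justification of the ``frequency-disjointness claim'' in your step one. It is not true that two distinct tri-tiles $\vec Q,\vec Q'\in T$ with overlapping $\omega_{Q_1},\omega_{Q'_1}$ are forced by rank-$1$ and sparseness to agree on all three frequency components: when the scales are very different, $\omega_{Q_1}\subsetneq\omega_{Q'_1}$ is a priori possible without $\vec Q=\vec Q'$, and nothing in Definition~\ref{R-1D} rules it out directly. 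What actually yields (essential) pairwise disjointness of the $\omega_{Q_1}$ inside an $i$-tree with $i\neq 1$ is the lacunary displacement mechanism: by rank-$1$, for tiles with $|I_{\vec Q}|\ll |I_T|$ one has $Q_1\lesssim' Q_{T,1}$, which pins $\omega_{Q_1}$ at distance $\simeq|\omega_{Q_1}|$ (within a fixed factor like $10^7$) from the tree frequency without containing it; after splitting $T$ into $T^+\cup T^-$ according to the sign of this displacement and using sparseness to separate scales by a factor $\ge 10^9$, the $\tfrac{9}{10}\omega_{Q_1}$ supports of the wave packets become pairwise disjoint, and the Bessel inequality follows. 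The paper packages exactly this bookkeeping into Lemma~\ref{L:John-Nirenberg} (Lemma 6.13 of \cite{MR3052499}) and invokes it as a black box; if you intend to reprove it from scratch, the lacunary argument --- not forced equality of frequency cubes --- is the correct justification.
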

\begin{proof}
This standard energy result follows from Lemma \ref{L:John-Nirenberg}.

\end{proof}

\begin{prop}\label{P90}
Let $\left\{h_{\vec{P}} \right\}_{\vec{P} \in \mathbb{P}^0_{\mathfrak{b}}}$ satisfy $|h_{\vec{P}}| =1$ for all $\vec{P}\in \mathbb{P}^0_{\mathfrak{b}}$ and 

\begin{align*}
a_{\vec{Q},3}=\left \langle \Phi^\alpha_{Q_3,5}, \sum_{\vec{P} \in \mathbb{P}^0_{\mathfrak{b}}: \omega_{P_2} \supset  \supset \omega_{Q_3} }h_{\vec{P}} \Phi_{P_2, 0}^ \infty \right \rangle . 
\end{align*}
Then $S_3(\{a_{\vec{Q},3} \}_{\vec{Q} \in \mathbb{Q}^{\tilde{d}}}) \lesssim 1$.

\end{prop}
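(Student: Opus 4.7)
\emph{Proof plan.} The goal is to show $\sum_{\vec{Q} \in T} |a_{\vec{Q},3}|^2 \lesssim |I_T|$ for every $i$-tree $T \subset \mathbb{Q}^{\tilde{d}}$ with $i \in \{1, 2\}$. Setting
$$H := \sum_{\vec{P} \in \mathbb{P}^0_{\mathfrak{b}}} h_{\vec{P}} \Phi^\infty_{P_2, 0},$$
the strategy is to replace the $\vec{Q}$-dependent truncation in the definition of $a_{\vec{Q},3}$ by the fixed function $H$ up to a negligible Fourier-tail error, and then run a Bessel-type argument on $\{\Phi^\alpha_{Q_3,5}\}_{\vec{Q} \in T}$ combined with Lemma \ref{L:Trivial-v2}.

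First I would split $a_{\vec{Q},3} = \langle \Phi^\alpha_{Q_3,5}, H \rangle - E_{\vec{Q}}$, where the error $E_{\vec{Q}}$ collects the pairings of $\Phi^\alpha_{Q_3,5}$ against those tiles $\vec{P}$ with $\omega_{P_2} \not\supset \supset \omega_{Q_3}$. The key Fourier observation is that $\Phi^\infty_{P_2, 0} = |I_{\vec{P}}|^{1/2}|\Phi_{P_2, 0}|$ is a non-negative smooth bump at spatial scale $|I_{\vec{P}}|$, hence its Fourier transform is concentrated on an interval of radius $O(|\omega_{P_2}|)$ around the origin with Schwartz tails. Since $\Phi^\alpha_{Q_3,5}$ has Fourier support in $\frac{9}{10}\omega_{Q_3}$, and since every $\vec{P}$ contributing to $E_{\vec{Q}}$ either satisfies $|\omega_{P_2}| \lesssim |\omega_{Q_3}|$ or has $\omega_{Q_3}$ lying outside the bulk of $\omega_{P_2}$, the Schwartz-tail decay of $\widehat{\Phi^\infty_{P_2,0}}$ makes $|E_{\vec{Q}}|$ small enough that $\sum_{\vec{Q} \in T} |E_{\vec{Q}}|^2 \lesssim |I_T|$.

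It then remains to control $\sum_{\vec{Q} \in T} |\langle \Phi^\alpha_{Q_3,5}, H \rangle|^2$. Since $H$ no longer depends on $\vec{Q}$, a Bessel-type inequality for the family $\{\Phi^\alpha_{Q_3, 5}\}_{\vec{Q} \in T}$---obtained by dyadically decomposing in $|\omega_{Q_3}|$ and exploiting the disjointness of the Heisenberg boxes $(I_{\vec{Q}}, \omega_{Q_i})$ in the tree's free coordinate $i \neq 3$, or alternatively via Lemma \ref{L:John-Nirenberg}---bounds the above by $\|H\|_{L^2(\chi^M_{I_T})}^2$. Combining this with the pointwise estimate $|H(x)| \leq \sum_{\vec{P} \in \mathbb{P}^0_{\mathfrak{b}}} \chi^L_{I_{\vec{P}}}(x)$ and Lemma \ref{L:Trivial-v2} then yields $\|H\|_{L^2(\chi^M_{I_T})}^2 \lesssim |I_T|$, finishing the argument.

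The main obstacle will be establishing the Bessel inequality for $\{\Phi^\alpha_{Q_3, 5}\}_{\vec{Q} \in T}$: the $\mathbb{Q}$-cubes are Whitney-adapted to the line $\{\xi_1 = -\xi_2/2 = -\xi_3\}$ passing through the origin, so the intervals $\omega_{Q_3}$ at various scales within a tree may nest and the corresponding wave packets are not strictly orthogonal; a scale-by-scale argument using tile disjointness within the tree is required to recover quasi-orthogonality. A secondary difficulty is quantifying the Schwartz tails in $E_{\vec{Q}}$ when $\omega_{Q_3}$ sits close to the origin, where the Fourier separation used to suppress $E_{\vec{Q}}$ is weakest and must be handled uniformly as $\vec{Q}$ ranges over $T$.
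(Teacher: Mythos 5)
Your overall strategy — compare to a fixed function, relegate the constraint failure to an error, then run Bessel plus Lemma \ref{L:Trivial-v2} — is exactly the paper's. However, there is a genuine gap in two linked places, both traceable to how you model $\Phi^\infty_{P_2,0}$. You treat $\Phi^\infty_{P_2,0}$ as a nonnegative DC bump whose Fourier transform sits near the origin with Schwartz tails; the paper's proof uses that $\widehat{\Phi^\infty_{P_2,0}}$ is compactly supported in an interval of radius $\frac{9}{20}|\omega_{P_2}|$ centered at $c_{\omega_{P_2}}$, i.e.\ a genuine $L^\infty$-normalized wave packet. With compact Fourier support there are no tails: every $\vec{P}$ with $|\omega_{P_2}|\ll|\omega_{Q_3}|$ or with $\omega_{P_2}$ frequency-separated from $\omega_{Q_3}$ pairs to \emph{exactly zero}, so the entire error $E_{\vec{Q}}$ collapses to the finitely many scales with $|\omega_{P_2}|\simeq|\omega_{Q_3}|$. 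Your worry about "$\omega_{Q_3}$ sitting close to the origin" evaporates under this reading, whereas under your reading it becomes a real obstruction you have not resolved.

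More importantly, the compact-support observation is what gives you the \emph{disjointness of time projections} of the tiles that actually enter: for a fixed tree $T$, nonvanishing of $\langle\Phi^\alpha_{Q_3,5},\Phi^\infty_{P_2,0}\rangle$ together with $|\omega_{P_2}|\gg|\omega_{Q_3}|$ forces $\omega_{P_2}\supset\omega_{T_3}$, and since the tiles $\{P_2\}_{\vec{P}\in\mathbb{P}^0}$ are pairwise disjoint, the resulting subcollection $\mathbb{P}^0_\mathfrak{b}(T)$ has pairwise disjoint $I_{\vec{P}}$. Lemma \ref{L:Trivial-v2} is stated and proved \emph{only for} $\mathbb{P}^0_\mathfrak{b}(T)$, and the bound $\|H\|^2_{L^2(\chi^M_{I_T})}\lesssim|I_T|$ is false if you take $H=\sum_{\vec{P}\in\mathbb{P}^0_\mathfrak{b}}h_{\vec{P}}\Phi^\infty_{P_2,0}$ as written: the time intervals $I_{\vec{P}}$ for the full collection $\mathbb{P}^0_\mathfrak{b}$ can pile up at many scales over the same point, so $\sum_{\vec{P}\in\mathbb{P}^0_\mathfrak{b}}\chi^L_{I_{\vec{P}}}$ need not be $O(1)$. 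You must first reduce the sum in $H$ to $\mathbb{P}^0_\mathfrak{b}(T)$ — which is automatic given the exact Fourier vanishing, but is not available under the Schwartz-tail reading, where the far tiles still contribute small-but-nonzero amounts that you would have to sum across scales. So: replace the Schwartz-tail argument with the compact-Fourier-support observation, derive the disjoint time projections of $\mathbb{P}^0_\mathfrak{b}(T)$ from it, and only then invoke Lemma \ref{L:Trivial-v2}; the remaining near-diagonal scales $|\omega_{P_2}|\simeq|\omega_{Q_3}|$ are handled scale by scale as in the paper's term $II$.
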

\begin{proof}
 Fix a $1$- or $2$-tree $T \subset \mathbb{Q}^{\tilde{d}}$.  Observe that since each $\Phi^\infty_{P_2,0}$ satisfies 
\begin{eqnarray*}
supp~\hat {\Phi}^\infty_{P_2,0} \subset \left [ c_{\omega_{P_2}} - \frac{9 }{20}  |\omega_{P_2}|, c_{\omega_{P_2}} + \frac{9}{20 }| \omega_{P_2}| \right],
\end{eqnarray*}

\begin{eqnarray*}
\mathbb{P}^0_\mathfrak{b}(T) := \left\{ \vec{P} \in \mathbb{P}^0_\mathfrak{b} : \exists \vec{Q} \in T, \langle \Phi^\alpha_{Q_3, 5}, \Phi_{P_2, 0}^\infty \rangle \not = 0,  |\omega_{P_2}| \gg  |\omega_{Q_3}| \right\}
\end{eqnarray*}
 consists of tiles with disjoint time projections. Indeed, it is easy to check that for large enough implicit constant in the definition of $\mathbb{P}_{\mathfrak{b}}^0(T)$ (depending on the implicit constants in the definition of a tree), every tile $P_2: \vec{P} \in \mathbb{P}^0_\mathfrak{b} (T)$ must contain the $\mathbb{Q}^{\tilde{d}}$-tree 's top frequency band $\omega_{T_3}$, and $\mathbb{P}^0_\mathfrak{b}$ consists of disjoint tri-tiles.  
By the frequency restriction $\omega_{P_2} \supset \supset \omega_{Q_3}$ and disjointness of the tiles $\left\{P_2 : \vec{P} \in \mathbb{P}^0_{\mathfrak{b}} \right\}$, we observe

\begin{align*}
  \left( \sum_{\vec{Q} \in T} \left|  \left \langle \Phi^\alpha_{Q_3,5}, \sum_{ \substack{ \vec{P} \in \mathbb{P}^{0}_{\mathfrak{b}} \\  \omega_{P_2} \supset  \supset \omega_{Q_3}}} h_{\vec{P}} \Phi_{P_2, 0}^ \infty \right \rangle \right|^2 \right)^{1/2} =&\left( \sum_{\vec{Q} \in T} \left|  \left \langle \Phi^\alpha_{Q_3,5}, \sum_{ \substack{ \vec{P} \in \mathbb{P}^0_\mathfrak{b}(T) \\  \omega_{P_2} \supset \supset \omega_{Q_3}}} h_{\vec{P}} \Phi_{P_2, 0}^\infty  \right \rangle \right|^2 \right)^{1/2} \\\lesssim & \left( \sum_{\vec{Q} \in T} \left|  \left \langle \Phi^\alpha_{Q_3,5}, \sum_{\vec{P} \in \mathbb{P}^0_\mathfrak{b}(T)}h_{\vec{P}} \Phi_{P_2, 0}^\infty  \right \rangle \right|^2 \right)^{1/2}\\+&  \left( \sum_{\vec{Q} \in T} \left|  \left \langle \Phi^\alpha_{Q_3,5}, \sum_{ \substack{ \vec{P} \in \mathbb{P}^0_\mathfrak{b}(T) \\ |\omega_{P_2}| \simeq  |\omega_{Q_3}|}} h_{\vec{P}} \Phi_{P_2, 0}^\infty \right \rangle \right|^2 \right)^{1/2} \\ =:&~ I + II. 
 \end{align*}
For the last line, it is straightforward to check that for large enough implicit constant 

\begin{align*}
\left\{ \vec{P} \in \mathbb{P}^0_\mathfrak{b}(T) : \exists \vec{Q} \in T, |\omega_{P_2}| \ll |\omega_{Q_3}|, \langle \Phi^\alpha_{Q_3, 5}, \Phi_{P_2, 0}^\infty \rangle \not = 0 \right\} = \emptyset. 
\end{align*}
Our goal is then to show $I, II \lesssim 1$, in which case $2^{-n_3} \lesssim 1$. 
 For term $I$, note that for some $M \gg 1$
 
 \begin{align*}
 I &\lesssim   \left( \frac{1}{|I_T|} \sum_{\vec{Q} \in T} \left|  \left \langle \Phi^\alpha_{Q_3,5}, \sum_{\vec{P} \in \mathbb{P}^0_\mathfrak{b}(T)}h_{\vec{P}}  \Phi_{P_2, 0}^\infty \right \rangle \right|^2 \right)^{1/2}  \\ &\lesssim \left(  \frac{1}{|I_T|} \int_\mathbb{R} \left| \left|  \sum_{\vec{P} \in \mathbb{P}^0_\mathfrak{b}(T)}  \chi^M_{I_{\vec{P}}}\chi^M_{I_T} \right| \right|^2_2  dx \right)^{1/2} \\& \lesssim 1
 \end{align*}
 where the last line holds by Lemma \ref{L:Trivial-v2}. To prove Proposition \ref{P90}, it suffices to bound $II$. For this, observe
 
 \begin{align*}
 II &\leq \sum_{|k| \leq C} \left( \frac{1}{|I_T|} \sum_{\vec{Q} \in T} \left|  \left \langle \Phi^\alpha_{Q_3,5}, \sum_{\substack{ \vec{P} \in \mathbb{P}^0_\mathfrak{b}(T) \\ |\omega_{P_2}| = 2^k  |\omega_{Q_3}|}}h_{\vec{P}} \Phi_{P_2, 0}^\infty  \right \rangle \right|^2 \right)^{1/2} . 
 \end{align*}
For each scale $\lambda \in \mathbb{Z}$, we easily obtain for some $M \gg 1$
 
 \begin{align*}
 \sum_{\vec{Q} \in T: |\omega_{Q_3}| = 2^{\lambda}} \left|  \left \langle \Phi^\alpha_{Q_3}, \sum_{\substack{ \vec{P} \in \mathbb{P}^0_\mathfrak{b}(T) \\ |\omega_{P_2}| = 2^k  |\omega_{Q_3}|}} h_{\vec{P}} \Phi_{P_2, 0}^\infty \right \rangle \right|^2 & \lesssim \left| \left| \left[ \sum_{\vec{P} \in \mathbb{P}^0_{\mathfrak{b}}(T):|\omega_{P_2}| = 2^{k+\lambda}}\chi^M_{I_{\vec{P}}} \right] \chi^M_{I_T} \right| \right|_2^2
 \end{align*}
Summing this last inequality over all $\lambda \in \mathbb{Z}$ and using Lemma \ref{L:Trivial-v2} again yields
 
 \begin{align*}
II  &\lesssim\frac{1}{|I_T|^{1/2}} \sum_{|k| \leq C} \left| \left|  \left( \sum_{\lambda \in \mathbb{Z}} \left| \sum_{\substack{ \vec{P} \in \mathbb{P}^0_\mathfrak{b}(T) \\ |\omega_{P_2}| = 2^{k+\lambda}}}  \chi^M_{I_{\vec{P}}} \right|^2 \right)^{1/2} \chi^M_{I_T} \right| \right|_2 \\ &\lesssim \frac{1}{|I_T|^{1/2}} \left| \left|  \sum_{\vec{P} \in \mathbb{P}^0_\mathfrak{b}(T)} \chi^M_{I_{\vec{P}}}  \chi^M_{I_T} \right| \right|_2 \\ & \lesssim 1. 
 \end{align*}
 \end{proof}

\subsubsection{$3$-Energy Bound} We begin by recalling the definition of energy from section 6.3 of \cite{MR3052499}:

\begin{definition}[Energy]\label{Def:Energy}
Let $j \in \{1,2,3\}$ and $\tilde{\mathbb{Q}} \subset \mathbb{Q}$. Then

\begin{align*}
\mathcal{E}_j\left( \{a_{Q_j}\}_{\vec{Q} \in \tilde{\mathbb{Q}}} \right) := \sup_n \sup_{\mathbb{T}} 2^n \left( \sum_{T \in \mathbb{T}} |I_T| \right)^{1/2} 
\end{align*}
where the supremum in $\mathbb{T}$ ranges over all chains of strongly $j$-disjoint $i$-trees $T \subset \tilde{\mathbb{Q}}$ for which $i \not =j$ and 

\begin{align*}
\sum_{\vec{Q} \in T} |a_{Q_j}|^2 &\leq 2^n |I_T| \\ 
\sum_{\vec{Q} \in T^\prime} |a_{Q_j}|^2 &\geq 2^{n+1} |I_{T^\prime}|\qquad \forall~\text{subtrees}~T^\prime \subset T. 
\end{align*}

\end{definition}
\begin{lemma}\label{L:Energy-3}
The following $3$-energy estimate holds:

\begin{align*}
\mathcal{E}_3 \left( \left\{\left \langle \sum_{\vec{P} \in \mathbb{P}^0_{\mathfrak{b}}: \omega_{P_2} \supset \supset \omega_{Q_3}}h_{\vec{P}} \Phi_{P_2,0}^\infty , \Phi^\alpha_{Q_3,5}\right \rangle \right\}_{\vec{Q} \in \tilde{\mathbb{Q}}} \right) \lesssim \left( \sum_{\vec{P} \in \mathbb{P}^0_\mathfrak{b}} |I_{\vec{P}}| \right)^{1/2}. 
\end{align*}

\end{lemma}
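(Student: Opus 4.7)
By the definition of $\mathcal{E}_3$ in Definition \ref{Def:Energy}, it suffices to show that for every level $n$ and every chain $\mathbb{T}$ of strongly $3$-disjoint $i$-trees ($i\in\{1,2\}$) in $\tilde{\mathbb{Q}}$ whose coefficients $a_{\vec{Q},3}:=\bigl\langle\sum_{\vec{P}\in\mathbb{P}^0_{\mathfrak b}:\,\omega_{P_2}\supset\supset\omega_{Q_3}}h_{\vec{P}}\Phi^\infty_{P_2,0},\,\Phi^\alpha_{Q_3,5}\bigr\rangle$ satisfy the size conditions at level $n$, one has
$$2^{2n}\sum_{T\in\mathbb{T}}|I_T|\;\lesssim\;\sum_{\vec{P}\in\mathbb{P}^0_{\mathfrak b}}|I_{\vec{P}}|.$$
Using the subtree lower bound $2^{n+1}|I_{T'}|\leq\sum_{\vec{Q}\in T'}|a_{\vec{Q},3}|^2$ (with $T'=T$), summation over $\mathbb{T}$ reduces the task to proving the single inequality
$$\sum_{T\in\mathbb{T}}\sum_{\vec{Q}\in T}|a_{\vec{Q},3}|^2\;\lesssim\;\sum_{\vec{P}\in\mathbb{P}^0_{\mathfrak b}}|I_{\vec{P}}|.$$

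\paragraph{Decoupling $\vec{P}$ from $\vec{Q}$.} Set $g:=\sum_{\vec{P}\in\mathbb{P}^0_{\mathfrak b}}h_{\vec{P}}\Phi^\infty_{P_2,0}$. The Fourier supports $\widehat{\Phi^\alpha_{Q_3,5}}\subset\tfrac{9}{10}\omega_{Q_3}$ and $\widehat{\Phi^\infty_{P_2,0}}\subset\tfrac{9}{10}\omega_{P_2}$ force the contribution of $\vec{P}$ with $|\omega_{P_2}|\ll|\omega_{Q_3}|$ in $\langle g,\Phi^\alpha_{Q_3,5}\rangle$ to vanish, so I split $a_{\vec{Q},3}=\langle g,\Phi^\alpha_{Q_3,5}\rangle-\rho_{\vec{Q}}$, where $\rho_{\vec{Q}}$ collects only the finitely many scales $|\omega_{P_2}|\simeq|\omega_{Q_3}|$. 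The residual piece $\sum_{T}\sum_{\vec{Q}\in T}|\rho_{\vec{Q}}|^2$ is handled exactly as term $II$ in Proposition \ref{P90}: decompose by the finite ratios $|\omega_{P_2}|/|\omega_{Q_3}|=2^k$, apply Lemma \ref{L:Trivial-v2}, and use the pairwise disjointness of $\{P_2\}_{\vec{P}\in\mathbb{P}^0_{\mathfrak b}}$ to obtain a bound of size $\sum_{\vec{P}}|I_{\vec{P}}|$.

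\paragraph{Bessel inequality and bound on $\|g\|_2^2$.} For the main piece $\langle g,\Phi^\alpha_{Q_3,5}\rangle$, strong $3$-disjointness of the trees $T\in\mathbb{T}$ is exactly the hypothesis of Lemma \ref{Est:BHTEnergy} applied at coordinate $j=3$, so
$$\sum_{T\in\mathbb{T}}\sum_{\vec{Q}\in T}|\langle g,\Phi^\alpha_{Q_3,5}\rangle|^2\;\lesssim\;\|g\|_2^2.$$
It remains to estimate $\|g\|_2^2=\sum_{\vec{P},\vec{P}'}h_{\vec{P}}\overline{h_{\vec{P}'}}\langle\Phi^\infty_{P_2,0},\Phi^\infty_{P'_2,0}\rangle$. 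Pairs with $\omega_{P_2}\cap\omega_{P'_2}=\emptyset$ contribute zero by Plancherel. For pairs with overlapping frequency intervals, the tile disjointness of $\{P_2\}$ forces $I_{\vec{P}}\cap I_{\vec{P}'}=\emptyset$, so a Schur-type estimate based on the decay $\int\chi^M_{I_{\vec{P}}}\chi^M_{I_{\vec{P}'}}\,dx\lesssim |I_{\vec{P}}|/(1+\mathrm{dist}(I_{\vec{P}},I_{\vec{P}'})/|I_{\vec{P}'}|)^M$ collapses the cross terms to the diagonal. Since each $\Phi^\infty_{P_2,0}$ is $L^\infty$-normalized and concentrated on $I_{\vec{P}}$, one has $\|\Phi^\infty_{P_2,0}\|_2^2\simeq|I_{\vec{P}}|$, yielding
$$\|g\|_2^2\;\lesssim\;\sum_{\vec{P}\in\mathbb{P}^0_{\mathfrak b}}|I_{\vec{P}}|,$$
which is the desired bound.

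\paragraph{Main obstacle.} The delicate point is Step~3: converting tile disjointness of $\{P_2\}$ (a condition on the Heisenberg boxes, not on the time intervals alone) into a genuine almost-orthogonality bound on $\{\Phi^\infty_{P_2,0}\}$. The Schur test sketched above is the right tool, but one must be careful that the $L^\infty$-normalization is properly translated back to $L^2$-normalization via the $|I_{\vec{P}}|^{1/2}$ factor, and that the residual boundary scales $|\omega_{P_2}|\simeq|\omega_{Q_3}|$ in the decoupling step above do not introduce logarithmic losses (they don't, since only $O(1)$ ratios $2^k$ survive).
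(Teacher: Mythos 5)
Your proposal shares the paper's central idea — pass to the superposition $g=\sum_{\vec P}h_{\vec P}\Phi^\infty_{P_2,0}$, use Cauchy--Schwarz/Bessel for the main pairing, and bound $\|g\|_2^2\lesssim\sum_{\vec P}|I_{\vec P}|$ by almost--orthogonality — but there is a genuine gap in the treatment of the residual, and this gap cannot be patched within the framework you propose.

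The claim that the Fourier supports ``force the contribution of $\vec P$ with $|\omega_{P_2}|\ll|\omega_{Q_3}|$ in $\langle g,\Phi^\alpha_{Q_3,5}\rangle$ to vanish'' is false. If $\omega_{P_2}$ is a short interval contained inside $\tfrac{9}{10}\omega_{Q_3}$, the pairing can be nonzero. The appeal to Proposition~\ref{P90} is not justified here: the vanishing established there is for the restricted collection $\mathbb{P}^0_\mathfrak{b}(T)$ of a \emph{single} tree $T$, where every selected $\omega_{P_2}$ already contains the tree's top frequency band $\omega_{T_3}$ and is therefore geometrically prevented (for suitable constants) from being simultaneously much smaller than some other $\omega_{Q_3}$ in $T$. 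In your argument $g$ ranges over all of $\mathbb{P}^0_\mathfrak{b}$ and the inner sum ranges over all trees in the chain $\mathbb{T}$, so there is no such geometric exclusion: there will be tiles $\vec P$ whose $\omega_{P_2}$ is small relative to $\omega_{Q_3}$ for every $\vec Q$ in some tree, and their contribution must be retained.

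Even if one keeps the small-scale residual, the plan of directly proving $\sum_{T}\sum_{\vec Q\in T}|a_{\vec Q,3}|^2\lesssim\sum_{\vec P}|I_{\vec P}|$ fails: that bound is not quantitatively correct for the residual piece. In the paper's own proof, the corresponding error $\mathcal{E}_3^{II}$ (which collects exactly the $|\omega_{Q_3}|\gtrsim|\omega_{P_2}|$ regime) is bounded by $\sum_{\vec P}|I_{\vec P}|/\bigl(\sum_T|I_T|\bigr)^{1/2}$, a quantity that is not controlled by $\bigl(\sum_{\vec P}|I_{\vec P}|\bigr)^{1/2}$ when $\sum_T|I_T|$ is small. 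The paper then closes the argument by a two-case dichotomy on whether $\sum_T|I_T|\lessgtr\sum_{\vec P}|I_{\vec P}|$, showing $\sum_T|I_T|\lesssim2^{2n_3}\sum_{\vec P}|I_{\vec P}|$ in either case. Your proposal omits both the correct residual decomposition and this case analysis, and the Bessel step (Lemma~\ref{Est:BHTEnergy}) alone cannot deliver the inequality because the coefficients $a_{\vec Q,3}$ are not of the form $\langle f,\Phi^\alpha_{Q_3,5}\rangle$ for a single $f$ (the $\vec P$-sum constraint depends on $\vec Q$). To repair the proof you need to (i) retain the small-scale contributions in the residual, (ii) pass through the normalized dual coefficients $c_{Q_3}$ to get a linear, not quadratic, estimate on the error, and (iii) close with the case analysis as in the paper.
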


\begin{proof}
Setting $a_{\vec{Q},3}=\left \langle \sum_{\vec{P} \in \mathbb{P}^0_{\mathfrak{b}}: \omega_{P_2} \supset \supset \omega_{Q_3}}h_{\vec{P}} \Phi_{P_2,0}^\infty , \Phi^\alpha_{Q_3,5}\right \rangle$ for all $\vec{Q} \in \tilde{\mathbb{Q}}$, we observe from Definition \ref{Def:StronglyDisjoint} that for some collection of strongly $3$-disjoint trees $\mathbb{T}=\{T\}$, 

\begin{align*}
\mathcal{E}_3 \left(\{a_{\vec{Q},3}\}_{\vec{Q} \in \mathbb{Q}} \right) &\simeq \left[  \sum_{T \in \mathbb{T}} \sum_{\vec{Q} \in T}  \left| \left \langle \sum_{\vec{P} \in \mathbb{P}^0_{\mathfrak{b}}: \omega_{P_2} \supset \supset \omega_{Q_3}}h_{\vec{P}} \Phi_{P_2,0}^\infty , \Phi^\alpha_{Q_3,5}\right \rangle \right|^2  \right]^{1/2}
\end{align*}
where for all $T_1, T_2 \in \mathbb{T}$ and all subtrees $T_1^\prime \subset T_1$

\begin{align*}
&\frac{1}{|I_{T_1^\prime}| }\sum_{\vec{Q} \in T_1^\prime}  \left| \left \langle \sum_{\vec{P} \in \mathbb{P}^0_{\mathfrak{b}}: \omega_{P_2} \supset \supset \omega_{Q_3}}h_{\vec{P}} \Phi_{P_2,0}^\infty , \Phi^\alpha_{Q_3,5}\right \rangle \right|^2 \\ \lesssim & \frac{1}{|I_{T_1}| }\sum_{\vec{Q} \in T_1}  \left| \left \langle \sum_{\vec{P} \in \mathbb{P}^0_{\mathfrak{b}}: \omega_{P_2} \supset \supset \omega_{Q_3}}h_{\vec{P}} \Phi_{P_2,0}^\infty , \Phi^\alpha_{Q_3,5}\right \rangle \right|^2 \\ \simeq & \frac{1}{|I_{T_2}| }\sum_{\vec{Q} \in T_2}  \left| \left \langle \sum_{\vec{P} \in \mathbb{P}^0_{\mathfrak{b}}: \omega_{P_2} \supset \supset \omega_{Q_3}}h_{\vec{P}} \Phi_{P_2,0}^\infty , \Phi^\alpha_{Q_3,5}\right \rangle \right|^2.
\end{align*}
Defining for all $\vec{Q} \in \bigcup_{T \in \mathbb{T}} T$

\begin{eqnarray*}
c_{Q_3} :=  \overline{ \left \langle \sum_{\vec{P} \in \mathbb{P}^0_{\mathfrak{b}}: \omega_{P_2} \supset \supset \omega_{Q_3}}h_{\vec{P}} \Phi_{P_2, 0}^\infty , \Phi^\alpha_{Q_3,5}\right \rangle} \cdot \left[  \sum_{T \in \mathbb{T}} \sum_{\vec{Q} \in T}  \left| \left \langle \sum_{\vec{P} \in \mathbb{P}^0_{\mathfrak{b}}: \omega_{P_2} \supset \supset  \omega_{Q_3}}h_{\vec{P}} \Phi_{P_2, 0}^\infty, \Phi^\alpha_{Q_3,5}\right \rangle \right|^2  \right]^{-1/2}, 
\end{eqnarray*}
we obtain that for all $\vec{Q} \in \bigcup_{T \in \mathbb{T}} T$

\begin{eqnarray}
|c_{Q_3}| \simeq 2^{n_3}  \left| \left \langle \sum_{\vec{P} \in \mathbb{P}^0_{\mathfrak{b}}: \omega_{P_2} \supset \supset \omega_{Q_3}}h_{\vec{P}} \Phi_{P_2,0}^\infty, \Phi^\alpha_{Q_3,5}\right \rangle \right| \left[ \sum_{T \in \mathbb{T}} |I_T| \right]^{-1/2}
\end{eqnarray}
where $n_3$ realizes the supremum in Definition \ref{Def:Energy}. By construction it follows that for all $T \in \mathbb{T}$

\begin{align*}
 2^{-2n_3} \simeq \frac{1}{|I_T| }\sum_{\vec{Q} \in T}  \left| \left \langle \sum_{\vec{P} \in \mathbb{P}^0_{\mathfrak{b}}: \omega_{P_2} \supset \supset \omega_{Q_3}}h_{\vec{P}} \Phi_{P_2,0}^\infty ,\Phi^\alpha_{Q_3,5}\right \rangle \right|^2
\end{align*}
and for any subtree $T^\prime \subset T$, 

\begin{eqnarray}
\sum_{\vec{Q} \in T^\prime} |c_{Q_3}|^2 \lesssim \frac{ |I_{T^\prime}|}{\sum_{T \in \mathbb{T}} |I_T| }.
\end{eqnarray}
In particular, selecting $T^\prime = \{\vec{Q}\}$ yields

\begin{eqnarray}\label{Est:CQ}
|c_{Q_3}| \lesssim \frac{|I_{\vec{Q}}|^{1/2}}{\left[\sum_{T \in \mathbb{T}}|I_T|\right]^{1/2}}. 
\end{eqnarray}
Abbreviating $\mathcal{E}_3 \left(\{a_{\vec{Q},3}\}_{\vec{Q} \in \mathbb{Q}^{\tilde{d}}}\right) = \mathcal{E}_3$, we obtain 
\begin{align}
\mathcal{E}_3 &\simeq \sum_{T \in \mathbb{T}} \sum_{\vec{Q} \in T} \left \langle \sum_{\substack{ \vec{P} \in \mathbb{P}^0_{\mathfrak{b}} \\  \omega_{P_2} \supset \supset \omega_{Q_3}}} h_{\vec{P}} \Phi_{P_2, 0}^\infty , c_{Q_3} \Phi^\alpha_{Q_3,5}\right \rangle \nonumber \\ &= \sum_{\vec{P} \in \mathbb{P}^0_{\mathfrak{b}}} \left \langle h_{\vec{P}}\Phi^\infty_{P_2,0}, \sum_{T \in \mathbb{T}} \sum_{ \substack{ \vec{Q} \in T \\  \omega_{Q_3} \subset \subset \omega_{P_2}}} c_{Q_3} \Phi^\alpha_{Q_3,5}\right \rangle \nonumber \\ &= \left \langle  \sum_{\vec{P} \in \mathbb{P}^0_{\mathfrak{b}}} h_{\vec{P}}   \Phi^\infty_{P_2,0},  \sum_{T \in \mathbb{T}} \sum_{\vec{Q} \in T} c_{Q_3} \Phi^\alpha_{Q_3,5}\right \rangle \nonumber \\&+   \sum_{\vec{P} \in \mathbb{P}^0_{\mathfrak{b}}} \left \langle h_{\vec{P}}  \Phi^\infty_{P_2,0},  \sum_{T \in \mathbb{T}} \sum_{\substack{ \vec{Q} \in T \\  |\omega_{Q_3}| \gtrsim |\omega_{P_2}|}} c_{Q_3} \Phi^\alpha_{Q_3,5}\right \rangle \nonumber \\ &:= \mathcal{E}_3^I + \mathcal{E}_3^{II} \label{Def:E-term}. 
\end{align}
Before bounding the contributions of $\mathcal{E}_3^I$ and $\mathcal{E}_3^{II}$, we record the following elementary result. 

\begin{lemma}
Assume $|h_{\vec{P}}| \leq 1$ for all $\vec{P} \in  \mathbb{P}_{\mathfrak{b}}^0$. Then 

\begin{eqnarray}
\left| \left| \sum_{\vec{P} \in \mathbb{P}^0_{\mathfrak{b}}}h_{\vec{P}} \Phi^\infty_{P_2,0} \right| \right|_2 \lesssim \left( \sum_{\vec{P}\in  \mathbb{P}^0_{\mathfrak{b}}} |I_{\vec{P}}| \right)^{1/2}.
\end{eqnarray}

\end{lemma}

\begin{proof}
Begin by noting

\begin{eqnarray*}
\left| \left| \sum_{\vec{P} \in \mathbb{P}^0_\mathfrak{b}} h_{\vec{P}} \Phi^\infty_{P_2,0} \right| \right|_2^2 &=& \sum_{\vec{P}, \tilde{\vec{P}} \in \mathbb{P}^0_\mathfrak{b}} h_{\vec{P}} h_{\tilde{\vec{P}}}\left \langle \Phi^\infty_{P_2,0}, \Phi^\infty_{P_2,0} \right\rangle \\ &=& \left( \sum_{|I_{\vec{P}}| \gg  |I_{\tilde{\vec{P}}}| } +\sum_{|I_{\vec{P}}| \simeq |I_{\tilde{\vec{P}}}|} + \sum_{|I_{\vec{P}}| \ll  |I_{\tilde{\vec{P}}}|} \right)h_{\vec{P}} h_{\tilde{\vec{P}}}\left \langle \Phi^\infty_{\vec{P}_2,0}, \Phi^\infty_{\tilde{\vec{P}}_2,0} \right\rangle \\ &=& I + II + III. 
\end{eqnarray*}
It is straightforward to observe $|II| \lesssim \sum_{\vec{P} \in \mathbb{P}^0_\mathfrak{b}} |I_{\vec{P}}|$. By symmetry, it suffices to handle $I$:

\begin{eqnarray}\label{Est:Key}
|I | &\leq &  \sum_{\vec{P} \in \mathbb{P}^0_\mathfrak{b}}\left[  \sum_{ \ \tilde{\vec{P}} \in \mathbb{P}^0_\mathfrak{b} : \omega_{\tilde{P}_2} \supset \supset \omega_{P_2}}\left| \left \langle \Phi^\infty_{P_2,0}, \Phi^\infty_{\tilde{P}_2,0} \right\rangle \right|  \right] \lesssim \sum_{\vec{P} \in \mathbb{P}^\mathfrak{b}_\mathfrak{d}} |I_{\vec{P}}|. 
\end{eqnarray}
Indeed, recall that $\{P_2: \vec{P} \in \mathbb{P}^0\}$ is a disjoint collection of tiles, we have that for every $\vec{P} \in \mathbb{P}$ the collection $\left\{ I_{\tilde{\vec{P}}} : \tilde{\vec{P}} \in \mathbb{P}^0_{\mathfrak{b}}, \omega_{\tilde{P}_2} \supset \supset \omega_{P_2}\right\}$ consists of disjoint intervals and so for some $M \gg 1$

\begin{align*}
& \sum_{ \tilde{\vec{P}} \in \mathbb{P}^0_\mathfrak{b} : \omega_{\tilde{P}_2} \supset \supset \omega_{P_2}}\left| \left \langle \Phi^\infty_{P_2,0}, \Phi^\infty_{\tilde{P}_2,0} \right\rangle \right| \\ \lesssim &   \sum_{l \in \mathbb{Z}} \frac{1}{1+l^M}   \left \langle \chi^M_{I_{\vec{P}}} , \sum_{ \tilde{\vec{P}} \in \mathbb{P}^0_\mathfrak{b} : \omega_{\tilde{P}_2} \supset \supset \omega_{P_2}, I_{\vec{\tilde{P}}} \subset I_{\vec{P}} + l|I_{\vec{P}}|}  1_{I_{\vec{\tilde{P}}}} \right \rangle  \\ \lesssim&  \sum_{l \in \mathbb{Z}}\frac{1}{1+l^M} |I_{\vec{\tilde{P}}}| \\ \lesssim & |I_{\vec{P}}|. 
\end{align*}
Summing on $\vec{P} \in \mathbb{P}^0_{\mathfrak{b}}$ then yields estimate \eqref{Est:Key}.

\end{proof}
Using the fact that the trees $T \in \mathbb{T}$ form a strongly disjoint collection and that for any subtree $T^\prime \subset T \in \mathbb{T}$, $\sum_{\vec{Q} \in T^\prime} |c_{Q_3}|^2 \lesssim \frac{ |I_{T^\prime}|}{\sum_{T \in \mathbb{T}} |I_T| }$, we may use Lemma \ref{Est:BHTEnergy} to deduce from \eqref{Def:E-term}
\begin{eqnarray}\label{Est:E3-1}
\mathcal{E}^I_3=  \left| \left \langle \sum_{\vec{P} \in \mathbb{P}^0_{\mathfrak{b}}} h_{\vec{P}} \Phi^\infty_{P_2,0}, \sum_{T \in \mathbb{T}} \sum_{\vec{Q} \in T} c_{Q_3} \Phi^\alpha_{Q_3, 5} \right \rangle \right|  \lesssim \left| \left|  \sum_{\vec{P} \in \mathbb{P}^0_{\mathfrak{b}}}h_{\vec{P}} \Phi^\infty_{P_2,0} \right| \right|_2 \lesssim \left( \sum_{\vec{P} \in \mathbb{P}^0_{\mathfrak{b}}}  |I_{\vec{P}}| \right)^{1/2}.
\end{eqnarray}
It therefore remains to bound $\mathcal{E}_3^{II}$. Recalling \eqref{Def:E-term} and using estimate \eqref{Est:CQ}, we obtain for some $M \gg 1$

\begin{align*}
\mathcal{E}_3^{II} \lesssim&\sum_{\vec{P} \in \mathbb{P}^0_{\mathfrak{b}}} \left| \left \langle h_{\vec{P}} \Phi^\infty_{P_2,0},   \sum_{T \in \mathbb{T}} \sum_{\vec{Q} \in T: |\omega_{Q_3}| \gtrsim |\omega_{P_2}|} c_{Q_3} \Phi^\alpha_{Q_3, 5}\right \rangle \right| \\ \lesssim & \sum_{\vec{P} \in \mathbb{P}^0_{\mathfrak{b}}} \sum_{T \in \mathbb{T}} \sum_{\substack{ \vec{Q} \in T: |\omega_{Q_3}| \gtrsim |\omega_{P_2}| \\ \omega_{Q_3} \cap \omega_{P_2} \not = \emptyset}} |c_{Q_3}|   \left| \left \langle  \Phi^\infty_{P_2,0},    \Phi^\alpha_{Q_3, 5}\right \rangle \right|  \\ \lesssim& \frac{1}{\left( \sum_{T \in \mathbb{T}} |I_T| \right)^{1/2}}  \sum_{\vec{P} \in \mathbb{P}^0_{\mathfrak{b}}} \sum_{T \in \mathbb{T}} \sum_{\substack{ \vec{Q} \in T: |\omega_{Q_3}| \gtrsim |\omega_{P_2}| \\ \omega_{Q_3} \cap \omega_{P_2} \not = \emptyset}}   \left \langle  \chi^M_{I_{\vec{P}}},  \chi^M_{I_{\vec{Q}}} \right \rangle.  
\end{align*}
By estimate \eqref{Est:Trivial}, the last line of the above display can be majorized by a constant times
\begin{align*}
&  \frac{1}{\left( \sum_{T \in \mathbb{T}} |I_T| \right)^{1/2}} \sum_{\vec{P} \in \mathbb{P}^0_{\mathfrak{b}}}     \left \langle  \chi^M_{I_{\vec{P}}},  \sum_{T \in \mathbb{T}} \sum_{\substack{ \vec{Q} \in T: |\omega_{Q_3}| \gtrsim |\omega_{P_2}| \\ \omega_{Q_3} \cap \omega_{P_2} \not = \emptyset}} 1_{I_{\vec{Q}}} \right \rangle  \\ \lesssim &\frac{\sum_{\vec{P} \in \mathbb{P}^0_{\mathfrak{b}}} |I_{\vec{P}}|}{\left( \sum_{T \in \mathbb{T}} |I_T| \right)^{1/2}}\times \sup_{\vec{P} \in \mathbb{P}^0_{\mathfrak{b}}} \left| \left|  \sum_{T \in \mathbb{T}} ~\sum_{\substack{ \vec{Q} \in T: |\omega_{Q_3}| \gtrsim |\omega_{P_2}| \\  \omega_{Q_3} \cap \omega_{P_2} \not = \emptyset}}  1_{I_{\vec{Q}}} \right| \right|_{L^\infty(\mathbb{R})}. 
\end{align*}
Using the disjointness of the tiles $\{Q_3: \vec{Q} \in T~for~some~T \in \mathbb{T}\}$, 
\begin{eqnarray*}
\sup_{\vec{P} \in \mathbb{P}^0_{\mathfrak{b}}} \left| \left|  \sum_{T \in \mathbb{T}} \sum_{\vec{Q} \in T:|\omega_{Q_3}| \gtrsim |\omega_{P_2}|, \omega_{Q_3} \cap \omega_{P_2} \not = \emptyset}  1_{I_{\vec{Q}}} \right| \right|_{L^\infty(\mathbb{R})} \lesssim 1. 
\end{eqnarray*}
As a consequence, 

\begin{eqnarray}\label{Est:E3-2}
\mathcal{E}_3^{II} \lesssim \frac{ \sum_{\vec{P} \in \mathbb{P}^0_{\mathfrak{b}}} |I_{\vec{P}}|} { \left( \sum_{T \in \mathbb{T}} |I_T| \right)^{1/2}}. 
\end{eqnarray}
Combining the estimates for $\mathcal{E}_3^I$ and $\mathcal{E}_3^{II}$ in \eqref{Est:E3-1} and \eqref{Est:E3-2} yields
 \begin{eqnarray*}
 \mathcal{E}_3 \simeq 2^{-n_3} \left( \sum_{T \in \mathbb{T}} |I_T| \right)^{1/2} \simeq \mathcal{E}_3^I + \mathcal{E}_3^{II}  \lesssim \left( \sum_{\vec{P} \in \mathbb{P}^{\mathfrak{b}}_{\mathfrak{d}}} |I_{\vec{P}}| \right)^{1/2} + \frac{ \sum_{\vec{P} \in \mathbb{P}^0_{\mathfrak{b}}} |I_{\vec{P}}|} { \left( \sum_{T \in \mathbb{T}} |I_T| \right)^{1/2}}. 
 \end{eqnarray*}
 
 CASE 1:  $ \sum_{T \in \mathbb{T}} |I_T|  < \sum_{\vec{P} \in \mathbb{P}^0_{\mathfrak{b}}} |I_{\vec{P}}|$. Then $\sum_{T \in \mathbb{T}} |I_T|\lesssim 2^{n_3} \sum_{\vec{P} \in \mathbb{P}^0_{\mathfrak{b}}} |I_{\vec{P}}|$. 

CASE 2: $ \sum_{T \in \mathbb{T}} |I_T|  \geq \sum_{\vec{P} \in \mathbb{P}^0_{\mathfrak{b}}} |I_{\vec{P}}|$. Then $\sum_{T \in \mathbb{T}} |I_T|\lesssim 2^{2n_3} \sum_{\vec{P} \in \mathbb{P}^0_{\mathfrak{b}}} |I_{\vec{P}}|$. 
In either case 

\begin{eqnarray*}
\sum_{T \in \mathbb{T}}|I_T| \lesssim 2^{2n_3} \sum_{\vec{P} \in  \mathbb{P}^0_{\mathfrak{b}}} |I_{\vec{P}}|
\end{eqnarray*}
and $\mathcal{E}_3 \lesssim \left( \sum_{\vec{P} \in  \mathbb{P}^0_{\mathfrak{b}}} |I_{\vec{P}}| \right)^{1/2}$.
\end{proof}
\begin{prop}\label{P:Goal}
 \begin{eqnarray*}
\sum_{\vec{P} \in \mathbb{P}^0_\mathfrak{b}} |I_{\vec{P}}|  \lesssim 2^{\mathfrak{b}} |E_2|^{\frac{1}{2}(1-\tilde{\epsilon})} |E_3|^{ \frac{1}{2}(1-\tilde{\epsilon})} \left[ \sum_{\vec{P} \in \mathbb{P}^0_\mathfrak{b}} |I_{\vec{P}}| \right]^{\tilde{\epsilon}}.
 \end{eqnarray*}
 \end{prop}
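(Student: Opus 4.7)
My plan is to prove Proposition \ref{P:Goal} by establishing the trilinear estimate \eqref{Est:Goal}, from which the proposition follows by the dualization already carried out in the preceding discussion. Abbreviating
\[
\Lambda := \sum_{\vec{Q} \in \mathbb{Q}^{\tilde{d}}} \frac{1}{|I_{\vec{Q}}|^{1/2}}\, a_{\vec{Q},1}\, a_{\vec{Q},2}\, a_{\vec{Q},3},
\]
with $a_{\vec{Q},1} = \langle f_2, \Phi^{\alpha}_{Q_1,2}\rangle$, $a_{\vec{Q},2} = \langle f_3, \Phi^{\alpha}_{Q_2,3}\rangle$, and $a_{\vec{Q},3} = \langle \Phi^{\alpha}_{Q_3,5}, \sum_{\vec{P}} h_{\vec{P}}\Phi^{\infty}_{P_2,0}\rangle$ as in Proposition \ref{P90}, the inequality to prove becomes
\[
|\Lambda| \lesssim |E_2|^{(1-\tilde{\epsilon})/2}\,|E_3|^{(1-\tilde{\epsilon})/2}\, \biggl(\sum_{\vec{P}\in\mathbb{P}^0_{\mathfrak{b}}}|I_{\vec{P}}|\biggr)^{\tilde{\epsilon}}.
\]

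To bound $|\Lambda|$ I would apply the generic size/energy interpolation inequality for trilinear tile-forms developed in Chapter 6 of \cite{MR3052499}, assembled from the size and energy inputs already established: the sizes $S_j(\{a_{\vec{Q},j}\}) \lesssim 1$ for $j \in \{1,2,3\}$ come from Propositions \ref{P89} and \ref{P90}; standard Bessel inequalities of the kind in Lemma \ref{Est:BHTEnergy}, combined with the hypothesis $|f_j| \leq 1_{E_j}$, give $\mathcal{E}_1 \lesssim ||f_2||_2 \lesssim |E_2|^{1/2}$ and $\mathcal{E}_2 \lesssim ||f_3||_2 \lesssim |E_3|^{1/2}$; and Lemma \ref{L:Energy-3} supplies the crucial $3$-energy bound $\mathcal{E}_3 \lesssim (\sum_{\vec{P}}|I_{\vec{P}}|)^{1/2}$. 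Interpolating with most of the weight placed on the energies $\mathcal{E}_1, \mathcal{E}_2$ and a small exponent of order $\tilde{\epsilon}$ placed on $\mathcal{E}_3$ then produces exactly the target inequality on $|\Lambda|$, the $\tilde{\epsilon}$ loss in the $|E_j|$-exponents being the price paid for extracting a nontrivial power of $\sum_{\vec{P}}|I_{\vec{P}}|$.

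The hard part is not the interpolation itself but the $3$-energy bound in Lemma \ref{L:Energy-3}: the naïve Bessel estimate only yields $\mathcal{E}_3 \lesssim ||\sum_{\vec{P}} h_{\vec{P}}\Phi^{\infty}_{P_2,0}||_2 \lesssim (\sum |I_{\vec{P}}|)^{1/2}$ after exploiting the pairwise disjointness of $\{P_2: \vec{P}\in\mathbb{P}^0_{\mathfrak{b}}\}$ under the frequency restriction $\omega_{P_2} \supset\supset \omega_{Q_3}$, and the contribution of $\mathbb{Q}$-tiles with $|\omega_{Q_3}| \gtrsim |\omega_{P_2}|$ (the term $\mathcal{E}_3^{II}$ in the proof of Lemma \ref{L:Energy-3}) must be controlled by a direct tile-counting argument exploiting scale separation and the disjointness of $\{Q_3\}$ within each tree. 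This $\ell^1$-flavored $3$-energy bound is the critical new ingredient that breaks the otherwise fatal $\ell^2$-summability obstruction and powers the self-improvement loop in Corollary \ref{Cor1} that produces Theorem \ref{l1T}.
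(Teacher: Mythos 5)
Your proposal is correct and follows the paper's own argument essentially verbatim: the paper invokes the size-energy interpolation (Proposition 6.12 of \cite{MR3052499}) with $S_j \lesssim 1$ from Propositions \ref{P89}--\ref{P90}, $\mathcal{E}_1 \lesssim |E_2|^{1/2}$ and $\mathcal{E}_2 \lesssim |E_3|^{1/2}$ from Bessel, and $\mathcal{E}_3 \lesssim (\sum_{\vec{P}}|I_{\vec{P}}|)^{1/2}$ from Lemma \ref{L:Energy-3}, choosing the weights $\theta_1 = \theta_2 = \tilde{\epsilon}$, $\theta_3 = 1 - 2\tilde{\epsilon}$. Your identification of Lemma \ref{L:Energy-3} (and its $\mathcal{E}_3^{II}$ term) as the genuine technical content is also accurate.
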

 \begin{proof}
The fundamental size-energy inequality given as Proposition 6.12 in \cite{MR3052499} ensures that for any $0 \leq \theta _1, \theta_2, \theta_3 <1$ such that $\theta_1 + \theta_2 + \theta_3 =1$
\begin{align*}
& \left| \sum_{\vec{Q} \in \mathbb{Q}^{\tilde{d}}}~\frac{1}{|I_{\vec{Q}}|^{1/2}} \langle f_2, \Phi^\alpha_{Q_1,1} \rangle \langle f_3, \Phi^\alpha_{Q_2,2} \rangle \left \langle \Phi^\alpha_{Q_3,5}, \sum_{\vec{P} \in \mathbb{P}^0_{\mathfrak{b}}: \omega_{P_2} \supset  \supset \omega_{Q_3}}h_{\vec{P}} \Phi_{P_2, 0}^\infty   \right \rangle \right| \\\lesssim & \prod_{i=1}^3 \mathcal{E}_j\left( \{a_{\vec{Q},j}\}_{\vec{Q} \in \tilde{\mathbb{Q}}} \right)^{1-\theta_j} S_j\left(\{a_{\vec{Q},j}\}_{\vec{Q} \in \tilde{\mathbb{Q}}} \right) ^{\theta_j}. 
\end{align*}
It follows from Lemma \ref{P89} and \ref{P90} that $S_j\left(\{a_{\vec{Q},j}\}_{\vec{Q} \in \tilde{\mathbb{Q}}} \right) \lesssim 1$ for all $j \in \{1,2,3\}$. Moreover, Lemma \ref{Est:BHTEnergy} gives $\mathcal{E}_1 \left(\{a_{\vec{Q},1}\}_{\vec{Q} \in \tilde{\mathbb{Q}}} \right) \lesssim |E_2|^{1/2}, \mathcal{E}_2\left(\{a_{\vec{Q},2}\}_{\vec{Q} \in \tilde{\mathbb{Q}}}\right) \lesssim |E_3|^{1/2},$ and Lemma \ref{L:Energy-3} yields $\mathcal{E}_3 \left(\{a_{\vec{Q},3}\}_{\vec{Q} \in \tilde{\mathbb{Q}}} \right) \lesssim \left( \sum_{\vec{P} \in \mathbb{P}^0_\mathfrak{b}} |I_{\vec{P}}| \right)^{1/2}$. 
 Therefore, picking $\theta_1 , \theta_2 = \tilde{\epsilon} $ and $\theta_3 = 1-2\tilde{\epsilon}$ guarantees
 
 \begin{eqnarray*}
\sum_{\vec{P} \in \mathbb{P}^0_\mathfrak{b}(\tilde{\mathbb{Q}})} |I_{\vec{P}}|  \lesssim 2^{\mathfrak{b}} |E_2|^{\frac{1}{2}(1-\tilde{\epsilon})} |E_3|^{ \frac{1}{2}(1-\tilde{\epsilon})} \left[ \sum_{\vec{P} \in \mathbb{P}^0_\mathfrak{b}(\tilde{\mathbb{Q}})} |I_{\vec{P}}| \right]^{\tilde{\epsilon}}.
 \end{eqnarray*}
\end{proof}
An immediate consequence of Proposition \ref{P:Goal} is Theorem \ref{l1T}. Corollaries \ref{Cor0} and \ref{Cor1} follow.

\subsubsection{Synthesis}
We now combine the proceeding results to obtain generalized restricted type estimates uniform in small neighborhoods near each point in $\{A_1, A_2, A_3\}$. The decomposition

\begin{align*}
\mathbb{P} \times \mathbb{Q} = \bigcup_{d , \tilde{d} \geq 0} \bigcup_{n_1 \geq N_1(d)}  \bigcup_{\mathfrak{d} \geq N_2(d, \tilde{d})} \mathbb{P}^{d, \tilde{d},*}_{n_1, \mathfrak{d}} \times \mathbb{Q}^{\tilde{d}}
\end{align*}
enables us to rewrite $\Lambda_2^{\mathbb{P}, \mathbb{Q}}(f_1, f_2, f_3, f_4) $  as
\begin{align*}
  \sum_{d, \tilde{d} \geq 0}  \sum_{n_1 \geq N_1(d)} \sum_{\mathfrak{d} \geq N_2(d, \tilde{d})} \sum_{ \vec{P} \in  \mathbb{P}^{d, \tilde{d},*}_{n_1, \mathfrak{d}}}\frac{1}{|I_{\vec{P}}|^{1/2}} \langle f_1, \Phi^{\alpha^\prime}_{P_1,1} \rangle \langle f_4,   \Phi^{\alpha^\prime}_{P_4,4} \rangle\left \langle  \int_0^1 BHT^{\alpha, \mathbb{Q}^{\tilde{d}}}_{\omega_{P_2}}(f_2, f_3) d \alpha , \Phi^{\alpha^\prime}_{P_2,0} \right\rangle .
 \end{align*}
 For fixed $d, \tilde{d} \geq 0, n_1 \geq N_1(d)$ and $\mathfrak{d} \geq N_2(d, \tilde{d})$, we may further rewrite  
 \begin{align*}
 &\sum_{ \vec{P} \in  \mathbb{P}^{d, \tilde{d},*}_{n_1, \mathfrak{d}}}\frac{1}{|I_{\vec{P}}|^{1/2}} \langle f_1, \Phi_{P_1,1} \rangle \langle f_4,   \Phi_{P_4,4} \rangle\left \langle  \int_0^1 BHT^{\alpha, \mathbb{Q}^{\tilde{d}}}_{\omega_{P_2}}(f_2, f_3) d \alpha , \Phi_{P_2,0} \right\rangle \\  =& \sum_{T \in \mathcal{T}_{n_1, 1}^{d} } \sum_{ \vec{P} \in T \cap \mathbb{P}^{d, \tilde{d},*}_{n_1, \mathfrak{d}} } \frac{1}{|I_{\vec{P}}|^{1/2}} \langle f_1, \Phi_{P_1,1} \rangle \langle f_4,   \Phi_{P_4,4} \rangle\left \langle  \int_0^1 BHT^{\alpha, \mathbb{Q}^{\tilde{d}}}_{\omega_{P_2}}(f_2, f_3) d \alpha , \Phi_{P_2,0} \right\rangle \\ =&  \sum_{T \in \mathcal{T}_{\mathfrak{d}, 2}^{d, \tilde{d}}} \sum_{ \vec{P} \in T \cap \mathbb{P}^{d, \tilde{d},*}_{n_1, \mathfrak{d}} } \frac{1}{|I_{\vec{P}}|^{1/2}} \langle f_1, \Phi_{P_1,1} \rangle \langle f_4,   \Phi_{P_4,4} \rangle\left \langle  \int_0^1 BHT^{\alpha, \mathbb{Q}^{\tilde{d}}}_{\omega_{P_2}}(f_2, f_3) d \alpha , \Phi_{P_2,0} \right\rangle. 
\end{align*}
Each tree in $ \mathcal{T}_{n_1, 1}^{d}$ is overlapping in either the 1st or 2nd index.  Using the tree and energy estimates gives

\begin{align*}
& \left| \sum_{T \in \mathcal{T}_{n_1, 1}^{d} } \sum_{ \vec{P} \in T \cap \mathbb{P}^{d, \tilde{d},*}_{n_1, \mathfrak{d}} } \frac{1}{|I_{\vec{P}}|^{1/2}} \langle f_1, \Phi_{P_1,1} \rangle \langle f_4,   \Phi_{P_4,4} \rangle\left \langle  \int_0^1 BHT^{\alpha, \mathbb{Q}^{\tilde{d}}}_{\omega_{P_2}}(f_2, f_3) d \alpha , \Phi_{P_2,0} \right\rangle \right|\\& \lesssim 2^{-N d}  2^{-n_1} 2^{-\mathfrak{d}} \sum_{T \in \mathcal{T}_{n_1, 1}^{d} }  |I_T| \\ &\lesssim 2^{-N d} 2^{-n_1} 2^{-\mathfrak{d}} \left[ 2^{2n_1} |E_1| \right]. 
\end{align*}
Similarly, we have from the tree estimate in section 4.2.2, Lemma \ref{Est:l2Energy}, and Corollary \ref{Cor1}  
\begin{align*}
& \left| \sum_{T \in \mathcal{T}_{\mathfrak{d}, 2}^{d, \tilde{d}}} \sum_{ \vec{P} \in T \cap \mathbb{P}^{d, \tilde{d},*}_{n_1, \mathfrak{d}} } \frac{1}{|I_{\vec{P}}|^{1/2}} \langle f_1, \Phi_{P_1,1} \rangle \langle f_4,   \Phi_{P_4,4} \rangle\left \langle  \int_0^1 BHT^{\alpha, \mathbb{Q}^{\tilde{d}}}_{\omega_{P_2}}(f_2, f_3) d \alpha , \Phi_{P_2,0} \right\rangle \right| \\&\lesssim 2^{-N d} 2^{-n_1} 2^{-\mathfrak{d}} \left[ \sum_{T \in \mathcal{T}_{\mathfrak{d}, 2}^{d, \tilde{d}}}  |I_T| \right] \\ &\lesssim_\theta  2^{-N d} 2^{-n_1} 2^{-\mathfrak{d}} \min\left\{ 2^{2 \mathfrak{d}} 2^{2 \tilde{d}} |E_2|^{1+\gamma} |E_3|^{2- \gamma}, 2^{\frac{\mathfrak{d}}{1-\tilde{\epsilon}}} |E_2|^{\frac{1}{2}} |E_3|^{\frac{1}{2}} \right\} . 
\end{align*}
Hence, for any $0\leq \theta_1, \theta_2, \theta_3 \leq 1$ satisfying $\theta_1 + \theta_2+\theta_3=1$ and $0 < \gamma, \tilde{\epsilon} <1$ one has

\begin{align*}
&\left| \Lambda_2^{\mathbb{P}, \mathbb{Q}}(f_1, f_2, f_3, f_4 1_{\Omega^\prime}) \right|\\ \lesssim &\sum_{\tilde{d} , d \geq 0} \sum_{n_1 \geq N_1(d)} \sum_{\mathfrak{d} \geq N_2(d, \tilde{d})} 2^{-N d} 2^{-n_1} 2^{-\mathfrak{d}} \\ & \hspace{20mm} \times \min 
 \left \{  2^{2n_1} |E_1| , 2^{2 \mathfrak{d}} 2^{2 \tilde{d}} |E_2|^{1+\gamma} |E_3|^{2- \gamma}, 2^{\frac{\mathfrak{d}}{1-\tilde{\epsilon}}} |E_2|^{\frac{1}{2}} |E_3|^{\frac{1}{2}} \right\} \\ \lesssim &  \sum_{\tilde{d} , d \geq 0} \sum_{n_1 \geq N_1(d)}  \sum_{\mathfrak{d} \geq N_2(d, \tilde{d})} 2^{-N d}2^{2 \theta_2 \tilde{d}} 2^{-n_1(1- 2 \theta_1)} 2^{-\mathfrak{d}\left[1- 2 \theta_2 - \frac{\theta_3}{1-\tilde{\epsilon}} \right]} \\ &\hspace{20mm} \times    |E_1|^{\theta_1} |E_2|^{\theta_2(1+\gamma) + \frac{\theta_3}{2} } |E_3|^{ \theta_2(2- \gamma)+\frac{\theta_3}{2}} . 
\end{align*}
To produce generalized restricted type estimates near $A_1 = \left(1, \frac{1}{2}, \frac{1}{2}, - 1 \right)$, it suffices to show that for each $\alpha_1<1$ in some small, fixed neighborhood of  $1$, there is a generalized restricted type estimate $(\alpha_1, \frac{1}{2}, \frac{1}{2} , - \alpha_1)$  for $\Lambda_2^{\mathbb{P}, \mathbb{Q}} (f_1, f_2, f_3, f_41_{\Omega^c})$. To this end, let $\theta_1= \epsilon_1, \theta_2 =  \epsilon_2, \theta_3 = 1-\epsilon_1 -\epsilon_2$. We need to choose $0 < \epsilon_1, \epsilon_2 \ll 1$ and $0 < \tilde{\epsilon}, \gamma <1$ such that

\begin{align}
1-\alpha_1 &>\epsilon_1 \label{Ineq:1st} \\ 
1-2 \epsilon_2 - \frac{1-\epsilon_1 - \epsilon_2}{1-\tilde{\epsilon}} &>0 \label{Ineq:2nd} \\
\frac{1}{2} \left[ 1- 2 \epsilon_2 - \frac{1-\epsilon_1 - \epsilon_2}{1-\tilde{\epsilon}} \right] + \epsilon_2 (1+\gamma) + \frac{1-\epsilon_1 - \epsilon_2}{2} &= \frac{1}{2} \label{Eq:1st} \\ 
\frac{1}{2} \left[ 1- 2\epsilon_2 - \frac{1-\epsilon_1 - \epsilon_2}{1-\tilde{\epsilon}} \right] +  \epsilon_2 (2-\gamma) + \frac{1-\epsilon_1 - \epsilon_2}{2} &=\frac{1}{2}  \label{Eq:2nd}
\end{align}
Take $\epsilon_1 =\frac{1}{2} (1-\alpha_1),  \epsilon_2 = \frac{1}{8}(1-\alpha_1)$. This ensures that \eqref{Ineq:1st} is satisfied. Adding \eqref{Eq:1st} and \eqref{Eq:2nd} gives

\begin{align}\label{Eq:3rd}
\left[ 1- 2 \epsilon_2 - \frac{1 - \epsilon_1 - \epsilon_2}{1-\tilde{\epsilon}} \right]  + 3 \epsilon_2 +\left[ 1- \epsilon_1 - \epsilon_2  \right]= 1. 
\end{align}
Note that $3 \epsilon_2 +1 - \epsilon_1- \epsilon_2 < 1$. 
Moreover, letting $\tilde{\epsilon} \rightarrow 0$ in \eqref{Eq:3rd} observe

\begin{eqnarray*}
1-2 \epsilon_2 -1 + \epsilon_1 + \epsilon_2 + 3 \epsilon_2 + 1 - \epsilon_1 - \epsilon_2 = 1+ \epsilon_2 > 1.
\end{eqnarray*}
Therefore, there is a solution $\tilde{\epsilon} \in (0,1)$ solving $1- 2 \epsilon_2 - \frac{1 - \epsilon_1 - \epsilon_2}{1-\tilde{\epsilon}} + 3 \epsilon_2 + 1- \epsilon_1 - \epsilon_2 =1$ and satisfying $1-2 \epsilon_2 - \frac{1-\epsilon_1 - \epsilon_2}{1-\tilde{\epsilon}}>0$. So, it suffices to find $0 < \gamma <1$. Note letting $\gamma \rightarrow 0$  in \eqref{Eq:2nd}  gives
\begin{eqnarray*}
\frac{1}{2} \left[ 1- 2\epsilon_2 - \frac{1-\epsilon_1 - \epsilon_2}{1-\tilde{\epsilon}} \right] + 2  \epsilon_2 + \frac{1-\epsilon_1 - \epsilon_2}{2} = \frac{1}{2}   + \frac{ \epsilon_2}{2} > 1/2. 
\end{eqnarray*}
Moreover, letting $\gamma \rightarrow 1$ in \eqref{Eq:2nd} gives

\begin{eqnarray*}
\frac{1}{2} \left[ 1- 2 \epsilon_2 - \frac{1-\epsilon_1 - \epsilon_2}{1-\tilde{\epsilon}} \right] + \epsilon_2 + \frac{1-\epsilon_1 - \epsilon_2}{2}  = \frac{1}{2}  - \frac{\epsilon_2}{2} <1/2
\end{eqnarray*}
Therefore, we may find the desired $0 < \gamma<1$.

For $A_2= (\frac{1}{2}, \frac{1}{2}, 1, -1)$, we need to show that for each $\alpha_3<1$ in some small, fixed neighborhood of  $1$ the restricted type estimate $\left( \frac{1}{2}, \frac{1}{2}, \alpha_3 , - \alpha_3 \right)$ holds. To this end, let $\theta_1= 1/2 - \epsilon_1, \theta_2 = 1/2 - \epsilon_2, \theta_3 = \epsilon_1 + \epsilon_2$. We need to choose $0 < \epsilon_1, \epsilon_2 \ll 1$ and $0< \gamma, \tilde{\epsilon}<1$  such that

\begin{align}
2 \epsilon_2 >& \frac{\epsilon_1 + \epsilon_2}{1-\tilde{\epsilon}} \label{Ineq:d}  \\
\frac{1}{2} \left[ 1- 2\left(\frac{1}{2} - \epsilon_2\right) - \frac{\epsilon_1 + \epsilon_2}{1-\tilde{\epsilon}} \right] + \left(\frac{1}{2} - \epsilon_2\right) (2-\gamma) + \frac{\epsilon_1 + \epsilon_2}{2} =& \alpha_3 \label{Eq:4th} \\
\frac{1}{2} \left[ 1- 2\left(\frac{1}{2} - \epsilon_2\right) - \frac{\epsilon_1 + \epsilon_2}{1-\tilde{\epsilon}} \right] + \left(\frac{1}{2}- \epsilon_2\right) (1+\gamma) + \frac{\epsilon_1 + \epsilon_2}{2} =& \frac{1}{2}. \label{Eq:5th} 
\end{align}
Take $\epsilon_2=\frac{3}{4} \cdot (1 - \alpha_3)$ and $\epsilon_1 = \frac{1}{4} \cdot ( 1 - \alpha_3)$. 
Any $0< \tilde{\epsilon} <1/3$ will satisfy \eqref{Ineq:d}. 
Moreover, from adding \eqref{Eq:4th} and \eqref{Eq:5th} it follows that
\begin{eqnarray*}
- \frac{\epsilon_1 + \epsilon_2}{1-\tilde{\epsilon}} + 3/2  + \epsilon_1= \alpha_3 + \frac{1}{2}. 
\end{eqnarray*}
Therefore, we take $\tilde{\epsilon}=1/5$. 
Lastly, we need to check that there is a solution $0 < \gamma<1$ to \eqref{Eq:4th}. 
Note by our choice of $\tilde{\epsilon}$ that \eqref{Eq:4th} can be rewritten as
\begin{align*}
\frac{1}{2} \left[ 2 \epsilon_2 + \alpha_3 - 1 - \epsilon_1 \right] + \left(\frac{1}{2} - \epsilon_2\right)(2-\gamma) + \frac{ 1- \alpha_3}{2}=\alpha_3.
\end{align*}
So, it suffices to show by letting $\gamma=0$ and $\gamma =1$ respectively that

\begin{align*}
\frac{1}{2} \left[ 2 \epsilon_2 + \alpha_3 - 1 - \epsilon_1 \right] +1-2 \epsilon_2  + \frac{ 1 - \alpha_3}{2} &>\alpha_3\\ 
 \frac{1}{2} \left[ 2 \epsilon_2 + \alpha_3 - 1 - \epsilon_1 \right] +\frac{1}{2}- \epsilon_2  + \frac{ 1 - \alpha_3}{2}  &< \alpha_3. 
\end{align*}
This is the same as requiring

\begin{eqnarray*}
\alpha_3 <1~\text{and}~ \frac{1}{2} - \frac{1}{8} \left[ 1- \alpha_3\right] < \alpha_3.
\end{eqnarray*}
Both inequalities are satisfied for all $\frac{3}{7} < \alpha_3 <1$.  Estimates near $A_3= (\frac{1}{2}, 1, \frac{1}{2}, -1)$ follow from a nearly identical argument, so the details are omitted. 
\subsection{Generalized Restricted Type Estimates near $A_4$ and $A_5$}
Recall $A_4=\left (-\frac{3}{2}, \frac{1}{2}, 1, 1 \right)$ and $A_5 = \left( - \frac{3}{2}, 1, \frac{1}{2}, 1 \right)$.  By rescaling, we may assume $|E_1|=1$. Set

\begin{eqnarray*}
\tilde{\Omega} = \left\{ M1_{E_2} \gtrsim |E_2| \right\} \bigcup \left\{ M1_{E_3} \gtrsim |E_3| \right\}   \bigcup \left\{ M1_{E_4} \gtrsim |E_4| \right\}.
\end{eqnarray*}
Let $\mathbb{Q}^{\tilde{d}} := \left\{ \vec{Q} \in \mathbb{Q} : 1 + \frac{ dist(I_{\vec{Q}}, \tilde{\Omega}^c) }{|I_{\vec{Q}}|} \simeq 2^{\tilde{d}} \right\}$ and define as before

\begin{align*}
\Omega_1^{0} &=\left\{ M \left[ \int_0^1  BHT^{\alpha, \mathbb{Q}^0}(f_2, f_3) d\alpha\right] \gtrsim  |E_2|^{1/2} |E_3|^{1/2} \right\} \\ \Omega_2^{\tilde{d}} &=  \left\{ M \left(  \left[ \int_0^1  \sum_{\vec{Q} \in \mathbb{Q}^{\tilde{d}}} \frac{ |\langle f_2, \Phi^\alpha_{Q_1, 2} \rangle \langle f_3, \Phi^\alpha_{Q_2, 3} \rangle|}{|I_{\vec{Q}}|} \tilde{1}_{I_{\vec{Q}}} d \alpha\right]^2 \right) \gtrsim 2^{2\tilde{d}} |E_2| |E_3| \right\}.
\end{align*}
Lastly, construct
\begin{eqnarray*}
\Omega= \tilde{\Omega} \bigcup \Omega_1^0 \bigcup_{\tilde{d} \geq 1} \Omega_2^{\tilde{d}}. 
\end{eqnarray*}
Then for large enough implicit constants, $|\Omega| \leq 1/2$ and $\tilde{E}_1 := E_1 \cap \Omega^c$ is a major subset of $E_1$ since $|E_1|=1$. The rest of the proof of Theorem \ref{DT} near $\{A_4, A_5\}$ proceeds exactly as before. For any $0\leq \theta_1, \theta_2, \theta_3 \leq 1$ satisfying $\theta_1 + \theta_2+\theta_3=1$ and $0 < \gamma, \tilde{\epsilon} <1$ one has 

\begin{eqnarray*}
&& \left| \Lambda_2^{\mathbb{P}, \mathbb{Q}}(f_1 1_{\Omega^\prime}, f_2, f_3, f_4)  \right| \\ &\lesssim_{\tilde{\epsilon}}&   \sum_{\tilde{d} , d \geq 0} \sum_{n_1 \geq N_1(d)}  \sum_{\mathfrak{d} \geq N_2(d, \tilde{d})}2^{-\tilde{N} d}2^{2 \theta_2 \tilde{d}} 2^{-n_1(1- 2 \theta_1)}  2^{-\mathfrak{d}\left[1- 2 \theta_2 - \frac{\theta_3}{1-\tilde{\epsilon}} \right]}  |E_2|^{\theta_2(1+\gamma) + \frac{\theta_3}{2} } |E_3|^{ \theta_2(2- \gamma)+\frac{\theta_3}{2}} |E_4|. 
\end{eqnarray*}
For estimates near $A_4= (-\frac{3}{2}, \frac{1}{2}, 1, 1)$, let $\alpha_2 = 1/2, |1-\alpha_3| \ll 1$, and $\alpha_4 =1$. Set $\theta_1 =\frac{1}{2}-\epsilon_1, \theta_2= \frac{1}{2} -\epsilon_2, $ and $\theta_3 = \epsilon_1 + \epsilon_2$, where we are required to choose $0 < \epsilon_1, \epsilon_2 \ll 1$ and $0 < \gamma , \tilde{\epsilon} <1$ to ensure

\begin{align*}
2 \epsilon_2 >& \frac{ \epsilon_1 + \epsilon_2}{1-\tilde{\epsilon}}   \\
\frac{1}{2} \left[ 1-2 \left(\frac{1}{2} - \epsilon_2\right) - \frac{ \epsilon_1 + \epsilon_2}{1-\tilde{\epsilon}} \right] + \left(\frac{1}{2} - \epsilon_2 \right)( 2- \gamma) + \frac{\epsilon_1 + \epsilon_2}{2} =& \alpha_3 \\ 
\frac{1}{2} \left[ 1-2 \left(\frac{1}{2} - \epsilon_2\right) - \frac{ \epsilon_1 + \epsilon_2}{1-\tilde{\epsilon}} \right] + \left(\frac{1}{2} - \epsilon_2\right )( 1+ \gamma) + \frac{\epsilon_1 + \epsilon_2}{2} =& 1/2 . 
\end{align*}
This system was solved in our analysis of $A_2$, so the details are omitted here.  For estimates near $A_5 = \left( - \frac{3}{2}, 1, \frac{1}{2}, 1 \right)$, set $\theta \simeq 1,  \theta _2 \simeq \frac{1}{2}, \theta_3 \simeq \frac{1}{2}, \theta _3 \simeq 0$ and proceed as in the argument for $A_4=(-\frac{3}{2}, \frac{1}{2}, 1,1)$.

\subsection{Generalized Restricted Type Estimates near $A_6, A_7, A_8, A_9$}
Recall that 

\begin{eqnarray*}
C^{1,1,-2} : (f_1, f_2, f_3) \mapsto \int_{\xi_1 < \xi_2 < -\xi_3/2} \hat{f}_1(\xi_1) \hat{f}_2(\xi_2) \hat{f}_3(\xi_3) e^{2 \pi i x (\xi_1 + \xi_2 + \xi_3)} d\xi_1 d \xi_2 d \xi_3
\end{eqnarray*}
satisfies the identity \eqref{Est:Iden}, namely 

\begin{align*}
 C^{1,1,-2}(f_1, f_2, f_3)(x)  =& C^{1,1}(f_1, f_2)(x) \cdot  f_3(x) \\-& H^+(C^{1,1}(f_1 ,f_2)\cdot  f_3)(x) \\ -&H^-(f_1 \cdot C^{-2,1}(f_3, f_2))(x).
\end{align*}
Therefore, the $3$-adjoint denoted by $C^{1,1,-2}_{*, 3}$ defined by the usual property 

\begin{eqnarray*}
\int_\mathbb{R}  C^{1,1,-2} (f_1, f_2, f_4)(x) f_3(x) dx= \int_\mathbb{R} C^{1,1,-2}_{*, 3} (f_1, f_2, f_3) (x) f_4(x) dx\end{eqnarray*}
on $\mathcal{S}^4(\mathbb{R})$ is writable as

\begin{align*}
C^{1,1,-2}_{*,3}(f_1, f_2, f_3)(x)=&C^{1,1}(f_1, f_2)(x) \cdot f_3(x) \\-&  C^{1,1} (f_1, f_2) (x) \cdot H^{-} (f_3) (x)\\-&  C^{1,1}( f_1 \cdot H^+(f_3), f_2)(x).
\end{align*}
Indeed, it suffices to check the last term, which we claim is the $3$-adjoint of the map $(f_1, f_2, f_3) \mapsto - H^{-}(f_1 \cdot C^{-2,1}(f_3, f_2))(x)$. Indeed, first observe 

\begin{align*}
 &\int_\mathbb{R} H^{-}\left[ f_1 \cdot C^{-2,1}(f_4, f_2)\right](x)  f_3(x) dx \\ =& \int_\mathbb{R} f_1 (x) C^{-2,1}(f_4, f_2)(x) H^+ [f_3](x) dx \\ =& \int_\mathbb{R} C^{-2,1}(f_4, f_2)(x)   f_1(x) H^+ [f_3] (x) dx .  
 \end{align*}
Next, note that for any $F,G,H \in \mathcal{S}(\mathbb{R})$

\begin{align*}
\int_\mathbb{R} C^{-2, 1} (F,G)(x) H(x) dx =&   \int_{\xi_1 + \xi_2 + \xi_3 = 0} 1_{\left\{-\frac{\xi_1}{2} < \xi_2 \right \}} \hat{F}(\xi_1) \hat{G}(\xi_2) \hat{H}(\xi_3) d \vec{\xi} \\ =& \int_{\xi_1 + \xi_2 + \xi_3 = 0} 1_{\left\{\frac{\xi_2 + \xi_3}{2} < \xi_2 \right\}} \hat{F}(\xi_1) \hat{G}(\xi_2) \hat{H}(\xi_3) d \vec{\xi} \\ =& \int_{\xi_1 + \xi_2 + \xi_3 = 0} 1_{\left\{ \xi_3 < \xi_2 \right \}} \hat{F}(\xi_1) \hat{G}(\xi_2) \hat{H}(\xi_3) d \vec{\xi} \\ =& \int_\mathbb{R} C^{1,1} (H,G)(x) F(x) dx. 
\end{align*}
Setting $F=f_4, G= f_2$ and $H=f_1 \cdot H^+[f_3]$  then yields

\begin{align*}
\int_\mathbb{R}  C^{-2,1}(f_4, f_2)(x)   f_1(x) H^+ [f_3](x) dx = \int_\mathbb{R} C^{1,1}(f_1 H^+[f_3], f_2)(x)f_4(x) dx . 
 \end{align*}
Using the $BHT$ and Hilbert transform estimates, we may observe that $C^{1,1,-2}_{*, 3}$ maps into $L^r(\mathbb{R})$ for all $r \in (\frac{2}{3}, \infty)$. Therefore, we should not expect the generic adjoint models to map below $L^{\frac{2}{3}}(\mathbb{R})$. We now proceed to prove the generalized restricted type estimates near the points $A_6, A_7, A_8, A_9$, where the adjoint index is restricted to map into the above range. By symmetry, it will suffice to prove the estimate only near the points $A_8 =\left( 0, 1, -\frac{1}{2}, \frac{1}{2} \right)$ and $A_9 = \left( \frac{1}{2}, 1 - \frac{1}{2}, 0 \right)$, for which $3$ is the adjoint index.  Indeed, estimates near $A_6, A_7$ are obtained from estimates near $A_8$ and $A_9$ by interchanging the roles of $f_2, f_3$. 

The adjoint situation is more complicated in the semi-degenerate case than in the fully non-degenerate one because one cannot simply flip the frequency inclusions to reduce to the situation where the exceptional set is associated with functions in the second and third index. This is because the composition

\begin{eqnarray*}
\sum_{\vec{P} \in \mathbb{P}} \frac{ \langle f_1, \Phi_{P_1,1} \rangle \langle f_4, \Phi_{P_4,4} \rangle \left \langle \int_0^1 BHT^{\alpha, \mathbb{Q}} (f_2, f_3)d \alpha , \Phi_{P_2,0}\right \rangle}{|I_{\vec{P}}|^{1/2}}
\end{eqnarray*}
satisfies no generalized restricted type estimates as $\mathbb{P}$ is not a rank-1 collection of tri-tiles. Moreover, if one tries to repeat the arguments for $A_1, A_2, A_3, A_4, A_5$ estimates,  one cannot enlarge the exceptional set $\Omega$ to obtain good control over the averages of the $BHT^{\mathbb{Q}}$-type operators on time-intervals of $\mathbb{P}$-tiles that may be much farther from $\Omega^c$ than the time-intervals of the $\mathbb{Q}$-tiles.  The way around this obstruction is to decompose our collection of degenerate tri-tiles $\mathbb{P}$. 

By scaling invariance, set $|E_3|=1$ and use $f_j$ to denote $f_j^\prime$ for $j=1,2,3,4$ as described in Definition \ref{Def:RestrType}. Let $\Omega = \left\{ M1_{E_2} \gtrsim |E_2| \right\}$ for a large enough implicit constant so that $\widetilde{E_3} := E_3 \cap \Omega^c$ satisfies $|\widetilde{E_3}| \geq |E_3|/2$. As usual, set

\begin{eqnarray*}
\mathbb{Q}^d  = \left\{ \vec{Q} \in \mathbb{Q}: 1+ \frac{ dist( I_{\vec{Q}}, \Omega^c)}{ |I_{\vec{Q}}|} \simeq 2^d \right\}.
\end{eqnarray*}
For each $d \geq 0$, we decompose $\mathbb{P} = \bigcup_{k \geq 0} \mathbb{P}^d_k$ by setting

\begin{eqnarray*}
\mathbb{P}^d_0 = \left\{ \vec{P} \in \mathbb{P} :   I_{\vec{P}} \cap \left\{ M\left[  \sum_{\vec{Q} \in \mathbb{Q}^d}  \frac{ \langle f_2, \Phi_{Q_1,2} \rangle \langle f_3, \Phi_{Q_2,3} \rangle \Phi_{Q_3,5}}{|I_{\vec{Q}}|^{1/2}} \right] \geq 1 \right\}^c \not = \emptyset \right\}.
\end{eqnarray*}
and then inductively constructing for all $k \geq 1$

\begin{eqnarray*}
\mathbb{P}^d_k = \left\{ \vec{P} \in \mathbb{P} \cap \left( \bigcup_{\tilde{k} \leq k-1} \mathbb{P}^d_{\tilde{k}} \right)^c~ :~   I_{\vec{P}} \cap \left\{ M\left[  \sum_{\vec{Q} \in \mathbb{Q}^d} \frac{ \langle f_2, \Phi_{Q_1,2} \rangle \langle f_3, \Phi_{Q_2,3} \rangle \Phi_{Q_3,5}}{|I_{\vec{Q}}|^{1/2}}\right]  \geq 2^k \right\}^c \not = \emptyset \right\}.
\end{eqnarray*}
We assign $\mathbb{I}_k^d$ to be the collection of maximal dyadic intervals in the set $\{ I_{\vec{P}} : \vec{P} \in \mathbb{P}_k^d \}$ and record a straightforward yet essential size estimate. 
\begin{lemma}\label{L:Size-2}
Recall that for any $\tilde{\mathbb{P}} \subset \mathbb{P}$

\begin{align*}
 & Size_0^{d} (f_2,f_3, \tilde{\mathbb{P}}) \\ :=&   \sup_{T \subset \tilde{\mathbb{P}}} \frac{1}{|I_T|^{1/2}} \left( \sum_{\vec{P} \in T}\left | \left \langle  BHT^{\mathbb{Q}^d} (f_2, f_3) , \Phi_{P_2, 0} \right \rangle \right|^2 \right. \\  +& \left. \left | \left \langle \int_0^1 \sum_{\vec{Q} \in \mathbb{Q}^d: \omega_{Q_3} \supset \supset \omega_{P_2}} \frac{1}{|I_{\vec{Q}}|^{1/2}} \langle f_2, \Phi^\alpha_{Q_1, 2} \rangle \langle f_3, \Phi^\alpha_{Q_2,3} \rangle \Phi^\alpha_{Q_3,5} d\alpha, \Phi_{P_2,0} \right \rangle  \right|^2 \right. \\ +& \left.\left | \left \langle \int_0^1 \sum_{\vec{Q} \in \mathbb{Q}^d: |\omega_{Q_3}| \simeq  |\omega_{P_2}|} \frac{1}{|I_{\vec{Q}}|^{1/2}} \langle f_2, \Phi^\alpha_{Q_1, 2} \rangle \langle f_3, \Phi^\alpha_{Q_2,3} \rangle \Phi^\alpha_{Q_3,5} d\alpha, \Phi_{P_2,0} \right \rangle  \right|^2  \right)^{1/2}
\end{align*}
where the supremum is over all $1$-trees $T \subset \tilde{\mathbb{P}}$. 
Then for all $k, d \geq 0$

\begin{eqnarray*}
Size_0^d(f_2, f_3, \mathbb{P}^d_k) \lesssim 2^k. 
\end{eqnarray*}
\end{lemma}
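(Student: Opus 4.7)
The plan is to bound each of the three summands in the definition of $Size_0^{d}(f_2, f_3, \mathbb{P}_k^d)$ separately by a constant multiple of $2^k$ (after the $|I_T|^{1/2}$ normalization). Fix any $1$-tree $T \subset \mathbb{P}_k^d$. The defining property of $\mathbb{P}_k^d$ provides, for each $\vec{P} \in T$, a point $x_{\vec{P}} \in I_{\vec{P}}$ at which the Hardy--Littlewood maximal function of the unrestricted sum $\sum_{\vec{Q} \in \mathbb{Q}^d} \frac{\langle f_2, \Phi_{Q_1,2}\rangle \langle f_3, \Phi_{Q_2,3}\rangle \Phi_{Q_3,5}}{|I_{\vec{Q}}|^{1/2}}$ is $\lesssim 2^k$; the standard passage from a pointwise maximal bound to a weighted local average yields
\begin{equation*}
\frac{1}{|I_{\vec{P}}|} \int \left| \sum_{\vec{Q} \in \mathbb{Q}^d} \frac{\langle f_2, \Phi^{\alpha}_{Q_1,2}\rangle \langle f_3, \Phi^{\alpha}_{Q_2,3}\rangle \Phi^{\alpha}_{Q_3,5}}{|I_{\vec{Q}}|^{1/2}}\right| \chi^M_{I_{\vec{P}}} \, dx \lesssim 2^k
\end{equation*}
uniformly in $\alpha \in [0,1]$.

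For the first summand, which involves $\int_0^1 BHT^{\alpha, \mathbb{Q}^d}(f_2, f_3)\, d\alpha$ paired against $\Phi_{P_2, 0}$, Lemma \ref{L:John-Nirenberg} reduces the $\ell^2$ sum over $\vec{P} \in T$ to the supremum of the weighted average above over $\vec{P} \in T$, which is $\lesssim 2^k$ by Minkowski in $\alpha$. For the second summand, the restriction $\omega_{Q_3} \supset\supset \omega_{P_2}$ enforces $|I_{\vec{Q}}| \ll |I_{\vec{P}}|$, so majorizing $|\Phi^{\alpha}_{Q_3,5}|$ by $|I_{\vec{Q}}|^{-1/2} \chi^L_{I_{\vec{Q}}}$ dominates the inner sum on $I_{\vec{P}}$ pointwise, up to Schwartz tails, by the unrestricted sum; Lemma \ref{L:John-Nirenberg} and the displayed inequality again close this case.

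The third summand, restricted to $|\omega_{Q_3}| \simeq |\omega_{P_2}|$, resists a direct John--Nirenberg reduction since the paired function genuinely depends on $\vec{P}$ at identical scales. Instead I will mirror the treatment of term $III$ in the proof of Lemma \ref{Est:Size-Restrictions}: swapping the $\vec{P}$- and $\vec{Q}$-summations and using the almost-orthogonality of $\{\Phi^{\alpha}_{Q_3,5}\}$ at comparable scales reduces the squared $\ell^2$ sum to a local $L^2$-type quantity of the form
\begin{equation*}
\sum_{l \in \mathbb{Z}} \frac{1}{1+l^M} \int_0^1 \sum_{\substack{\vec{Q} \in \mathbb{Q}^d \\ I_{\vec{Q}} \subset I_T + l |I_T|}} \frac{|\langle f_2, \Phi^{\alpha}_{Q_1,2}\rangle|^2 |\langle f_3, \Phi^{\alpha}_{Q_2,3}\rangle|^2}{|I_{\vec{Q}}|}\, d\alpha.
\end{equation*}
Using $|f_j| \leq 1_{E_j}$ and Cauchy--Schwarz to factor one power of each inner product as $\|1_{E_j} \chi^M_{I_{\vec{Q}}}\|_{L^2}$ reduces this to a product of $L^1$-averages of the unrestricted sum over a neighborhood of $I_T$, each controlled by the displayed weighted-average inequality; the net bound is $|I_T| \cdot 2^{2k}$, as required.

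The main obstacle is the third summand: the defining inequality of $\mathbb{P}_k^d$ is an $L^1$-type maximal bound on the unrestricted BHT-like sum, while the diagonal piece naturally generates an $L^2$ quantity. Bridging this gap requires using $|f_j| \leq 1_{E_j}$ together with Cauchy--Schwarz to trade one factor of inner product for an $L^2$-average over $E_j$, so that the $L^1$ maximal bound afforded by the definition of $\mathbb{P}_k^d$ suffices to close the estimate.
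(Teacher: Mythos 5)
Your treatment of the first summand matches the paper: invoke Lemma \ref{L:John-Nirenberg}, then use that each $\vec{P} \in \mathbb{P}_k^d$ has $I_{\vec{P}}$ meeting the set where the maximal function of $\sum_{\vec{Q}\in\mathbb{Q}^d} |I_{\vec{Q}}|^{-1/2}\langle f_2,\Phi_{Q_1,2}\rangle\langle f_3,\Phi_{Q_2,3}\rangle\Phi_{Q_3,5}$ is $\lesssim 2^k$.

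Your second summand is where the argument breaks down. You propose to dominate the restricted sum over $\{\omega_{Q_3}\supset\supset\omega_{P_2}\}$, after replacing $|\Phi^\alpha_{Q_3,5}|$ by a decaying cutoff, ``pointwise, up to Schwartz tails, by the unrestricted sum.'' But the unrestricted sum $\sum_{\vec{Q}\in\mathbb{Q}^d} |I_{\vec{Q}}|^{-1/2}\langle f_2,\cdot\rangle\langle f_3,\cdot\rangle\Phi^\alpha_{Q_3,5}$ is a signed sum with oscillation and cancellation between scales; the absolute-value quantity $\sum_{\vec{Q}} |I_{\vec{Q}}|^{-1}|\langle f_2,\cdot\rangle\langle f_3,\cdot\rangle|\chi^L_{I_{\vec{Q}}}$ is not dominated by it but rather dominates it. The definition of $\mathbb{P}_k^d$ furnishes an $L^1$-maximal bound only for the signed sum, and no comparable bound for the absolute-value sum is available in this section (unlike \S 4.2, where $\Omega_1^{\tilde d}$ in the exceptional set explicitly controls the absolute-value sum). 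So your reduction to ``Lemma \ref{L:John-Nirenberg} and the displayed inequality'' does not close. What the paper actually does is invoke the Biest size estimate, Lemma \ref{L:BiestSize}, which bounds this summand directly by $\bigl[\sup_{\vec P}\frac{1}{|I_{\vec P}|}\int_{E_2}\chi^M_{I_{\vec P}}\,dx\bigr]^\theta\bigl[\sup_{\vec P}\frac{1}{|I_{\vec P}|}\int_{E_3}\chi^M_{I_{\vec P}}\,dx\bigr]^{1-\theta}\lesssim 1$, with no appeal to the defining property of $\mathbb{P}_k^d$ at all. A constant bound is all that is needed since the target is $2^k$ with $k\ge 0$.

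The same remark applies to your third summand: the paper bounds it by $\lesssim 1$ via the trivial $\ell^2$ estimate of display \eqref{Est:l2-trivial} (using only $|f_j|\le 1_{E_j}\le 1$), and never routes it through the defining property of $\mathbb{P}_k^d$. Your claim to reduce this diagonal $\ell^2$ quantity to ``$L^1$-averages of the unrestricted sum'' controlled by the $\mathbb{P}_k^d$ maximal bound suffers from the same signed-versus-absolute-value mismatch, and even if it went through it would yield a wastefully large $2^{2k}$ factor where a constant suffices. The structural point you should internalize is that only the first summand of $Size_0^d$ requires the $2^k$ gain from the stopping-time decomposition; the other two are bounded by absolute constants independently of $k$, via Lemma \ref{L:BiestSize} and the elementary $\ell^2$ estimate respectively.
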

\begin{proof}
By triangle inequality, it suffices to bound the sum
\begin{align*}
& \left[  \frac{1}{|I_T|} \sum_{\vec{P} \in T \cap \mathbb{P}^d_k } \left| \left \langle BHT^{\mathbb{Q}^d}(f_2, f_3) ,\Phi_{P_2,0} \right \rangle \right|^2 \right]  ^{1/2} \\+& \left[  \frac{1}{|I_T|} \sum_{\vec{P} \in T \cap \mathbb{P}^d_k } \left| \left \langle \sum_{\vec{Q} \in \mathbb{Q}^d: \omega_{Q_3} \supset \supset \omega_{P_2}}\frac{ \langle f_2, \Phi_{Q_1,2} \rangle \langle f_3, \Phi_{Q_2,3} \rangle \Phi_{Q_3,5}}{|I_{\vec{Q}}|^{1/2}}, \Phi_{P_2,0} \right \rangle \right|^2 \right]^{1/2} \\ +& \left[ \frac{1}{|I_T|} \sum_{\vec{P} \in T \cap \mathbb{P}^d_k} \left | \left \langle \int_0^1 \sum_{\vec{Q} \in \mathbb{Q}^d: |\omega_{Q_3}| \simeq  |\omega_{P_2}|} \frac{1}{|I_{\vec{Q}}|^{1/2}} \langle f_2, \Phi^\alpha_{Q_1, 2} \rangle \langle f_3, \Phi^\alpha_{Q_2,3} \rangle \Phi^\alpha_{Q_3,5} d\alpha, \Phi_{P_2,0} \right \rangle  \right|^2  \right]^{1/2} \\ =:& ~I + II+III. 
\end{align*} 
For term I, use Lemma \ref{L:John-Nirenberg} to observe

\begin{eqnarray*}
I \lesssim  \sup_{\vec{P} \in T \cap \mathbb{P}_k} \frac{1}{|I_{\vec{P}}|} \int_\mathbb{R} \left| \sum_{\vec{Q} \in \mathbb{Q}^d } \frac{1}{|I_{\vec{Q}}|^{1/2}} \langle f_2, \Phi_{Q_1,2} \rangle \langle f_3, \Phi_{Q_2,3} \rangle \Phi_{Q_3,5} \right| \chi^M_{I_{\vec{P}}} dx.
\end{eqnarray*}
Because $\vec{P} \in T\cap \mathbb{P}_k$, $I_{\vec{P}} \cap \left\{ M\left[  \sum_{\vec{Q} \in \mathbb{Q}^d} \frac{1}{|I_{\vec{Q}}|^{1/2}} \langle f_2, \Phi_{Q_1,2} \rangle \langle f_3, \Phi_{Q_2,3} \rangle \Phi_{Q_3,5}\right] \geq  2^k \right\}^c \not = \emptyset$ and 

\begin{eqnarray*}
\frac{1}{|I_{\vec{P}}|} \int_\mathbb{R} \left| \sum_{\vec{Q} \in \mathbb{Q}^d } \frac{ \langle f_2, \Phi_{Q_1,2} \rangle \langle f_3, \Phi_{Q_2,3} \rangle \Phi_{Q_3,5}}{|I_{\vec{Q}}|^{1/2}} \right| \chi^M_{I_{\vec{P}}} dx \lesssim 2^k .
\end{eqnarray*}
The Biest size estimate in Lemma \ref{L:BiestSize} handles term II. Indeed, 
\begin{align*}
&\left[  \frac{1}{|I_T|} \sum_{\vec{P} \in T \cap \mathbb{P}^d_k } \left| \left \langle \sum_{\vec{Q} \in \mathbb{Q}^d: \omega_{Q_3} \supset \supset  \omega_{P_2}} \frac{ \langle f_2, \Phi_{Q_1,2} \rangle \langle f_3, \Phi_{Q_2,3} \rangle \Phi_{Q_3,5}}{|I_{\vec{Q}}|^{1/2}}, \Phi_{P_2,0} \right \rangle \right|^2 \right]^{1/2} \\ \lesssim & \left[ \sup_{\vec{P} \in T} \frac{1}{|I_{\vec{P}}|} \int_{E_2} \chi^M_{I_{\vec{P}}} dx\right]^\theta \left[ \sup_{\vec{P} \in T} \frac{1}{|I_{\vec{P}}|} \int_{E_3} \chi^M_{I_{\vec{P}}} dx \right] ^{1-\theta}  \\ \lesssim &~ 1.
\end{align*}

It therefore remains to bound term $III$: 
\begin{align}\label{Est:l2-trivial}
III^2 &\lesssim  \frac{1}{|I_T|} \sum_{l \in \mathbb{Z}} \frac{1}{1+l^M} \sum_{\vec{Q}\in \mathbb{Q}^{\tilde{d}}: I_{\vec{Q}} \subset I_T + l |I_T|}  \frac{ |\langle f_2, \Phi_{Q_1,2} \rangle|^2 |\langle f_3, \Phi_{Q_2, 3} \rangle|^2}{|I_{\vec{Q}}|} \\&\lesssim  \sum_{l \in \mathbb{Z}} \frac{1}{1+l^M} \frac{  ||1_{E_2} \chi^M_{I_{T}+l |I_T|} ||_{L^2} \cdot ||1_{E_3} \chi^M_{I_{T}+l |I_T|}||_{L^2}}{|I_T|} \nonumber \\ &\lesssim  1 \nonumber. 
\end{align}

\end{proof}
We must therefore contend with exponential growth in the tree sizes in our tile collections $\mathbb{P}^d_k$. What makes this growth acceptable is the simple observation that for every $ d, k \geq 0$
\begin{eqnarray*}
\bigcup_{\vec{P} \in \mathbb{P}^d_k} I_{\vec{P}} = \bigcup_{I \in \mathbb{I}^d_k}\subset \left\{  M\left[  \sum_{\vec{Q} \in \mathbb{Q}^d}\frac{ \langle f_2, \Phi_{Q_1,2} \rangle \langle f_3, \Phi_{Q_2,3} \rangle \Phi_{Q_3,5}}{|I_{\vec{Q}}|^{1/2}}\right] \gtrsim  2^k  \right\}.
\end{eqnarray*}
Therefore, using $|E_3|=1$, the $L^1 \rightarrow L^{1, \infty}$ bounds for the Hardy-Littlewood maximal function, and the localized $BHT$ estimate in Lemma \ref{Est:BHTloc}, we note

\begin{align*}
\left| \bigcup_{\vec{P} \in \mathbb{P}^d_k} I_{\vec{P}}\right|  = \sum_{I \in \mathbb{I}^d_k} |I| \lesssim  2^{-N d} 2^{-k}   |E_2|^{1/2}. 
\end{align*}
Not surprisingly, it is the smallness of the support of the intervals in $\mathbb{P}^d_k$ that will allow us to control the largeness of the tree sizes. We now need to recall another standard tile decomposition: 

\begin{lemma}
Let $Size_4(f_4, \mathbb{P}_{n_4,4}) := \sup_{\vec{P} \in \mathbb{P}_{n_4, 4}}\frac{1}{|I_{\vec{P}}|} \int_{E_4} \chi^M_{I_{\vec{P}}} dx$. There there is a decomposition $\mathbb{P} = \bigcup_{n_4 \geq 0} \mathbb{P}_{n_4,4}$ into disjoint subcollections with the property that if $\mathbb{I}_{n_4,4} $ is the collection of maximal dyadic intervals in $\left\{ I_{\vec{P}} : \vec{P} \in \mathbb{P}_{n_4, 4} \right\}$ then 

\begin{align*}
Size_4(f_4, \mathbb{P}_{n_4,4})  \lesssim &2^{-n_4} \\
\sum_{I \in  \mathbb{I}_{n_4, 4}} |I| \lesssim &2^{n_4} |E_4|. 
\end{align*}

\end{lemma}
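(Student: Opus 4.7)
My plan is to use the standard dyadic-level partition by the stated size functional. Concretely, I would set
\[
\mathbb{P}_{n_4, 4} := \left\{ \vec{P} \in \mathbb{P} : 2^{-n_4 - 1} < \frac{1}{|I_{\vec{P}}|}\int_{E_4} \chi^M_{I_{\vec{P}}}\, dx \leq 2^{-n_4} \right\}, \qquad n_4 \geq 0.
\]
Since $|f_4| \leq 1_{E_4}$ and $\int \chi^M_{I_{\vec{P}}}\, dx \lesssim |I_{\vec{P}}|$ whenever $M > 1$, the average is uniformly $\lesssim 1$, so these subcollections are disjoint and exhaust $\mathbb{P}$. The upper size bound $Size_4(f_4, \mathbb{P}_{n_4, 4}) \lesssim 2^{-n_4}$ then holds by construction. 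The content of the lemma is thus the packing inequality for the maximal intervals $\mathbb{I}_{n_4,4}$.

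For this, I fix $n_4$ and note that the intervals in $\mathbb{I}_{n_4,4}$ are pairwise disjoint by maximality, and each one $I$ comes from some $\vec{P} \in \mathbb{P}_{n_4,4}$, hence satisfies $\frac{1}{|I|}\int_{E_4} \chi^M_I\, dx > 2^{-n_4-1}$. I then split $\mathbb{R} = I \cup \bigcup_{j \geq 1}(2^j I \setminus 2^{j-1} I)$ and use $\chi^M_I \lesssim 2^{-jM}$ on the $j$-th annulus. Some term must dominate: there exists $j(I) \geq 0$ with
\[
\frac{|E_4 \cap 2^{j(I)} I|}{|2^{j(I)} I|} \gtrsim \frac{2^{j(I)(M-1) - n_4}}{(1 + j(I))^2}.
\]
This forces every point of $2^{j(I)} I$ to lie in the sublevel set $\Omega_j := \{ M 1_{E_4} \geq c\, 2^{j(M-1)-n_4}(1+j)^{-2} \}$ when $j = j(I)$, hence $I \subset \Omega_{j(I)}$.

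To finish, I group the intervals by the value of $j(I)$. By disjointness of the $I$'s within a group and the weak-$(1,1)$ bound on the Hardy--Littlewood maximal function,
\[
\sum_{\substack{I \in \mathbb{I}_{n_4, 4} \\ j(I) = j}} |I| \leq |\Omega_j| \lesssim 2^{n_4 - j(M-1)}(1+j)^2 |E_4|,
\]
and summing the geometric tail over $j \geq 0$ yields $\sum_{I \in \mathbb{I}_{n_4,4}} |I| \lesssim 2^{n_4}|E_4|$, provided $M$ is taken sufficiently large (which is permitted by Definition \ref{WP}). There is no serious obstacle here; the only point that demands care is balancing the annular decay of $\chi^M_I$ against the $(1+j)^{-2}$ loss from pigeonholing the integral, which is precisely what the flexibility in the wave packet decay exponent $M$ is for.
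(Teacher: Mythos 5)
Your argument is correct, and it takes a genuinely different route from the paper's. The paper defines $\mathbb{P}_{n_4,4}$ not by the value of the size average but by containment of $I_{\vec{P}}$ in sublevel sets of the maximal function: $\mathbb{P}_{0,4} = \{\vec{P} : I_{\vec{P}} \subset \{M1_{E_4}\gtrsim 1\}\}$, then iteratively $\mathbb{P}_{n_4,4} = \{\vec{P} \in \mathbb{P} \setminus \bigcup_{m<n_4}\mathbb{P}_{m,4} : I_{\vec{P}} \subset \{M1_{E_4} \geq 2^{-n_4}\}\}$. That choice makes the packing bound trivial --- the maximal intervals are disjoint and sit inside the single level set $\{M1_{E_4}\geq 2^{-n_4}\}$, so weak-$(1,1)$ gives $\sum |I| \lesssim 2^{n_4}|E_4|$ in one line --- while the size bound $\lesssim 2^{-n_4}$ is what needs a small argument (each remaining $I_{\vec{P}}$ escapes the previous level set at some point $x_0$, and the annular estimate $\frac{1}{|I|}\int_{E_4}\chi^M_I\lesssim \inf_{I}M1_{E_4}$ transfers the smallness of $M1_{E_4}(x_0)$ to the size). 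You run the trade in the opposite direction: partition by the size value itself, so the size bound is immediate, and invest the annular pigeonhole plus the $(1+j)^{-2}$ summability loss in the packing bound. The work is comparable, but the paper's version has the advantage that the $\mathbb{P}_{n_4,4}$ are nested with respect to containment of time intervals (a small tile inside a maximal $I$ is necessarily in the same or an earlier generation), which is not true of your band decomposition; nothing in this lemma's statement needs that, so both are acceptable here.

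One technical point worth fixing in your write-up: the average $\frac{1}{|I_{\vec{P}}|}\int_{E_4}\chi^M_{I_{\vec{P}}}$ is only bounded by an absolute constant $C$, not by $1$, so your bands as written fail to capture tiles with average in $(2^0,C]$. You should either declare $\mathbb{P}_{0,4}$ to contain everything with average $>2^{-1}$, or observe that the conclusion only asserts $Size_4 \lesssim 2^{-n_4}$ so a reindex by a fixed constant shift is harmless. This is a one-line fix and does not affect the substance of the argument.
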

\begin{proof}
Let $\mathbb{P}_{0, 4} = \left\{ \vec{P} \in \mathbb{P} : I_{\vec{P}} \subset \left\{ M 1_{E_4} \gtrsim 1 \right\} \right\}$ and iteratively construct 

\begin{align*}
\mathbb{P}_{n_4, 4} = \left\{ \vec{P} \in \mathbb{P} \cap \left[ \bigcup_{0 \leq m < n_4} \mathbb{P}_{m, 4} \right]^c : I_{\vec{P}} \subset  \left\{ M 1_{E_4} \geq 2^{-n_4} \right\}  \right\}. 
\end{align*}
Using Lemma \ref{L:John-Nirenberg}, it is easy to check the desired properties. 

\end{proof}

The next step is to perform a size-energy stopping time decomposition in $\mathbb{P}_{n_4,4}$ and $\mathbb{P}^d_k$.

\begin{lemma}\label{TDL*}
Fix $d,  k, n_4 \geq 0$. Let $\mathbb{P}^d_{k, n_4} = \mathbb{P}_{n_4, 4} \cap \mathbb{P}^d_k$ and  

\begin{eqnarray*}
\mathbb{I}_{k, n_4}^d = \left\{ I : I = J \cap K~for~some~ J \in \mathbb{I}_{n_4,4}, K \in \mathbb{I}_k^d \right\}.
\end{eqnarray*}
 Then there are two decompositions of $\mathbb{P}^d_{k,n_4}$, namely $\bigcup_{n_1 \geq 0}  \mathbb{P}^{d,1}_{k,n_4,n_1}$ and $ \bigcup_{ \mathfrak{d} \gtrsim -k} \mathbb{P}^{d,2}_{k,n_4, \mathfrak{d}}$ such that 
  
  \begin{eqnarray*}
  Size_1(f_1, \mathbb{P}^{d,1}_{k, n_4, n_1}) \lesssim 2^{-n_1}~and~Size_0^d(f_2, f_3, \mathbb{P}^{d,2}_{k,n_4, \mathfrak{d}}) \lesssim 2^{-\mathfrak{d}}.
  \end{eqnarray*}
   Moreover, both $\mathbb{P}^{d,1}_{k,n_4, n_1}$ and $\mathbb{P}^{d,2}_{k,n_4,\mathfrak{d}}$ can be written as a union of trees, i.e. 

\begin{align}\label{Def:Decomp-2}
\mathbb{P}^{d,1}_{k, n_4 ,n_1} =& \bigcup_{T \in \mathcal{T}^d_{k, n_4,n_1}} \bigcup_{\vec{P} \in T} \vec{P} \\ 
\mathbb{P}^{d,2}_{k, n_4, \mathfrak{d}} =& \bigcup_{T \in \mathcal{T}^{d,2}_{k,n_4, \mathfrak{d}}} \bigcup_{\vec{P} \in T} \vec{P}
\end{align}
where

\begin{eqnarray*}
 \sum_{T \in \mathcal{T}_{k, n_4, n_1}^{d,1}} |I_T| &\lesssim& 2^{2n_1} \sum_{T \in \mathcal{T}_{k, n_4,n_1, *}^d} \sum_{\vec{P} \in T} \left| \langle f_1, \Phi_{P_1, 1} \rangle \right|^2  \\
 \sum_{T \in \mathcal{T}_{k, \mathfrak{d}}^{d,2}} |I_T| &\lesssim & 2^{2 \mathfrak{d}}\sum_{T \in \mathcal{T}_{k,\mathfrak{d},*,I}^{d,2}}  \sum_{\vec{P} \in T}\left| \left \langle  BHT^{\mathbb{Q}^d} (f_2, f_3), \Phi_{P_2,0} \right\rangle \right| ^2\\&+& 2^{2 \mathfrak{d}}\sum_{T \in \mathcal{T}_{k,\mathfrak{d},*,II}^{d,2}}  \sum_{\vec{P} \in T}\left| \left \langle  \sum_{\vec{Q} \in \mathbb{Q}^d: \omega_{Q_3} \supset \supset \omega_{P_2}} \frac{1}{|I_{\vec{Q}}|^{1/2}} \langle f_2, \Phi_{Q_1, 2} \rangle \langle f_3, \Phi_{Q_2, 3} \rangle \Phi_{Q_3, 5}, \Phi_{P_2,0} \right \rangle  \right| ^2 \\ &+&  2^{2 \mathfrak{d}}\sum_{T \in \mathcal{T}_{k,\mathfrak{d},*,II}^{d,2}}  \sum_{\vec{P} \in T}\left| \left \langle  \sum_{\vec{Q} \in \mathbb{Q}^d: |\omega_{Q_3}| \simeq  |\omega_{P_2}|} \frac{1}{|I_{\vec{Q}}|^{1/2}} \langle f_2, \Phi_{Q_1, 2} \rangle \langle f_3, \Phi_{Q_2, 3} \rangle \Phi_{Q_3, 5}, \Phi_{P_2,0} \right \rangle  \right| ^2
\end{eqnarray*}
where $\mathcal{T}^{d,1}_{k, n_1, n_4 ,*} \subset  \mathcal{T}^{d,1}_{k, n_1, n_4} $ and $\mathcal{T}^{d,2}_{k, n_4, \mathfrak{d}, *} := \mathcal{T}^{d,2}_{k, n_4, \mathfrak{d}, * , I} \bigcup \mathcal{T}^{d,2}_{k,n_4, \mathfrak{d}, *, II} \subset  \mathcal{T}_{k, \mathfrak{d}}^{d,2} $, each tree in $\mathcal{T}^{d,1}_{k, n_1,*}$ is a $2$-tree and each tree in $ \mathcal{T}^{d,2}_{k, \mathfrak{d},*} $ is a $1$-tree, and the collections $\mathcal{T}^{d,1}_{k, n_1, n_4,*} , \mathcal{T}^{d,2}_{k, n_4,\mathfrak{d},*,I}, \mathcal{T}^{d,2}_{k, n_4,\mathfrak{d}, *, II}$ can each be written as the union of two chains of strongly disjoint trees as in Definition \ref{Def:StronglyDisjoint}. In particular, 

\begin{eqnarray*}
\mathcal{T}^{d,1}_{k,n_4, n_1,*} &=& \mathcal{T}^{d,2}_{k, n_4, n_1,*, +} \bigcup \mathcal{T}^{d,1}_{k, n_4, n_1,*, -} \\ 
\mathcal{T}_{k, n_4, \mathfrak{d},*}^{d,2}&=& \mathcal{T}_{k, n_4,  \mathfrak{d},*, I, +}^{d,2} \bigcup \mathcal{T}_{k, n_4 \mathfrak{d},*, I, -}^{d,2} \bigcup \mathcal{T}_{k, n_4,\mathfrak{d},*, II, +}^{d,2} \bigcup \mathcal{T}_{k, n_4, \mathfrak{d},*, II, -}^{d,2}
\end{eqnarray*}
where each collection on the right side of the above display is strongly disjoint. 
Lastly, 

\begin{align*}
& \left[ \bigcup_ {T \in \mathcal{T}^{d,1}_{k, n_4, n_1,*}} I_T \right] \cup  \left[ \bigcup_ {T \in \mathcal{T}^{d,2}_{k, n_4, \mathfrak{d},*} }I_T \right]\subset \bigcup_{I \in \mathbb{I}^d_{k,n_4}} I \\  
& \sum_{I \in \mathbb{I}^d_{k, n_4}} |I| \lesssim \min \{ 2^{-N d} 2^k  |E_2|^{1/2}, 2^{n_4} |E_4| \}.
\end{align*}
\end{lemma}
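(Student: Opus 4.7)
The proof will follow the same stopping-time tree-selection scheme carried out in detail for Lemma \ref{TDL}, adapted to the restricted collection $\mathbb{P}^d_{k,n_4}$. First I would construct the decomposition $\bigcup_{n_1 \geq 0}\mathbb{P}^{d,1}_{k,n_4,n_1}$ by iterating on $n_1$, starting from the smallest $n_1$ for which the initial (finite) value of $Size_1(f_1, \mathbb{P}^d_{k,n_4})$ exceeds $2^{-n_1}$. At each stage select a maximal upward-oriented $2$-tree satisfying $\sum_{\vec{P} \in T} |\langle f_1, \Phi_{P_1,1}\rangle|^2 \geq 2^{-2n_1-3}|I_T|$ with maximal frequency center $\xi_{T,1}$, remove it together with the companion $1$-tree of tiles whose $P_1$ lies at or below $P_{T,1}$, and iterate. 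The identical procedure run in the downward direction produces a second strongly $1$-disjoint chain. This gives $\mathcal{T}^{d,1}_{k,n_4,n_1,*,\pm}$ and the bound $\sum_T |I_T| \lesssim 2^{2n_1}\sum_T\sum_{\vec{P} \in T}|\langle f_1, \Phi_{P_1,1}\rangle|^2$ just as in Lemma \ref{TDL}.

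For the second decomposition I would perform the analogous stopping time on the three summands defining $Size_0^d(f_2, f_3, \cdot)$, running three parallel tree-selection processes (one per summand), each ordered by the $P_2$-component. The crucial input is Lemma \ref{L:Size-2}, which supplies the initial-size estimate $Size_0^d(f_2, f_3, \mathbb{P}^d_k) \lesssim 2^k$; this is what forces the lower bound $\mathfrak{d} \gtrsim -k$ on the scales that appear. Each summand is a standard $\ell^2$-type square function over a $1$-tree, so the same upward/downward selection yields strongly $2$-disjoint chains, labelled $\mathcal{T}^{d,2}_{k,n_4,\mathfrak{d},*,I,\pm}$, $\mathcal{T}^{d,2}_{k,n_4,\mathfrak{d},*,II,\pm}$, $\mathcal{T}^{d,2}_{k,n_4,\mathfrak{d},*,III,\pm}$, together with the stated $\sum_T|I_T|$ bounds coming from the construction.

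The support inclusion $\bigl[\bigcup_{T \in \mathcal{T}^{d,1}_{k,n_4,n_1,*}} I_T\bigr] \cup \bigl[\bigcup_{T \in \mathcal{T}^{d,2}_{k,n_4,\mathfrak{d},*}} I_T\bigr] \subset \bigcup_{I \in \mathbb{I}^d_{k,n_4}} I$ is immediate: any $\vec{P} \in \mathbb{P}^d_{k,n_4}$ satisfies $I_{\vec{P}} \subset J$ for some $J \in \mathbb{I}_{n_4,4}$ and $I_{\vec{P}} \subset K$ for some $K \in \mathbb{I}^d_k$, so $I_{\vec{P}}$ lies inside the single element $J \cap K \in \mathbb{I}^d_{k,n_4}$. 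For the measure bound, $\sum_{I \in \mathbb{I}^d_{k,n_4}}|I| \lesssim 2^{n_4}|E_4|$ is inherited from the bound on $\sum_{J \in \mathbb{I}_{n_4,4}}|J|$ (which follows from $L^1 \to L^{1,\infty}$ boundedness of $M$ applied to $1_{E_4}$). The other half follows by writing $\bigcup_{I \in \mathbb{I}^d_k} I$ as a subset of the level set $\{M[\sum_{\vec{Q} \in \mathbb{Q}^d}\langle f_2, \Phi_{Q_1,2}\rangle \langle f_3, \Phi_{Q_2,3}\rangle \Phi_{Q_3,5}/|I_{\vec{Q}}|^{1/2}] \gtrsim 2^k\}$, applying Chebyshev, and invoking the localized $BHT$ estimate of Lemma \ref{Est:BHTloc} (using $|E_3|=1$ and $\vec{Q} \in \mathbb{Q}^d$ to gain the factor $2^{-Nd}$).

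The main obstacle will be bookkeeping rather than any new analytic ingredient: the three terms in $Size_0^d$ require three parallel stopping times, each split into $+/-$ strongly disjoint chains, and one must verify strong $2$-disjointness of the collected trees by the same sparseness plus maximality argument used to prove the corresponding claim in Lemma \ref{TDL}. Beyond this accounting, every ingredient is already in place: the standard tree algorithm of Lemma \ref{TDL}, the size bound of Lemma \ref{L:Size-2}, and the localized $BHT$ estimate of Lemma \ref{Est:BHTloc}.
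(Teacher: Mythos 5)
Your proposal is correct and follows the same approach as the paper, whose proof is essentially the single line ``apply the argument localized to each dyadic interval $I \in \mathbb{I}^d_{k,n_4}$ from Lemmas~\ref{TDL} and~\ref{L:John-Nirenberg}''; you have simply unpacked what that entails, and your derivation of the measure bound (Chebyshev on the $2^k$-level set plus the $2^{-Nd}$ decay from $f_3$ being supported on $\Omega^c$ and $\vec{Q}\in\mathbb{Q}^d$, together with the weak-$L^1$ bound for $M1_{E_4}$) reproduces the pre-lemma discussion in the paper. One small remark: your argument naturally yields $2^{-Nd}2^{-k}|E_2|^{1/2}$, matching the paper's surrounding text, rather than the $2^{-Nd}2^{k}|E_2|^{1/2}$ appearing in the lemma statement, which appears to be a sign typo there.
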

\begin{proof}
Apply the argument localized to each dyadic interval $I \in \mathbb{I}_{k,n_4}^d$ from Lemmas \ref{TDL} and \ref{L:John-Nirenberg}. 
\end{proof}

\vspace{5mm}
\subsubsection{Energy Savings}
We shall now show 

\begin{lemma}
\begin{align}\label{Est:Intermediate}
\sum_{T \in \mathcal{T}_{k, n_4, \mathfrak{d}, *}^{d,2}} |I_T| \lesssim_{\epsilon, \tilde{\epsilon}}  \min \left\{2^{-N d/2} 2^{2 \mathfrak{d}} 2^{-k/2} |E_2|^{1/2}  , 2^{2 \mathfrak{d}} 2^{d} |E_2|^{2-\epsilon} ,  2^{\frac{ \mathfrak{d}}{1-\tilde{\epsilon}}} |E_2|^{1/2} \right\}.
\end{align}
\end{lemma}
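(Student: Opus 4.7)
The plan is to establish each of the three bounds in the stated minimum by a distinct strategy; the final estimate is then their pointwise minimum.

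For the third bound $2^{\mathfrak{d}/(1-\tilde\epsilon)}|E_2|^{1/2}$, I would invoke Corollary \ref{Cor1} directly, specialized to $|E_3|=1$. Nothing in the proof of Theorem \ref{l1T} uses structure of the exceptional set beyond what is encoded by the $\mathbb{Q}^d$ stopping-time decomposition together with the sparsity/rank-$1$ properties of $\mathbb{P}$ and $\mathbb{Q}$, all of which are retained in the present setup, so the inequality $\sum_{T} |I_T| \lesssim 2^{\mathfrak d/(1-\tilde\epsilon)}|E_2|^{1/2}|E_3|^{1/2}$ carries over verbatim.

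For the second bound $2^{2\mathfrak{d}}2^d|E_2|^{2-\epsilon}$, I would mimic the argument behind Lemma \ref{Est:l2Energy}. Splitting $\mathcal{T}_{k,n_4,\mathfrak d,*}^{d,2}$ into its strongly disjoint sub-chains, the Bessel-type inequalities of Lemmas \ref{Est:BHTEnergy} and \ref{Est:BiestEnergy} reduce each chain's contribution to a squared $L^2$ norm of the relevant $BHT^{\alpha,\mathbb Q^d}$ or Biest-type object. Lemma \ref{Est:BHTloc} then supplies $\|BHT^{\alpha,\mathbb{Q}^d}(f_2,f_3)\|_2^2 \lesssim 2^{2d}|E_2|^{1+\theta}|E_3|^{2-\theta}$, which specializes with $|E_3|=1$ and $\theta$ close to $1$ to $2^{2d}|E_2|^{2-\epsilon}$; the Biest piece is handled identically. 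Multiplying by the size-squared factor $2^{2\mathfrak{d}}$ yields the claim, the asymmetric weighting of $|E_2|$ and $|E_3|$ letting one factor of $2^d$ be absorbed into the constant.

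For the first bound $2^{-Nd/2}2^{2\mathfrak{d}}2^{-k/2}|E_2|^{1/2}$, I would exploit the new geometric smallness of the tree supports provided by the refinement $\mathbb{P}^d_k$: by Lemma \ref{TDL*},
\begin{equation*}
\Bigl|\bigcup_{I \in \mathbb{I}^d_{k,n_4}} I\Bigr| \;\lesssim\; 2^{-Nd}\,2^{-k}\,|E_2|^{1/2}.
\end{equation*}
Starting once more from the size/Bessel reduction $\sum_T |I_T| \lesssim 2^{2\mathfrak{d}}\,\|g \cdot \chi^M_{\widetilde{\bigcup I}}\|_2^2$, where $g$ denotes the pertinent $BHT/$Biest object and $\widetilde{\bigcup I}$ a Schwartz-tail enlargement of the support, I would apply H\"older's inequality $\|g\|_{L^2(S)} \le |S|^{1/4}\|g\|_{L^4}$ and use an $L^8 \times L^8 \to L^4$ estimate for $BHT$ to obtain $\|g\|_{L^4} \lesssim |E_2|^{1/8}|E_3|^{1/8} = |E_2|^{1/8}$. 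Combining with $|S|^{1/2} \lesssim 2^{-Nd/2}\,2^{-k/2}\,|E_2|^{1/4}$ gives the claimed bound after squaring.

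The main obstacle is Bound 1. The $BHT$ portion admits standard localized multilinear bounds, but the Biest-type piece is a genuine discrete tri-tile sum, not a classical $BHT$, so the $L^4$ estimate needed above requires an analogue of the Biest $L^p$ theory of \cite{MR2127985} localized to $\mathbb{Q}^d$. Tracking how this localization interacts with the $L^4$ bound — and in particular verifying that no additional $2^d$ losses spoil the exponent on $d$ — is the most technical step; the $\ell^1$-energy bound (Bound 3) and the standard $\ell^2$-energy bound (Bound 2) are by comparison routine consequences of earlier results.
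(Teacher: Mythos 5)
Your overall architecture mirrors the paper's: bound~3 is exactly the paper's invocation of Theorem~\ref{l1T} (though strictly one should quote Theorem~\ref{l1T} or Corollary~\ref{Cor0} rather than Corollary~\ref{Cor1}, which is phrased for the $\mathcal{T}^{d,\tilde d}_{\mathfrak d,2}$ trees from the $A_1,A_2,A_3$ section), and bound~2 follows the paper's combination of Lemmas~\ref{Est:BHTloc}, \ref{Est:BHTEnergy}, and \ref{Est:BiestEnergy}. For bound~1 the paper, like you, exploits the smallness $\sum_{I\in\mathbb{I}^d_{k,n_4}}|I|\lesssim 2^{-Nd}2^{-k}|E_2|^{1/2}$ after the size/Bessel reduction; the difference is that the paper bounds each local piece by $\|1_{E_2}\chi^M_I\|_4^2\|1_{E_3}\chi^M_I\|_4^2$ (and a localized $L^2$ Biest energy) and then sums over $I$ by Cauchy--Schwarz, whereas you pull out $|S|^{1/2}$ by H\"older first and then appeal to an $L^8\times L^8\to L^4$ bound. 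Both implementations are correct and equivalent exponent bookkeeping.

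The ``main obstacle'' you identify for bound~1, however, is not a real one, and you have already defused it yourself without noticing: Lemma~\ref{Est:BiestEnergy}, which you invoke for bound~2, is precisely the estimate that controls the Biest-type tri-tile sum $\sum_{\omega_{Q_3}\supset\supset\omega_{P_2}}$ in terms of the $L^2$ norm of $BHT^{\mathbb{Q}^d}$, and the paper uses it again (localized to $\mathbb{I}^d_{k,n_4}$) in establishing \eqref{Est:BiestEnergy+}. No independent Biest $L^4$ theory is needed; once the Biest piece is reduced to a localized $\|BHT^{\mathbb{Q}^d}(f_2,f_3)\chi^M_I\|_2^2$, the support smallness and an $L^4\times L^4\to L^2$ (or your $L^8\times L^8\to L^4$) bound close the argument exactly as for the genuine $BHT$ term. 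Separately, your remark that ``one factor of $2^d$ [is] absorbed into the constant'' is not a valid step: a factor $2^d$ with $d$ ranging over $\mathbb{Z}_{\geq 0}$ cannot be absorbed into an implicit constant. In fact your computation produces $2^{2d}$ (more precisely $2^{2d(1-\epsilon)}$), and this is what the paper itself uses in the subsequent synthesis, where the min contains $2^{2\mathfrak d}2^{2d}|E_2|^{2-\epsilon}$. The $2^d$ in the statement of the lemma is a typographical slip, and you should trust your own arithmetic and flag it as such rather than invent a cancellation that is not there.
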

\begin{proof}
First note

\begin{align*}
\sum_{T  \in \mathcal{T}_{k, \mathfrak{d}, *}^{d}}  |I_T|  \lesssim&   2^{2 \mathfrak{d}} \sum_{T \in \mathcal{T}_{k, \mathfrak{d}, *,I}^d} \sum_{\vec{P} \in T} \left| \left \langle  BHT^{\mathbb{Q}^d}(f_2, f_3), \Phi_{P_2,0} \right \rangle \right|^2 \\ +& 2^{2 \mathfrak{d}} \sum_{T \in \mathcal{T}_{k, \mathfrak{d}, *, II}^{d}} \sum_{\vec{P} \in T} \left| \left \langle   \sum_{\vec{Q} \in \mathbb{Q}^d: \omega_{Q_3} \supset \supset \omega_{P_2}}  \frac{1}{|I_{\vec{Q}}|^{1/2}} \langle f_2 , \Phi_{Q_1,2} \rangle \langle f_3, \Phi_{Q_2,3} \rangle \Phi_{Q_3,5}, \Phi_{P_2,0} \right \rangle \right|^2  \\+& 2^{2 \mathfrak{d}} \sum_{T \in \mathcal{T}_{k, \mathfrak{d}, *,III}^d} \left| \left \langle  \sum_{\vec{Q} \in \mathbb{Q}^d: |\omega_{Q_3}| \simeq  |\omega_{P_2}|} \frac{1}{|I_{\vec{Q}}|^{1/2}} \langle f_2, \Phi_{Q_1, 2} \rangle \langle f_3, \Phi_{Q_2, 3} \rangle \Phi_{Q_3, 5}, \Phi_{P_2,0} \right \rangle  \right| ^2 \\ 
:=& I + II +III. 
\end{align*}
Using the BHT energy estimate in Lemma \ref{Est:BHTEnergy} on $I$ and the Biest energy estimate in Lemma \ref{Est:BiestEnergy} on $II$ yields 

\begin{align*}
I \lesssim& 2^{2 \mathfrak{d}}\sum_{I \in \mathbb{I} ^d_k }  ||1_{E_2} \chi^M_I ||_4^2 || 1_{E_3} \chi^M_I ||_4^2\\ 
II \lesssim& 2^{2 \mathfrak{d}} \sum_{I \in \mathbb{I}^d_k} || BHT^{\mathbb{Q}^d}(f_2, f_3)\chi^M_I ||_2^2 . 
\end{align*}
For $III$, we majorize according to 
\begin{align*}
III \lesssim& 2^{2 \mathfrak{d}} \sum_{I \in \mathbb{I}^d_k} \sum_{\vec{Q} \in \mathbb{Q}^d: I_{\vec{Q}} \subset I} \frac{\langle f_2, \Phi_{Q_1,2}\rangle|^2 \langle f_3, \Phi_{Q_2, 3}\rangle|^2}{|I_{\vec{Q}}|} \lesssim 2^{2 \mathfrak{d}} \sum_{I \in \mathbb{I}^d_k} ||1_{E_2} \chi^M_I ||_4^2 || 1_{E_3} \chi^M_I ||_4^2. 
\end{align*}
Since  $\sum_{I \in \mathbb{I}^d_{k, n_4} } |I| \lesssim 2^{-N d} 2^{-k}  |E_2|^{1/2}$

\begin{align}\label{Est:BiestEnergy+}
\sum_{T \in \mathcal{T}_{k, \mathfrak{d}, *}^{\tilde{d}}} |I_T| \lesssim & 2^{2 \mathfrak{d}}\sum_{I \in \mathbb{I} ^d_{k, n_4} }  ||1_{E_2} \chi^M_I ||_4^2 || 1_{E_3} \chi^M_I ||_4^2 + 2^{2 \mathfrak{d}} \sum_{I \in \mathbb{I}^d_{k, n_4}} || BHT^{\mathbb{Q}^d}(f_2, f_3)\chi^M_I ||_2^2 \\ \lesssim& 2^{-N d/2} 2^{2 \mathfrak{d}} 2^{-k/2}  |E_2|^{1/2} \nonumber.
\end{align}
Combining Lemmas \ref{Est:BHTloc}, \ref{Est:BHTEnergy} and \ref{Est:BiestEnergy} yields for every $0<\epsilon \ll 1$

\begin{align}\label{Est:l2-redux}
\sum_{T \in \mathcal{T}_{k,n_4, \mathfrak{d}, *}^{d,2}} |I_T|  \lesssim_\epsilon 2^{2 \mathfrak{d}} 2^{d} |E_2|^{2-\epsilon}. 
\end{align}
Moreover, an application of Theorem \ref{l1T} yields for every $0 < \tilde{\epsilon} \ll 1$
\begin{align}\label{Est:l1-redux}
\sum_{T \in \mathcal{T}_{k, n_4,\mathfrak{d}, *}^{d,2}} |I_T| \lesssim_{\tilde{\epsilon}} 2^{\frac{ \mathfrak{d}}{1-\tilde{\epsilon}}} |E_2|^{1/2}. 
\end{align}
Therefore, from estimates \eqref{Est:BiestEnergy+}, \eqref{Est:l2-redux}, and \eqref{Est:l1-redux}, we obtain estimate \eqref{Est:Intermediate}. 
\end{proof}
\begin{lemma}
For all $n_1, n_4, d, k \geq 0$, 
\begin{eqnarray}\label{Est:Intermediate1}
\sum_{T \in \mathcal{T}^{d,1}_{k,n_4, n_1, *}} |I_T| \lesssim  2^{2n_1} \min \left\{ |E_1|,  2^{n_4} |E_4| \right\}.
\end{eqnarray}
\end{lemma}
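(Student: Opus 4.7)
The plan is to derive both bounds from a single application of the Bessel-type energy lemma (Lemma \ref{Est:BHTEnergy}). First, the size condition $Size_1(f_1, \mathbb{P}^{d,1}_{k,n_4,n_1}) \lesssim 2^{-n_1}$ from Lemma \ref{TDL*}, combined with the fact that $\mathcal{T}^{d,1}_{k,n_4,n_1,*}$ consists of $2$-trees, yields
\begin{align*}
\sum_{T \in \mathcal{T}^{d,1}_{k,n_4,n_1,*}} |I_T| \lesssim 2^{2n_1} \sum_{T \in \mathcal{T}^{d,1}_{k,n_4,n_1,*}} \sum_{\vec{P} \in T} |\langle f_1, \Phi_{P_1,1}\rangle|^2.
\end{align*}
Since $\mathcal{T}^{d,1}_{k,n_4,n_1,*}$ is the union of two chains of strongly $1$-disjoint $2$-trees (per Lemma \ref{TDL*}), the hypotheses of Lemma \ref{Est:BHTEnergy} (with $j=1$, $i=2$) are satisfied on each chain.

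The first bound follows immediately by applying Lemma \ref{Est:BHTEnergy} to $f_1$, giving
\begin{align*}
\sum_T \sum_{\vec{P} \in T} |\langle f_1, \Phi_{P_1,1}\rangle|^2 \lesssim \|f_1\|_2^2 \leq |E_1|
\end{align*}
and hence $\sum_T |I_T| \lesssim 2^{2n_1} |E_1|$. For the second bound, I would use the containment $\bigcup_T I_T \subseteq \bigcup_{I \in \mathbb{I}^d_{k,n_4}} I =: \mathcal{I}$ with $|\mathcal{I}| \lesssim 2^{n_4} |E_4|$ from the last assertion of Lemma \ref{TDL*}. Organizing trees by the maximal $I \in \mathbb{I}^d_{k,n_4}$ containing $I_T$ and applying the local version of Lemma \ref{Est:BHTEnergy} to $f_1 \chi^M_I$ on the sub-collection of tiles with $I_{\vec{P}} \subseteq I$ yields $\sum_{T: I_T \subseteq I} |I_T| \lesssim 2^{2n_1} \|f_1 \chi^M_I\|_2^2 \lesssim 2^{2n_1} |I|$ since $|f_1| \leq 1$; summing over $I \in \mathbb{I}^d_{k,n_4}$ produces the bound $2^{2n_1} \cdot 2^{n_4} |E_4|$.

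No serious obstacle arises; the only technical point is the routine wave packet tail argument needed to justify passing from $f_1$ to $f_1 \chi^M_I$ in the local energy estimate. Since each wave packet $\Phi_{P_1,1}$ has Schwartz decay away from $I_{\vec{P}} \subseteq I$, the contribution of $f_1(1 - \chi^M_I)$ to the inner products is controlled by a geometric dyadic decomposition of the complement into annuli $2^{s+1} I \setminus 2^s I$ and absorbed into the implicit constant, in exact analogy with the localization arguments already appearing in Lemmas \ref{L:John-Nirenberg} and \ref{Est:l2Energy}.
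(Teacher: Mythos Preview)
Your proposal is correct and follows essentially the same route as the paper: the first bound comes from the global Bessel inequality (Lemma~\ref{Est:BHTEnergy}) applied to $f_1$, and the second from localizing the same energy estimate to each $I \in \mathbb{I}^d_{k,n_4}$, bounding $\|f_1\chi^M_I\|_2^2 \lesssim |I|$ via $|f_1|\le 1$, and summing using $\sum_{I\in\mathbb{I}^d_{k,n_4}} |I| \lesssim 2^{n_4}|E_4|$ from Lemma~\ref{TDL*}. The paper's proof is just a terser version of what you wrote, omitting the tail discussion you included.
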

\begin{proof}
By Lemma \ref{Est:BHTEnergy},

\begin{align*}
\sum_{T \in \mathcal{T}^{d,1}_{k, n_4,n_1, *}}  |I_T| \lesssim 2^{2n_1}||f_1||_{L^2}^2  \lesssim 2^{2n_1} |E_1|.
\end{align*}
Moreover, 

\begin{align*}
\sum_{T \in \mathcal{T}^{d,1}_{k, n_4, n_1}}|I_T| \lesssim&  2^{2n_1} \sum_{I \in \mathbb{I}^d_{k, n_4}} \left| \left| f_1 \chi^M_I \right| \right|_2^2 \\ \lesssim& 2^{2n_1} \sum_{I \in \mathbb{I}^d_{k, n_4}} |I| \\ \lesssim& 2^{2n_1} 2^{n_4} |E_4|. 
\end{align*}
Consequently, estimate \eqref{Est:Intermediate1} holds.
\end{proof}

\subsubsection{Synthesis}
For $d, k, n_1, n_4 \geq 0$ and $\mathfrak{d} \gtrsim  -k$, let $\mathbb{P}^{d}_{k, n_4, n_1, \mathfrak{d}} = \mathbb{P}^{d,1}_{k, n_4, n_1} \cap \mathbb{P}^{d,2}_{k, n_4,\mathfrak{d}}$. Then
the decomposition \eqref{Def:Decomp-2} yields

 \begin{align*}
 &\sum_{ \vec{P} \in  \mathbb{P}^{d}_{k, n_4, n_1, \mathfrak{d}}}\frac{1}{|I_{\vec{P}}|^{1/2}} \langle f_1, \Phi_{P_1,1} \rangle \langle f_4,   \Phi_{P_4,4} \rangle\left \langle  \int_0^1 BHT^{\alpha, \mathbb{Q}^d}_{\omega_{P_2}}(f_2, f_3) d \alpha , \Phi_{P_2,0} \right\rangle \\  =& \sum_{T \in \mathcal{T}_{k,n_4,n_1}^{d,1}} \sum_{ \vec{P} \in T \cap \mathbb{P}^d_{k,n_4,n_1, \mathfrak{d}}} \frac{1}{|I_{\vec{P}}|^{1/2}} \langle f_1, \Phi_{P_1,1} \rangle \langle f_4,   \Phi_{P_4,4} \rangle\left \langle  \int_0^1 BHT^{\alpha, \mathbb{Q}^d}_{\omega_{P_2}}(f_2, f_3) d \alpha , \Phi_{P_2,0} \right\rangle \\ =&  \sum_{T \in \mathcal{T}_{k, n_4, \mathfrak{d}}^{d,2}} \sum_{ \vec{P} \in T \cap \mathbb{P}^d_{k, n_4, n_1, \mathfrak{d}}} \frac{1}{|I_{\vec{P}}|^{1/2}} \langle f_1, \Phi_{P_1,1} \rangle \langle f_4,   \Phi_{P_4,4} \rangle\left \langle  \int_0^1 BHT^{\alpha, \mathbb{Q}^d}_{\omega_{P_2}}(f_2, f_3) d \alpha , \Phi_{P_2,0} \right\rangle. 
\end{align*}
Each element in $ \mathcal{T}_{k,n_4,n_1}^{d,1} \bigcup \mathcal{T}_{k,n_4,\mathfrak{d}}^{d,2}$  is a $1$- or $2$-tree.  Using the tree estimates in Section 4.2.2 and estimate \eqref{Est:Intermediate1} yields

\begin{align*}
& \left| \sum_{T \in \mathcal{T}_{k,n_4,n_1}^{d,1} } \sum_{ \vec{P} \in T \cap \mathbb{P}^d_{k, n_4, n_1,\mathfrak{d}} } \frac{1}{|I_{\vec{P}}|^{1/2}} \langle f_1, \Phi_{P_1,1} \rangle \langle f_4,   \Phi_{P_4,4} \rangle\left \langle  \int_0^1 BHT^{\alpha, \mathbb{Q}^{\tilde{d}}}_{\omega_{P_2}}(f_2, f_3) d \alpha , \Phi_{P_2,0} \right\rangle \right|\\& \lesssim  2^{-n_1} 2^{-n_4} 2^{-\mathfrak{d}} \sum_{T \in \mathcal{T}_{k, n_4, n_1}^{d,1} }  |I_T| \\ &\lesssim  2^{-n_1} 2^{-n_4} 2^{-\mathfrak{d}} \min \left\{ 2^{2n_1} |E_1|, 2^{2n_1} 2^{n_4} |E_4| \right\}. 
\end{align*}
Similarly, the tree estimates in Section 4.2.2 and estimate \eqref{Est:Intermediate} yield
\begin{align*}
& \left| \sum_{T \in \mathcal{T}_{k, n_4,\mathfrak{d}}^{d,2}} \sum_{ \vec{P} \in T \cap \mathbb{P}^d_{k, n_4, n_1, \mathfrak{d}}} \frac{1}{|I_{\vec{P}}|^{1/2}} \langle f_1, \Phi_{P_1,1} \rangle \langle f_4,   \Phi_{P_4,4} \rangle\left \langle  \int_0^1 BHT^{\alpha, \mathbb{Q}^{\tilde{d}}}_{\omega_{P_2}}(f_2, f_3) d \alpha , \Phi_{P_2,0} \right\rangle \right| \\&\lesssim  2^{-n_1} 2^{-n_4} 2^{-\mathfrak{d}} \left[ \sum_{T \in \mathcal{T}_{k, n_4, \mathfrak{d}}^{d,2}}  |I_T| \right] \\ &\lesssim   2^{-n_1}2^{-n_4}  2^{-\mathfrak{d}} \min\left\{ 2^{-N d/2} 2^{2 \mathfrak{d}} 2^{-k/2} |E_2|^{1/2}  , 2^{2 \mathfrak{d}} 2^{d} |E_2|^{2-\epsilon} ,  2^{\frac{ \mathfrak{d}}{1-\tilde{\epsilon}}} |E_2|^{1/2}  \right\} . 
\end{align*}
Hence,
\begin{align*}
& \left|\Lambda_2^{\mathbb{P}, \mathbb{Q}}(f_1, f_2, f_3, f_4) \right| \\ \lesssim & \sum_{d, k, n_1, n_4  \geq 0}  ~\sum_{\mathfrak{d} \gtrsim -k} 2^{-n_1} 2^{-n_4} 2^{-\mathfrak{d}} \times \\ & \min\left\{  2^{2n_1} |E_1|, 2^{2n_1} 2^{n_4} |E_4| ,  2^{-N d/2} 2^{2 \mathfrak{d}} 2^{-k/2} |E_2| ^{1/2} , 2^{2 \mathfrak{d}} 2^{2d} |E_2|^{2-\epsilon} ,  2^{\frac{ \mathfrak{d}}{1-\tilde{\epsilon}}} |E_2| ^{ 1/2} \right\}. 
\end{align*}
We now show generalized restricted type estimates for a sequence of 4-tuples approaching $A_8 = (0, 1, -\frac{1}{2}, \frac{1}{2} )$. Using $0 \leq \theta_1, \theta_2, \theta_3, \theta_4, \theta_5 \leq 1$ to denote the weighting assigned to each component in the above minimum, set $\theta_1 = 0,  \theta_2 = \frac{1}{2} - \epsilon_2, \theta_3 = \epsilon_3, \theta_4 = \frac{1}{2} - \epsilon_4, \theta_5 = \epsilon_2 +\epsilon_4 - \epsilon_3$ 
for some $0< \epsilon_2, \epsilon_3, \epsilon_4 \ll 1$ to be determined. For summability over $k$ and $\mathfrak{d}$, we require

\begin{align*}
\epsilon_3 < & \epsilon_2 + \epsilon_4   \\ 
2 \epsilon_3 + 2 \left(\frac{1}{2} - \epsilon_4\right) + \frac{ \epsilon_2 + \epsilon_4 - \epsilon_3}{1- \tilde{\epsilon}} <&1 \\ 
1- 2 \epsilon_3 - 2 \left( \frac{1}{2} - \epsilon_4 \right) - \frac{ \epsilon_2 + \epsilon_4 - \epsilon_3}{ 1-\tilde{\epsilon}} <&\frac{\epsilon_3}{2}. 
\end{align*}
For fixed $0< \epsilon_4 \ll 1$, let $\epsilon_2 = \frac{\epsilon_4}{2}$ and $\epsilon_3 = \frac{ 3 \epsilon_4}{8}$ so that the first inequality holds. Moreover, there exists a choice of $0< \tilde{\epsilon}\ll 1$ for which the last two inequalities in the above display are satisfied. Indeed, letting $\tilde{\epsilon} \rightarrow 0$, the system becomes

\begin{eqnarray*}
0 < \epsilon_4 - \epsilon_2 - \epsilon_3 < \frac{\epsilon_3}{2}< \frac{\epsilon_2 + \epsilon_4}{2}. 
\end{eqnarray*}
These inequalities are satisfied by our choice of $\epsilon_2, \epsilon_3$ as a function of $\epsilon_4$. Letting $\epsilon_4 \rightarrow 0$ gives us restricted weak type estimates for a sequence of tuples approaching $A_8$. Proving restricted estimates for a sequence approaching $A_9$ is very similar to the argument for $A_8$, so the details are omitted. It is clear that one should set $\theta_1 = \frac{1}{2} - \epsilon_1, \theta_2 = 0, \theta_3 = \epsilon_3, \theta_4 = \frac{1}{2} - \epsilon_4 , 
\theta_5 = \epsilon_1+ \epsilon_4 - \epsilon_3. 
$
for special choices of $0 < \epsilon_1, \epsilon_3 , \epsilon_4 \ll 1$. 
 This ends the proof of Theorem \ref{DT} from which Theorem \ref{MT} follows. 
\end{proof}

\vspace{10mm}
\section{$C^{1,1,1-2}$ Estimates}

Our goal in this section is to prove

\MTV*

\subsection{Reduction to the $\Lambda_3$-Model}
The discretized and localized version of Theorem \ref{MT*} is 

\begin{theorem}\label{DT*}
Every $5$-form of type $\Lambda_3$ as described in Definition \ref{Def:Lambda-3} is generalized restricted type $\vec{\beta}$ for all admissible tuples $\vec{\beta}$ sufficiently close to the extremal points in $\mathbb{B}$.
If $\vec{\beta}$ has a bad index $j$, the restricted type estimate is uniform in the sense that the major subset $E_j^\prime$ can be chosen uniformly in the parameters

\begin{eqnarray*}
\mathbb{P}, \mathbb{Q}, \mathbb{R}, \left\{ \Phi_{P_k, j} \right\}, \left\{ \Phi^\alpha_{Q_k, j} \right\}, \left\{ \Phi_{R_k, j}\right\}. 
\end{eqnarray*}
\end{theorem}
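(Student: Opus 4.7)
The proof of Theorem \ref{DT*} will mirror the structure of the $\Lambda_2$ argument from Theorem \ref{DT}, but must accommodate the additional nested bilinear structure encoded by the $\mathbb{R}$-collection. The plan is as follows. Fix an extremal point $B_j \in \mathbb{B}$ and let $i$ denote its bad index (for $B_1,\ldots,B_6$ this is the dual position $5$; for $B_7,\ldots,B_{10}$ it is position $2$; for the remaining $B_{11},\ldots,B_{16}$ it is position $3$ or $4$). After rescaling so $|E_i|=1$, I will construct an exceptional set $\Omega$ as the union of (i) maximal level sets $\{M 1_{E_k} \gtrsim |E_k|\}$ for the good indices $k$, (ii) a scale-dependent family of BHT-controls $\Omega^{\tilde d}_{1}$ on the operator $\int_0^1 BHT^{\alpha,\mathbb{Q}^{\tilde d}}(f_3,f_4)\, d\alpha$ together with its square-function counterpart, exactly as in \eqref{Exc:1}, and (iii) an analogous family $\Omega^{\tilde d'}_{2}$ controlling the inner $\mathbb{R}$-expression $\sum_{\vec{R}\in\mathbb{R}^{\tilde{d}'}} |I_{\vec R}|^{-1/2} \langle f_1,\Phi_{R_1,1}\rangle \langle f_5,\Phi_{R_2,5}\rangle \Phi^{n-l}_{R_3,0}$, which is itself a BHT-like bilinear output in $(f_1,f_5)$. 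When the bad index lies inside $\{1,5\}$, the corresponding control will be replaced by a dual localization that uses only the surviving input, completely analogously to the treatment of positions $3$ in Section 4.3 and $1$ in Section 4.4 of the $C^{1,1,-2}$ argument. Weak-$L^1$ bounds for $M$ and $\ell^2$-orthogonality yield $|\Omega|\le 1/2$, so $\widetilde E_i := E_i \cap \Omega^c$ is a major subset.

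Next I will discretize via three sizes: the standard $Size_2(f_2,\tilde{\mathbb{P}})$ as an $\ell^2$-size at the tile $P_2$, a size $Size^{\tilde d}_{BHT}(f_3,f_4,\tilde{\mathbb{P}})$ at the tile $P_3$ paralleling $Size_0^{\tilde d}$ of Section 4.2.1, and a third size $Size^{\tilde d'}_R(f_1,f_5,\tilde{\mathbb{P}})$ measuring the $\mathbb{R}$-bracket tested against $\Phi^{lac}_{P_1,6}$. Simultaneous stopping-time decomposition then produces
\[
\mathbb{P} \times \mathbb{Q} \times \mathbb{R} = \bigcup_{d,\tilde d,\tilde d' \ge 0}\ \bigcup_{n_2,\mathfrak{d}_{BHT},\mathfrak{d}_R} \mathbb{P}^{d,\tilde d,\tilde d'}_{n_2,\mathfrak{d}_{BHT},\mathfrak{d}_R} \times \mathbb{Q}^{\tilde d} \times \mathbb{R}^{\tilde d'},
\]
with each piece organized as a union of strongly disjoint chains of $j$-trees in the sense of Definition \ref{Def:StronglyDisjoint} via the exact argument of Lemma \ref{TDL}. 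The tree estimates follow by Cauchy-Schwarz and the $L^2$-normalization of wave packets, and the size restrictions of Lemma \ref{Est:Size-Restrictions} apply verbatim to the $BHT^{\alpha,\mathbb{Q}}$-size; for the $\mathbb{R}$-size, an analogous computation using Lemmas \ref{L:BiestSize} and \ref{L:John-Nirenberg} will give $2^{-\mathfrak{d}_R} \lesssim_N 2^{-N(\tilde d'-d)}|E_1|^{1/2}|E_5|^{1/2}$.

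For the energies, the standard $\ell^2$ bounds of Lemmas \ref{Est:BHTloc}--\ref{Est:BiestEnergy} transfer directly. Crucially, the $\ell^1$-energy Theorem \ref{l1T} applies to the $BHT^{\alpha,\mathbb{Q}}$-controlled size to produce the summability improvement that drove the $\Lambda_2$ argument below $L^1$. A second, parallel application of the same theorem, with $(f_3,f_4)$ replaced by $(f_1,f_5)$ and $\mathbb{Q}$ replaced by $\mathbb{R}$, will yield the analogous $\ell^1$-improvement for the $\mathbb{R}$-size. Interpolating each $\ell^2$ bound against its $\ell^1$ counterpart using weights $\theta_{n_2},\theta_{BHT},\theta_{R}$, combining with the tree estimates, and summing the resulting geometric series in $d,\tilde d,\tilde d',n_2,\mathfrak{d}_{BHT},\mathfrak{d}_R$, will produce the restricted type bound $|\Lambda_3^{\mathbb{P},\mathbb{Q},\mathbb{R}}(\vec f)| \lesssim \prod_{k=1}^{5} |E_k|^{\beta_k}$ in any sufficiently small neighborhood of each $B_j$, upon choosing the weights to satisfy a system of linear inequalities exactly analogous to \eqref{Ineq:1st}--\eqref{Eq:2nd}.

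The main obstacle will be verifying the $\ell^1$-energy bound for the inner $\mathbb{R}$-sum. Theorem \ref{l1T} was proved by dualizing against a disjoint family of $\mathbb{P}$-tiles in position $2$, reducing to a three-energy estimate organized around the size/energy trilinear inequality; here the analogous reduction must be carried out in the presence of the additional constraint $\widetilde{\omega_{R_1}} \ni (c_{\omega_{P_2}}-c_{\omega_{P_3}})/2$ from Definition \ref{Def:Lambda-3}, which couples the $\mathbb{R}$-frequencies to the difference of two $\mathbb{P}$-frequencies rather than to a single $\omega_{P_2}$. The disjointness argument underlying Proposition \ref{P90} (in particular the claim that the relevant $\mathbb{P}$-tops have disjoint time intervals) will need to be re-established using this shifted constraint. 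For extremal points $B_7,\ldots,B_{16}$, where the bad index lies among the good-for-$\mathbb{Q}$ inputs, I will additionally need the $\mathbb{P}^d_k$-type local discretization of Section 4.4 to compensate for the missing maximal-function control on the bad position, combined with the localized BHT estimate of Lemma \ref{Est:BHTloc} applied to both the $\mathbb{Q}$ and $\mathbb{R}$ blocks simultaneously.
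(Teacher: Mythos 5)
Your proposal takes a genuinely different route from the paper, and the divergence contains a gap. The paper does \emph{not} introduce a third size or a third stopping-time parameter $\mathfrak{d}_R$ for the $\mathbb{R}$-bracket, nor does it apply Theorem~\ref{l1T} to the $\mathbb{R}$-sum. Instead, the entire $\mathbb{R}$-structure is absorbed into a single size estimate (Lemma~\ref{L:Tree-4}): the $\ell^2$ average over a tree of the expression
\[
\left\langle \sum_{\vec{R}: \widetilde{\omega_{R_1}} \ni (c_{\omega_{P_2}}-c_{\omega_{P_3}})/2} \frac{\langle f_1,\Phi_{R_1,1}\rangle\langle f_5,\Phi_{R_2,5}\rangle}{|I_{\vec R}|^{1/2}}\Phi^{n-l}_{R_3,0},\ \Phi^{lac}_{P_1,6}\right\rangle
\]
is bounded by $\left[\sup_T |I_{\vec P}|^{-1}\int_{E_1}\chi^M\right]^\theta\left[\sup_T |I_{\vec P}|^{-1}\int_{E_5}\chi^M\right]^{1-\theta}$. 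The point is that, once $\Phi^{lac}_{P_1,6}$ is the testing wave packet over a tree, the $\vec P$-sum forms a Littlewood--Paley square function, and the whole thing linearizes to a Calder\'on--Zygmund operator acting on the $\mathbb{R}$-paraproduct of $(f_1,f_5)$; the $L^1\to L^{1,\infty}$ bound, John--Nirenberg, and H\"older then give the claimed size. Consequently there is nothing to sum in a third parameter: the $\mathbb{R}$-bracket contributes a fixed factor $|E_1|^\theta$ (with $|E_5|^{1-\theta}=1$ after rescaling) plus geometric decay $2^{-Nd(1-\theta)}$ from the distance of $I_T$ to $\Omega^c$, and the synthesis only runs over $d,\tilde d, n_2, \mathfrak{d}$.

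The gap in your plan is that the second, parallel application of Theorem~\ref{l1T} with $(f_3,f_4)\mapsto(f_1,f_5)$ and $\mathbb{Q}\mapsto\mathbb{R}$ is not only unnecessary but not available off the shelf, and the difficulties are more serious than a matter of ``re-establishing disjointness.'' The proof of Theorem~\ref{l1T} hinges on a frequency-cube inclusion $\omega_{Q_3}\subset\subset\omega_{P_2}$, whereas the $\mathbb{R}$-constraint is a \emph{point}-in-shifted-half-interval condition $\widetilde{\omega_{R_1}}\ni(c_{\omega_{P_2}}-c_{\omega_{P_3}})/2$; the testing wave packet is $\Phi^{lac}_{P_1,6}$ rather than a mean-zero $\Phi_{P_2,0}$; and $\mathbb{R}$ is a generalized tri-tile family with $\omega_{R_2}=-\omega_{R_1}$ and $\omega_{R_3}$ centered at the origin, not a generic rank-one collection. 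These features change the trilinear size/energy geometry in Propositions~\ref{P89}--\ref{P90} and Lemma~\ref{L:Energy-3} in ways you do not carry out and which you flag as an ``obstacle'' without resolving. There is also a circularity in your proposed extra exceptional-set family $\Omega^{\tilde d'}_2$: for the extremal points $B_1,B_2,B_3$ the bad index is $5$, so the $\mathbb{R}$-expression depends on $f_5 = f_5 1_{\Omega^c}$, which already depends on $\Omega$; you gesture at a ``dual localization,'' but the paper avoids this issue entirely by not constructing any exceptional control for the $\mathbb{R}$-output. In short, the $\ell^1$-energy boost is needed precisely because the degenerate $\mathbb{P}$-collection obstructs summability against the $\mathbb{Q}$-bracket; the $\mathbb{R}$-bracket sits behind a lacunary test and so poses no such obstruction, and the route through Lemma~\ref{L:Tree-4} is both necessary and sufficient there.
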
 
Before showing Theorem \ref{DT*}, we prove the following
 \begin{prop}\label{MP*}
 Theorem \ref{DT*} implies Theorem \ref{MT*}. 
 \end{prop}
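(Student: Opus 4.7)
The plan is to mirror the architecture of Proposition \ref{MP} but with one additional level of decomposition reflecting the extra factor $f_3$ in the simplex. The symbol $1_{\xi_1 < \xi_2 < \xi_3 < -\xi_4/2}$ should be carved into pieces according to the relative sizes of the three gaps $g_1 := |\xi_1 - \xi_2|$, $g_2 := |\xi_2 - \xi_3|$, and $g_3 := |\xi_3 + \xi_4/2|$. The \textbf{comparable} region where all three gaps are of the same order localizes to a $2$-dimensional singular set and becomes a Marcinkiewicz--H\"ormander symbol of small dimension, handled directly by \cite{MR1887641} with an exceptional set independent of time-frequency parameters.

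In each \textbf{dominant-gap} region, I would apply a Whitney smooth partition on the hyperplane $\{\xi_1 + \xi_2 + \xi_3 + \xi_4 + \xi_5 = 0\}$ (after dualizing with $f_5$) in the spirit of \eqref{Def:Splitting1}--\eqref{Def:Splitting2}. When $g_3$ dominates, the observation that $\xi_1 < \xi_2 < \xi_3 < -\xi_4/2$ is equivalent, under the hierarchy $g_3 \gg g_1 + g_2$, to $\{\xi_1 < \xi_2 < \xi_3\} \cap \{\xi_1 + \xi_2 + \xi_3 < -\xi_4/2\}$, precisely in the spirit of Camil Muscalu's trick used in Proposition \ref{MP}, splits the symbol into the product of a non-degenerate Biest symbol on $(\xi_1, \xi_2, \xi_3)$ and a semi-degenerate bilinear symbol pairing the sum $\xi_1+\xi_2+\xi_3$ against $\xi_4$. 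Discretizing both pieces by the standard procedure of Section~6.1 of \cite{MR3052499} and absorbing the rapidly-decaying coefficients $c_{\vec{k}}, d_{\vec{l}}$ produces a $5$-form matching Definition \ref{Def:Lambda-3}: the outer $\mathbb{P}$-sum carries the Whitney tiles adapted to $\{\xi_1 = 0, \xi_2 + \xi_3 = 0\}$, the $BHT^{\alpha,\mathbb{Q}}_{\omega_{P_3}}$ piece carries the Whitney tiles for $\{\xi_1 = -\xi_2/2 = -\xi_3\}$, and the inner $\mathbb{R}$-bracket encodes the remaining Biest interaction at coarser scales. The dominant $g_1$ and $g_2$ regions are handled symmetrically and, by the $1\leftrightarrow 3$ swap built into $\mathbb{B}^\prime$, reduce to $\Lambda_3$-type models after analogous discretization, with the roles of the variables permuted.

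With these reductions in hand, Theorem \ref{DT*} supplies generalized restricted type estimates uniformly near every extremal vertex $B_j \in \mathbb{B}$, and the symmetric version handles $\mathbb{B}^\prime$. Generalized restricted type interpolation from Chapter~3 of \cite{MR2199086} then delivers the full interior $L^p$ range in $\mathbb{B} \cap \mathbb{B}^\prime$. The sharp corner statement $(1, 2/3, 2/3, 2/3, -2) \in \overline{\mathbb{B} \cap \mathbb{B}^\prime}$ and boundedness into $L^r$ for every $1/3 < r \leq 1$ follow immediately by writing this corner explicitly as a convex combination of the listed extremal points of $\mathbb{B}$ and $\mathbb{B}^\prime$ (for instance, averaging $B_1$ with its $\mathbb{B}^\prime$-partner places the target index at the optimal $-2$ while the input indices average to $2/3$ or $1$).

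The main obstacle I anticipate is the honest verification that the discretization in the $g_3$-dominant region produces precisely the nested $\mathbb{R}$-collection of Definition \ref{Def:Lambda-3}, with $\omega_{R_2} = -\omega_{R_1}$, $\omega_{R_3} = [-|\omega_{R_3}|/2, |\omega_{R_3}|/2]$, and the crucial frequency anchoring $\widetilde{\omega_{R_1}} \ni (c_{\omega_{P_2}} - c_{\omega_{P_3}})/2$. This structure is forced by matching the frequency scales of the inner Biest Whitney cubes against those of the outer $\mathbb{P}$- and $\mathbb{Q}$-cubes (which live in opposite-sign configurations due to the dualization with $f_5$ through the hyperplane constraint), but the precise bookkeeping of the three-tuple of dyadic shifts, the oscillation parameters, and the scale-separation condition $\omega_{Q_3} \subset\subset \omega_{P_3}$ (playing the same role as $|\omega_{\vec{P}}|\gg|\omega_{\vec{Q}}|$ in Proposition \ref{MP}) must be carried out carefully. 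Once this compatibility is checked, the reduction is routine: the rapid decay of the shift-coefficients yields absolute summability in every averaging parameter, so restricted type bounds for each individual model transfer to the full operator $T_{\phi_{\mathcal{R}_3}}$, and analogous arguments for the remaining two regions complete the proof.
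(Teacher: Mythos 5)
Your high-level outline—carve the symbol by the relative sizes of the three gaps $g_1 = |\xi_1-\xi_2|$, $g_2 = |\xi_2-\xi_3|$, $g_3 = |\xi_3+\xi_4/2|$, discretize, invoke Theorem \ref{DT*}, and interpolate—is the right architecture, but you have the hierarchy backwards at the crucial step, and this would derail the reduction. The $\Lambda_3$-model in Definition \ref{Def:Lambda-3} has scale hierarchy $|\omega_{\vec{R}}| \gg |\omega_{\vec{P}}| \gg |\omega_{\vec{Q}}|$, where the $\mathbb{R}$-bracket carries $f_1$ and $f_5$ (encoding the singularity $\{\xi_1 = \xi_2\}$ via $\xi_5 = -(\xi_1+\cdots+\xi_4)$), the $\mathbb{P}$-sum is adapted to $\{\xi_1=0,\ \xi_2+\xi_3=0\}$ (encoding $\{\xi_2=\xi_3\}$), and the $\mathbb{Q}$-sum inside $BHT^{\alpha,\mathbb{Q}}_{\omega_{P_3}}$ carries $(f_3,f_4)$ (encoding $\{\xi_3=-\xi_4/2\}$). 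Since Whitney scale is proportional to distance from the singularity, this structure arises precisely when $g_1 \gg g_2 \gg g_3$ (region $\mathcal{S}_1$, and with a minor variant also $\mathcal{S}_2 = \{g_1 \gg g_3 \gg g_2\}$)—\emph{not} when $g_3$ dominates as you propose. Your own description is internally inconsistent: you posit $g_3 \gg g_1+g_2$, which forces the Biest on $(\xi_1,\xi_2,\xi_3)$ to live at the \emph{fine} scales, yet you then assign the ``remaining Biest interaction'' to the $\mathbb{R}$-bracket ``at coarser scales.'' If you actually carried out the discretization in the $g_3$-dominant region, you would not land on a $\Lambda_3$-model; Theorem \ref{DT*} would simply not apply.

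Two further issues compound this. First, your plan to dispatch the remaining dominant-gap regions ``by the $1 \leftrightarrow 3$ swap built into $\mathbb{B}'$'' mischaracterizes both the symmetry (the $\mathbb{B}'$ permutation is $1\mapsto 1,\ 2\mapsto 4,\ 3\mapsto 2,\ 4\mapsto 3$, not a transposition of indices $1$ and $3$) and the actual reduction. The paper does not reduce the other regions to $\Lambda_3$-type models by symmetry at all; rather, the regions $\mathcal{S}_3,\dots,\mathcal{S}_{11}$—including all of your ``other dominant-gap'' cases and the comparable cases—are handled by appealing directly to the non-degenerate simplex estimates of \cite{MR3329857}, since in those regions the degenerate singularity does not sit at the top of the scale hierarchy. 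The symmetry is only used to pass between $\mathcal{S}_1$ and $\mathcal{S}_2$, which both have $g_1$ dominant. Second, your parenthetical justification of the sharp corner is off: $(1,\frac{2}{3},\frac{2}{3},\frac{2}{3},-2)$ arises as $\tfrac13(B_1 + B_2 + B_3)$, not as the average of $B_1$ with its $\mathbb{B}'$-partner (which is $B_2 = (1,\frac12,\frac12,1,-2)$, giving the average $(1,\frac34,\frac12,\frac34,-2)$). The set equality $\{B_1', B_2', B_3'\} = \{B_1, B_2, B_3\}$ is what places the corner in $\overline{\mathbb{B}\cap\mathbb{B}'}$. In short, the skeleton is right but the identification of the critical region and the disposal of the noncritical ones both need to be fixed before the reduction to Theorem \ref{DT*} goes through.
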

 \begin{proof}
We proceed as in the proof of $C^{1,1,-2}$ by decomposing $\left\{ \xi_1 < \xi_2 < \xi_3< - \frac{\xi_4}{2} \right\}$ into the following regions, which are viewed as subsets of $\left\{ \xi_1 < \xi_2 < \xi_3 < - \frac{\xi_4}{2}\right\}$: 

\begin{align*}
\mathcal{S}_1 =& \left\{ |\xi_1- \xi_2| \gg |\xi_2 - \xi_3| \gg  \left|\xi_3 + \frac{\xi_4}{2}\right| \right\} 
, \mathcal{S}_2 = \left\{| \xi_1 - \xi_2| \gg \left |\xi_3 + \frac{\xi_4}{2}\right| \gg |\xi_2- \xi_3| \right\}  \\ 
\mathcal{S}_3 =& \left\{ |\xi_1 - \xi_2|\gg |\xi_2 - \xi_3| \simeq \left| \xi_3 + \frac{\xi_4}{2} \right| \right\} 
,\mathcal{S}_4 = \left\{ |\xi_1 - \xi_2| \simeq |\xi_2 -\xi_3| \gg \left|\xi_3 + \frac{\xi_4}{2}\right| \right\} \\ 
\mathcal{S}_5 =& \left\{ |\xi _1 - \xi_2| \simeq \left|\xi_3 + \frac{\xi_4}{2}\right|  \gg |\xi_2 - \xi_3| \right\}
,\mathcal{S}_6 = \left\{ |\xi_1 - \xi_2 | \simeq |\xi_2 - \xi_3| \simeq \left|\xi_3 + \frac{\xi_4}{2} \right| \right\}\\
\mathcal{S}_7 =& \left\{ \left| \xi_3 + \frac{\xi_4}{2} \right| \gg |\xi_2 - \xi_3| \gg |\xi_1 - \xi_2| \right\} 
,\mathcal{S}_8 = \left\{ \left| \xi_2 + \frac{\xi_4}{2} \right| \gg |\xi_1 - \xi_2| \gg |\xi_2 - \xi_3| \right\} \\ 
\mathcal{S}_9 =& \left\{ \left| \xi_3 + \frac{\xi_4}{2} \right| \simeq |\xi_2 - \xi_3 | \gg |\xi_1 - \xi_2| \right\} 
,\mathcal{S}_{10} = \left\{ |\xi_3 + \frac{\xi_4}{2}| \gg |\xi_2 - \xi_3| \simeq |\xi_1 - \xi_2| \right\} \\
\mathcal{S}_{11} =& \left\{ |\xi_2 - \xi_3| \gg |\xi_1 - \xi_2 | \right\} \cap \left\{ |\xi_2 - \xi_3| \gg\left |\xi_3 + \frac{\xi_4}{2} \right|  \right\}. \\ 
\end{align*}
By the methods used in \cite{MR3329857}, estimates for $C^{1,1,1,-2}$ follow from estimates for multipliers with symbols  adapted to each of the above $11$ regions of $\mathbb{R}^4$ and which have the nested structure of a paracomposition. To say a symbol $m$ is adapted to a region $\mathcal{R}_j$ means that it is supported in $\mathcal{R}_j$ and satisfies the derivative estimate

\begin{eqnarray*}
| \partial^{\vec{\alpha}} m(\vec{\xi}) | \leq A \frac{1}{dist(\vec{\xi}, \Sigma)^{|\vec{\alpha}|}} \qquad \forall \vec{\xi} \in \mathbb{R}^4
\end{eqnarray*}
for sufficiently many multi-indices $\vec{\alpha} \in \mathbb{Z}_{\geq 0}^4$ where $\Sigma := \{ \xi_1 = \xi_2 = \xi_3 = - \xi_4/2\}$. 
A nested structure means here that the models 
It is tedious but straightforward to verify using the results in \cite{MR3329857} that the estimates appearing in Theorem \ref{MT*} hold for multipliers with symbols adapted to regions $\{ \mathcal{S}_j\}_{j=3}^{11}$. Therefore, our problem reduces to producing estimates for multipliers with symbols adapted to regions $\mathcal{S}_1$ and $\mathcal{S}_2$. Moreover, a closer look at the argument in \cite{MR3329857} reveals that for $\mathcal{S}_1$ and $\mathcal{S}_2$ it suffices to prove the estimates in Theorem \ref{MT*} for all symbols adapted to $\mathcal{S}_1$ and $\mathcal{S}_2$ and which are identically equal to one on sets $\mathcal{S}_1^*$ and $\mathcal{S}_2^*$, which have the same shape as $\mathcal{S}_1$ and $\mathcal{S}_2$ respectively and are given by 
\begin{eqnarray*}
\mathcal{S}^*_1 &=& \left\{ |\xi_1- \xi_2| \gg \gg |\xi_2 - \xi_3| \gg \gg \left|\xi_3 + \frac{\xi_4}{2}\right| \right\} \\ \mathcal{S}^*_2 &=& \left\{| \xi_1 - \xi_2| \gg \gg \left |\xi_3 + \frac{\xi_4}{2}\right| \gg \gg |\xi_2- \xi_3| \right\}
\end{eqnarray*}
where $\gg \gg$ signifies an implicit constant much greater than that appearing in $\gg$. 

Furthermore, it suffices by symmetry and the interpolation results from Chapter 3 of \cite{MR2199086} to produce generalized restricted type estimates for the discretized model of generic symbols adapted to $\mathcal{S}_1,$ say. To this end, we first recall the equality (7.19) in Section 7.2 of \cite{MR3052499} valid for any $N \in \mathbb{R}$: 

\begin{eqnarray}\label{Identity}
1_{(-\infty, N]} (\xi) =  \lim_{M \rightarrow \infty} \frac{C}{2M} \int_{-M}^M \int_0^1 \sum_{\omega \in \mathcal{D}_{k, \eta}} \hat{\Phi}_{\omega_l}(\xi) 1_{\omega_r}(N) ~d k d \eta
\end{eqnarray}
where $\mathcal{D}_{k, \eta} = 2^k D_0^1+\eta$ and $D_0^1$ is the collection of standard dyadic cubes, for any interval $\omega = [c_\omega - |\omega|/2, c_\omega+ |\omega|/2]$, $\omega_l=[c_\omega-|\omega|/2, c_\omega]$ and $\omega_r=[c_\omega, c_{\omega} + |\omega|/2]$, and  $\hat{\Phi}_{\omega}(\xi) = \hat{\Phi}(\frac{\xi - c_{\omega}}{|\omega|})$ for some $\hat{\Phi} \in C^\infty[-1/4, 1/4]$ and $\hat{\Phi} \equiv 1$ on $[-3/16, 3/16]$. 
We now construct $\Phi_{\mathcal{S}_1}:\mathbb{R}^5 \rightarrow \mathbb{R}$ given by
\begin{eqnarray*}
&& \Phi_{\mathcal{S}_1}(\xi_1, \xi_2, \xi_3, \xi_4, \xi_5) \\ &=&  \sum_{\vec{k} ,\vec{l} \in \mathbb{Z}^3} \sum_{\vec{\sigma} ,\vec{\gamma} \in \{0, \frac{1}{3}, \frac{2}{3}\}^3} 
 \sum_{\omega_{\vec{P}} \in \mathcal{P}^{\vec{\gamma}}}~ \sum_{\omega_{\vec{Q}} \in \mathcal{Q}^{\vec{\sigma}}: |\omega_{\vec{Q}}| \ll |\omega_{\vec{P}}|}c_{\vec{k}} d_{\vec{l}}  \\ && \lim_{M \rightarrow \infty} \frac{C}{2M} \int_{-M}^M \int_0^1 \sum_{\omega \in \mathcal{D}_{k, \eta}: |\omega| \gg |\omega_{\vec{P}}|} \hat{\Phi}_{\omega_l}(\xi_1) 1_{\omega_r}\left(\frac{c_{\omega_{P_2}}-c_{\omega_{P_3}}}{2}\right)\hat{\Phi}_{-\tilde{\omega}_l}(\xi_5) \hat{\Phi}_{\omega_0}(-\xi_1-\xi_5) ~d k d \eta  \\ &\times&  \hat{\eta}^{l_1, \gamma_1}_{\omega_{P_1}, 6}(\xi_1 + \xi_5) \hat{\eta}^{l_2, \gamma_2}_{\omega_{P_2}, 2}(\xi_2) \hat{\eta}^{l_3, \gamma_3}_{\omega_{P_3},7}(\xi_3 + \xi_4) \hat{\eta}^{k_3, \sigma_3}_{-\omega_{Q_3}, 1}(\xi_1 + \xi_2 + \xi_5) \hat{\eta}^{k_1, \sigma_1}_{\omega_{Q_1}, 3}(\xi_3) \hat{\eta}^{k_2, \sigma_2}_{\omega_{Q_2}, 4} (\xi_4), 
\end{eqnarray*}
where each $\omega_{\vec{P}}=(\omega_{P_1}, \omega_{P_2}, \omega_{P_3})$ is adapted to $\{\xi_1+ \xi_2=0 ; \xi_3=0\}$, each $\omega_{\vec{Q}} = (\omega_{Q_1}, \omega_{Q_2}, \omega_{Q_3})$ is adapted to $\left\{ \xi_1 = - \xi_2/2 =\xi_3 \right\}$, $\tilde{\omega}$ is the interval with the same center as $\omega$ and  1.5 times the length and $\omega_0$ has the same length as $\tilde{\omega}$ centered at the origin, i.e.  
$\tilde{\omega}= [c_{\omega}-\frac{3}{4}|\omega|,c_{\omega}+\frac{3}{4}|\omega|]$ and $\omega_0 = [-\frac{3}{4} |\omega|, \frac{3}{4}|\omega|]$. The bump functions with dyadic shifts $\vec{\sigma}$ and $\vec{\gamma}$ and oscillation parameters $\vec{k}$ and $\vec{l}$ are taken directly from the decompositions given in \eqref{Def:Splitting1} and \eqref{Def:Splitting2} from \S{4}. We next introduce the collection of frequency rectangles $\mathcal{R}_{k, \eta} = \left\{ (\tilde{\omega}_l, - \tilde{\omega}_l, \omega_0): \omega \in \mathcal{D}_{k, \eta} \right\}$. For $\vec{\omega}:=(\omega_{R_1}, \omega_{R_2}, \omega_{R_3}) = (\omega_l, - \tilde{\omega}_l , \omega_0) \in \mathcal{R}_{k, \eta}$, let $\widetilde{\omega_{R_1}} = \omega_r$. 
Set $\phi_{\mathcal{S}_1}(\xi_1, \xi_2, \xi_3, \xi_4) = \Phi_{\mathcal{S}_1}(\xi_1, \xi_2, \xi_3, \xi_4, -\xi_1-\xi_2 -\xi_3-\xi_4)$ and observe that $\phi_{\mathcal{S}_1} \equiv 1$ on $\mathcal{S}_1^*$ for an appropriate choice of implicit constants. We may now dualize:

\begin{eqnarray*}
&& \int_\mathbb{R} T_{\phi_{\mathcal{S}_1}} (f_1, f_2, f_3, f_4)(x) f_5(x) dx \\
&=&  \lim_{M \rightarrow \infty} \frac{1}{2M} \int_{-M} ^M \int_0^1 \sum_{\vec{k} ,\vec{l} \in \mathbb{Z}^3} \sum_{\vec{\sigma} ,\vec{\gamma} \in \{0, \frac{1}{3}, \frac{2}{3}\}^3} 
 \sum_{\omega_{\vec{P}} \in \mathcal{P}^{\vec{\gamma}}}~ \sum_{\omega_{\vec{Q}} \in \mathcal{Q}^{\vec{\sigma}}: |\omega_{\vec{Q}}| \ll |\omega_{\vec{P}}|} ~ \sum_{\omega_{\vec{R}} \in \mathcal{R}^{k, \eta}: \widetilde{\omega_{R_1}} \ni (c_{\omega_{P_2}}-c_{\omega_{P_3}})/2} c_{\vec{k}} d_{\vec{l}} \\ &\times& \int_\mathbb{R}  \left[  f_1*  \Phi_{\omega_{R_1}}f_5* \Phi_{\omega_{R_2}} \right] * \Phi_{\omega_{R_3}}  \left[f_2*\eta^{l_2, \gamma_2} _{\omega_{P_2}, 2}  \left[ f_3* \eta^{k_1, \sigma_1}_{\omega_{Q_1}, 3} f_4 * \eta^{k_2, \sigma_2}_{\omega_{Q_2}, 4}  \right] *\eta^{l_3, \gamma_3}_{\omega_{P_3}, 7} \right]*\eta_{\omega_{P_1},6}^{l_1, \gamma_1}  dx dk d \eta.   \end{eqnarray*}
The last line of the above display can then be discretized using the procedure of Section 6.1 of \cite{MR3052499} to yield a limit of averages and sums over various rapidly decaying terms of generic forms of type $\Lambda_3$:

\begin{eqnarray*}
\Lambda_3^{\mathbb{P}, \mathbb{Q}, \mathbb{R}}(\vec{f})=&&  \sum_{\vec{P} \in \mathbb{P}} \frac{1}{|I_{\vec{P}}|^{1/2}} \left \langle \sum_{\vec{R} \in \mathbb{R}: \widetilde{\omega_{R_1}} \ni (c_{\omega_{P_2}}-c_{\omega_{P_3}})/2} \frac{1}{|I_{\vec{R}}|^{1/2}}  \langle f_1, \Phi_{R_1, 1} \rangle \langle f_5, \Phi_{R_2, 5}\rangle \Phi^{n-l}_{R_3, 0} , \Phi^{lac}_{P_1,6}\right  \rangle \\ && \hspace{20mm} \times \langle f_2, \Phi_{P_2, 2} \rangle \left\langle  \int_0^1  BHT_{\omega_{P_3}}^{\alpha, \mathbb{Q}} (f_3, f_4) d \alpha,  \Phi_{P_3, 7}\right \rangle . 
\end{eqnarray*}
where $\mathbb{P}$  is a collection of tri-tiles for which $\omega_{\vec{P}}=(\omega_{P_1}, \omega_{P_2}, \omega_{P_4})$ is adapted to $\{\xi_1 =-\xi_2; \xi_3=0\}$, $\mathbb{Q}$ is a rank-1 collection of tri-tiles for which $\omega_{\vec{Q}}=(\omega_{Q_1}, \omega_{Q_2}, \omega_{Q_3})$ is adapted to $\{ \xi_1=-\xi_2/2= -\xi_3 \}$, and $\mathbb{R}$ is a collection of generalized tri-tiles  for which $\omega_{\vec{R}} =(\omega_1, \omega_2 , \omega_3) =  (\omega_1 , - \omega_1, - |\omega_1|/2, |\omega_1|/2)$. Generalized tiles and tri-tiles are given at the end of Definition \ref{TT}. Each $\Phi_{P_k,j}$ is a wave packet on the tile $P_k$, and each $\Phi_{R_k,j}$ is a wave packet on the tile $R_k$ according to Definition \ref{WP}. Therefore, the proof of Theorem \ref{MT*} reduces to obtaining generalized restricted type estimates for all $\Lambda_3$-models. 

\end{proof}



The remainder of \S{5} is dedicated to the proof of Theorem \ref{DT*}. 
\subsection{Generalized Restricted Type Estimates near $B_1, B_2, B_3$}
\subsubsection{Tile Decomposition}
Fix tri-tile collections $\mathbb{P}, \mathbb{Q},$ and $\mathbb{R}$ once and for all. For convenience, we shall subsequently use $f_j$ to denote $f_j^\prime$ for $j=1,2,3,4,5$ as described in Definition \ref{Def:RestrType}. Furthermore, we assume that $|E_5|=1$ by rescaling and that the collections $\mathbb{P}, \mathbb{Q}, \mathbb{R}$ are sparse. For each $\tilde{d} \geq 0$ set $\mathbb{Q}^{\tilde{d}} := \left\{ \vec{Q} \in \mathbb{Q} : 1 + \frac{ dist(I_{\vec{Q}}, \tilde{\Omega}^c) }{|I_{\vec{Q}}|} \simeq 2^{\tilde{d}} \right\}$ and set 

\begin{eqnarray}
\tilde{\Omega} &=& \left\{ M1_{E_1} \gtrsim |E_1| \right\} \bigcup \left\{ M1_{E_2} \gtrsim |E_2| \right\} \bigcup \left\{ M1_{E_3} \gtrsim |E_3| \right\} \bigcup \left\{ M1_{E_4} \gtrsim |E_4| \right\} \\
\Omega_1^0&=&\left\{ M \left[ \int_0^1  BHT^{\alpha, \mathbb{Q}^{0}}(f_3, f_4) d\alpha\right] \gtrsim  |E_3|^{1/2} |E_4|^{1/2} \right\} \\ 
\Omega_1^{\tilde{d}} &= & \left\{ M \left(  \left[ \int_0^1  \sum_{\vec{Q} \in \mathbb{Q}^{\tilde{d}}} \frac{ |\langle f_3, \Phi^\alpha_{Q_1, 3} \rangle \langle f_4, \Phi^\alpha_{Q_2,4 } \rangle|}{|I_{\vec{Q}}|} \chi^M_{I_{\vec{Q}}} d \alpha\right]^2 \right) \gtrsim 2^{2\tilde{d}} |E_3| |E_4| \right\}.
\end{eqnarray}
Lastly, construct 
\begin{eqnarray*}
\Omega= \tilde{\Omega} \bigcup \Omega_1^0 \bigcup_{\tilde{d} \geq  1} \Omega_1^{\tilde{d}}. 
\end{eqnarray*}
Then for large enough implicit constants, $|\Omega| \leq 1/2$ and $\tilde{E}_5 := E_5 \cap \Omega^c$ is a major subset of $E_5$ since $|E_5|=1$. Now let $\mathbb{P}^d := \left\{ \vec{P} \in \mathbb{P}: 1+ \frac{dist(I_{\vec{P}}, \Omega^c)}{|I_{\vec{P}}|} \simeq 2^d \right\}$.  Assuming $|f_1|\leq 1_{E_1}, |f_2| \leq 1_{E_2}, |f_3| \leq 1_{E_3}, |f_4|\leq 1_{E_4}, |f_5|\leq 1_{E_5 \cap \Omega^c}$, recall that our task in this section is to obtain the estimate $|\Lambda_3^{\mathbb{P}, \mathbb{Q}, \mathbb{R}}(f_1, f_2, f_3, f_4, f_5)| \lesssim |E_1|^{\alpha_1} |E_2|^{\alpha_2} |E_3|^{\alpha_3} |E_4|^{\alpha_4}$ for $(\alpha_1, \alpha_2, \alpha_3, \alpha_4)$ in a small neighborhood near an extremal point $\vec{\beta}  \in \{B_1, B_2, B_3\}$. 

For any subcollection of tri-tiles $\tilde{\mathbb{P}} \subset \mathbb{P}$, let 

\begin{align*}
Size_2\left(f_2, \tilde{ \mathbb{P}}\right) :=& \sup_{T \subset \tilde{\mathbb{P}}} \frac{1}{|I_T|^{1/2}} \left( \sum_{\vec{P} \in T} |\langle f_2, \Phi_{P_2,2} \rangle|^2 \right)^{1/2}
\end{align*}
and 
\begin{align*}
& Size^{\tilde{d}}_7(f_3, f_4, \tilde{\mathbb{P}}) \\ :=& \sup_{T \subset \tilde{\mathbb{P}}} \frac{1}{|I_T|^{1/2}} \left( \sum_{\vec{P} \in T}\left | \left \langle \int_0^1 BHT^{\alpha, \mathbb{Q}^{\tilde{d}}} (f_3, f_4) d \alpha, \Phi_{P_3, 7} \right \rangle \right|^2 \right. \\  +& \left. \left | \left \langle \int_0^1 \sum_{\vec{Q} \in \mathbb{Q}^{\tilde{d}}: \omega_{Q_3} \supset \supset \omega_{P_2}} \frac{1}{|I_{\vec{Q}}|^{1/2}} \langle f_2, \Phi^\alpha_{Q_1, 2} \rangle \langle f_3, \Phi^\alpha_{Q_2,3} \rangle \Phi^\alpha_{Q_3,5} d\alpha, \Phi_{P_3,7} \right \rangle  \right|^2 \right. \\ +& \left.\left | \left \langle \int_0^1 \sum_{\vec{Q} \in \mathbb{Q}^{\tilde{d}}: |\omega_{Q_3}| \simeq  |\omega_{P_2}|} \frac{1}{|I_{\vec{Q}}|^{1/2}} \langle f_2, \Phi^\alpha_{Q_1, 2} \rangle \langle f_3, \Phi^\alpha_{Q_2,3} \rangle \Phi^\alpha_{Q_3,5} d\alpha, \Phi_{P_3,7} \right \rangle  \right|^2  \right)^{1/2}
\end{align*}
where the supremum arising in the definition of the $2$-size is over all $3$-trees and the supremum arising in the definition of the $7$-size is over all $2$-trees. As before, both sizes generate decompositions of $\mathbb{P}^d$ for each $\tilde{d} \geq 0$, namely $\bigcup_{n_2 \geq N_2(d)}  \mathbb{P}^d_{n_2, 2}$ and $ \bigcup_{ \mathfrak{d} \geq N_3(d, \tilde{d})} \mathbb{P}^{d, \tilde{d}}_{\mathfrak{d}, 3}$ such that $Size_2(f_2, \mathbb{P}_{n_2, 2}^d) \lesssim 2^{-n_2}$ and $Size_7^{\tilde{d}}(f_3, f_4,\mathbb{P}_{\mathfrak{d},3}^{d, \tilde{d}}) \lesssim 2^{-\mathfrak{d}}$. Moreover, $\mathbb{P}^d_{n_2, 2}$ and $\mathbb{P}_{\mathfrak{d}, 3}^{d, \tilde{d}}$ can each be written as a union of trees, i.e. 

\begin{align*}
\mathbb{P}^d_{n_2, 2} =& \bigcup_{T \in \mathcal{T}_{n_2, 2}^d} \bigcup_{\vec{P} \in T} \vec{P} \\ 
\mathbb{P}^{d, \tilde{d}}_{\mathfrak{d}, 3} =& \bigcup_{T \in \mathcal{T}^{d, \tilde{d}}_{\mathfrak{d}, 3}} \bigcup_{\vec{P} \in T} \vec{P}, 
\end{align*}
such that $\sum_{T \in \mathcal{T}_{n_2, 2}^d} |I_T| \lesssim 2^{2n_2} \sum_{T \in \mathcal{T}_{n_2, 2,*}^d} \sum_{\vec{P} \in T} \left| \langle f_1, \Phi_{P_2, 2} \rangle \right|^2$ and  

\begin{align*}
& \sum_{T \in \mathcal{T}_{\mathfrak{d}, 3}^{d, \tilde{d}}} |I_T| \\  \lesssim &2^{2 \mathfrak{d}}\sum_{T \in \mathcal{T}_{\mathfrak{d}, 3,*}^{d, \tilde{d}}}  \sum_{\vec{P} \in T}\left| \left \langle \int_0^1 BHT^{\alpha , \mathbb{Q}^{\tilde{d}}} (f_3, f_4) d \alpha, \Phi_{P_3,7} \right\rangle \right| ^2\\ +& 2^{2 \mathfrak{d}}\sum_{T \in \mathcal{T}_{\mathfrak{d}, 3,*}^{d, \tilde{d}}}  \sum_{\vec{P} \in T}\left | \left \langle \int_0^1 \sum_{\vec{Q} \in \mathbb{Q}^{\tilde{d}}: \omega_{Q_3} \supset \supset \omega_{P_2}} \frac{1}{|I_{\vec{Q}}|^{1/2}} \langle f_2, \Phi^\alpha_{Q_1, 2} \rangle \langle f_3, \Phi^\alpha_{Q_2,3} \rangle \Phi^\alpha_{Q_3,5} d\alpha, \Phi_{P_3,7} \right \rangle  \right|^2  \\+&
2^{2 \mathfrak{d}}\sum_{T \in \mathcal{T}_{\mathfrak{d}, 3,*}^{d, \tilde{d}}}  \sum_{\vec{P} \in T} \left | \left \langle \int_0^1 \sum_{\vec{Q} \in \mathbb{Q}^{\tilde{d}}: |\omega_{Q_3}| \simeq  |\omega_{P_2}|} \frac{1}{|I_{\vec{Q}}|^{1/2}} \langle f_2, \Phi^\alpha_{Q_1, 2} \rangle \langle f_3, \Phi^\alpha_{Q_2,3} \rangle \Phi^\alpha_{Q_3,5} d\alpha, \Phi_{P_3,7} \right \rangle  \right|^2 
\end{align*}
where $\mathcal{T}_{n_2, 2,*}^d \subset  \mathcal{T}_{n_2, 2}^d $, $\mathcal{T}_{\mathfrak{d}, 3,*}^{d, \tilde{d}} \subset  \mathcal{T}_{\mathfrak{d}, 3}^{d, \tilde{d}} $, each tree in $\mathcal{T}_{n_2, 2,*}^d$ is a $3$-tree and each tree in $ \mathcal{T}_{\mathfrak{d}, 3,*}^{d, \tilde{d}}$ is a $2$-tree, and the collections $\mathcal{T}_{n_2, 2,*}^d , \mathcal{T}_{\mathfrak{d}, 3,*}^{d, \tilde{d}}$ can each be written as the union of a strongly $2$-disjoint and $3$-disjoint subcollections respectively. We denote this last property by

\begin{align*}
\mathcal{T}_{n_1, 1,*}^d =& \mathcal{T}_{n_1, 1,*, +}^d \bigcup \mathcal{T}_{n_1, 1,*, -}^d \\ 
\mathcal{T}_{\mathfrak{d}, 2,*}^{d, \tilde{d}}=& \mathcal{T}_{\mathfrak{d}, 2,*, +}^{d, \tilde{d}} \bigcup \mathcal{T}_{\mathfrak{d}, 2,*, -}^{d, \tilde{d}}.
\end{align*}
Similar to before, we construct $\mathbb{P}^{d, \tilde{d},*}_{n_2, \mathfrak{d}} = \mathbb{P}^{d, \tilde{d}}_{n_2, 2} \cap \mathbb{P}^{d, \tilde{d}}_{\mathfrak{d}, 3}$
and record the following tree estimates. 

\subsubsection{Tree Estimates}
If $T\subset \mathbb{P}^{d, \tilde{d},*}_{n_2, \mathfrak{d}}$ is a $3$-tree, we may use Lemma \ref{L:Tree-4} to conclude that for any $0 < \theta <1$

\begin{eqnarray*}
&&\left| \sum_{\vec{P} \in T} \frac{1}{|I_{\vec{P}}|^{1/2}}  \left \langle \sum_{\vec{R} \in \mathbb{R}: \widetilde{\omega_{R_1}} \ni (c_{\omega_{P_2}}-c_{\omega_{P_3}})/2)} \frac{ \langle f_1, \Phi_{R_1, 1} \rangle \langle f_5, \Phi_{R_2, 5}\rangle}{|I_{\vec{R}}|^{1/2}} \Phi^{n-l}_{R_3, 0} , \Phi^{lac}_{P_1,6}\right  \rangle \right.  \\  && \hspace{20mm} \times \left.  \langle f_2, \Phi_{P_2, 2} \rangle \left\langle  \int_0^1  BHT_{\omega_{P_3}}^{\alpha, \mathbb{Q}} (f_3, f_4) d \alpha,  \Phi_{P_3, 7}\right \rangle \right|\\& \leq&\frac{ \left( \sum_{\vec{P} \in T} \left|  \left \langle \sum_{\vec{R} \in \mathbb{R}: \widetilde{\omega_{R_1}} \ni ( c_{\omega_{P_2}}-c_{\omega_{P_3}})/2} \frac{ \langle f_1, \Phi_{R_1, 1} \rangle \langle f_5, \Phi_{R_2, 5}\rangle}{|I_{\vec{R}}|^{1/2}} \Phi^{n-l}_{R_3, 0} , \Phi^{lac}_{P_1,6}\right  \rangle \right|^2  \right)^{1/2}}{|I_T|^{1/2}}  \\  && \times \frac{  \left( \sum_{\vec{P} \in T} |\langle f_2, \Phi_{P_2,2} \rangle|^2 \right)^{1/2}}{|I_T|^{1/2}}  \sup_{\vec{P} \in T}\left[  \frac{\left| \left \langle \int_0^1 BHT^{\alpha, \mathbb{Q}^{\tilde{d}}}_{\omega_{P_3}}(f_3, f_4) d \alpha, \tilde{\Phi}^{\infty}_{P_3,7} \right\rangle \right|}{|I_{\vec{Q}}|} \right] |I_T|\\ &\lesssim_\theta &2^{-N d(1-\theta) } |E_1|^{\theta} 2^{-n_2}2^{-\frak{d}} |I_T|. 
\end{eqnarray*}
If $T \subset \mathbb{P}^{d, \tilde{d},*}_{n_2, \mathfrak{d}} $ is a $2$-tree, then for any $0 < \theta <1$

\begin{eqnarray*}
&&\left| \sum_{\vec{P} \in T}\frac{1}{|I_{\vec{P}}|^{1/2}}  \left \langle \sum_{\vec{R} \in \mathbb{R}: \widetilde{\omega_{R_1}} \ni  (c_{\omega_{P_2}}-c_{\omega_{P_3}})/2} \frac{ \langle f_1, \Phi_{R_1, 1} \rangle \langle f_5, \Phi_{R_2, 5}\rangle}{|I_{\vec{R}}|^{1/2}} \Phi^{n-l}_{R_3, 0} , \Phi^{lac}_{P_1,6}\right  \rangle \right. \\ && \left. \hspace{20mm}  \times \langle f_2, \Phi_{P_2, 2} \rangle \left\langle  \int_0^1  BHT_{\omega_{P_3}}^{\alpha, \mathbb{Q}} (f_3, f_4) d \alpha,  \Phi_{P_3, 7}\right \rangle \right|\\ &\lesssim&   \left[ \sup_{\vec{P} \in T} \frac{  \left|  \left \langle \sum_{\vec{R} \in \mathbb{R}: \widetilde{\omega_{R_1}} \ni (c_{\omega_{P_2}}-c_{\omega_{P_3}})/2} \frac{ \langle f_1, \Phi_{R_1, 1} \rangle \langle f_5, \Phi_{R_2, 5}\rangle}{|I_{\vec{R}}|^{1/2}} \Phi^{n-l}_{R_3, 0} , \Phi^{lac}_{P_1,6}\right  \rangle \right|}{|I_{\vec{P}}|^{1/2}} \right]    \\ &\times& \left( \sum_{\vec{P} \in T} \frac{|\langle f_4, \Phi_{\vec{P},4}^{lac} \rangle|^2}{|I_T|} \right)^{1/2} \left( \sum_{\vec{P} \in T} \frac{ \left| \left \langle \int_0^1 BHT^{\alpha, \mathbb{Q}^{\tilde{d}}}_{\omega_{P_2}}(f_3, f_4) d \alpha, \Phi_{P_3,7} \right\rangle\right|^2}{|I_T|}\right)^{1/2} |I_T| \\ &\lesssim_\theta& 2^{-N d(1-\theta)} |E_1|^\theta 2^{-n_2}2^{-\frak{d}} |I_T|.
\end{eqnarray*}
\subsubsection{Size Restrictions}

\begin{lemma}\label{L:Tree-4}
Fix $d, \tilde{d}, n_2, \mathfrak{d} \geq 0$ and let $T \subset \mathbb{P}^{d, \tilde{d},*}_{n_2, \mathfrak{d}}$ be a $2$- or $3$-tree. 
Then for some $M \gg 1$ and every $0 < \theta <1$

\begin{eqnarray*}
&& \left( \frac{1}{|I_T|} \sum_{\vec{P} \in T} \left|  \left \langle \sum_{\vec{R} \in \mathbb{R}: \widetilde{\omega_{R_1}} \ni (c_{\omega_{P_2}}-c_{\omega_{P_3}})/2} \frac{ \langle f_1, \Phi_{R_1, 1} \rangle \langle f_5, \Phi_{R_2, 5}\rangle}{|I_{\vec{R}}|^{1/2}} \Phi^{n-l}_{R_3, 0} , \Phi^{lac}_{P_1,6}\right  \rangle \right|^2 \right)^{1/2} \\& \lesssim_\theta & \left[ \sup_{\vec{P} \in T}  \frac{1}{|I_{\vec{P}}|} \int_{E_1} \chi^M_{I_{\vec{P}}} dx \right]^\theta \left[ \sup_{\vec{P} \in T}  \frac{1}{|I_{\vec{P}}|} \int_{E_5} \chi^M_{I_{\vec{P}}} dx \right]^{1-\theta}. 
\end{eqnarray*}
\end{lemma}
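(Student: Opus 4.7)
The plan is to establish this Biest-type size estimate by duality, reducing to a localized BHT/Bi-Carleson bound on the pair $(f_1, f_5)$, in close analogy with Lemma \ref{L:BiestSize} (originally Lemma 9.1 of \cite{MR2127985}). I would first pick a sequence $\{a_{\vec{P}}\}_{\vec{P} \in T}$ with $\sum_{\vec{P} \in T} |a_{\vec{P}}|^2 = 1$ that realizes the left hand side and rewrite the quantity in question as
$$\frac{1}{|I_T|^{1/2}} \sum_{\vec{P} \in T} a_{\vec{P}} \sum_{\vec{R} \in \mathbb{R}: \widetilde{\omega_{R_1}} \ni (c_{\omega_{P_2}}-c_{\omega_{P_3}})/2} \frac{\langle f_1, \Phi_{R_1,1}\rangle\langle f_5, \Phi_{R_2,5}\rangle}{|I_{\vec{R}}|^{1/2}} \langle \Phi^{n-l}_{R_3,0}, \Phi^{lac}_{P_1,6}\rangle .$$
After swapping the order of summation, this becomes $\frac{1}{|I_T|^{1/2}} \int F(x) G(x)\, dx$ where $F$ is the discrete BHT-type object built from the $\vec{R}$-tiles acting on $(f_1, f_5)$, and $G = \sum_{\vec{P} \in T} a_{\vec{P}} \Phi^{lac}_{P_1,6}$ is a packet function that will serve as the "dual" input.

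Second, I would verify $\|G \chi^M_{I_T}\|_2 \lesssim |I_T|^{1/2}$ by almost-orthogonality: since $T$ is a $j$-tree ($j \in \{2,3\}$) rather than a $1$-tree, the $\mathbb{P}$-tri-tile structure forces $\{P_1\}_{\vec{P} \in T}$ to behave like a pairwise disjoint (lacunary) collection up to scale separation, so $\{\Phi^{lac}_{P_1,6}\}_{\vec{P} \in T}$ is essentially an orthonormal family after $\chi^M_{I_T}$-localization, yielding the $L^2$ bound from the normalization $\sum |a_{\vec{P}}|^2 = 1$. Moreover, $G$ has frequency support near $0$ because $\omega_{\vec{P}}$ is adapted to $\{\xi_1=0,\xi_2+\xi_3=0\}$, which is exactly what makes the integral pairable against the BHT output $F$ via its output wave packet $\Phi^{n-l}_{R_3,0}$ (centered at $0$). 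The selection rule $\widetilde{\omega_{R_1}} \ni (c_{\omega_{P_2}}-c_{\omega_{P_3}})/2$ respects the Whitney adaptation and ensures the frequency pairing $\langle \Phi^{n-l}_{R_3,0}, \Phi^{lac}_{P_1,6}\rangle$ is nonzero only for compatible scales.

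Third, I would invoke a localized $BHT$-type size inequality (of the sort used to prove Lemma \ref{L:BiestSize}) to majorize $|\int F G\, dx|$ by
$$|I_T|\cdot \left[\sup_{\vec{P} \in T}\tfrac{1}{|I_{\vec{P}}|}\int_{E_1}\chi^M_{I_{\vec{P}}}\right]^{\theta}\left[\sup_{\vec{P} \in T}\tfrac{1}{|I_{\vec{P}}|}\int_{E_5}\chi^M_{I_{\vec{P}}}\right]^{1-\theta}$$
obtained by interpolating between the extremal restricted weak type $(1,\infty)$ and $(\infty,1)$ estimates for the BHT-type form in $F$, localized via $\chi^M_{I_T}$ and combined with the $L^2$ control of $G$. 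Since $|f_j| \leq 1_{E_j}$ for $j \in \{1,5\}$, the size of $F$ over $I_T$ is controlled by $\int_{E_1}\chi^M$ and $\int_{E_5}\chi^M$ averages, and one gets the desired geometric mean after dividing by $|I_T|^{1/2}$.

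The main technical obstacle will be justifying the $L^2$ bound on $G$ and the attendant almost-orthogonality when $T$ is a $2$- or $3$-tree in a collection $\mathbb{P}$ which, crucially, is \emph{not} rank-$1$: the usual Bessel inequality applies to $\{\Phi^{lac}_{P_1,6}\}$ only after one carefully uses that $T$ is a tree in the $j$-th component for $j \neq 1$, so that the $P_1$-tiles enjoy lacunary separation in frequency and disjointness in time-scale within the tree top. The rest of the argument then parallels the proof of Lemma \ref{L:BiestSize} and a step-by-step adaptation of Lemma 9.1 of \cite{MR2127985} with the roles of the $\mathbb{P}$ and $\mathbb{R}$ collections swapped, and with $\supset\supset$ replacing $\supseteq$ in the frequency selection, which as noted in the paper does not affect the validity of that argument.
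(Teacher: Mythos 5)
Your proposal takes a genuinely different route from the paper. The paper does not dualize in the $\vec{P}$-index and apply Cauchy--Schwarz; it first uses the John--Nirenberg inequality (quoted as (3.15) in Section 3.3 of \cite{MR3052499}) to reduce the $\ell^2$-over-tree quantity to an $L^{1,\infty}(I_T)$ bound on the square function $\bigl( \sum_{\vec{P}\in T} |b_{\vec{P}}|^2 \tfrac{1_{I_{\vec{P}}}}{|I_{\vec{P}}|} \bigr)^{1/2}$, and then identifies a Calder\'on--Zygmund structure after linearizing the $\ell^2$-sum. Comparing: the paper's John--Nirenberg reduction buys you the freedom to work at the weak-$L^1$ endpoint (easier to localize), while your duality approach would buy more symmetry between $f_1$ and $f_5$ at the cost of having to prove a genuine $L^2(I_T)$ bound on the $\vec{R}$-output at the right geometric mean.

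However, there is a genuine gap in your proposal that must be addressed before either route can close. You write that "after swapping the order of summation, this becomes $\tfrac{1}{|I_T|^{1/2}}\int F G\,dx$ where $F$ is built from the $\vec{R}$-tiles," implicitly treating $F$ as independent of $\vec{P}$. But the inner sum over $\vec{R}$ carries the selection constraint $\widetilde{\omega_{R_1}} \ni (c_{\omega_{P_2}}-c_{\omega_{P_3}})/2$, which \emph{depends on $\vec{P}$}; you cannot swap sums and obtain a $\vec{P}$-independent function $F$ without first removing this dependence. The condition is not equivalent to "the inner product $\langle \Phi^{n-l}_{R_3,0},\Phi^{lac}_{P_1,6}\rangle \neq 0$" (there are pairs $(\vec{P},\vec{R})$ for which exactly one of the two holds), so appealing to the Whitney adaptation is not enough. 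This is precisely the difficulty the paper's proof is organized around: after splitting $T$ into $T^\pm$, the proof introduces $L^\infty$-normalized mollifiers $\Phi_{\vec{R},T^+}$ with the property that $\langle \Phi^{n-l}_{R_3,0} * \Phi_{\vec{R},T^+},\Phi_{P_1,6}\rangle$ encodes exactly the complementary set of pairings, converting the $\vec{P}$-dependent selection rule into a pair of $\vec{P}$-independent convolution operators plus a same-scale diagonal term $III$. Only after that decomposition do the linearized Calder\'on--Zygmund operator and the square-function estimates for $f_1$, $f_5$ apply. Your proposal never confronts this, so the reduction to a BHT-type size estimate for $(f_1,f_5)$ is not justified.

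A secondary, minor point: with $\sum_{\vec{P}\in T}|a_{\vec{P}}|^2=1$ and the $\Phi^{lac}_{P_1,6}$ being $L^2$-normalized wave packets (Definition~\ref{WP}), the almost-orthogonality you invoke would give $\lVert G\rVert_2\lesssim 1$, not $\lVert G\chi^M_{I_T}\rVert_2\lesssim |I_T|^{1/2}$; the $|I_T|^{1/2}$ gain must come from the localized bound on $F$, not from $G$. Also, "pairwise disjoint" is not quite right for the $\{P_1\}$ tiles in a $2$- or $3$-tree; they are nested in frequency, and the almost-orthogonality of $\{\Phi^{lac}_{P_1,6}\}$ comes from the lacunary cutoff that separates scales, not from disjointness of the $\omega_{P_1}$.
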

\begin{proof}
It suffices to assume that $T$ contains its top $P_T$ and moreover, that $E_1, E_5 \subset 5 I_T$. This reduction follows from the proof of the standard Biest size estimate in \cite{MR2127985}.  We may now use the John-Nirenberg inequality in (3.15) in Section 3.3 of \cite{MR3052499} to reduce to showing for any $2$- or $3$-tree $T$ that

\begin{eqnarray}\label{Est:John-Nirenberg1}
&& \left| \left| \left( \sum_{\vec{P} \in T }  \left| \left \langle  \sum_{\vec{R} \in \mathbb{R}: \widetilde{\omega_{R_1}} \ni (c_{\omega_{P_2}}-c_{\omega_{P_3}})/2} \frac{ \langle f_1, \Phi_{R_1, 1} \rangle \langle f_5, \Phi_{R_2, 5}\rangle}{|I_{\vec{R}}|^{1/2}} \Phi^{n-l}_{R_3, 0} , \Phi^{lac}_{P_1,6}\right  \rangle \right|^2\frac{1_{I_{\vec{P}}}}{|I_{\vec{P}}|} \right)^{1/2} \right| \right|_{L^{1,\infty}(I_T)} 
\end{eqnarray}
is bounded above by $C_\theta \left( \int_{E_1} \chi^M_{I_T} dx\right)^\theta \left( \int_{E_5} \chi^M_{I_T} dx \right)^{1-\theta}.$ Assume that $T$ is a $2$-tree without loss of generality. Then we split $T$ into $T^+ \bigcup T^-$, where 

\begin{eqnarray*}
T^+&=& \left\{ \vec{P} \in T: P_3 \lesssim^+ P_{T,3} \right\} \\
T^- &=& \left\{ \vec{P} \in T: P_3 \lesssim^- P_{T,3} \right\}
\end{eqnarray*}
and prove \eqref{Est:John-Nirenberg1} for $T^+$ and $T^-$. In fact, we prove \eqref{Est:John-Nirenberg1} only for $T^+$ again without loss of generality. Let $\mathbb{R}(T^+) = \left\{ \vec{R} \in \mathbb{R} : \exists \vec{P} \in T^+~s.t.~ \widetilde{\omega_{R_1}} \ni (c_{\omega_{P_2}}-c_{\omega_{P_3}})/2 , |\omega_{\vec{P}}| << |\omega_{\vec{R}}|\right\}$. For each $\vec{R} \in \mathbb{R}$, there exists an $L^\infty$-normalized bump function $\hat{\Phi}_{\vec{R}, T^+}$ centered at $0$ such that $\langle \Phi^{n-l}_{R_3,0} * \Phi_{\vec{R}, T^+} , \Phi_{P_1, 6} \rangle =\langle \Phi^{n-l}_{R_3,0} , \Phi_{P_1, 6} \rangle$ for all pairings $(\vec{P}, \vec{R})$ for which $|\omega_{\vec{R}}| >>> | \omega_{\vec{P}}|$ and $\widetilde{\omega_{R_1}} \not \ni  (c_{\omega_{P_2}}-c_{\omega_{P_3}})/2$.  The inner product $\langle \Phi^{n-l}_{R_3,0} * \Phi_{\vec{R}, T^+} , \Phi_{P_1, 6} \rangle$ vanishes for all others pairings $(\vec{P}, \vec{R}).$  Moreover, note that $\langle \Phi^{n-l}_{R_3,0} , \Phi_{P_1,6} \rangle = 0$ whenever it is not the case that $|\omega_{\vec{P}}| <<  |\omega_{\vec{R}}|$. From these properties, it is now straightforward to bound \eqref{Est:John-Nirenberg1}  by a constant times 

\begin{align*}
& \left| \left| \left( \sum_{\vec{P} \in T^+ }  \left| \left \langle  \sum_{\vec{R} \in \mathbb{R}(T^+)} \frac{ \langle f_1, \Phi_{R_1, 1} \rangle \langle f_5, \Phi_{R_2, 5}\rangle}{|I_{\vec{R}}|^{1/2}} \Phi^{n-l}_{R_3, 0} , \Phi^{lac}_{P_1,6}\right  \rangle \right|^2\frac{1_{I_{\vec{P}}}}{|I_{\vec{P}}|} \right)^{1/2} \right| \right|_{L^{1,\infty}(I_{T^+})}  \\ +& \left| \left| \left( \sum_{\vec{P} \in T^+ }  \left| \left \langle  \sum_{\vec{R} \in \mathbb{R}(T^+)} \frac{ \langle f_1, \Phi_{R_1, 1} \rangle \langle f_5, \Phi_{R_2, 5}\rangle}{|I_{\vec{R}}|^{1/2}} \Phi^{n-l}_{R_3, 0} * \Phi_{\vec{R},T^+} , \Phi^{lac}_{P_1,6}\right  \rangle \right|^2\frac{1_{I_{\vec{P}}}}{|I_{\vec{P}}|} \right)^{1/2} \right| \right|_{L^{1,\infty}(I_{T^+})} \\ +& \left| \left| \left( \sum_{\vec{P} \in T^+ }  \left| \left \langle  \sum_{\substack{ \vec{R} \in \mathbb{R}(T^+): |\omega_{\vec{R}}| \simeq |\omega_{\vec{P}}|\\ \widetilde{\omega_{R_1}} \not \ni  (c_{\omega_{P_2}}-c_{\omega_{P_3}})/2} } \frac{ \langle f_1, \Phi_{R_1, 1} \rangle \langle f_5, \Phi_{R_2, 5}\rangle}{|I_{\vec{R}}|^{1/2}} \Phi^{n-l}_{R_3, 0} , \Phi^{lac}_{P_1,6}\right  \rangle \right|^2\frac{1_{I_{\vec{P}}}}{|I_{\vec{P}}|} \right)^{1/2} \right| \right|_{L^{1,\infty}(I_{T^+})}\\ =:& ~I + II + III. 
\end{align*}
We first bound the contributions of $I$ and $II$. Linearizing the $\ell^2(T^+)$-sum in terms $I$ and $II$ yields Calder\'on-Zygmund operators, which maps $L^1(\mathbb{R}) \rightarrow L^{1,\infty}(\mathbb{R})$. As each $\Phi_{\vec{R},T^+}$ is an $L^1$-normalized bump function, these two summands in the above display may be bounded by a constant times

\begin{eqnarray}\label{Est:Intermediate2}
\left| \left|    \sum_{\vec{R} \in \mathbb{R}(T^+)} \frac{ \langle f_1, \Phi_{R_1, 1} \rangle \langle f_5, \Phi_{R_2, 5}\rangle}{|I_{\vec{R}}|^{1/2}} \Phi^{n-l}_{R_3, 0} \right| \right|_{L^1(\mathbb{R})}. 
\end{eqnarray}
Furthermore, as the wave packets $\Phi_{R_1,1}$ and $\Phi_{R_2,5}$ have finer time localization than the scale $|I_T|$, it follows that for all $0 < \theta <1$ 

\begin{align*}
\left| \left| \left(\sum_{\vec{R} \in \mathbb{R}(T^+)}  |\langle f_1, \Phi_{R_1,1} \rangle|^2 \frac{1_{I_{\vec{R}}}}{|I_{\vec{R}}|} \right)^{1/2} \right| \right|_{L^{\frac{1}{\theta}} (\mathbb{R})} \lesssim& \left(\int_{E_1} \chi_{I_T} ^M dx \right)^\theta \\ 
\left| \left| \left(\sum_{\vec{R} \in \mathbb{R}(T^+)}  |\langle f_5, \Phi_{R_5,2} \rangle|^2 \frac{1_{I_{\vec{R}}}}{|I_{\vec{R}}|} \right)^{1/2} \right| \right|_{L^{\frac{1}{1-\theta}} (\mathbb{R})} \lesssim& \left(\int_{E_5} \chi_{I_T} ^M dx \right)^{1-\theta}. 
\end{align*}
An application of Cauchy-Schwarz on the $\mathbb{R}(T^+)$-sum followed by H\"older's inequality allows us to dominate $I+II$ by $C \left( \int_E \chi^M_{I_T} dx\right)^\theta \left( \int_{E_4} \chi^M_{I_T} dx \right)^{1-\theta}.$ 

To bound term $III$, it suffices to observe

\begin{align*}
III \lesssim &  \left| \left| \left(  \sum_{\vec{R} \in \mathbb{R}(T^+)}  \frac{ |\langle f_1, \Phi_{R_1, 1} \rangle|^2 | \langle f_5, \Phi_{R_2, 5}\rangle|^2}{|I_{\vec{R}}|} \frac{1_{I_{\vec{R}}}}{|I_{\vec{R}}|} \right)^{1/2} \right| \right|_{L^1(\mathbb{R})} \\ \lesssim& \left| \left| \left(  \sum_{\vec{R} \in \mathbb{R}(T^+)}   |\langle f_1, \Phi_{R_1, 1} \rangle|^2 \frac{1_{I_{\vec{R}}}}{|I_{\vec{R}}|} \right)^{1/2} \left(  \sum_{\vec{R} \in \mathbb{R}(T^+)}   |\langle f_5, \Phi_{R_5, 2} \rangle|^2 \frac{1_{I_{\vec{R}}}}{|I_{\vec{R}}|} \right)^{1/2}\right| \right|_{L^1(\mathbb{R})} .
\end{align*}
Another application of H\"older's inequality and the square function estimates allow us to dominate \eqref{Est:Intermediate2} by $C_\theta \left( \int_{E_1} \chi^M_{I_T} dx\right)^\theta \left( \int_{E_5} \chi^M_{I_T} dx \right)^{1-\theta}$ as desired.

\end{proof}
An immediate consequence of Lemmas \ref{L:John-Nirenberg} and \ref{L:Tree-4} is 
\begin{lemma}
Fix $d, \tilde{d}, n_2, \mathfrak{d}$ such that $\mathbb{P}^{d, \tilde{d},*}_{n_2, \mathfrak{d}}$ is nonempty. Then 

\begin{align*}
2^{-n_2} \lesssim& 2^d |E_2| \\ 
2^{-\mathfrak{d}} \lesssim& 2^{-N(\tilde{d}-d) } |E_3|^{1/2} |E_4|^{1/2}. 
\end{align*}
As a consequence, we have that in the decomposition 

\begin{align*}
\mathbb{P} \times \mathbb{Q}= \bigcup_{d, \tilde{d} \geq 0} \bigcup _{n_2 \geq N_2(d)} \bigcup_{\mathfrak{d} \geq N_3(d, \tilde{d})} \mathbb{P}^{d, \tilde{d},*}_{n_2, \mathfrak{d}} \times \mathbb{Q}^{\tilde{d}} 
\end{align*}
$2^{-N_2(d)} \lesssim 2^d|E_2|$ and $2^{-N_3(d, \tilde{d})} \lesssim 2^{-N(\tilde{d}- d)} |E_3|^{1/2} |E_4|^{1/2}.$

\end{lemma}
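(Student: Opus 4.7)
The plan is to proceed in direct parallel to the proof of Lemma \ref{Est:Size-Restrictions} from Section 4, with the roles of $(f_1, f_2, f_3)$ and the tiles $(P_1, P_2)$ there played here by $(f_2, f_3, f_4)$ and $(P_2, P_3)$. The first inequality $2^{-n_2} \lesssim 2^d|E_2|$ is essentially immediate: applying Lemma \ref{L:John-Nirenberg} to any $3$-tree $T \subset \mathbb{P}^{d,\tilde{d},*}_{n_2,\mathfrak{d}}$ nearly saturating $Size_2$ reduces the claim to
\[
\sup_{\vec{P}\in T}\tfrac{1}{|I_{\vec{P}}|}\int 1_{E_2}\chi^M_{I_{\vec{P}}}\,dx \lesssim 2^d|E_2|,
\]
which is forced by $\vec{P}\in\mathbb{P}^d$ and the inclusion $\tilde{\Omega}\supseteq\{M1_{E_2}\gtrsim|E_2|\}$ built into the exceptional set.

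For the second inequality I would split into the two cases $\tilde{d}\lesssim d$ and $\tilde{d}\gg d$ and control the three summands $I,II,III$ making up $Size_7^{\tilde{d}}(f_3,f_4,T)$ separately on each $2$-tree $T$. In the easy case $\tilde{d}\lesssim d$: the full-$BHT$ contribution $I$ is handled by Lemma \ref{L:John-Nirenberg} followed by the $\Omega_1^{\tilde{d}}$-defining inequality for $\Omega$, yielding $I\lesssim 2^d 2^{\tilde{d}}|E_3|^{1/2}|E_4|^{1/2}$, which is acceptable thanks to $d\geq \tilde{d}$; the ``upper'' piece $II$ (with $\omega_{Q_3}\supset\supset\omega_{P_3}$) is a direct application of the localized Biest size bound Lemma \ref{L:BiestSize} with $\theta=1/2$; and the ``matching-scale'' piece $III$ collapses by $\mathbb{Q}^{\tilde{d}}$-disjointness to a Bessel-style $\ell^2$-sum
\[
III^2\lesssim \tfrac{1}{|I_T|}\sum_{l\in\mathbb{Z}}\tfrac{1}{1+l^M}\sum_{\vec{Q}\in\mathbb{Q}^{\tilde{d}}:I_{\vec{Q}}\subset I_T+l|I_T|}\int_0^1\tfrac{|\langle f_3,\Phi^\alpha_{Q_1,3}\rangle|^2|\langle f_4,\Phi^\alpha_{Q_2,4}\rangle|^2}{|I_{\vec{Q}}|}\,d\alpha,
\]
which after $l$-summation is dominated by $2^{2d}|E_3||E_4|$ via the $E_3$-$E_4$ John-Nirenberg averages.

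The main obstacle is Case 2, $\tilde{d}\gg d$, where one must extract the decay factor $2^{-N(\tilde{d}-d)}$. The key geometric lemma to import verbatim is that for every $(\vec{P},\vec{Q})\in\mathbb{P}^d\times\mathbb{Q}^{\tilde{d}}$ with $|I_{\vec{P}}|\leq|I_{\vec{Q}}|$ one has $dist(I_{\vec{P}},I_{\vec{Q}})\gtrsim 2^{\tilde{d}}|I_{\vec{Q}}|$, since otherwise the triangle inequality
\[
dist(I_{\vec{Q}},\tilde{\Omega}^c)\leq dist(I_{\vec{Q}},I_{\vec{P}})+|I_{\vec{P}}|+dist(I_{\vec{P}},\tilde{\Omega}^c)\ll 2^{\tilde{d}}|I_{\vec{Q}}|
\]
contradicts $\vec{Q}\in\mathbb{Q}^{\tilde{d}}$. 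With this separation in hand, the Schwartz tails of the wave packets $\Phi^\alpha_{Q_3,5}$ and $\Phi_{P_3,7}$ contribute an extra $2^{-N(\tilde{d}-d)}$ in each of $I,II,III$, which combined with the maximal-function defining property of $\Omega_1^{\tilde{d}}$ (and the normalization $|E_5|=1$ inherited from the extremal-point analysis) is enough to yield $2^{-N\tilde{d}/4}|E_3|^{1/2}|E_4|^{1/2}$ after absorbing a harmless polynomial loss in $d$.

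The ``As a consequence'' statement is then automatic: since $N_2(d)$ and $N_3(d,\tilde{d})$ are defined as the smallest integers for which $\mathbb{P}^d_{N_2(d),2}$ and $\mathbb{P}^{d,\tilde{d}}_{N_3(d,\tilde{d}),3}$ are nonempty, both size bounds apply at those extremal indices and produce the two advertised consequences, closing out the tile decomposition needed for the subsequent energy estimates.
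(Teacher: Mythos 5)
Your proposal is correct and follows exactly the route the paper intends: the paper's own proof is a one-liner deferring to the Section 4 argument, and you have correctly unpacked what that argument is (shift the index roles, use Lemma \ref{L:John-Nirenberg} plus the $\tilde{\Omega}$-membership for the first bound, split into $\tilde{d}\lesssim d$ versus $\tilde{d}\gg d$ and use the time-interval separation estimate $dist(I_{\vec{P}}, I_{\vec{Q}}) \gtrsim 2^{\tilde{d}}|I_{\vec{Q}}|$ together with the $\Omega_1^{\tilde{d}}$ maximal-function condition for the second). One small remark: the paper's proof cites Lemma \ref{L:BiestSize}, but this appears to be a misprint --- the argument being invoked is really that of Lemma \ref{Est:Size-Restrictions} (the Section 4 size-restriction lemma), which is precisely the template you followed.
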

\begin{proof}
The argument is identical to the proof of Lemma \ref{L:BiestSize} and therefore omitted. 
\end{proof}

\subsubsection{Synthesis}
As the $\ell^2$ and $\ell^1$ energy estimates for $\Lambda_3^{\mathbb{P}, \mathbb{Q}, \mathbb{R}}(f_1, f_2, f_3, f_4, f_5)$ are essentially  identical to the $\Lambda_2^{\mathbb{P}, \mathbb{Q}}(f_1, f_2, f_3, f_4)$ case, it suffices to assemble all the pieces. Using Theorem \ref{DT} as a guide, the reader may check that for all $0 \leq \theta_1 , \theta_2, \theta_3 \leq 1$ 

\begin{align*}
& \left|\Lambda_3^{\mathbb{P}, \mathbb{Q}, \mathbb{R}} (f_1, f_2, f_3, f_4, f_5 ) \right| \\  \lesssim &
\sum_{d, \tilde{d} \geq 0} \sum_{n_2 \geq N_2(d)} \sum_{\mathfrak{d} \geq N_3(d, \tilde{d})} 2^{-Nd (1-\theta)} 2^{d \theta} |E_1|^{\theta} 2^{-n_2} 2^{-\mathfrak{d}}  \\& \hspace{20mm} \times \min \left\{ 2^{2n_2} |E_2|, 2^{2 \mathfrak{d}} 2^{2 \tilde{d}} |E_3|^{1+\gamma} |E_4|^{2-\gamma}, 2^{\frac{\mathfrak{d}}{1-\tilde{\epsilon}}} |E_3|^{1/2} |E_4|^{1/2} \right\} \\ \lesssim& \sum_{d, \tilde{d} \geq 0} \sum_{n_2 \geq N_2(d)} \sum_{\mathfrak{d} \geq N_3(d, \tilde{d})} 2^{-Nd (1-\theta)} 2^{d \theta}  2^{-n_2} 2^{-\mathfrak{d}}\\   &  \hspace{20mm} \times 2^{2 n_2 \theta_2} 2^{2 \mathfrak{d} \theta_2} 2^{2 \tilde{d} \theta_2} |E_1|^{\theta}|E_3|^{(1+\tilde{\theta}) \theta_2} |E_4|^{(2-\tilde{\theta}) \theta_2} 2^{\frac{ \mathfrak{d} \theta_3}{1-\tilde{\epsilon}}} |E_3|^{\theta_3/2} |E_4|^{\theta_3/2} \\ \leq & \sum_{d, \tilde{d} \geq 0} \sum_{n_2 \geq N_2(d)} \sum_{\mathfrak{d} \geq N_3(d, \tilde{d})} 2^{-Nd (1-\theta)/2}2^{-n_2(1-2 \theta_1)} 2^{-\mathfrak{d}\left[ 1-2 \theta_2-  \frac{\theta_3}{1-\tilde{\epsilon}}\right]}   \\ &  \hspace{20mm} \times |E_1|^\theta |E_3|^{(1+\gamma) \theta_2+\frac{\theta_3}{2}} |E_4|^{(2-\gamma) \theta_2+\frac{\theta_3}{2}} . 
\end{align*} 
For appropriate choices of $0< \epsilon_1, \epsilon_2 , \tilde{\epsilon} , \gamma \ll 1$ and $\theta \simeq 1$, produce estimates for a sequence of tuples approaching $B_1 = \left(1, 1, \frac{1}{2}, \frac{1}{2}, - 1 \right)$ by setting  $ \theta_1 =\epsilon_1, \theta_2 =\epsilon_2, \theta_3 =1-\epsilon_1- \epsilon_2$. For $B_2 = \left( 1, \frac{1}{2} , \frac{1}{2} , 1 , -1 \right)$, set $ \theta_1 =1/2-\epsilon_1, \theta_2 =\frac{1}{2}-\epsilon_2, \theta_3 =\epsilon_1 + \epsilon_2$. Lastly, for $B_3= \left(1,  \frac{1}{2}, 1, \frac{1}{2} - 1\right)$, set $\theta =1-\gamma, \tilde{\theta}= 1, \theta_1 = \frac{1}{2}-\epsilon_1, \theta_2 = \frac{1}{2}-\epsilon_2, \theta_3 =\epsilon_1 + \epsilon_2$. 

\subsection{Generalized Restricted Type Estimates near $B_4, B_5, B_6, B_7, B_8, B_9, B_{10}$}
The model sum $\Lambda_3^{\mathbb{P}, \mathbb{R}, \mathbb{Q}}$ is symmetric in positions $1$ and $5$, and so generalized restricted estimates near $B_1,B_2, B_3$ ensures generalized restricted estimates near $B_4, B_5, B_6$. Generalized restricted type estimates near $B_7, B_8, B_9, B_{10}$ follow from the fact that for every $0\leq \theta_1, \theta_2, \theta_3, \theta_4, \theta_5 \leq 1$ such that $\theta_1 + \theta_2 + \theta_3 + \theta_4 + \theta_5=1$, 

\begin{align*}
& \left|\Lambda_3^{\mathbb{P}, \mathbb{Q}, \mathbb{R}} (f_1,f_21_{\Omega^c} , f_3, f_4, f_5) \right| \\& \sum_{d, \tilde{d} \geq 0} \sum_{n_2 \geq \bar{N}_2(d)} \sum_{\mathfrak{d} \geq N_3(d, \tilde{d})}  2^d |E_1|^{\theta} |E_5|^{1-\theta} 2^{-n_2} 2^{-\mathfrak{d}}  \\& \hspace{20mm} \times \min \left\{ 2^{2n_2} |E_2|, 2^{2 \mathfrak{d}} 2^{2 \tilde{d}} |E_3|^{1+\gamma} |E_4|^{2-\gamma}, 2^{\frac{\mathfrak{d}}{1-\tilde{\epsilon}}} |E_3|^{1/2} |E_4|^{1/2} \right\}\\ & \lesssim  \sum_{d, \tilde{d} \geq 0} \sum_{n_2 \geq N_2(d)} \sum_{\mathfrak{d} \geq N_3(d, \tilde{d})}2^d 2^{2 \tilde{d} \theta_2}  2^{-n_2(1-2 \theta_1)} 2^{-\mathfrak{d}\left[ 1-2 \theta_2-  \frac{\theta_3}{1-\tilde{\epsilon}}\right]} \\ &  \hspace{20mm} \times   |E_1|^{1-\theta} |E_5|^{\theta}  |E_3|^{(1+\gamma) \theta_2+\frac{\theta_3}{2}} |E_4|^{(2-\gamma) \theta_2+ \frac{\theta_3}{2}}.  
\end{align*}
For appropriate choices of $0< \epsilon_1, \epsilon_2 , \tilde{\epsilon} \ll 1$ and $ 0 < \theta, \gamma <1$ , we may produce estimates for a sequence of tuples approaching $B_7 = \left(0  , -\frac{3}{2}, \frac{1}{2}, 1, 1  \right)$ by setting $\theta \simeq 1, \gamma \simeq 0,  \theta_1 =1/2-\epsilon_1, \theta_2 =1/2 - \epsilon_2, \theta_3 = \epsilon_1 + \epsilon_2$. For $B_8= \left( 1, -\frac{3}{2}, \frac{1}{2}, 1, 0 \right),$ set  $ \theta \simeq 0, \gamma \simeq 0, \theta_1 =1/2-\epsilon_1, \theta_2=1/2 - \epsilon_2, \theta_3= \epsilon_1 + \epsilon_2$. For $B_9= \left( 0, - \frac{3}{2}, 1, \frac{1}{2}, 1 \right)$, set $\theta \simeq 1, \gamma \simeq 1, \theta_1=1/2 - \epsilon_1, \theta_2 =1/2 -\epsilon_2, \theta_3 = \epsilon_1 +\epsilon_2$. For $ B_{10} = \left( 1 , -\frac{3}{2}, 1, \frac{1}{2}, 0 \right)$, set  $\theta \simeq 0, \gamma \simeq 1, \theta_1 =1/2 - \epsilon_1, \theta_2=1/2 - \epsilon_2, \theta_3 =\epsilon_1 + \epsilon_2$. 

\subsection{Generalized Restricted Type Estimates near $B_{11}, B_{12}, B_{13}, B_{14}, B_{15}, B_{16}$}
By modifying the adjoint tile decomposition for $\Lambda_2^{\mathbb{P},\mathbb{Q}}$, it is now straightforward to observe 

\begin{align*}
& \left|\Lambda_3^{\mathbb{P}, \mathbb{Q}, \mathbb{R}}(f_1, f_2, f_3 1_{\Omega^c}, f_4,f_5) \right| \\ &\lesssim \sum_{d,  n_1, n_2, n_5,k  \geq 0}\sum_{\mathfrak{d} \gtrsim -k} 2^{-n_1(1-\theta)} 2^{-n_5 \theta} 2^{-n_2} 2^{-n_4} 2^{-\mathfrak{d}} \\ & \hspace{20mm} \times  \min\left\{  2^{2n_2} |E_2|, 2^{2n_2} 2^{n_1(1-\theta)} 2^{n_5 \theta (1-\theta_2)}  |E_1|^{1-\theta} |E_5|^\theta ,   \right. \\ &\left. \hspace{30mm} 2^{2 \mathfrak{d}} 2^{-k/2} 2^{-N d} |E_4| ^{1/2}, 2^{2 \mathfrak{d}} 2^{2d} |E_4|^{2-\epsilon} ,  2^{\frac{ \mathfrak{d}}{1-\tilde{\epsilon}}} |E_4| ^{ 1/2} \right\} \\  &\lesssim \sum_{d,  n_1, n_2, n_5,k  \geq 0}\sum_{\mathfrak{d} \gtrsim -k} 2^{-Nd \theta_3} 2^{2d \theta_4} 2^{-n_1(1-\theta)(1-\theta_2)} 2^{-n_5\theta (1-\theta_2)} 2^{-n_2(1-2(\theta_1 + \theta_2))}  2^{-n_4}  \\ &  \hspace{20mm} \times 2^{-\mathfrak{d}(1-2 (\theta_3+\theta_4) -\frac{\theta_5}{1-\tilde{\epsilon}} )} 2^{-k \theta_3/2}  |E_1|^{(1-\theta) \theta_2} |E_5|^{\theta \theta_2} |E_2|^{\theta_1}  |E_4|^{\theta_3/2 + (2-\epsilon)\theta_4 +\theta_5/2} 
\end{align*}
 Using $0 \leq \theta_1, \theta_2, \theta_3, \theta_4, \theta_5 \leq 1$ to denote the weightings assigned to each term in the above minimum, we may deduce suitable weak type estimates in a neighborhood of $B_{11} = \left( 0 , \frac{1}{2} , -\frac{1}{2} ,1,0\right)$  by taking $\tilde{\epsilon} \simeq 0,\epsilon \simeq 0,  \theta=\frac{1}{2},  \theta_1 =\epsilon_1, \theta_2 = \frac{1}{2}-\epsilon_2, \theta_3 =\epsilon_3 , \theta_4 =\frac{1}{2}-\epsilon_4, \theta_5 =\epsilon_2 + \epsilon_4-\epsilon_1-\epsilon_3$. For estimates near $B_{12} = \left( \frac{1}{2}, 0, -\frac{1}{2}, 1, 0 \right) $, take $\tilde{\epsilon} \simeq 0,\epsilon \simeq 0,\theta \simeq 0 , \theta_1 =\epsilon_1, \theta_2 =\frac{1}{2}-\epsilon_2 , \theta_3 =\epsilon_3 , \theta_4 =\frac{1}{2}-\epsilon_4, \theta_5 =\epsilon_2+\epsilon_4-\epsilon_1-\epsilon_3$. For estimates near $B_{13} = \left( 0, 0, -\frac{1}{2}, 1, \frac{1}{2} \right)$, take $\tilde{\epsilon} \simeq 0,\epsilon \simeq 0, \theta \simeq 1, \theta_1 =\epsilon_1, \theta_2 =\frac{1}{2}-\epsilon_2 , \theta_3 =\epsilon_3 , \theta_4 =\frac{1}{2}-\epsilon_4, \theta_5 =\epsilon_2+\epsilon_4-\epsilon_1-\epsilon_3$. Some care has to be taken to ensure summability over $k \geq 0$. That this is possible is again straightforward, and so details are left to the reader. Generalized restricted type estimates near $B_{14} = \left( 0, \frac{1}{2}, 1, -\frac{1}{2}, 0 \right) , B_{15} = \left( \frac{1}{2}, 0, 1, -\frac{1}{2}, 0 \right), B_{16} = \left( 0,0, 1, - \frac{1}{2}, \frac{1}{2} \right)$ are obtained by symmetry with $B_{11}, B_{12}, B_{13}$. This ends the proof of Theorem \ref{DT*} from which Theorem \ref{MT*} follows.

\nocite{*}
\bibliography{MER}{}
\bibliographystyle{plain}

\Addresses
\end{document}